\documentclass[reqno]{amsart}
\numberwithin{equation}{section}

\usepackage{hyperref}
\urlstyle{rm}

\usepackage{bm}
\usepackage{amssymb}
\usepackage{amsthm}
\usepackage[alphabetic,nobysame]{amsrefs}
\usepackage{graphicx} 
\usepackage{amscd}
\usepackage{enumerate}
\usepackage{cancel}
\usepackage[font=footnotesize]{caption}
\usepackage{dsfont}
\usepackage[colorinlistoftodos]{todonotes}
\usepackage{tikz-cd}
\usepackage[all]{xy}
\usepackage{mathtools}
\usepackage{scrextend}

\definecolor{darkred}{rgb}{1,0,0} 
\definecolor{darkgreen}{rgb}{0,0.8,0}
\definecolor{darkblue}{rgb}{0,0,1}

\hypersetup{colorlinks,
linkcolor=darkblue,
filecolor=darkgreen,
urlcolor=darkblue,
citecolor=darkgreen}

\makeatletter
\providecommand\@dotsep{5}
\makeatother

\definecolor{orange}{RGB}{253,85,0}
\definecolor{darkgreen}{RGB}{0,95,10}

 \newcommand{\trap}{\mathrm{trap}}

 \newcommand{\inj}{\mathrm{inj}}
 \newcommand{\YY}{\mathcal{Y}}
 \newcommand{\xx}{\bm{x}}
 \newcommand{\yy}{\bm{y}}
 \newcommand{\vv}{\bm{v}}
 
 \newcommand{\Eu}{E^u}
 
 \newcommand{\Z}{\mathds{Z}}
 
 \newcommand{\R}{\mathds{R}}
 \newcommand{\PP}{\mathds{P}}

 \newcommand{\XX}{\mathcal{X}}
 \newcommand{\UU}{\mathcal{U}}
 \newcommand{\VV}{\mathcal{V}}
 \newcommand{\WW}{\mathcal{W}}
 \newcommand{\ZZ}{\mathcal{Z}}
 \newcommand{\LL}{\mathcal{L}}
 \newcommand{\KK}{\mathcal{K}}
 \newcommand{\GG}{\mathcal{G}}

 \newcommand{\HH}{\mathcal{H}}

 \newcommand{\Diff}{\mathrm{Diff}}
 \newcommand{\area}{\mathrm{area}}
 \newcommand{\sigmav}{\sigma^{\mathrm{v}}}
 \newcommand{\sigmavv}[1]{\sigma^{\mathrm{v}}_{#1}}
 \newcommand{\crit}{\mathrm{crit}}
 
 \newcommand{\id}{\mathrm{id}}
 
 \newcommand{\ind}{\mathrm{ind}}
 \newcommand{\nul}{\mathrm{nul}}
 \newcommand{\Imm}{\mathrm{Imm}}

 \DeclareMathOperator{\interior}{int}
  
 \DeclareMathOperator{\rank}{rank} 
 \DeclareMathOperator*{\toup}{\longrightarrow} 
 \DeclareMathOperator*{\ttoup}{\llongrightarrow} 
  
 \DeclareMathOperator*{\eembup}{\llonghookrightarrow}

\DeclareRobustCommand{\llonghookrightarrow}{\lhook\joinrel\relbar\joinrel\relbar\joinrel\rightarrow}
\DeclareRobustCommand{\llongrightarrow}{\relbar\joinrel\relbar\joinrel\rightarrow}

\newtheoremstyle{personal}
{12pt}
{12pt}
{\itshape}
{}
{\bfseries}
{.}
{.5em}
{}%

\newtheoremstyle{break}
{12pt}
{12pt}
{\itshape}
{}
{\bfseries}
{}
{.5em}
{}%

 \theoremstyle{personal}
 \newtheorem{MainThm}{Theorem}

 \newtheorem{Thm}{Theorem}[section]
 \newtheorem{Prop}[Thm]{Proposition}
 \newtheorem{Lemma}[Thm]{Lemma}
 \newtheorem{Cor}[Thm]{Corollary}

 \theoremstyle{break}

 \newtheorem{LemmaB}[Thm]{Lemma}

\newtheoremstyle{newdefinition}
{10pt}
{10pt}
{}
{}
{\bfseries}
{.}
{.5em}
{}%
 
 \theoremstyle{newdefinition}
 
 \newtheorem{Definition}[Thm]{Definition}
 \newtheorem{Remark}[Thm]{Remark}

\title[From curve shortening to link stability and Birkhoff sections]{From curve shortening to flat link stability\\ and Birkhoff sections of geodesic flows}

\author[M. R. R. Alves]{Marcelo R. R. Alves}
\address{Marcelo R.R. Alves\newline\indent Department of Mathematics, University of Antwerp\newline\indent Middelheim G, M.G.105, Middelheimlaan 1, 2020 Antwerp, Belgium}
\email{marcelo.ribeiroderesendealves@uantwerpen.be}

\author[M. Mazzucchelli]{Marco Mazzucchelli}
\address{Marco Mazzucchelli\newline\indent CNRS, UMPA, \'Ecole Normale Sup\'erieure de Lyon\newline\indent 46 all\'ee d'Italie, 69364 Lyon, France}
\email{marco.mazzucchelli@ens-lyon.fr}

\thanks{Marcelo R.~R.~Alves was supported by the Senior Postdoctoral fellowship of the Research Foundation - Flanders (FWO) in fundamental research 1286921N. Marco Mazzucchelli is partially supported by the ANR grants CoSyDy (ANRCE40-0014) and COSY (ANR-21-CE40-0002).}

\date{August 21, 2024. \emph{Revised}: October 8, 2024.}

\keywords{Curve shortening flow, closed geodesics, flat links, Birkhoff sections}

\subjclass[2020]{53C22, 37D40, 53E10, 53D25}

\begin{document}

\begin{abstract}
We employ the curve shortening flow to establish three new results on the dynamics of geodesic flows of closed Riemannian surfaces. The first one is the stability, under $C^0$-small perturbations of the Riemannian metric, of certain flat links of closed geodesics. The second one is a forced existence theorem for closed connected orientable Riemannian surfaces: for surfaces of positive genus, the existence of a contractible simple closed geodesic $\gamma$ forces the existence of infinitely many closed geodesics intersecting $\gamma$ in every primitive free homotopy class of loops; for the 2-sphere, the existence of two disjoint simple closed geodesics forces the existence of a third one intersecting both. The final result asserts the existence of Birkhoff sections for the geodesic flow of any closed connected orientable Riemannian surface.
\end{abstract}

\maketitle

\vspace{-20pt}

\begin{footnotesize}
\begin{quote}
\tableofcontents 
\end{quote}
\end{footnotesize}

\section{Introduction}

On a closed Riemannian surface, the curve shortening flow is the $L^2$ anti-gradient flow of the length functional on the space of immersed loops. Unlike other more conventional anti-gradient flows on loop spaces, such as the one of the energy functional in the $W^{1,2}$ settings \cite{Klingenberg:1978aa}, the curve shortening flow is only a semi-flow (i.e.~its orbits are only defined in positive time), and the very existence of its trajectories in long time was a remarkable theorem of geometric analysis, first investigated by Gage \cite{Gage:1983aa,Gage:1984aa, Gage:1990aa} and Hamilton \cite{Gage:1986aa}, fully settled for embedded loops by Grayson \cite{Grayson:1989aa}, and further generalized to immersed loops by Angenent \cite{Angenent:1990aa, Angenent:1991aa}. One of the remarkable properties of the curve shortening flow is that it shrinks loops without increasing the number of their self-intersections. This allowed Grayson to provide a rigorous proof of Lusternik-Schnirelmann's theorem on the existence of three simple closed geodesics on every Riemannian 2-sphere \cite{Grayson:1989aa, Mazzucchelli:2018aa}. Later on, Angenent \cite{Angenent:2005aa} framed the curve shortening flow in the setting of Morse-Conley theory \cite{Conley:1978aa}, and proved a spectacular existence result for closed geodesics of certain prescribed flat-knot types on closed Riemannian surfaces. 

The purpose of this article, which is inspired by this latter work of Angenent, is to present new applications of the curve shortening flow to the study of the dynamics of geodesic flows: the stability of certain configurations of closed geodesics under $C^0$ perturbation of the Riemannian metric, the forced existence of closed geodesics intersecting certain given ones, and the existence of Birkhoff sections. We present our main results in detail over the next three subsections.

\subsection{$C^0$-stability of flat links of closed geodesics}\label{ss:C0_stability}

The expression of a Riemannian geodesic vector field involves the first derivatives of the Riemannian metric. Therefore, for each integer $k\geq1$, a $C^k$-small perturbation of the Riemannian metric corresponds to a $C^{k-1}$-small perturbation of the geodesic vector field. However, a $C^0$-small perturbation of the Riemannian metric may result in a drastic deformation of the geodesic vector field and of its dynamics. For instance, given any smooth embedded circle $\gamma$ in a Riemannian surface, one can always find a $C^0$-small perturbation of the Riemannian metric that makes $\gamma$ a closed geodesic for the new metric \cite[Ex.~43]{Alves:2022aa}. Moreover, it is always possible to arbitrarily increase the topological entropy of the geodesic flow by means of a $C^0$-small perturbation of the Riemannian metric \cite[Th.~12]{Alves:2022aa}. 

From the geometric perspective, it is natural to consider the $C^0$ topology on the space of Riemannian metrics: indeed, the length of curves, or more generally the volume of compact submanifolds, vary continuously under $C^0$-deformations of the Riemannian metric.
A result of the first author, Dahinden, Meiwes and Pirnapasov \cite{Alves:2023aa} asserts that the topological entropy of a non-degenerate geodesic flow of a closed Riemannian surface cannot be destroyed by a $C^0$-small perturbation of the metric. In a nutshell, this can be expressed by saying that the chaos of such geodesic flows is $C^0$ robust.
Our first result provides another geometric dynamical property that, unexpectedly, survives after $C^0$-perturbations of the Riemannian metric of a closed orientable surface: the existence of suitable configurations of closed geodesics. 
In order to state the result precisely, let us first introduce the setting.

Let $(M,g)$ be a closed Riemannian surface. We denote by $\Imm(S^1,M)$ the space of smooth immersions of the circle $S^1=\R/\Z$ to $M$, endowed with the $C^3$ topology. The group of orientation preserving smooth diffeomorphisms $\Diff_+(S^1)$ acts on $\Imm(S^1,M)$ by reparametrization, and we denote the quotient by
\[\Omega:=\frac{\Imm(S^1,M)}{\Diff_+(S^1)}.\]
The space $\Omega$ consists of unparametrized oriented immersed loops in $M$, and is endowed with the quotient $C^3$ topology. 
The length functional 
\[
L_g:\Omega\to(0,\infty),\qquad L_g(\gamma)=\int_{S^1} \|\dot\gamma(t)\|_g\,dt
\]
is well defined on $\Omega$, meaning that $L_g(\gamma)$ is independent of the specific choice of representative of $\gamma$ in $\Imm(S^1,M)$, is continuous, and even differentiable for a suitable differentiable structure on $\Omega$.
For each integer $n\geq1$, we denote by $\Delta_n$ the closed subset of $\Omega^{\times n}=\Omega\times...\times\Omega$ consisting of those multi-loops $\bm\gamma=(\gamma_1,...,\gamma_n)$ such that $\gamma_i$ is tangent to $\gamma_j$ for some $i\neq j$, or $\gamma_i$ has a self-tangency for some $i$. 
A path-connected component $\LL$ of $\Omega^{\times n}\setminus\Delta_n$ is called a \emph{flat link type}. The reason for this terminology is that $\LL$ lifts into a connected component of the space of links in the projectivized tangent bundle $\PP TM$. If $n=1$, $\LL$ is more specifically called a \emph{flat knot type}. This terminology was introduced by Arnold in \cite{Arnold:1994aa}.

For each integer $m\geq2$, we denote by $\gamma^m\in\Omega$ the $m$-fold iterate of a loop $\gamma\in\Omega$. Namely, once we fix a parametrization $\gamma:S^1\looparrowright M$, we obtain the parametrization $\gamma^m:S^1\looparrowright M$, $\gamma^m(t)=\gamma(mt)$. 
A loop $\gamma\in\Omega$ is \emph{primitive} if it is not of the form $\gamma=\zeta^m$ for some $\zeta\in\Omega$ and $m\geq2$, and otherwise it is an \emph{iterated} loop. 
A whole flat knot type $\KK$ is primitive when its closure in $\Omega$ contains only primitive loops. Examples of primitive flat knot types include all flat knot types consisting of embedded loops, and all flat knot types consisting of loops whose integral homology class is primitive (i.e.~not a multiple $mh$, for $m\geq2$, of another homology class $h$). Any flat link type $\LL$ is contained in a product $\KK_1\times...\times\KK_n$, where the factors $\KK_i$ are flat knot types, and we say that $\LL$ is primitive when all the factors $\KK_i$ are primitive flat knot types.

Closed geodesics admit a variational characterization and a dynamical one. The unparametrized oriented closed geodesics of $(M,g)$ are the critical point of the length functional $L_g$. The closed geodesics parametrized with unit speed are the base projections of the periodic orbits of the geodesic flow on the unit tangent bundle $\psi_t:SM\to SM$, $\psi_t(\dot\gamma(0))=\dot\gamma(t)$; here, $\gamma:\R\to M$ is any geodesic parametrized with unit speed $\|\dot\gamma\|_g\equiv1$. A closed geodesic $\gamma$ of length $\ell$ is non-degenerate when its unit-speed lift $\dot\gamma$ is a non-degenerate $\ell$-periodic orbit of the geodesic flow, meaning that $\dim\ker(d\psi_\ell(\dot\gamma(0))-\id)=1$. We introduce the following notion.

\begin{Definition}\label{d:stable_link}
A flat link of closed geodesics $\bm\gamma=(\gamma_1,...,\gamma_n)$ is \emph{stable} when every component $\gamma_i$ is non-degenerate and, for each $i\neq j$, the components $\gamma_i,\gamma_j$ have distinct flat knot types or distinct lengths $L_g(\gamma_i)\neq L_g(\gamma_j)$.\end{Definition}

Our first main result is the following.

\begin{MainThm}\label{mt:links}
Let $(M,g)$ be a closed Riemannian surface, $\LL$ a primitive flat link type, and $\bm\gamma\in\LL$ a stable flat link of closed geodesics. For each $\epsilon>0$, any Riemannian metric $h$ sufficiently $C^0$-close to $g$ has a flat link of closed geodesics $\bm\zeta\in\LL$ such that $\|L_h(\bm\zeta)- L_g(\bm\gamma)\|<\epsilon$.
\end{MainThm}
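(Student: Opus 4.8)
The plan is to run a local min--max for the length functional $L_h$ inside the flat link type $\LL$, using the curve shortening flow of $h$ as the deformation tool, and to transport the topological information carried by $\bm\gamma$ from $g$ to $h$ through the $C^0$-closeness of $L_h$ to $L_g$. The first step is localization. Since the curve shortening flow never increases the number of self-intersections or mutual intersections of immersed loops, the closure $\overline\LL\subseteq\Omega^{\times n}$ is positively invariant under the componentwise curve shortening semi-flow $\Phi^h=(\Phi^h_t)_{t\ge0}$ of every metric $h$, and $\bm\gamma$ is a rest point of $\Phi^g$ lying in $\LL$. Each $\gamma_i$ being non-degenerate, it is a hyperbolic rest point of the curve shortening flow of $g$, hence an isolated invariant set of finite Morse index $m_i$; the stability hypothesis (distinct flat knot types or lengths of the $\gamma_i$) then makes $\bm\gamma$ an isolated rest point of $\Phi^g$ inside $\overline\LL$, with nontrivial local homology --- a pointed sphere of dimension $m:=m_1+\dots+m_n$.

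Second, I would build a metric-independent essential family. Using the non-degeneracy of $\bm\gamma$ and the local structure of the curve shortening semi-flow near it (after Angenent), fix a small $\delta>0$ and, in a fixed $C^3$-neighbourhood $U$ of $\bm\gamma$ in $\overline\LL$, a continuous map $f=(f_1,\dots,f_n)\colon D^{m}\to U\cap\LL$ sending $\partial D^m$ into $\{\sum_i L_g(\text{$i$th component})<\sum_i L_g(\gamma_i)-\delta\}$ and representing the descending cell of $\bm\gamma$; one arranges $f$ to be a product of descending cells $f_i$ of the $\gamma_i$ with $\max_{D^{m_i}}L_g\circ f_i=L_g(\gamma_i)$, so that $[f]$ generates the local homology of $\bm\gamma$. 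For a metric $h$ set $c_i(h):=\inf_{\tilde f}\max_{x}L_h(\tilde f_i(x))$, the infimum over all $\tilde f$ homotopic to $f$ rel $\partial$ within $U$ (an abuse: one really runs the min--max for the product functional and tracks components). For $h=g$, with $U$ and $\delta$ small, $\bm\gamma$ is the only rest point of $\Phi^g$ in $U$, whence $c_i(g)=L_g(\gamma_i)$.

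Third, transport the min--max value. As $h\to g$ in $C^0$, $L_h\to L_g$ uniformly on the compact set $\overline U$, so the sublevel filtrations of $L_h$ and $L_g$ on $\overline U$ are $\eta$-interleaved with $\eta=\eta(\|h-g\|_{C^0})\to0$; a persistence argument then shows that, once $\eta<\delta/10$, the class $[f]$ remains essential for $L_h$, giving $c_i(h)\ge L_g(\gamma_i)-\delta-\eta$, while feeding $f$ itself into the definition gives $c_i(h)\le L_g(\gamma_i)+\eta$. Thus $|c_i(h)-L_g(\gamma_i)|<\epsilon$ for $h$ sufficiently $C^0$-close to $g$. Using $\Phi^h$ --- which decreases $L_h$, preserves $\overline\LL$, and enjoys the Palais--Smale-type compactness established by Angenent --- as a pseudo-gradient deformation, a deformation lemma identifies each $c_i(h)$ as a critical value of $L_h$: there is a multi-loop $\bm\zeta=(\zeta_1,\dots,\zeta_n)\in\overline\LL$ with each $\zeta_i$ an $h$-geodesic and $|L_h(\zeta_i)-L_g(\gamma_i)|<\epsilon$, i.e.\ $\|L_h(\bm\zeta)-L_g(\bm\gamma)\|<\epsilon$. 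It then remains to show $\bm\zeta\in\LL$, not merely in $\overline\LL$: here one uses that the curve shortening flow instantaneously destroys tangencies and never creates them, so the non-generic (tangency) strata of $\Omega^{\times n}$ carry no local homology relevant to the min--max, and the critical multi-loop detected inside $\overline\LL$ lies in the open stratum $\LL$; primitivity of the factors $\KK_i$ excludes $\zeta_i$ being a proper iterate of a shorter geodesic, and the distinct-length/distinct-flat-knot-type clause of the stability hypothesis keeps distinct components from coinciding, so that, together with the connectedness of $\LL$, $\bm\zeta$ lands in the prescribed flat link type.

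I expect the main obstacle to be this last point together with the precise form of the deformation lemma in the third step: one must control how the curve shortening flow of the possibly very rough metric $h$ interacts with the stratification of $\Omega^{\times n}$ by tangency loci, and rule out degeneration of the min--max multi-loop onto $\Delta_n$ or onto an iterated geodesic --- it is exactly here that Angenent's fine analysis of the semi-flow near the non-generic strata, the primitivity of $\LL$, and the stability of $\bm\gamma$ are all needed. By contrast, the transport of the essential homology class is comparatively soft, being a persistence/interleaving statement that uses only the $C^0$-closeness of the two length functionals.
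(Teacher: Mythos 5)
Your plan is a local min--max, whereas the paper proves this theorem by a rather different mechanism: an \emph{intertwining} of curve-shortening trajectories of $g$ and $h$ (Lemmas~\ref{l:deformation_lemma}--\ref{l:intertwining}). For a single flat knot type your interleaving step is indeed the paper's Theorem~\ref{t:knots} in spirit (comparison of sublevel filtrations of $L_g$ and $L_h$ relative to the exit set $\overline\KK_\rho$); that part is sound. But for a genuine link there is a gap in the final step, and you correctly flag that $\bm\zeta\in\LL$ is where the difficulty lies. Running component-wise min--maxes produces closed $h$-geodesics $\zeta_i\in\KK_i$ with $L_h(\zeta_i)\approx L_g(\gamma_i)$, but this gives a point in $\overline\KK_1\times\cdots\times\overline\KK_n$, not in $\overline\LL$: the flat link constraint is about the \emph{mutual} positions of the $\zeta_i$'s, which the separate min--maxes never see. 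And if instead you deform a product cell by the product semi-flow $\Phi^h$, the cell does not stay in $\overline\LL$: Angenent's Lemma~\ref{l:Angenent_primitive} only says that once an orbit leaves $\overline\KK$ it never returns, and for the link there is an additional, length-insensitive, exit channel --- two distinct components becoming tangent to one another. Such a tangency kills intersections and moves the multi-loop into a different component of $\Omega^{\times n}\setminus\Delta_n$ without dropping any individual length $L_h(\zeta_i)$, so the usual ``exit at low level'' argument does not rule it out. Your proposed remedy (that tangency strata carry no local homology) addresses transversality of the critical multi-loop but not this intermediate escape.

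What the paper does instead --- and this is the key idea you would need --- is to produce, for a \emph{simultaneous} choice of times $t_1,t_2,t_3$ valid for every component $i$, a trajectory that flows forward with $\Phi^h$ from a Gromoll--Meyer neighborhood $\UU_i$ of $\gamma_i$, passes through a small neighborhood $\ZZ_{i,k_i}$ of an $h$-geodesic, and then flows back with $\Phi^g$ into $\UU_i$ again. The resulting multi-path $\bm\nu_t$ starts and ends near $\bm\gamma\in\LL$; since pairwise intersection numbers are non-increasing under simultaneous curve shortening (\cite[Lemma~3.3]{Angenent:2005aa}) and agree at the two endpoints, they are constant along $\bm\nu_t$, so the whole path stays in $\LL$. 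Reading off the flat link type at the intermediate time $t_1$ then identifies $\bm\zeta$'s type with $\LL$. Making the times $t_1,t_2,t_3$ work for all components at once is precisely the content of Lemma~\ref{l:intertwining}, which in turn relies on a careful splitting of the Gromoll--Meyer local homology (equation~\eqref{e:GM_splitting}) and on the exit-set/curvature-control framework of Section~\ref{s:Morse}. In short: your proposal identifies the right obstacle but lacks the round-trip construction that resolves it, and it also elides the role of the exit set $\overline\KK_\rho$ (singularity formation) which is needed to make any deformation lemma for the curve shortening flow go through.
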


Our inspiration for Theorem \ref{mt:links} comes from Hofer geometry \cite{Hofer:1990aa, Polterovich:2001aa}. The Hamiltonian diffeomorphisms group of a symplectic manifold $\mathrm{Ham}(W,\omega)$ admits a remarkable metric, called the Hofer metric, which has a $C^0$ flavor and plays an important role in Hamiltonian dynamics and symplectic topology. When the symplectic manifold $(W,\omega)$ is a closed surface, 
a finite collection of 1-periodic orbits of a non-degenerate Hamiltonian diffeomorphism $\phi\in\mathrm{Ham}(W,\omega)$ has a certain braid type $\mathcal B$. The first author and Meiwes \cite{Alves:2021aa} proved that this property is stable under perturbation that are small with respect to the Hofer metric: any other $\psi\in\mathrm{Ham}(W,\omega)$ that is sufficiently close to $\phi$ also has a collection of 1-periodic orbits of braid type $\mathcal B$. The proof of this result involves Floer theory and holomorphic curves. Later on, employing periodic Floer homology, Hutchings \cite{Hutchings:2023aa} generalized the result to finite collections of periodic orbits of arbitrary period. 
Our Theorem \ref{mt:links} can be seen as a Riemannian version of these results. Unlike \cite{Alves:2021aa, Hutchings:2023aa}, our proof does not need Floer theory nor holomorphic curves, and instead employs the curve shortening flow.

\subsection{Forced existence of closed geodesics}

Our next result is an instance of a forcing phenomenon in dynamics: the existence of a particular kind of periodic orbit implies certain unexpected dynamical consequences. For instance, the existence of a hyperbolic periodic point with a transverse homoclinic for a diffeomorphism implies the existence of a horseshoe, which in turn implies the existence of plenty of nearby periodic points and the positivity of topological entropy \cite[Th.~6.5.5]{Katok:1995aa}. More in the spirit of our article, Boyland \cite{Boyland:1994aa} proved that the existence of periodic orbits with complicated braid types for a surface diffeomorphism implies  complicated dynamical structure, such as the positivity of the topological entropy and the existence of periodic orbits of certain other braid types.

In the specific case of geodesic flows, Denvir and Mackay \cite{Denvir:1998aa} proved that the existence of a contractible closed geodesic $\gamma$ on a Riemannian torus, or of three simple closed geodesics $\gamma_1,\gamma_2,\gamma_3$ bounding disjoint disks on a Riemannian 2-sphere, forces the positivity of the topological entropy of the corresponding geodesic flows. A simple argument involving the curve shortening flow further implies the existence of infinitely many closed geodesics in the complement of $\gamma$ or of $\gamma_1\cup\gamma_2\cup\gamma_3$. The forcing theory of Denvir and Mackay was generalized to the category of Reeb flows in \cite{Alves:2022ab,Pirnapasov:2021}.

Our second main result is a forced existence theorem for closed geodesics intersecting a simple one, on closed orientable surfaces of positive genus. The statement employs the following standard terminology.
A free homotopy class of loops in a surface $M$ is a connected component of the free loop space $C^\infty(S^1,M)$. Notice that this notion is less specific than the one of flat knot type: while every flat knot type corresponds to a unique free homotopy class of loops, a free homotopy class of loops always corresponds to infinitely many flat knot types. 
A free homotopy class of loops is called \emph{primitive} when it does not contain iterated loops.

\begin{MainThm}\label{mt:multiplicity}
On any closed connected orientable Riemannian surface of positive genus with a contractible simple closed geodesic $\gamma$, every primitive free homotopy class of loops contains infinitely many closed geodesics intersecting $\gamma$.
\end{MainThm}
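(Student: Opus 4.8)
The plan is to run a Conley-theoretic argument for the curve shortening semiflow on the connected component $\Omega_c\subset\Omega$ of immersed loops freely homotopic to $c$, filtered by the number of intersection points with the fixed geodesic $\gamma$, in the spirit of Angenent's Conley-theoretic analysis of flat knot types. Since $\gamma$ is a closed geodesic it is a stationary solution of the flow, and by the intersection-comparison principle for curve shortening the number of transverse intersections $\#(\zeta_t\cap\gamma)$ of a curve-shortening orbit $\zeta_t$ with $\gamma$ is non-increasing in $t$; as $\gamma$ is contractible, hence null-homologous, this number is always even. For each $m\ge0$ let $\mathcal{N}_m\subset\Omega_c$ be the closure of the set of loops transverse to $\gamma$ meeting it in at most $2m$ points. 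The $\mathcal{N}_m$ are nested, positively invariant under the semiflow, and exhaust $\Omega_c$. A closed geodesic in class $c$ is automatically transverse to $\gamma$ — a geodesic tangent to $\gamma$ must coincide with $\gamma$, which does not lie in $c$ since $c$, being primitive, is non-trivial — so a closed geodesic in $\mathcal{N}_m\setminus\mathcal{N}_{m-1}$ meets $\gamma$ transversally in exactly $2m$ points; for $m\ge1$ it therefore intersects $\gamma$, and geodesics sitting in distinct levels are pairwise distinct. Hence it suffices to produce closed geodesics in $c$ realizing infinitely many values of the intersection number with $\gamma$.

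I would obtain these by running, for each $m\ge1$, the Conley-index machinery for the pair $(\mathcal{N}_m,\mathcal{N}_{m-1})$. Here one uses Angenent's long-time existence and regularity for the curve shortening flow of immersed loops, the strict decrease of $L_g$ along non-stationary orbits together with its lower bound on $\Omega_c$ by the length of the shortest loop in $c$, and the compactness of the set of closed geodesics of bounded length; these yield the standard deformation lemmas and show that the semiflow retracts $\mathcal{N}_m$ onto an arbitrarily small neighbourhood of its set of closed geodesics. Consequently the relative Conley index of $(\mathcal{N}_m,\mathcal{N}_{m-1})$, equivalently $H_*(\mathcal{N}_m,\mathcal{N}_{m-1})$, is carried by closed geodesics of class $c$ meeting $\gamma$ in exactly $2m$ points, so if $H_*(\mathcal{N}_m,\mathcal{N}_{m-1})\neq0$ one obtains such a geodesic $\alpha_m$. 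The theorem follows once this non-vanishing is established for infinitely many $m$. Primitivity of $c$ enters precisely here, to guarantee that curve-shortening limits in $\Omega_c$ are \emph{prime} closed geodesics and not iterates, so that the critical-point theory is not obstructed by the degeneracy of iterated geodesics.

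It remains to establish the topological input $H_*(\mathcal{N}_m,\mathcal{N}_{m-1})\neq0$ for infinitely many $m$, which is the step I expect to be the main obstacle. Cut $M$ along $\gamma$ into the embedded disk $D$ it bounds — on which $\int_D K=2\pi$ by Gauss--Bonnet, so the hypothesis genuinely constrains the metric — and the complementary compact surface $M'=M\setminus\interior D$, which has boundary $\gamma$ and positive genus, so $\pi_1(M')$ is free of rank at least $2$ and $\pi_1(M)$ is its quotient by the normal subgroup generated by the non-trivial element $[\gamma]$. A loop in $\Omega_c$ meeting $\gamma$ transversally in $2m$ points breaks into $m$ arcs in $D$ and $m$ arcs in $M'$; as $D$ is a disk the $D$-arcs carry no topology, so the topology of $\mathcal{N}_m$ relative to $\mathcal{N}_{m-1}$ is governed by spaces of $m$-tuples of arcs in $M'$ with endpoints on $\gamma$ whose concatenation represents $c$. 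Because $[\gamma]$ has infinite order in the non-elementary free group $\pi_1(M')$, for every $m$ there are genuinely new such representations — informally, one can insert $m$ ``$\gamma$-excursions'' that cannot be undone by an arc isotopy keeping the loop transverse to $\gamma$ — and a bookkeeping of the corresponding relative Conley indices, modelled on Angenent's computation of the indices of flat knot types (or, concretely, exhibiting non-bounding relative cycles swept out by dragging arcs around the handles of $M'$), shows that these contributions do not cancel, so $H_*(\mathcal{N}_m,\mathcal{N}_{m-1})\neq0$ for infinitely many $m$. With this in hand, the geodesics $\alpha_m$ lie in $c$, intersect $\gamma$, and are pairwise distinct since they meet $\gamma$ in distinct numbers of points, which proves the theorem.
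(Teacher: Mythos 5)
Your reduction of the theorem to the non-vanishing of $H_*(\mathcal{N}_m,\mathcal{N}_{m-1})$ for infinitely many $m$ is a reasonable strategy, but the step you yourself flag as ``the main obstacle'' is exactly the heart of the theorem, and the proposal does not prove it: the existence, for every $m$, of genuinely new ways to write $c$ with $m$ excursions across $\gamma$ does not by itself prevent the corresponding contributions from cancelling in relative homology, and there is no general computation of Angenent-type invariants for these intersection-number strata that you could quote; ``a bookkeeping of the corresponding relative Conley indices \dots shows that these contributions do not cancel'' is a hope, not an argument. The paper never computes the homology of intersection-number strata at all. Instead it transfers the homological computation to an auxiliary metric: it takes the Donnay--Burns--Gerber model metric $g_0$ having $\gamma$ as the boundary of a focusing cap and strictly negative curvature outside, uses an invariant cone field to show every closed geodesic of $g_0$ other than $\gamma$ is hyperbolic and that, within a fixed flat knot type relative to $\gamma$, all $g_0$-geodesics have Morse indices of the same parity; it then produces infinitely many index-one hyperbolic $g_0$-geodesics in the given primitive class intersecting $\gamma$ (by a minimax between the unique local minimizers in the components of loop space of $\overline{M\setminus B}$ obtained by concatenating with powers of $\gamma$), and invokes a lacunary principle to conclude that the relative flat knot types of these geodesics have non-trivial local homology. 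Since that local homology is independent of the admissible metric, Morse inequalities for the original metric $g$ then yield infinitely many closed geodesics, either because infinitely many distinct relative flat knot types occur or because a single one has local homology of infinite rank. Some such transfer to a special metric (or an equivalent mechanism forcing index parity) is what your outline is missing, and without it the claimed non-vanishing is unsupported.

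There is also a secondary gap in the deformation step: for immersed loops the curve shortening flow does not enjoy long-time existence, since small subloops can pinch and create singularities before any convergence to a geodesic, so your claim that the semiflow retracts $\mathcal{N}_m$ onto a neighbourhood of its closed geodesics is not correct as stated. This is precisely why Angenent, and the paper, build every index pair with an exit set $\overline\KK_\rho$ of loops containing a subloop of small area (and why only \emph{primitive} relative flat knot types are used, so that the flow does not leave through iterates or through $\gamma$ itself); without this, ``$H_*(\mathcal{N}_m,\mathcal{N}_{m-1})$ is carried by closed geodesics meeting $\gamma$ in exactly $2m$ points'' is unjustified, since the homology could be carried by orbits escaping through singularity formation or through tangencies with $\gamma$ on the boundary of your closed sets. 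These points are repairable along the lines of Sections~2 and~3 of the paper, but the homological input of the previous paragraph is not.
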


For the 2-sphere, which is not covered by Theorem~\ref{mt:multiplicity}, we prove the following.

\begin{MainThm}\label{mt:sphere}
On any Riemannian 2-sphere with two disjoint simple closed geodesics $\gamma_1$ and $\gamma_2$, there exists a simple closed geodesic $\gamma$ intersecting each $\gamma_i$ in exactly two points, i.e.
$\# (\gamma\cap\gamma_1) = \#(\gamma\cap\gamma_2)=2$.
\end{MainThm}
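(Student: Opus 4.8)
\emph{Overview of the plan.} I would obtain $\gamma$ as a min--max critical point of the length functional $L_g$, constrained to a class of embedded loops that meet each $\gamma_i$ in at most two points, using the curve shortening flow both as the deformation in the min--max scheme (in the spirit of Grayson's proof of the Lusternik--Schnirelmann theorem and of Angenent's flat--knot existence results) and as the tool that forces the resulting geodesic to be simple and to meet each $\gamma_i$ in exactly two points.

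\emph{Topology of the configuration.} Since $\gamma_1$ and $\gamma_2$ are disjoint simple closed curves on $S^2$, they cut it as $\bar D_1\cup\bar A\cup\bar D_2$, where $\bar D_i$ is a closed disk bounded by $\gamma_i$ and $\bar A$ is the closed annulus between them. A simple closed curve $\sigma$ transverse to $\gamma_1\cup\gamma_2$ has the property in the statement precisely when $\sigma\cap\bar D_1$ and $\sigma\cap\bar D_2$ are single arcs and $\sigma\cap\bar A$ consists of two disjoint arcs joining $\gamma_1$ to $\gamma_2$; call such a $\sigma$ a \emph{meridian curve}. Transported by a homeomorphism from the round model (two latitude circles, with its rotation family of meridians), one gets a circle's worth of meridian curves $\{\sigma_\theta\}_{\theta\in S^1}$ sweeping $S^2$ exactly once. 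This loop is non-contractible in the space $\mathcal X$ of embedded loops meeting each $\gamma_i$ in at most two points --- indeed already in the space of all embedded loops in $S^2$, by the classical description of its homotopy type, hence a fortiori in the subspace $\mathcal X$ --- and it is not homotopic in $\mathcal X$ into the open sublocus $\mathcal Y\subset\mathcal X$ of loops disjoint from $\gamma_1$ or from $\gamma_2$, because each path component of $\mathcal Y$ consists of loops confined to one of the open disks $\mathrm{int}\,D_1$, $S^2\setminus\bar D_1$, $\mathrm{int}\,D_2$, $S^2\setminus\bar D_2$ and is therefore simply connected.

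\emph{The flow respects the stratification, and the min--max.} The curve shortening flow $\Phi_t$ preserves embeddedness, and by Grayson's theorem every orbit of an embedded loop collapses to a point in finite time or exists for all positive time and subconverges to a closed geodesic. Since $\gamma_1$ and $\gamma_2$ are geodesics, hence stationary, Angenent's Sturmian/avoidance principle gives that $t\mapsto\#(\Phi_t(\sigma)\cap\gamma_i)$ is non-increasing; thus $\mathcal X$ is invariant, and a loop of meridian type can leave the open stratum $\{\#(\sigma\cap\gamma_1)=\#(\sigma\cap\gamma_2)=2\}$ only by first becoming tangent to some $\gamma_i$ (the parity of intersection numbers on $S^2$ forbids ending up meeting some $\gamma_i$ in exactly one point). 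I would then set $W:=\inf\{\sup_{\theta}L_g(\tilde\sigma_\theta):\{\tilde\sigma_\theta\}\ \text{homotopic to}\ \{\sigma_\theta\}\ \text{in}\ \mathcal X\}$. Since $\{\sigma_\theta\}$ sweeps $S^2$ it cannot be deformed, with length bounded, to a family of arbitrarily short loops, so $0<W<\infty$; by the standard min--max argument driven by the curve shortening deformation, $W$ is a critical value realized by a closed geodesic $\gamma$ that is a limit of embedded loops drawn from near-optimal families in $\mathcal X$. Hence $\gamma$ is a \emph{simple} closed geodesic with $\#(\gamma\cap\gamma_i)\le2$; and $\#(\gamma\cap\gamma_i)\ge2$, since if $\gamma$ were disjoint from some $\gamma_i$ the near-optimal families would, after flowing, concentrate their near-maximal loops in $\mathcal Y$, and a further application of the curve shortening deformation would push $\{\sigma_\theta\}$ inside $\mathcal X$ either below level $W$ or into $\mathcal Y$, contradicting the previous paragraph. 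Therefore $\#(\gamma\cap\gamma_1)=\#(\gamma\cap\gamma_2)=2$, which is the assertion (note that two geodesics tangent at a point coincide, so $\gamma$ is automatically transverse to $\gamma_1$ and $\gamma_2$, and distinct from each of them; a multiple cover $\gamma=\zeta^m$ with $m\ge2$ is impossible, as $m$ would divide $\#(\gamma\cap\gamma_1)=2$, forcing $\#(\zeta\cap\gamma_1)=1$ against the parity on $S^2$).

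\emph{Main obstacle.} The crux is the min--max step: certifying that $W$ is attained by a bona fide simple closed geodesic of meridian type rather than by a degenerate object --- a point, a geodesic that has pulled off one of the $\gamma_i$, or a configuration with a self-tangency or a tangency to some $\gamma_i$. This requires (i) a precise understanding of the relevant portion of the topology of the pair $(\mathcal X,\mathcal Y)$, so that the rotation family genuinely carries a nontrivial min--max class relative to $\mathcal Y$, and (ii) a careful implementation of the min--max deformation lemma by the curve shortening flow acting on families, tracking the intersection pattern with the fixed geodesics $\gamma_1,\gamma_2$ throughout and invoking Grayson's long--time existence, Angenent's analysis of the flow for immersed curves, and the Sturmian principle to guarantee that near-optimal families and their limits remain of the desired type. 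Everything else --- simplicity of $\gamma$, transversality, the count being exactly $2$ --- then follows from the structural properties of the flow.
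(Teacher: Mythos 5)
Your plan is a direct min--max argument over the sweepout of ``meridian curves'', using the curve shortening flow as the deformation. The paper takes a genuinely different route: it introduces the primitive relative flat knot type $\KK(\bm\gamma)$ of embedded loops intersecting each $\gamma_i$ in two points, and exploits the fact (from Angenent's theory) that the local homology $C_*(\KK(\bm\gamma))$ is independent of the admissible Riemannian metric. It then computes this local homology explicitly on a model sphere of revolution built from two focusing caps joined by a flat cylinder; on that model metric, the only closed geodesics without transverse self-intersections are the equatorial circles and the circle of meridians, and the meridians form a non-degenerate critical circle $Z$ whose local homology is $\Z$ in degrees $\ind(Z_k)$ and $\ind(Z_k)+1$. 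Invariance of local homology plus the Morse inequality (Proposition~\ref{p:Morse_inequality}) then yields a closed geodesic of the desired flat knot type for the original metric, with no sweepout or min--max argument required. What your approach buys, if it could be completed, is a more elementary-feeling variational construction; what the paper's approach buys is that all the hard analysis of the min--max (Palais--Smale, non-degeneration of near-optimal families, the possibility of escaping into the exit set or onto $\gamma_1,\gamma_2$ themselves) is replaced by a single explicit Morse--Bott computation.

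Beyond the difference in method, there is a genuine gap in the topological input of your scheme. You assert that the loop $\{\sigma_\theta\}$ of meridian curves ``is non-contractible in the space of all embedded loops in $S^2$, by the classical description of its homotopy type.'' For oriented unparametrized embedded circles in $S^2$ (which is the setting the paper works in: $\Omega = \Imm(S^1,M)/\Diff_+(S^1)$), Smale's theorem gives a homotopy equivalence to $S^2$ (the space of oriented great circles), which is simply connected; so the meridian loop \emph{is} contractible there, and the a-fortiori step to $\mathcal X$ collapses. (For unoriented circles the space is $\simeq\mathbb{RP}^2$ and the meridian loop does generate $\pi_1\cong\Z/2$, but the paper's $\Omega$ is oriented and the half-turn of unoriented meridians does not return the oriented one to itself.) The correct nontrivial invariant is \emph{relative}: the local homology $H_*(\overline\KK,\overline\KK_\rho)$, which is nontrivial precisely because the boundary strata (self-tangencies, tangencies to $\gamma_i$, small subloops) are quotiented out. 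Your min--max would need to be formulated against this exit set, and the degeneration analysis --- ensuring the limit is not $\gamma_1$, $\gamma_2$, a point, or a configuration with a tangency --- is exactly the content you flag as ``the crux''; the paper's metric-independence trick is what lets it avoid doing that analysis family by family.
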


The idea of the proof of Theorem~\ref{mt:multiplicity} is to employ a specific Riemannian metric due to Donnay, Burns, and Gerber \cite{Donnay:1988ab, Burns:1989aa}, which has $\gamma$ as closed geodesic. The properties of such a Riemannian metric will allow us to establish the non-vanishing of the local homology of infinitely many ``relative'' flat knot types consisting of loops intersecting $\gamma$. Since the local homology of a flat knot type $\KK$ relative to $\gamma$ is independent of the choice of the Riemannian metric having $\gamma$ as closed geodesic, and its non-vanishing implies the existence of at least one closed geodesic of relative flat knot type $\KK$, we infer the existence of the infinitely many closed geodesics asserted by Theorem~\ref{mt:multiplicity}. The argument for Theorem~\ref{mt:sphere} similarly employs a convenient Riemannian metric to establish the non-vanishing of the local homology of the flat knot type consisting of simple loops intersecting each of the two $\gamma_i$'s in two points.

\subsection{Existence of Birkhoff sections}

While Theorems~\ref{mt:multiplicity} and~\ref{mt:sphere} have independent interest, our main motivation was to combine them with a recent work of the second author together with Contreras, Knieper, and Schulz \cite{Contreras:2022ab} in order to establish a full, unconditional, existence result for Birkhoff sections of geodesic flows of closed oriented surfaces. In order to state the result, let us recall the relevant definitions and the state of the art around this problem.

Let $\psi_t:N\to N$ be the flow of a nowhere vanishing vector field $X$ on a closed 3-manifold $N$. A surface of section is a (possibly disconnected) immersed compact surface $\Sigma \looparrowright N$ whose boundary $\partial \Sigma$ consist of periodic orbits of $\psi_t$, while the interior $\interior(\Sigma)$ is embedded in $N \setminus \partial \Sigma$ and transverse to $X$. Such a $\Sigma$ is called a \emph{Birkhoff section} when there exists $T>0$ such that, for each $z\in N$, the orbit segment $\psi_{[0,T]}(z)$ intersects $\Sigma$. By means of a Birkhoff section, the study of the dynamics of $\psi_t$, apart from the finitely many periodic orbits in $\partial\Sigma$, can be reduced to the study of the surface diffeomorphism $\interior(\Sigma)\to\interior(\Sigma)$, $z\mapsto\psi_{\tau(z)}(z)$, where 
\[\tau(z)=\min\big\{t\in(0,T]\ \big|\ \psi_t(z)\in\Sigma \big\}.\]
This reduction is highly desirable, as there are powerful tools that allow to study the dynamics of diffeomorphisms specifically in dimension two (e.g.~Poincaré-Birkhoff fixed point theorem, Brouwer translation theorem, Le Calvez transverse foliation theory, etc.).

The notion of Birkhoff section was first introduced by Poincaré in his study of the circular planar restricted three-body problem, but owes its name to the seminal work of Birkhoff \cite{Birkhoff:1917aa}, who established their existence for all geodesic flows of closed orientable surfaces with nowhere vanishing curvature. Over half a century later, a result of Fried \cite{Fried:1983aa} confirmed the existence of Birkhoff sections for all transitive Anosov flows of closed 3-manifolds. In one of the most famous articles from symplectic dynamics \cite{Hofer:1998aa}, Hofer, Wysocky, and Zehnder proved that the Reeb flow of any 3-dimensional convex contact sphere admits a Birkhoff section that is an embedded disk. Since this work, the quest for Birkhoff sections of Reeb flows has been a central theme in symplectic dynamics, see e.g.~\cite{Salomao:2018aa, Hryniewicz:2023aa}. Recently, the existence of Birkhoff sections for the Reeb vector field of a $C^\infty$-generic contact form of any closed 3-manifold has been confirmed independently by the first author and Contreras \cite{Contreras:2022aa}, and by Colin, Dehornoy, Hryniewicz, and Rechtman \cite{Colin:2024aa}. Indeed, the existence of Birkhoff sections was proved for non-degenerate contact forms satisfying  any of the following assumptions, which hold for a $C^\infty$-generic contact form: the transversality of the stable and unstable manifolds of the hyperbolic closed orbits \cite{Contreras:2022aa}, or the equidistribution of the closed orbits \cite{Colin:2024aa}. These results extended in different directions  previous work of Colin, Dehornoy, and Rechtman \cite{Colin:2023aa}, which in particular provided, for any non-degenerate Reeb flows of any closed 3 manifold, a surface of section $\Sigma$ that is almost a Birkhoff section, except for some escaping half-orbits converging to hyperbolic boundary components of $\Sigma$. This result, in turn, relies on Hutchings' embedded contact homology \cite{Hutchings:2014vp}, a powerful machinery based on holomorphic curves and Seiberg-Witten theory, which provides plenty of surfaces of section almost filling the whole ambient 3-manifold. Beyond the above generic conditions, the existence of a Birkhoff section for the Reeb flow of \emph{any} contact form on any closed 3-manifold remains an open problem.

For the special case of geodesic flows of closed Riemannian surfaces, the non-degeneracy and the transversality of the stable and unstable manifolds of the hyperbolic closed geodesics hold for a $C^\infty$ generic Riemannian metric, and so does the existence of Birkhoff sections according to the above mentioned result in \cite{Contreras:2022aa}. In a recent work of the first author together with Contreras, Knieper, and Schulz \cite{Contreras:2022ab}, this latter result was re-obtained without holomorphic curves techniques, employing instead the curve shortening flow. Actually, the existence result obtained is slightly stronger: the non-degeneracy is only required for the contractible simple closed geodesics without conjugate points. Our third main result removes completely any generic requirement.

\begin{MainThm}\label{mt:Birkhoff_sections}
The geodesic flow of any closed orientable Riemannian surface admits a Birkhoff section.
\end{MainThm}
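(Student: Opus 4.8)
The strategy is to reduce the statement to the conditional Birkhoff section theorem of~\cite{Contreras:2022ab} and to remove its hypothesis with the help of the forced existence Theorems~\ref{mt:multiplicity} and~\ref{mt:sphere}. Recall that~\cite{Contreras:2022ab} produces a Birkhoff section for the geodesic flow of a closed orientable Riemannian surface $(M,g)$ as soon as every contractible simple closed geodesic of $(M,g)$ without conjugate points is non-degenerate; its proof uses the curve shortening flow to build a surface of section $\Sigma$ whose boundary is a finite union of closed geodesics, and whose only failure to be a genuine Birkhoff section comes from \emph{escaping} half-orbits, each confined to a single connected component of the complement of the binding and accumulating, in positive or negative time, onto a contractible simple closed geodesic without conjugate points. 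I may therefore assume that the hypothesis of~\cite{Contreras:2022ab} fails; in particular $(M,g)$ has a contractible simple closed geodesic (otherwise the hypothesis holds vacuously), and each escaping half-orbit of $\Sigma$ accumulates onto one. Since a contractible simple closed curve on a closed orientable surface bounds an embedded closed disk, each such limit geodesic $\gamma$ bounds a disk $D$ with $M\setminus\gamma=\interior(D)\sqcup M_0$, and the escaping half-orbits limiting onto $\gamma$ eventually enter an arbitrarily thin tubular neighbourhood of $\gamma$, winding around parallel to $\gamma$.

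For a surface $M$ of positive genus the plan is to enlarge the binding by finitely many of the closed geodesics provided by Theorem~\ref{mt:multiplicity}. The crucial observation is that a single closed geodesic $\zeta$ crossing $\gamma$ transversally already blocks every half-orbit winding around parallel to $\gamma$ in a thin tube $A$: since $\zeta$ meets $A$ in an arc joining the two boundary circles of $A$, and such an arc separates $A$ into a disk, any loop winding once around $A$ must cross $\zeta$, hence meet the part of $\Sigma$ lying over $\zeta$. Applying Theorem~\ref{mt:multiplicity} in an arbitrary primitive free homotopy class produces infinitely many closed geodesics crossing $\gamma$, from which I select finitely many: one for each contractible simple closed geodesic occurring as a limit of an escaping half-orbit, and, for a $1$-parameter family of such limit geodesics (as in a flat cylinder region), a single geodesic crossing the whole family. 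Thickening these geodesics into Birkhoff annuli, flowing the annuli by the curve shortening flow --- which is intersection-number non-increasing, preserves the flat link type, and keeps the interior transverse to the geodesic vector field --- and attaching them to $\Sigma$ eliminates all escaping half-orbits, so that every orbit meets the enlarged surface of section in uniformly bounded time: this is the sought Birkhoff section.

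For $M=S^2$, where Theorem~\ref{mt:multiplicity} does not apply, I would instead produce the required crossing geodesics from the Lusternik-Schnirelmann theorem together with Theorem~\ref{mt:sphere}. Any limit geodesic $\gamma$ of an escaping half-orbit is a simple closed geodesic bounding two complementary disks; if no closed geodesic crosses $\gamma$, then the three simple closed geodesics supplied by Lusternik-Schnirelmann are disjoint from $\gamma$, and Theorem~\ref{mt:sphere} applied to $\gamma$ and one of them yields a simple closed geodesic $\rho$ meeting $\gamma$ in exactly two points, which I adjoin to the binding; as before, one transverse crossing suffices to block the half-orbits winding around $\gamma$, and when the limit geodesics fill a cylinder family bounded by two disjoint waist geodesics $\sigma_1,\sigma_2$, the geodesic $\rho$ given by Theorem~\ref{mt:sphere} applied to $\sigma_1$ and $\sigma_2$ crosses every waist geodesic of the family at once. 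Adjoining such geodesics to the binding one after another removes all escaping half-orbits and yields a Birkhoff section.

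The step I expect to be the main obstacle is precisely the passage from the surface of section $\Sigma$ of~\cite{Contreras:2022ab} to a genuine Birkhoff section, i.e.\ the elimination of escaping half-orbits with unbounded return time; this is where the \emph{infinitude} of closed geodesics in Theorems~\ref{mt:multiplicity} and~\ref{mt:sphere} is indispensable, since it is what lets one find, among them, finitely many whose Birkhoff annuli simultaneously block all escaping orbits. Two further points require care: keeping the whole construction inside the curve shortening framework, so that the enlarged binding remains an admissible flat link of closed geodesics and the curve shortening flow of the new Birkhoff annuli creates neither self-tangencies nor wrong-way crossings with the boundary geodesics; and controlling the set of limit geodesics of the escaping half-orbits --- showing it is governed by finitely many closed geodesics or $1$-parameter families thereof --- so that only finitely many applications of Theorems~\ref{mt:multiplicity} and~\ref{mt:sphere} are needed.
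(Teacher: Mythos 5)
Your proposal tracks the heuristic narrative that the paper itself sketches in its introduction, but the actual proof is organized quite differently and is cleaner precisely because it sidesteps the issues you flag at the end. Instead of starting from the almost-Birkhoff surface of section of~\cite{Contreras:2022ab} and patching away its escaping half-orbits, the paper first proves Theorem~\ref{t:complete_system}: there is a \emph{finite} collection of closed geodesics $\gamma_1,\dots,\gamma_n$ such that $U:=M\setminus(\gamma_1\cup\cdots\cup\gamma_n)$ has empty trapped set $\trap(SU)=\varnothing$ and each component of $U$ is a convex geodesic polygon. Granting this, one applies Fried surgery once to the union of the Birkhoff annuli of the $\gamma_i$; the convexity of the complementary polygons yields the uniform upper bound on the hitting time (which you asserted but did not establish), so the result is a genuine Birkhoff section. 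The only ingredient imported from~\cite{Contreras:2022ab} is their trapping criterion (Theorem~\ref{t:non_trapping} here): a convex geodesic polygon \emph{without} simple closed geodesics has empty trapped set. The conditional Birkhoff section theorem of~\cite{Contreras:2022ab}, and in particular the internal structure of its escaping half-orbits, is never invoked.

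The construction of the collection is explicit: take the standard non-contractible collection $A$ of Lemma~\ref{l:std_collection} (for $S^2$, two transversally intersecting simple closed geodesics via Lemma~\ref{l:std_collection_S2}), adjoin a \emph{maximal} finite family $Z$ of pairwise disjoint innermost contractible simple closed geodesics $\zeta_1,\dots,\zeta_h$ contained in $M\setminus A$, and for each $\zeta_i$ a closed geodesic $\beta_i$ crossing both $\zeta_i$ and $A$, supplied by Theorem~\ref{mt:multiplicity} (positive genus) or Theorem~\ref{mt:sphere} (sphere). Maximality of $Z$ together with path-connectedness of $A\cup Z\cup B$ forces the complement to consist of convex geodesic polygons containing no simple closed geodesics.

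Your proposal has genuine gaps that this different route avoids. First, finiteness: you rightly flag it as the main obstacle, but leave it unresolved. The paper handles it through a simple Gauss--Bonnet estimate~\eqref{e:Gauss_Bonnet_area_bound} --- any contractible simple closed geodesic bounds a disk of area at least $2\pi/\max(R_g)$ --- which feeds into Lemma~\ref{l:innermost} and guarantees that only finitely many pairwise disjoint innermost contractible simple closed geodesics can appear. Without such a quantitative input, your iterative patching has no reason to terminate, and your ad hoc treatment of one-parameter (flat cylinder) families of limit geodesics is never made rigorous. Second, you rely on a structural description of the surface of section from~\cite{Contreras:2022ab} (that escaping half-orbits accumulate onto contractible simple closed geodesics without conjugate points) which is not part of the \emph{statement} of that result; using it would require reopening and re-adapting its proof. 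Third, the phrase ``flowing the annuli by the curve shortening flow'' is a category error: Birkhoff annuli live in $SM$, not in $\Imm(S^1,M)$, and the curve shortening flow plays no role in the Birkhoff section construction itself --- it enters only inside the proofs of Theorems~\ref{mt:multiplicity} and~\ref{mt:sphere} that supply the crossing geodesics. Correspondingly, the concern about self-tangencies or wrong-way crossings of the enlarged binding is a non-issue in the actual argument, where the binding consists of honest closed geodesics and is assembled purely topologically.
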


The scheme of the proof is the following. Any closed geodesic produces two immersed surfaces of section of annulus type, the so-called Birkhoff annuli, consisting of all unit tangent vectors based at any point of the closed geodesic and pointing on one of the two sides of it. A surgery procedure due to Fried \cite{Fried:1983aa} allows to glue together all Birkhoff annuli of a suitable collection of non-contractible simple closed geodesics, producing a surface of section $\Sigma$. A contractible simple closed geodesic without conjugate points $\gamma$ whose unit-speed lifts $\pm\dot\gamma$ do not intersect $\Sigma$ is an obstruction for $\Sigma$ to be a Birkhoff section. Even after adding to $\Sigma$ the Birkhoff annuli $A^+\cup A^-$ of $\gamma$, there may still be half-orbits of the geodesic flow converging to $\pm\dot\gamma$ without intersecting $\Sigma':=\Sigma\cup A^+\cup A^-$. Theorems~\ref{mt:multiplicity} and~\ref{mt:sphere} allow us to always detect other closed geodesics intersecting $\gamma$ transversely, and after gluing their Birkhoff annuli to $\Sigma'$ we obtain a new surface of section $\Sigma''$ that does not have $\pm\dot\gamma$ as $\omega$-limit of half-orbits not intersecting $\Sigma''$. As it turns out, after repeating this procedure for finitely many contractible simple closed geodesics without conjugate points, we end up with a Birkhoff section.

\begin{Remark}
The argument in our proof does not allow us to control the genus of the Birkhoff section $\Sigma\looparrowright SM$ provided by Theorem~\ref{mt:Birkhoff_sections} for the geodesic flow of a closed connected orientable Riemannian surface $(M,g)$. Nevertheless, it allows us to bound from above the number of connected components $b$ of the boundary $\partial\Sigma$ as
\[ b\leq 8\max\big\{1,\mathrm{genus}(M)\big\}+\frac{4}{\pi}\,\area(M,g)\max(R_g), \]
where $R_g$ denotes the Gaussian curvature, see Remark~\ref{r:bound_boundary_components}.
It is worthwhile to mention a recent work of Kim, Kim, and van Koert \cite{Kim:2022aa},  which exhibits examples of Reeb flows on homology 3-spheres such that the minimal number of boundary components of their Birkhoff sections is arbitrarily large.
\end{Remark}

\subsection{Organization of the paper}
In Section~\ref{s:preliminaries} we recall the needed background on the curve shortening flow, and on the classical variational setting for the closed geodesics problem. 
In Section~\ref{s:Morse}, we develop the analogue of the classical results from Morse theory of closed geodesics within a primitive relative flat knot type.
In Section~\ref{s:flat_links}, we first prove the simpler, special case of Theorem~\ref{mt:links} for primitive flat knot types, actually under slightly weaker assumptions (Theorem~\ref{t:knots}). Next, after suitable preliminaries, we prove a slightly stronger version of Theorem~\ref{mt:links},  replacing the non-degeneracy of the original flat link of closed geodesics with a homological visibility assumption  (Theorem~\ref{t:links}). In Section~\ref{s:forcing} we prove Theorem~\ref{mt:multiplicity} and~\ref{mt:sphere}, and in the final Section~\ref{s:Birkhoff_sections} we prove Theorem~\ref{mt:Birkhoff_sections}.

\subsection{Acknowledgments}
The first author thanks Matthias Meiwes and Abror
Pirnapasov for  many illuminating and inspiring conversations about forcing in dynamics, and Yuri Lima for a conversation about the first return time of Birkhoff sections.

\section{Preliminaries}\label{s:preliminaries}

\subsection{Curve shortening flow}
\label{ss:curve_shortening}

Let $(M,g)$ be a closed Riemannian surface.
The curve shortening flow $\phi^t=\phi^t_g$, for $t\geq0$, is a continuous semi-flow on $\Imm(S^1,M)$ defined as follows: its orbits $\gamma_t:=\phi^t(\gamma_0)$ are solutions of the PDE
\[
\partial_t\gamma_t=\kappa_{\gamma_t}n_{\gamma_t},
\]
for all $t\in[0,t_\gamma)$. Here, $t_\gamma=t_{g,\gamma}$ is the  extended real number giving the maximal interval of definition, $n_{\gamma_t}:[0,1]\to TM$ is any vector field along $\gamma_t$ that is orthonormal to $\dot\gamma_t$, and $\kappa_{\gamma_t}:S^1\to\R$ denotes the signed geodesic curvature of $\gamma_t$ with respect to $n_{\gamma_t}$, i.e.
\[
\nabla\!_s\tfrac{\dot\gamma_t(s)}{\|\dot\gamma_t(s)\|_g}
=
\kappa_{\gamma_t}(s)\,\|\dot\gamma_t(s)\|_g\,n_{\gamma_t}(s),\]
where $\nabla_s$ denotes the Levi-Civita covariant derivative. Notice that we may have $n_{\gamma_t}(0)=-n_{\gamma_t}(1)$ if $M$ is not orientable, but nevertheless the product $\kappa_{\gamma_t} n_{\gamma_t}$ is independent of the choice of $n_{\gamma_t}$.
We recall that $\Imm(S^1,M)$ is endowed with the $C^3$ topology, as we specified in the introduction.
The map $(t,\gamma)\mapsto\phi^t(\gamma)$ is continuous on its domain of definition, which is an open neighborhood of $\{0\}\times\Imm(S^1,M)$ in $[0,\infty)\times\Imm(S^1,M)$.
The curve shortening flow is $\Diff(S^1)$-equivariant, meaning that 
\begin{align*}
\phi^t(\gamma\circ\theta)
=
\phi^t(\gamma)\circ\theta,
\qquad\forall\gamma\in\Imm(S^1,M),\ \theta\in\Diff(S^1),\ t\in[0,t_\gamma).
\end{align*}
In particular, it also induces a continuous semi-flow on the quotient 
\[\Omega=\frac{\Imm(S^1,M)}{\Diff_+(S^1)}\] that we still denote by $\phi^t$ (we will mainly consider $\phi^t$ defined on $\Omega$, except if we work with an explicit parametrization of the initial loop $\gamma$). 

\begin{Remark}
The fact that the loops in $\Omega$ are oriented is not particularly relevant for us, and in most sections of the article we could have equivalently worked with the space of unoriented loops $\Imm(S^1,M)/\Diff(S^1)$. The only reason to consider the space of oriented loops $\Omega$ is that it makes the local homology computation \eqref{e:loc_hom_circle} of Lemma~\ref{l:non_deg_visible} simpler.
\end{Remark}

The curve shortening flow is the $L^2$ anti-gradient flow of the length functional $L=L_g:\Omega\to(0,\infty)$. More specifically, it satisfies
\begin{align}
\label{e:derivative_length}
\frac{d}{dt} L(\phi^t(\gamma))
=
-\int_{S^1} \kappa_\gamma(s)^2 \|\dot\gamma(s)\|_g\,ds
\leq0,
\end{align}
and the equality holds if and only if $\gamma$ is a closed geodesic, which is the case if and only if $\gamma=\phi^t(\gamma)$ for all $t\geq0$.

According to a theorem of Grayson \cite{Grayson:1989aa}, if the orbit $\phi^t(\gamma)$ of an embedded loop is only defined on a bounded interval $[0,t_\gamma)\subsetneq[0,\infty)$, then $\phi^t(\gamma)$ shrinks to a point as $t\to t_\gamma$; if instead $[0,t_\gamma)=[0,\infty)$, then $L(\phi^t(\gamma))\to\ell>0$,  the curvature $\kappa_{\phi^t(\gamma)}$ converges to zero in the $C^\infty$ topology as $t\to\infty$, and in particular there exists a subsequence $t_n\to\infty$ such that $\phi^{t_n}(\gamma)$ converges to a closed geodesic. This is not necessarily the case if $\gamma$ is not embedded, as it may also happen that $L(\phi^t(\gamma))\to\ell>0$ and $\phi^t(\gamma)$ develops a singularity as $t\to t_\gamma$. Nevertheless, the forthcoming lemma due to Angenent allows to control this behavior.

Let $\gamma:S^1\looparrowright M$ be an immersed loop such that the restriction $\gamma|_{[s_1,s_2]}$ is an embedded subloop, i.e.~$\gamma(s_1)=\gamma(s_2)$ and $\gamma|_{[s_1,s_2)}$ is injective. We say that $\gamma|_{[s_1,s_2]}$ is a \emph{$\rho$-subloop} when it bounds a disk $D\subset M$ of area less than or equal to $\rho$, and for some $\delta>0$ the curves $\gamma|_{(s_1-\delta,s_1)}$ and $\gamma|_{(s_2,s_2+\delta)}$ do not enter $D$ (see Figure~\ref{f:subloop}). With the same notation of the introduction, we denote by $\Delta=\Delta_1$ the closed subspace of $\Omega$ consisting of those loops having a self-tangency.

\begin{figure}
\begin{footnotesize}
\includegraphics{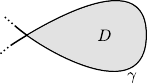}
\end{footnotesize}
\caption{A $\rho$-subloop of $\gamma$, with a filling  $D$ of area less than or equal to $\rho$.}
\label{f:subloop}
\end{figure}

\begin{LemmaB}[\cite{Angenent:2005aa}, Lemmas 5.3-4]\label{l:subloops}$ $
\begin{itemize}\setlength{\itemsep}{5pt}
 \item[(i)] For each $\gamma\in\Omega$ such that $L(\phi^t(\gamma))\to\ell>0$ as $t\to t_{\gamma}$, either $\phi^{t_n}(\gamma)$ converges to a closed geodesic for some sequence $t_n\to t_\gamma$, or $\phi^t(\gamma)$ develops a singularity as $t\to t_\gamma$. In this latter case, for each  $\rho>0$ and for each $t$ sufficiently close to $t_\gamma$ the loop $\phi^t(\gamma)$ possesses a $\rho$-subloop.

 \item[(ii)] There exists $\rho=\rho_g>0$ with the following property. Let $\gamma\in\Omega$ and $\tau>0$ be such that $\phi^t(\gamma)\in\Omega\setminus\Delta$ for all $t\in[0,\tau]$, and assume that $\gamma|_{[a_0,b_0]}$ is a $\rho$-subloop $($for some parametrization on $\gamma$$)$. Then, $[a_0,b_0]$ can be extended to a continuous family of intervals $[a_t,b_t]$ such that $\phi^t(\gamma)|_{[a_t,b_t]}$ is a $(\rho-\tfrac\pi2 t)$-subloop for all $t\in[0,\tau]$.
 \hfill\qed
\end{itemize}
\end{LemmaB}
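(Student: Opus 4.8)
The plan is to prove the two items separately, since they draw on different ingredients: item (i) on the general theory of singularities of the curve shortening flow, and item (ii) on an elementary computation combining the Gauss--Bonnet theorem with the evolution of the area enclosed by the subloop.

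For (i), I would first recall the standard fact that the only obstruction to extending a curve shortening orbit past $t_\gamma$ is the blow-up of the geodesic curvature, so that if $\phi^t(\gamma)$ does \emph{not} develop a singularity as $t\to t_\gamma$ then necessarily $t_\gamma=\infty$ and $\phi^t(\gamma)$ enjoys uniform $C^\infty$ bounds. In that case I would integrate \eqref{e:derivative_length} to get $\int_0^\infty\!\int_{S^1}\kappa_{\phi^t(\gamma)}^2\,\|\partial_s\phi^t(\gamma)\|_g\,ds\,dt=L(\gamma)-\ell<\infty$, choose a sequence $t_n\to\infty$ along which the inner integral tends to $0$, and extract via Arzel\`a--Ascoli a subsequence of $\phi^{t_n}(\gamma)$ converging in $\Omega$ to a loop of length $\ell>0$ with vanishing geodesic curvature, i.e.\ a closed geodesic. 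In the complementary case, where a curvature singularity forms while the length stays bounded below by $\ell>0$, I would run the singularity analysis of the immersed curve shortening flow as in \cite{Angenent:2005aa} --- Huisken's monotonicity formula, Hamilton's Harnack inequality, and the Abresch--Langer classification of closed self-shrinkers --- to show that a parabolic rescaling of $\phi^t(\gamma)$ around a singular point subconverges to a nontrivial self-shrinking curve; since the whole curve is not collapsing (its total length is bounded below), the collapsing piece is a small embedded loop, and undoing the rescaling produces, for every $\rho>0$ and every $t$ close enough to $t_\gamma$, an embedded subloop of $\phi^t(\gamma)$ bounding a disk of area $\le\rho$, with the adjacent arcs staying uniformly away from the singular point and hence outside that disk.

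For (ii), I would fix $\rho_g>0$ so small that $\rho_g\max_M|R_g|\le\tfrac\pi2$, where $R_g$ is the Gaussian curvature, and argue as follows. The self-intersection point $\gamma(a_0)=\gamma(b_0)$ of the $\rho_g$-subloop is transverse because $\gamma\notin\Delta$; since the curve shortening flow never increases the number of self-intersections and no self-tangency occurs on $[0,\tau]$, this transverse crossing persists and moves continuously, which provides the continuous family $[a_t,b_t]$ with $\phi^t(\gamma)|_{[a_t,b_t)}$ injective and bounding a disk $D_t$, and also prevents the adjacent arcs from entering $D_t$ on $[0,\tau]$ (that would require a prior self-tangency with $\partial D_t$). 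Setting $A(t):=\area(D_t)$, the first variation of area under the flow gives $A'(t)=-\int_{\phi^t(\gamma)|_{[a_t,b_t]}}\kappa_{\phi^t(\gamma)}\,d\ell$ for the orientation of $D_t$ making $\kappa$ the curvature toward the interior, and the Gauss--Bonnet theorem applied to $D_t$ with its single corner of exterior angle $\theta_t<\pi$ yields
\[\int_{\phi^t(\gamma)|_{[a_t,b_t]}}\kappa_{\phi^t(\gamma)}\,d\ell=2\pi-\theta_t-\int_{D_t}R_g\,dA>\pi-\rho_g\max_M|R_g|\ge\tfrac\pi2\]
whenever $A(t)\le\rho_g$. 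Hence $A'(t)<-\tfrac\pi2$ as long as $A(t)\le\rho_g$, and since $A(0)\le\rho_g$ a bootstrap keeps $A(t)\le\rho_g$ and gives $A(t)\le\rho_g-\tfrac\pi2 t$ on $[0,\tau]$; combined with the persistence of injectivity and of the separation of the adjacent arcs, this is exactly the statement that $\phi^t(\gamma)|_{[a_t,b_t]}$ is a $(\rho_g-\tfrac\pi2 t)$-subloop.

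The main obstacle is the singular case of (i): whereas (ii) is short and essentially self-contained once the first-variation-of-area formula is granted, the assertion that a curvature singularity forming at bounded length must be a shrinking embedded loop relies genuinely on the non-elementary classification of blow-up limits of the planar curve shortening flow.
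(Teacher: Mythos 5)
The paper does not actually prove this lemma: it is imported verbatim from Angenent (Lemmas 5.3--5.4 of \cite{Angenent:2005aa}), which is why it is stated with a \emph{qed} and no argument. Your sketch essentially reconstructs Angenent's own proofs, so in that sense it follows the same route as the source the paper relies on. For item (ii) your argument is the right one and is complete in outline: absence of self-tangencies on $[0,\tau]$ keeps the corner a transverse crossing, so the pair $[a_t,b_t]$, the disk $D_t$, and the local exteriority of the adjacent arcs persist; the first variation of enclosed area under $\partial_t\gamma=\kappa n$ gives $A'(t)=-\int\kappa\,ds$ (the corner is a single point and contributes nothing), and Gauss--Bonnet on the disk with one corner of exterior angle in $(-\pi,\pi)$ yields $\int\kappa\,ds>\pi-\int_{D_t}R_g\,dA$, so choosing $\rho_g\max_M|R_g|\le\tfrac\pi2$ gives $A'(t)\le-\tfrac\pi2$ while $A(t)\le\rho_g$, hence $A(t)\le\rho-\tfrac\pi2 t$.

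For item (i), the dichotomy and the non-singular case (finite total $\int\|\kappa\|_{L^2}^2\,dt$, a sequence $t_n$ with $\|\kappa(t_n)\|_{L^2}\to0$, compactness, limit of positive length $\ell$ with vanishing curvature) are standard and fine. Two imprecisions in the singular case are worth flagging, though neither is fatal since you are explicitly deferring to the blow-up literature. First, the rescaled limit is \emph{not} always a self-shrinker: type~II singularities (which do occur for immersed curves, e.g.\ the collapsing loop of a cardioid-type curve) rescale to the Grim Reaper, an eternal but not self-similar solution; the correct statement, and the one Angenent actually proves, is that in either case a small loop of vanishing enclosed area is collapsing near the singular point. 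Second, your justification that the adjacent arcs lie outside the small disk --- ``staying uniformly away from the singular point'' --- cannot be right as phrased, since those arcs emanate from the crossing point of the subloop, which converges to the singular point; what is needed (and what the blow-up picture provides) is only the local condition of the definition, namely that for some $\delta>0$ the continuing arcs exit the disk near the corner, as they do for the shrinking-circle and Grim-Reaper models.
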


\subsection{Primitive flat knot types} \label{ss:primitive}

Let $\bm\zeta=(\zeta_1,...,\zeta_n)$ be either the empty link (when $n=0$), or a flat link of pairwise geometrically distinct closed geodesics (namely, $\zeta_i$ and $\zeta_j$ are transverse for all $i\neq j$). We denote by $\Delta(\bm\zeta)$ the closed subset of $\Omega$ consisting of those loops $\gamma$ having a self-tangency or a tangency with some component of $\bm\zeta$ (Figure~\ref{f:tangency}). With the notation of Section~\ref{ss:C0_stability}, we have
\begin{align*}
 \Delta(\bm\zeta) 
 = 
 \big\{
 \gamma\in\Omega
 \ \big|\ 
 (\gamma,\bm\zeta)\in\Delta_{n+1}
 \big\}.
\end{align*}
A path-connected component $\KK$ of $\Omega\setminus\Delta(\bm\zeta)$ is called a \emph{flat knot type relative} to $\bm\zeta$ (notice that, if $\bm\zeta$ is empty, this notion reduces to the one of ordinary flat knot type). We denote by $\overline{\KK}$ and $\partial\KK$ its closure and its boundary in $\Omega$ respectively. The relative flat knot type $\KK$ is called \emph{primitive} when $\partial\KK$ does not contain non-primitive loops nor components of $\bm\zeta$.

Throughout this section, we fix a primitive flat knot type $\KK$ relative to $\bm\zeta$. We recall the main properties of the curve shortening flow with respect to $\KK$, established by Angenent.

\begin{LemmaB}[\cite{Angenent:2005aa}, Lemmas 3.3 and 6.2]\label{l:Angenent_primitive}
$ $
\begin{itemize}
\setlength{\itemsep}{5pt}

\item[(i)] If $\gamma\in\KK$ and $\phi^{t_0}(\gamma)\not\in\overline\KK$ for some $t_0\in(0,t_\gamma)$, then $\phi^t(\gamma)\not\in\overline\KK$ for all $t\in(t_0,t_\gamma)$ as well.

\item[(ii)] If $\gamma,\phi^{t_0}(\gamma)\in\overline\KK$ for some $t_0\in(0,t_\gamma)$, then $\phi^t(\gamma)\in\KK$ for all $t\in(0,t_0)$.\hfill\qed

\end{itemize}
\end{LemmaB}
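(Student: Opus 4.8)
The statement to prove is Lemma~\ref{l:Angenent_primitive}, which asserts two properties of the curve shortening semi-flow $\phi^t$ with respect to a primitive flat knot type $\KK$ relative to a flat link of closed geodesics $\bm\zeta$. Part (i) says that once an orbit leaves $\overline{\KK}$, it never returns; part (ii) says that if an orbit lies in $\overline{\KK}$ at two times $0$ and $t_0$, then it lies in the \emph{open} stratum $\KK$ at all intermediate times.

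\textbf{Plan of proof.} The conceptual heart of both statements is the fundamental monotonicity property of the curve shortening flow: it does not increase the number of self-intersections of a loop, and more generally does not increase the number of intersections between two evolving loops (Angenent's intersection-comparison principle, a Sturm-type theorem for the linearized parabolic equation). Since $\bm\zeta$ consists of closed geodesics, each component $\zeta_j$ is a stationary solution of the flow, so the number of transverse intersection points $\#(\phi^t(\gamma)\cap\zeta_j)$ and the number of self-intersections $\#\mathrm{self}(\phi^t(\gamma))$ are non-increasing in $t$, and they can only drop when a tangency occurs, i.e.\ when $\phi^t(\gamma)\in\Delta(\bm\zeta)$. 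I would first recall this principle precisely (citing Angenent), noting that along a curve-shortening orbit the set of times at which $\phi^t(\gamma)\in\Delta(\bm\zeta)$ is discrete (isolated tangency instants), away from which the total intersection/self-intersection count is locally constant; and at each such instant the count strictly decreases. This is exactly the statement that an orbit crosses the discriminant $\Delta(\bm\zeta)$ only ``transversally and downward.''

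\textbf{Deducing (i).} Suppose $\gamma\in\KK$ and $\phi^{t_0}(\gamma)\notin\overline{\KK}$. Since $\overline{\KK}$ is the closure of a path-component of $\Omega\setminus\Delta(\bm\zeta)$, and the open strata are separated by the discriminant, the orbit $t\mapsto\phi^t(\gamma)$ passes from $\KK$ to a different component of $\Omega\setminus\Delta(\bm\zeta)$, so there is a first exit time $t_1\in(0,t_0]$ with $\phi^{t_1}(\gamma)\in\Delta(\bm\zeta)$ and the total count (self-intersections plus intersections with $\bm\zeta$) strictly drops at $t_1$; for $t$ slightly beyond $t_1$ the orbit is in $\Omega\setminus\Delta(\bm\zeta)$ with strictly smaller count than loops in $\KK$ have. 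To get back into $\overline{\KK}$ at some later time $t>t_0$, the orbit would have to approach a loop with the count characteristic of $\KK$ (since flat knot type is locally constant on $\Omega\setminus\Delta(\bm\zeta)$ and the count is an invariant of the relative flat knot type). But the count is monotone non-increasing along the whole orbit, and it has already strictly decreased below that value; it can never come back up. Hence $\phi^t(\gamma)\notin\overline{\KK}$ for all $t\in(t_1,t_\gamma)\supseteq(t_0,t_\gamma)$. I should be slightly careful that ``count'' is really an invariant that distinguishes $\overline{\KK}$ from the neighboring stratum it jumps into — this uses primitivity of $\KK$ (so that no degenerate/iterated loops appear on $\partial\KK$ to confuse the bookkeeping) and Angenent's identification of relative flat knot types with the combinatorial data recorded by intersection counts plus the cyclic word of crossings.

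\textbf{Deducing (ii).} Suppose $\gamma,\phi^{t_0}(\gamma)\in\overline{\KK}$. If some intermediate $\phi^{t_1}(\gamma)$, $t_1\in(0,t_0)$, were \emph{not} in $\KK$, then either $\phi^{t_1}(\gamma)\in\Delta(\bm\zeta)$, or $\phi^{t_1}(\gamma)$ lies in a different open stratum. In the latter case, applying part (i) (already proven) with the obvious reindexing: the orbit left $\overline{\KK}$ strictly before $t_1$, hence stays out of $\overline{\KK}$ for all later times, contradicting $\phi^{t_0}(\gamma)\in\overline{\KK}$. In the former case $\phi^{t_1}(\gamma)\in\Delta(\bm\zeta)$ is a tangency instant, at which the count strictly drops; since $\gamma\in\overline{\KK}$ has the $\KK$-count and the count is monotone, $\phi^{t}(\gamma)$ for $t$ just above $t_1$ has strictly smaller count, so can never return to $\overline{\KK}$, again contradicting $\phi^{t_0}(\gamma)\in\overline{\KK}$. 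Therefore $\phi^{t}(\gamma)\in\KK$ for all $t\in(0,t_0)$; note the conclusion lands in the \emph{open} $\KK$, because we have ruled out hitting $\Delta(\bm\zeta)$ on the open interval. One subtlety: at $t=0$ itself the curve may be on $\partial\KK$, so the strict-drop argument must be applied at $t_1>0$ where the orbit has already regularized (curve shortening instantly smooths a loop off the discriminant, or the monotonicity is applied on $[\epsilon,t_0]$ and one lets $\epsilon\downarrow 0$).

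\textbf{Main obstacle.} The delicate point is not the logical skeleton above but making rigorous the claim that crossing $\Delta(\bm\zeta)$ always \emph{strictly decreases} a quantity that is constant on each relative-flat-knot-type stratum. Self-tangencies and tangencies with $\zeta_j$ can in principle be of higher order, and one must use the Sturmian/parabolic-regularity structure of the linearized flow to show that along a genuine curve-shortening orbit such tangencies are generic enough that the intersection number genuinely drops (this is precisely the content of Angenent's Lemmas being cited, so in the write-up I would invoke them rather than reprove them). Equally, one must know that flat knot type relative to $\bm\zeta$ is detected by this count together with the combinatorics of the Gauss-type word — again established in Angenent's framework and used essentially through the primitivity hypothesis. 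Granting those inputs, everything reduces to the monotonicity-plus-strict-drop bookkeeping sketched above.

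\begin{Remark}
In the actual write-up I would present this as: (a) state Angenent's intersection-comparison principle; (b) observe that $t\mapsto\#\mathrm{self}(\phi^t(\gamma))+\sum_j\#(\phi^t(\gamma)\cap\zeta_j)$ is non-increasing and strictly decreasing exactly across $\Delta(\bm\zeta)$-instants; (c) observe this quantity is locally constant on $\Omega\setminus\Delta(\bm\zeta)$ and, restricted to $\overline{\KK}$, equal to the value attained on $\KK$; (d) run the two short contradiction arguments for (i) and (ii).
\end{Remark}
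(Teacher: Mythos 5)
The paper quotes this lemma from Angenent with a concluding $\qed$ and no proof, so what you are reconstructing is Angenent's argument rather than any argument in this paper. Your identification of the main tool --- the parabolic Sturm / intersection--comparison principle --- is correct, and so is your reading of the primitivity hypothesis (it keeps iterated loops and the components $\zeta_j$ off $\partial\KK$, so that every discriminant crossing is a genuine tangency rather than a degenerate configuration where the count cannot be made to drop).

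There is, however, a concrete flaw in step (c) of your concluding Remark, which propagates into the contradiction argument for (i). You assert that the quantity $\#\mathrm{self}(\cdot)+\sum_j\#(\cdot\cap\zeta_j)$, ``restricted to $\overline{\KK}$, [is] equal to the value attained on $\KK$.'' This is false: a loop $\alpha\in\partial\KK\subset\Delta(\bm\zeta)$ has a tangency, and when that tangency is the coalescence of a transverse pair present in nearby loops of $\KK$, the transverse count of $\alpha$ is strictly smaller than the value $n_\KK$ attained on $\KK$. Consequently, your monotonicity argument for (i) only shows the orbit cannot return to the \emph{open} stratum $\KK$; it does not exclude the orbit touching $\partial\KK$ at a later isolated instant, and since $\partial\KK\subset\overline\KK$ that is exactly the scenario that (i) has to forbid. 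To close the gap one should instead track the intersection count with multiplicities (an order-$k$ tangency contributes $k$), which is upper semicontinuous on $\Omega$ and hence $\geq n_\KK$ everywhere on $\overline\KK$, and for which Angenent's parabolic Sturm theorem gives finiteness for $t>0$, monotone non-increase along the flow, and a strict drop at every $\Delta(\bm\zeta)$-instant. With that invariant, the contradiction in (i) goes through; your reduction of (ii) to (i), combined with the instant-smoothing of tangencies that lets you replace the hypothesis $\gamma\in\overline\KK$ by $\phi^\epsilon(\gamma)\in\KK$ for small $\epsilon>0$, then works as you indicate.
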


\begin{Remark}
It can be easily shown that the curve shortening flow preserves both the subspace of $m$-th iterates $\Omega^m:=\{\gamma^m\ |\ \gamma\in\Omega\}$ and its complement $\Omega\setminus\Omega^m$. This shows that the assumption that $\KK$ is primitive is essential at least for point (ii) of Lemma~\ref{l:Angenent_primitive}.
\end{Remark}

\begin{figure}
\begin{footnotesize}
\includegraphics{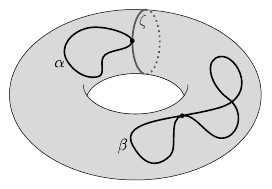}
\end{footnotesize}
\caption{A loop $\alpha\in\Delta(\zeta)$ with a tangency to $\zeta$, and a loop $\beta\in\Delta(\zeta)$ with a self-tangency. We stress that, while the tangencies depicted are without crossing, tangencies can be topologically transverse.}
\label{f:tangency}
\end{figure}

We define the \textbf{exit set} of the primitive flat knot type $\KK$ as
\begin{align*}
 \partial_-\KK
 &:=
 \big\{
 \gamma\in\partial\KK\ \big|\ \phi^t(\gamma)\not\in\overline\KK\mbox{ for all }t\in(0,t_\gamma)
 \big\}.
\end{align*}
By Lemma~\ref{l:Angenent_primitive}, $\partial_-\KK$ is a closed subset of $\partial\KK$.

\begin{Lemma}[\cite{Angenent:2005aa}, Lemma~6.3]
The exit set $\partial_-\KK$ does not depend on the Riemannian metric $g$.
\hfill\qed
\end{Lemma}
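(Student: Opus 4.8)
The plan is to show that $\partial_-\KK$ depends only on the topological/combinatorial data of the flat knot type, not on the specific metric $g$ used to define the curve shortening flow $\phi^t=\phi^t_g$. The key geometric input is the fact, due to Angenent, that the curve shortening flow does not increase the number of self-intersections of a loop, nor the number of intersections with a fixed embedded (or geodesic) reference curve; in fact, for a short time it preserves the flat knot type of a loop lying in the open stratum $\Omega\setminus\Delta(\bm\zeta)$, and it can only \emph{lose} crossings as $t$ increases, never gain them. First I would reformulate membership in $\partial_-\KK$ purely in terms of the forward orbit's itinerary among the connected components of $\Omega\setminus\Delta(\bm\zeta)$: a loop $\gamma\in\partial\KK$ lies in $\partial_-\KK$ precisely when, under an arbitrarily small perturbation into $\KK$, the forward curve shortening orbit immediately leaves $\overline\KK$ and never returns. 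The point is to argue that this "immediately leaves and never returns" property is detected already at the level of the crossing structure, which is metric-independent.

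The main step is a perturbation/continuity argument. Given two metrics $g_0$ and $g_1$, connect them by a path $g_r$, $r\in[0,1]$. The geodesics $\bm\zeta$ are transverse closed geodesics for the \emph{original} fixed metric used in the definition of the relative flat knot type, so the stratification of $\Omega$ by $\Delta(\bm\zeta)$ is the same for all $r$ — only the dynamics $\phi^t_{g_r}$ changes. I would use the $\Diff(S^1)$-equivariant continuous dependence of the curve shortening semi-flow on the metric (continuity of $(t,\gamma,g)\mapsto\phi^t_g(\gamma)$ on its domain), together with Lemma~\ref{l:Angenent_primitive}: by part (ii), once an orbit has re-entered $\overline\KK$ after leaving it, it must in fact have stayed in $\KK$ the whole time in between; so the dichotomy is genuinely "stays in $\overline\KK$ forever" versus "leaves $\overline\KK$ at some finite time and never comes back." Combined with part (i) — leaving $\overline\KK$ is irreversible — the exit set is characterized by: the forward orbit of every nearby loop in $\KK$ leaves $\overline\KK$ in finite time. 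Now the crossing-number monotonicity makes this a closed, metric-robust condition: if for metric $g_0$ some nearby loop $\gamma'\in\KK$ had a forward orbit staying in $\overline\KK$ up to time $t_{\gamma'}$, one shows, by tracking the (finitely many, non-increasing) crossings along the continuously varying family of orbits $\phi^t_{g_r}(\gamma')$, that the same persists for $g_r$ with $r$ close to $0$; a connectedness argument in $r$ then propagates the conclusion to $g_1$. One must also handle the two behaviors at the end time $t_\gamma$ — convergence to a closed geodesic or formation of a singularity — using Lemma~\ref{l:subloops}: a $\rho$-subloop forming under $\phi^t_{g_r}$ is, for small $\rho$, incompatible with remaining in a fixed primitive relative flat knot type $\KK$ after the subloop is shrunk, so singularity formation forces exit from $\overline\KK$ uniformly in $r$.

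The step I expect to be the main obstacle is making the crossing-tracking argument uniform in the metric near the boundary $\partial\KK$ and near the blow-up time $t_\gamma$: a priori $t_{g_r,\gamma'}$ can vary (even jump down) with $r$, and the orbit can approach $\Delta(\bm\zeta)$, so "the number of crossings is locally constant along the orbit" degenerates exactly where it matters. The resolution should be to work not with a single loop but with a small neighborhood in $\KK$, exploit that $\partial_-\KK$ and $\partial\KK\setminus\partial_-\KK$ are both relatively open in a suitable sense (Lemma~\ref{l:Angenent_primitive} again), and use compactness of a small closed ball of initial data together with the uniform $\rho_g$ from Lemma~\ref{l:subloops}(ii) to get a time $\tau>0$, independent of $r$ on a subinterval, during which the flat knot type among components of $\Omega\setminus\Delta(\bm\zeta)$ is controlled. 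Iterating this over finitely many steps (the crossing number is a non-negative integer that strictly drops each time the orbit crosses $\Delta(\bm\zeta)$) terminates the analysis, and the metric-independence of $\partial_-\KK$ follows.
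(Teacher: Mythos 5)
Your proposal takes a genuinely different route from Angenent's argument, and it has a gap that I don't see how to close.

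First, your opening reformulation is not the actual dichotomy. Membership in $\partial_-\KK$ is a statement about the forward orbit of $\gamma\in\partial\KK$ \emph{itself}: by Lemma~\ref{l:Angenent_primitive}, a loop $\gamma\in\partial\KK$ satisfies exactly one of (A) $\phi^t(\gamma)\in\KK$ for all small $t>0$, or (B) $\phi^t(\gamma)\notin\overline\KK$ for all $t>0$. You instead recast it in terms of "arbitrarily small perturbations into $\KK$," which is not the definition and would require a separate argument to match up with it.

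Second, and this is the substantive gap: your deformation/connectedness scheme does not establish openness of $R=\{r\in[0,1]:\gamma\in\partial_-\KK\ \text{for}\ g_r\}$. The negation "$\phi^t_{g_r}(\gamma)\in\KK$ for some $t>0$" is an open condition in $r$ (since $\KK$ is open and the flow depends continuously on the metric on compact time intervals), so $R$ is \emph{closed}; but you also need $R$ open, and that is precisely the hard direction. Your crossing-count argument does not supply it: fixing a small $t_0$ and observing $\phi^{t_0}_{g_0}(\gamma)\notin\overline\KK$ only yields, by continuity, $\phi^{t_0}_{g_r}(\gamma)\notin\overline\KK$ for $r$ near $0$ — which is again a closed-type conclusion. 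It leaves open exactly the troublesome scenario: under $g_r$ the orbit could first dip into $\KK$ on $(0,t_1)$, return to $\partial\KK$ at some $t_2<t_0$, and only then exit $\overline\KK$; this makes $\gamma\notin\partial_-\KK$ for $g_r$ while still satisfying $\phi^{t_0}_{g_r}(\gamma)\notin\overline\KK$. Crossing numbers with $\bm\zeta$ and self-crossings are too coarse to rule this out, since many distinct components of $\Omega\setminus\Delta(\bm\zeta)$ share the same crossing numbers, and the itinerary of the orbit among components is not encoded by that integer.

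The proof in Angenent's paper is local rather than global: near each tangent point of $\gamma\in\partial\KK$ one writes the two tangent arcs as graphs with difference $w=u_1-u_2$, which evolves by a linear uniformly parabolic equation (whose coefficients depend on $g$, but that is immaterial). The strong maximum principle/Sturmian theory for such equations shows that the sign pattern and vanishing order of $w$ at $t=0$ — data of the loop $\gamma$, not of the metric — dictate how the tangency resolves (separation versus transverse crossing, and in which direction). This resolution in turn determines whether $\phi^t(\gamma)$ enters $\KK$ or leaves $\overline\KK$ for small $t>0$. Since that data is metric-independent, so is $\partial_-\KK$, pointwise, with no need to connect $g_0$ to $g_1$ by a path (which is itself not a free move here, since the intermediate metrics must keep $\bm\zeta$ geodesic). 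Your final appeal to $\rho$-subloops is also not needed and not quite sound: a $\rho$-subloop does not by itself force exit from $\overline\KK$, and in any case the exit set concerns the behavior at $t\to 0^+$, not at the blow-up time $t_\gamma$.
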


We denote by $\Omega_\rho=\Omega_{g,\rho}$ the open subset of $\Omega$ consisting of those $\gamma$ containing a $\rho$-subloop, and we set
\begin{align}
\label{e:exit_and_subloops}
 \overline\KK_{\rho}=\overline\KK_{g,\rho}:=\partial_-\KK\cup\big(\overline\KK\cap\Omega_{\rho}\big).
\end{align}

\begin{Lemma}[\cite{Angenent:2005aa}, Prop.~6.8]\label{l:Angenent_exit_time}
The exit-time function
\begin{align*}
\tau_{\rho}=\tau_{g,\rho}:\overline\KK\to[0,\infty],
\qquad
\tau_\rho(\gamma)=\inf\big\{t\in[0,t_\gamma)\ \big|\ \phi^t(\gamma)\in\overline\KK_{\rho}\big\}
\end{align*}
is continuous $($here, we employ the usual convention $\inf\varnothing=\infty$$)$.\hfill\qed
\end{Lemma}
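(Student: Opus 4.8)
The plan is to rewrite $\tau_\rho$ as the first time the curve shortening orbit hits a suitable \emph{open} subset $\VV\subseteq\Omega$; then upper semicontinuity is automatic, while lower semicontinuity comes from the fact that $\overline\KK$ behaves as an isolating block for $\phi^t$ whose exit set is $\partial_-\KK$.

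First I would record the dynamical skeleton. For $\gamma\in\overline\KK$ set $A(\gamma):=\{t\in[0,t_\gamma)\mid\phi^t(\gamma)\in\overline\KK\}$. Applying Lemma~\ref{l:Angenent_primitive}(ii) to $\phi^{t_1}(\gamma)$ whenever $t_1<t_2$ both lie in $A(\gamma)$ gives $\phi^t(\gamma)\in\KK$ for $t\in(t_1,t_2)$; since $0\in A(\gamma)$ and $A(\gamma)$ is closed in $[0,t_\gamma)$, there is $\sigma(\gamma):=\sup A(\gamma)$ with $A(\gamma)=[0,\sigma(\gamma)]$ when $\sigma(\gamma)<t_\gamma$ and $A(\gamma)=[0,t_\gamma)$ otherwise; in either case $\phi^t(\gamma)\in\KK$ for $t\in(0,\sigma(\gamma))$, and if $\sigma(\gamma)<t_\gamma$ then $\phi^{\sigma(\gamma)}(\gamma)$ lies in $\partial\KK$ (it cannot lie in the open set $\KK$) while $\phi^{\sigma(\gamma)+s}(\gamma)\notin\overline\KK$ for all $s\in(0,t_\gamma-\sigma(\gamma))$, so $\phi^{\sigma(\gamma)}(\gamma)\in\partial_-\KK$. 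Using this and the defining property of $\partial_-\KK$, a routine check of two inclusions gives, for every $\gamma\in\overline\KK$,
\[
\tau_\rho(\gamma)=\inf\big\{\,t\in[0,t_\gamma)\ \big|\ \phi^t(\gamma)\in\VV\,\big\},
\qquad
\VV:=(\Omega\setminus\overline\KK)\cup\Omega_\rho
\]
(if $\phi^t(\gamma)\in\partial_-\KK$ then $\phi^{t+\epsilon}(\gamma)\in\Omega\setminus\overline\KK\subseteq\VV$ for small $\epsilon>0$, while if $\phi^t(\gamma)\in\VV$ lies outside $\overline\KK$ then $\phi^{\sigma(\gamma)}(\gamma)\in\partial_-\KK\subseteq\overline\KK_\rho$ with $\sigma(\gamma)<t$). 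The set $\VV$ is open, as $\overline\KK$ is closed and $\Omega_\rho$ is open, and moreover $\partial\VV\subseteq\partial\KK$: indeed the interior of $\overline\KK$ is $\KK$, and a loop which is a limit of loops carrying $\rho$-subloops but carries none itself must have a self-tangency, hence lies in $\Delta(\bm\zeta)$.

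Upper semicontinuity at $\gamma_0$ is then the classical statement about first-hitting times of open sets: if $\tau_\rho(\gamma_0)<\infty$, the set $\{t\mid\phi^t(\gamma_0)\in\VV\}$ is open and nonempty, so for $\epsilon>0$ one picks $t^{*}\in(\tau_\rho(\gamma_0),\tau_\rho(\gamma_0)+\epsilon)$ with $\phi^{t^{*}}(\gamma_0)\in\VV$; by continuity of the semi-flow and openness of $\VV$, also $\phi^{t^{*}}(\gamma)\in\VV$ for all $\gamma$ near $\gamma_0$, whence $\tau_\rho(\gamma)\le t^{*}$.

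Lower semicontinuity is the step that actually uses the structure, and I expect it to be the main obstacle. Suppose, for contradiction, that $\gamma_n\to\gamma_0$ with $\tau_\rho(\gamma_n)\to L<\tau_\rho(\gamma_0)$; we may assume $L<\infty$. Then $L<t_{\gamma_0}$: this is clear when $\tau_\rho(\gamma_0)<\infty$, and when $\tau_\rho(\gamma_0)=\infty$ one has $t_{\gamma_0}=\infty$ because an orbit in $\overline\KK$ with $t_\gamma<\infty$ must first enter $\Omega_\rho\subseteq\VV$ (Grayson's theorem and Lemma~\ref{l:subloops}(i)). For $t<L$ we get $\phi^t(\gamma_0)\notin\VV$, hence $\phi^t(\gamma_0)\in\overline\KK$, so $[0,L]\subseteq A(\gamma_0)$ and therefore $\phi^t(\gamma_0)\in\KK$ for $t\in(0,L)$. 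On the other hand, choosing $s_n$ just above $\tau_\rho(\gamma_n)$ with $\phi^{s_n}(\gamma_n)\in\VV$ and $s_n\to L$, continuity of the semi-flow gives $\phi^L(\gamma_0)=\lim_n\phi^{s_n}(\gamma_n)\in\overline\VV$, and since $\phi^L(\gamma_0)\notin\VV$ we conclude $\phi^L(\gamma_0)\in\partial\VV\subseteq\partial\KK$. Thus the orbit of $\gamma_0$ lies in $\KK$ on $(0,L)$ and hits $\partial\KK$ at time $L$, which by the dichotomy above forces $\sigma(\gamma_0)=L$ and $\phi^L(\gamma_0)\in\partial_-\KK$; hence $\phi^s(\gamma_0)\in\Omega\setminus\overline\KK\subseteq\VV$ for all $s\in(L,t_{\gamma_0})$, so $\tau_\rho(\gamma_0)\le L$, contradicting $L<\tau_\rho(\gamma_0)$. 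The degenerate possibilities ($L=\infty$, or $\gamma_0\in\partial\KK$ already at time $0$) are immediate. Throughout, the genuinely substantive input is Angenent's Lemma~\ref{l:Angenent_primitive}, which is precisely where the primitivity of $\KK$ is used; everything else is the soft topology of hitting times of open sets.
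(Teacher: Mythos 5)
The paper does not prove this statement; it is cited verbatim from Angenent \cite{Angenent:2005aa} (Prop.~6.8), so there is no proof in the paper to compare against. Taken on its own, your reorganization of $\tau_\rho$ as the first hitting time of the open set $\VV=(\Omega\setminus\overline\KK)\cup\Omega_\rho$ is correct (your two inclusions do hold, using Lemma~\ref{l:Angenent_primitive} to see that $\phi^t(\gamma)\notin\overline\KK$ forces $\phi^{\sigma(\gamma)}(\gamma)\in\partial_-\KK$ at an earlier time), and it does make upper semicontinuity automatic, which is a genuine simplification. The lower-semicontinuity argument, however, has a real gap at $L=0$, which you dismiss as ``immediate.'' When $L=0$ the dichotomy only gives $\phi^t(\gamma_0)\in\KK$ for $t\in(0,\sigma(\gamma_0))$; since $0\notin(0,\sigma(\gamma_0))$, the fact $\gamma_0=\phi^L(\gamma_0)\in\partial\KK$ does \emph{not} force $\sigma(\gamma_0)=0$, hence does not place $\gamma_0$ in $\partial_-\KK$. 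Concretely, you must rule out: $\gamma_0\in\partial\KK\setminus\partial_-\KK$ with $\gamma_0\notin\Omega_\rho$, approached by $\gamma_n\in\overline\KK\cap\Omega_\rho$, so that $\tau_\rho(\gamma_n)\equiv0<\tau_\rho(\gamma_0)$. The dichotomy is silent here; one would instead push forward by a small $\epsilon>0$ (using $\gamma_0\notin\partial_-\KK$ to get $\phi^\epsilon(\gamma_0)\in\KK$ via Lemma~\ref{l:Angenent_primitive}(ii)) and invoke Lemma~\ref{l:subloops}(ii) to carry the $\rho$-subloops of $\gamma_n$ to time $\epsilon$, reducing to your $\partial\VV$-claim applied at the interior point $\phi^\epsilon(\gamma_0)\in\KK$.

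That brings up the second gap: the inclusion $\partial\VV\subseteq\partial\KK$ is asserted with only a sketch. The part coming from $\overline{\Omega\setminus\overline\KK}\cap\overline\KK$ is indeed soft topology, but the statement that a limit of loops carrying $\rho$-subloops which itself has none must lie in $\Delta(\bm\zeta)$ is a genuine compactness claim. It requires ruling out the parameter interval $[a_n,b_n]$ of the subloop degenerating to a point (which is excluded not by any tangency but by the $C^3$ topology together with Gauss--Bonnet, since a vanishing subloop forces unbounded curvature), and handling separately the case where the tails begin to graze the bounding disk. Finally, your appeal to Grayson and Lemma~\ref{l:subloops}(i) to ensure $t_{\gamma_0}=\infty$ whenever $\tau_\rho(\gamma_0)=\infty$ covers the singularity scenario only under the hypothesis $\ell>0$ stated in that lemma; the shrinking-to-a-point scenario ($\ell=0$) needs its own word, e.g.\ observing that it would force the iterated subloop estimate or a contradiction with the flat knot type. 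None of this is a fatal flaw --- the overall strategy is sound --- but the two ``immediate'' dismissals are precisely where the substantive work of Angenent's proposition is hiding.
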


The continuity of the exit-time function, together with Lemma~\ref{l:subloops}(ii), implies that the inclusion $(\overline\KK,\overline\KK_{\rho_1})\hookrightarrow(\overline\KK,\overline\KK_{\rho_2})$ is a homotopy equivalence for all $\rho_1<\rho_2<\rho_g$. Indeed, its homotopy inverse is given by $\gamma \mapsto \phi^{\tau_{\rho_1,\rho_2}(\gamma)}(\gamma)$,
where
\begin{align*}
 \tau_{\rho_1,\rho_2}(\gamma)
 :=
 \min\big\{\tau_{\rho_1}(\gamma),\tfrac2\pi(\rho_2-\rho_1)\big\}.
\end{align*}

\subsection{Curvature control}\label{ss:curvature}

In the already mentioned work \cite{Grayson:1989aa}, Grayson described the behavior of the curvature of embedded loops that evolve under the curve shortening flow. Actually, the analysis does not require the embeddedness and holds for immersed loops as well, and even in the more general setting of reversible Finsler metrics \cite[Section~2.5]{De-Philippis:2022aa}. We summarize here the results needed later on, in Section~\ref{ss:nbhds}, for constructing the suitable neighborhoods of compact sets of closed geodesics that enter the definition of local homology, one of the ingredients of our proof of Theorems~\ref{mt:links} and~\ref{mt:multiplicity}.

In order to simplify the notation, for any given immersed loop $\gamma_0:S^1\looparrowright M$, we denote by $\gamma_t:=\phi^t(\gamma_0)$ its evolution under the curve shortening flow, and by $\kappa_t:=\kappa_{\gamma_t}$ the signed geodesic curvature of $\gamma_t$ with respect to a normal vector field. In \cite[Lemma~1.2]{Gage:1990aa}, Gage showed that $\kappa_t$ evolves according to the PDE
\begin{align}
\label{e:PDE_curvature}
\partial_t\kappa_t=D^2\kappa_t+\kappa_t r_t+\kappa_t^3, 
\end{align}
where $r_t:S^1\to\R$ denotes the Gaussian curvature of the Riemannian surface $(M,g)$ along $\gamma_t$, and $D=\|\dot\gamma_t(s)\|_g^{-1}\partial_s$ is a vector field on $[0,t_{\gamma_0})\times S^1$, which we see as a differential operator acting on functions $f:[0,t_{\gamma_0})\times S^1\to\R$, $f(t,s)=f_t(s)$ as 
\[Df_t(s)=\|\dot\gamma_t(s)\|_g^{-1}\dot f_t(s).\]
We denote by $\Gamma_t:=\gamma_t\circ\nu_t^{-1}:[0,L(\gamma_s)]\looparrowright M$ the arclength reparametrization of $\gamma_t$, where $\nu_t:[0,1]\to[0,L(\gamma_t)]$ is the function 
\begin{align*}
 \nu_t(s) = \int_0^s \|\dot\gamma_t(r)\|_g\,dr,
\end{align*}
and by $K_t:=\kappa_t\circ\nu_t^{-1}$ the signed geodesic curvature of $\Gamma_t$.

\begin{Prop}\label{p:curvature_control}
For each compact interval $[a,b]\subset(0,\infty)$ there exist $c>0$ such that, for each $\epsilon>0$ small enough, for each immersed smooth loop $\gamma_0:S^1\looparrowright M$ satisfying $L(\gamma_0)\in[a,b]$ and $\|K_0\|_{L^2}+\|\dot K_0\|_{L^2}\leq\epsilon$, and for each $t\in[0,t_\gamma)$, we have $\|K_t\|_{L^\infty}\leq c\,\epsilon$ or $L(\gamma_0)-L(\gamma_t)\geq \epsilon^2$.
\end{Prop}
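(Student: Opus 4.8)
The plan is to analyze the evolution equation \eqref{e:PDE_curvature} for the curvature and combine it with the length-decay identity \eqref{e:derivative_length} to obtain an a priori bound that persists as long as not too much length has been lost. First I would reformulate the hypothesis and conclusion in terms of the arclength-parametrized quantities $K_t$: note that $\int_{S^1}\kappa_t(s)^2\|\dot\gamma_t(s)\|_g\,ds=\|K_t\|_{L^2}^2$, so \eqref{e:derivative_length} reads $\tfrac{d}{dt}L(\gamma_t)=-\|K_t\|_{L^2}^2$, and the smallness hypothesis is exactly $\|K_0\|_{L^2}^2+\|\dot K_0\|_{L^2}^2\le\epsilon^2$ (where the dot on $K_0$ is the arclength derivative). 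The dichotomy in the conclusion is then the natural one: either we never lose as much as $\epsilon^2$ of length on $[0,t_\gamma)$, in which case we must control $\|K_t\|_{L^\infty}$; or at some point we do lose $\epsilon^2$, which is the second alternative and requires nothing further.

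So the heart of the argument is: \emph{assuming $L(\gamma_0)-L(\gamma_t)<\epsilon^2$ for all $t$, show $\|K_t\|_{L^\infty}\le c\epsilon$.} Here I would run a differential-inequality / bootstrap argument on the quantity $E(t):=\|K_t\|_{L^2}^2+\|D K_t\|_{L^2}^2$ (an $H^1$-type energy, with $D$ the arclength derivative). Differentiating in $t$, using \eqref{e:PDE_curvature}, integration by parts on $S^1$, the curvature bound $|r_t|\le\max|R_g|=:\|R_g\|_\infty$, and the elementary interpolation/Sobolev inequalities on the circle of circumference $L(\gamma_t)\in[a-\epsilon^2,b]$ (a bounded range, so all Sobolev constants are uniform), one gets an inequality of the schematic form $\tfrac{d}{dt}E(t)\le C_1 E(t)+C_2 E(t)^2+(\text{lower order from the moving metric})$, where the cubic term $\kappa_t^3$ in \eqref{e:PDE_curvature} is responsible for the $E^2$ term after Gagliardo–Nirenberg ($\|K_t\|_{L^\infty}^2\lesssim E(t)$ and $\|K_t\|_{L^6}^6\lesssim E(t)^2$ type estimates). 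Since $E(0)\le\epsilon^2$, a standard continuation argument shows that as long as $E$ stays below some fixed threshold, $E(t)\le 2\epsilon^2 e^{2C_1 t}$; but I must also kill the exponential growth, and this is exactly where the length hypothesis enters. The key extra ingredient is that $\int_0^{t}\|K_s\|_{L^2}^2\,ds=L(\gamma_0)-L(\gamma_t)<\epsilon^2$, so the total $L^2$-in-time mass of the curvature is tiny; one should therefore set up the Gronwall estimate so that the "bad" coefficients are integrable against this, i.e. estimate $\tfrac{d}{dt}E\le \big(C_1\|K_t\|_{L^2}^2+C_2\|K_t\|_{L^\infty}^2+\dots\big)E$ plus genuinely small forcing, and then $\int_0^t(\dots)ds$ is controlled by $L(\gamma_0)-L(\gamma_t)$ together with the a priori smallness of $E$ under the bootstrap, yielding $E(t)\le C\epsilon^2$ uniformly. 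Finally $\|K_t\|_{L^\infty}\le C'\sqrt{E(t)}\le c\epsilon$ by Sobolev embedding $H^1(S^1)\hookrightarrow L^\infty$, with $c$ depending only on $[a,b]$ and $\|R_g\|_\infty$.

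To make the bootstrap rigorous I would introduce the stopping time $t^*:=\sup\{t\in[0,t_\gamma) : \|K_s\|_{L^\infty}\le K_0\ \forall s\le t\}$ for a suitable fixed threshold $K_0$ (independent of $\epsilon$, small enough that the cubic term is dominated), show that on $[0,t^*)$ the above estimates give $\|K_t\|_{L^\infty}\le c\epsilon< K_0$ once $\epsilon$ is small, and conclude $t^*=t_\gamma$ by continuity of $t\mapsto K_t$ (which follows from the smoothness of the curve shortening flow away from singularities, and from Lemma~\ref{l:subloops}(i) / the standard Grayson–Angenent regularity theory up to the singular time — on the interval where $L(\gamma_t)$ stays within $\epsilon^2$ of $L(\gamma_0)$ the flow has not shrunk a loop to a point, but there could be a singularity of immersed type; near such a singularity, however, $\|K_t\|_{L^\infty}\to\infty$, which would contradict the bound $\|K_t\|_{L^\infty}\le c\epsilon$, so in fact the bound propagates up to $t_\gamma$ and, if $t_\gamma<\infty$, forces the alternative $L(\gamma_0)-L(\gamma_t)\ge\epsilon^2$ to have occurred earlier).

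\textbf{Main obstacle.} The delicate point is the interplay between the cubic nonlinearity in \eqref{e:PDE_curvature} and the moving metric: getting a clean differential inequality for an $H^1$-energy requires care with the commutator $[\partial_t,D]$ (since $D$ depends on $t$ through $\|\dot\gamma_t\|_g$), and one must arrange the Gronwall argument so that every coefficient that could blow up is paired against the tiny quantity $L(\gamma_0)-L(\gamma_t)=\int\|K_s\|_{L^2}^2\,ds$, rather than simply integrated in $t$ (which would only give an $e^{Ct}$ bound, useless since $t_\gamma$ can be large). Verifying that this pairing actually closes — i.e. that the "length budget" $\epsilon^2$ is exactly enough to absorb the exponential — is the crux, and is presumably why the statement is phrased as a dichotomy rather than an unconditional bound.
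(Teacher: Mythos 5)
Your proposal is built on the right backbone, which is also the paper's: differentiate an $L^2$/$H^1$ quantity in $t$ using \eqref{e:PDE_curvature}, exploit $\frac{d}{dt}L(\gamma_t)=-\|K_t\|_{L^2}^2$ so that the total length budget $L(\gamma_0)-L(\gamma_t)<\epsilon^2$ controls $\int_0^t\|K_s\|_{L^2}^2\,ds$, run a stopping-time bootstrap, and conclude via the Sobolev embedding $H^1(S^1)\hookrightarrow L^\infty$. That skeleton is the same as the paper's. The paper, however, does not run a single Gronwall inequality on $E=\|K_t\|_{L^2}^2+\|\dot K_t\|_{L^2}^2$; it first bounds $\|K_t\|_{L^2}^2$ using the length-budget trick, and then controls $\|\dot K_t\|_{L^2}^2$ by a separate regime-splitting argument (on the set where $\|K_t\|_{L^2}\le\delta\|\dot K_t\|_{L^2}$ it shows $\partial_t\|\dot K_t\|_{L^2}^2\le -\|\ddot K_t\|_{L^2}^2\le 0$ via absorption into the second-derivative dissipation; off that set $\|\dot K_t\|_{L^2}$ is trivially dominated by $\|K_t\|_{L^2}$). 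There is also a subsidiary Gronwall argument in the paper used only to rule out the $\|K_\tau\|_{L^2}^2=c_1^{-1}$ alternative at the stopping time, which forces $\tau>1$ and then yields a contradiction with the length budget on $[\tau-1,\tau]$; you do not need this exact step in an $H^1$-energy version, but you need a replacement.

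The concrete gap is in your closing step. You write that $\tfrac{d}{dt}E\le(C_1\|K_t\|_{L^2}^2+C_2\|K_t\|_{L^\infty}^2+\dots)E$ and then that $\int_0^t(\dots)\,ds$ is controlled by $L(\gamma_0)-L(\gamma_t)$. This fails for the $\|K_t\|_{L^\infty}^2$ coefficient: the length identity bounds only $\int_0^t\|K_s\|_{L^2}^2\,ds$, not $\int_0^t\|K_s\|_{L^\infty}^2\,ds$. Even under the bootstrap hypothesis $\|K_t\|_{L^\infty}^2\lesssim\epsilon^2$, that integral is only $O(\epsilon^2 t)$, which is unbounded as $t\to t_{\gamma_0}$ and would give back an exponential $e^{C\epsilon^2 t}$, destroying the uniform bound. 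The source of this coefficient is the cubic term: in the estimate for $\partial_t\|\dot K_t\|_{L^2}^2$, a term of the form $\|K_t\|_{L^\infty}^2\|\dot K_t\|_{L^2}^2$ arises, and it multiplies $\|\dot K_t\|_{L^2}^2$, whose time integral is \emph{not} a length difference. The correct fix is not to treat this as a Gronwall coefficient at all, but to absorb it into the dissipation already present in your differential inequality: after summing the two estimates, you have $-2\|\dot K_t\|_{L^2}^2 - \|\ddot K_t\|_{L^2}^2$ on the right, and once $\epsilon$ is small enough the bootstrap gives $7\|K_t\|_{L^\infty}^2\le 2$, so $7\|K_t\|_{L^\infty}^2\|\dot K_t\|_{L^2}^2\le 2\|\dot K_t\|_{L^2}^2$ is swallowed; the curvature term $2\|R_t\|_{L^\infty}\|K_t\ddot K_t\|_{L^1}$ is split by Young so that its $\|\ddot K_t\|_{L^2}^2$ piece is absorbed by $-\|\ddot K_t\|_{L^2}^2$ and its $\|K_t\|_{L^2}^2$ piece has a constant coefficient. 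After these absorptions you are left with $\partial_t E\le C\|K_t\|_{L^2}^2$, with $C$ depending only on $[a,b]$ and $\|R_g\|_{L^\infty}$, and now integrating against the length budget gives $E(t)\le E(0)+C\epsilon^2\le(1+C)\epsilon^2$, closing the bootstrap. So the plan is salvageable and indeed gives an argument comparable to (arguably streamlining) the paper's, but the step you flagged as the ``crux'' — pairing every bad coefficient against the length budget — does not work as stated; part of the nonlinearity must instead be beaten by the dissipation.
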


\begin{proof}
We will fix an upper bound for the quantity $\epsilon>0$ later on. For now, we consider $\epsilon\in(0,\sqrt{a/2})$ together with the data stated in the lemma and the notation introduced just before. Notice that 
$\dot K_t\circ\nu_t = D\kappa_t$ and $\ddot K_t\circ \nu_t=D^2\kappa_t$.
We denote by $R_t:=r_t\circ\nu_t^{-1}$ the Gaussian curvature of $(M,g)$ along $\Gamma_t$. We employ~\eqref{e:PDE_curvature} to compute
\begin{align*}
 \partial_t \|K_t\|_{L^2}^2
 &=
 \partial_t \int_{0}^{L(\gamma_t)}K_t^2 ds
 =
 \partial_t \int_{S^1}\kappa_t^2\|\dot\gamma_t\|_g ds\\
 &=
 \int_{S^1}\Big(2\kappa_t\,\partial_t\kappa_t\|\dot\gamma_t\|_g + \kappa_t^2\partial_t\|\dot\gamma_t\|_g\Big)\,ds\\
 &=
 \int_{S^1}\Big(2\kappa_t(D^2\kappa_t+\kappa_tr_t+\kappa_t^3)- \kappa_t^4\Big)\|\dot\gamma_t\|_g\,ds\\
 &\leq
 -2 \|\dot K_t\|_{L^2}^2 + 2 \|K_t^2R_t\|_{L^1} + \|K_t^4\|_{L^1}\\
 & \leq
 -2 \|\dot K_t\|_{L^2}^2 +  \|K_t\|_{L^2}^2\Big( 2 \|R_t\|_{L^\infty} + \|K_t\|_{L^\infty}^2 \Big).
\end{align*}
If $L(\gamma_t)>a/2$, we can bound from above the term $\|K_t\|_{L^\infty}^2$ by
\begin{align*}
 \|K_t\|_{L^\infty}^2 
  \leq  \frac2{L(\gamma_t)}\|K_t\|_{L^2}^2 + 2L(\gamma_t)\|\dot K_t\|_{L^2}^2
 \leq  2a^{-1}\|K_t\|_{L^2}^2 + 2b\|\dot K_t\|_{L^2}^2 .
\end{align*}
We use this inequality as a lower bound for $\|\dot K_t\|_{L^2}^2$, and continuous the previous estimate as
\begin{equation}\label{e:estimate_Kt_L2_Zoll}
\begin{split}
\!\!\!\! \partial_t \|K_t\|_{L^2}^2
 &\leq
 \big( 4a^{-1}b^{-1} + 2\|R_t\|_{L^\infty}\big) \|K_t\|_{L^2}^2 + \|K_t\|_{L^\infty}^2 \bigg( \|K_t\|_{L^2}^2 - b^{-1}\bigg) \\
 &\leq 
 c_1 \|K_t\|_{L^2}^2 + \|K_t\|_{L^\infty}^2 \big( \|K_t\|_{L^2}^2 - c_1^{-1}\big),
\end{split}
\end{equation}
where $c_1\geq1$ is a constant depending only on the compact interval $[a,b]$ and  on the Riemannian metric $g$. 

We further require $\epsilon^2< c_1^{-1}e^{-c_1}$, and set
\begin{align*}
 \tau:=\sup\big\{ t\in[0,t_{\gamma_0})\ \big|\ \|K_t\|_{L^2}^2\leq c_1^{-1},\ L(\gamma_0)-L(\gamma_t)\leq\epsilon^2 \big\}.
\end{align*}
The inequality~\eqref{e:estimate_Kt_L2_Zoll} implies 
\begin{align}\label{e:estimate_Kt_L2_Zoll_final}
 \partial_t \|K_t\|_{L^2}^2
 < 
 c_1 \|K_t\|_{L^2}^2,
 \qquad \forall t\in[0,\tau).
\end{align}
Notice that \eqref{e:derivative_length} can be rewritten in terms of $K_t$ as
$\tfrac{d}{dt} L(\gamma_t) = -\|K_t\|_{L^2}^2$. This, together with~\eqref{e:estimate_Kt_L2_Zoll_final} and with the initial bound $\|K_{0}\|_{L^2}\leq\epsilon$, gives
\begin{align*}
 \|K_{t}\|_{L^2}^2 
 \leq 
 \|K_{0}\|_{L^2}^2 
 +c_1\underbrace{\int_0^t \|K_{r}\|_{L^2}^2\,dr}_{=L(\gamma_0)-L(\gamma_t)}
 \leq
 \underbrace{(c_1+1)}_{=:c_2}\epsilon^2,\qquad
 \forall t\in[0,\tau).
\end{align*}

We claim that, if $\tau<t_{\gamma_0}$,  
\begin{align*}
 L(\gamma_0)-L(\gamma_\tau)=\epsilon^2,
\end{align*}
so that $L(\gamma_0)-L(\gamma_t)>\epsilon^2$ for all $t\in(\tau,t_{\gamma_0})$.
Assume by contradiction that this does not hold, so that $\|K_\tau\|_{L^2}^2= c_1^{-1}$.
By \eqref{e:estimate_Kt_L2_Zoll_final} and Gronwall inequality, we have 
\[\|K_{\tau}\|_{L^2}^2 \leq e^{c_1(\tau-t)} \|K_{t}\|_{L^2}^2,\qquad\forall t\in[0,\tau].\] 
Therefore
$c_1^{-1}=\|K_{\tau}\|_{L^2}^2\leq e^{c_1\tau}\|K_0\|_{L^2}^2\leq e^{c_1\tau}\epsilon^2\leq c_1^{-1}e^{c_1(\tau-1)}$, 
and we infer that $\tau>1$.
This further implies
\begin{align*}
 c_1^{-1} = \|K_{\tau}\|_{L^2}^2 \leq e^{c_1(\tau-t)} \|K_{t}\|_{L^2}^2 \leq e^{c_1} \|K_{t}\|_{L^2}^2,\qquad\forall t\in[\tau-1,\tau].
\end{align*}
By integrating with respect to $t$ on $[\tau-1,\tau]$, we obtain
\begin{align*}
 e^{-c_1}c_1^{-1}
 \leq
 \int_{\tau-1}^{\tau} \|K_{t}\|_{L^2}^2dt
 =
 L(\gamma_{\tau-1})- L(\gamma_{\tau})
 \leq 
 \epsilon^2,
\end{align*}
which contradicts the fact that $\epsilon^2<e^{-c_1}c_1^{-1}$.

Next, we compute
\begin{align*}
 \partial_t D\kappa_t
 &=
 \frac{\partial_t \dot \kappa_t}{\|\dot\gamma_t\|_g}
 -
 \frac{\dot \kappa_t\,\partial_t\|\dot\gamma_t\|_g}{\|\dot\gamma_t\|_g^2}
 =
 D\partial_t \kappa_t + \frac{\dot \kappa_t\,\kappa_t^2}{\|\dot\gamma_t\|_g}\\
 &=
 D(D^2 \kappa_t + \kappa_t r_t + \kappa_t^3) + D\kappa_t\,\kappa_t^2\\
 &=
 D^3 \kappa_t + D\kappa_t\, r_t + \kappa_t\,Dr_t + 4\kappa_t^2\,D\kappa_t .
\end{align*}
and obtain the bound
\begin{align*}
 \partial_t \|\dot K_t\|_{L^2}^2
 &=
 \partial_t \int_{S^1} (D\kappa_t)^2\|\dot\gamma_t\|_g\,ds
 =
 \int_{S^1} \big( 2 D\kappa_t\,\partial_tD\kappa_t - (D\kappa_t)^2\kappa_t^2 \big)\|\dot\gamma_t\|_g\,ds\\
 &=
 \int_{S^1} D\kappa_t\big( 2 D^3\kappa_t + 2D\kappa_t\,r_t + 2\kappa_t\,Dr_t + 7D\kappa_t\,\kappa_t^2 \big)\|\dot\gamma_t\|_g\,ds\\
 &=
 \int_0^{L(\gamma_t)} \Big(\! -2 \ddot K_t^2 + 2\dot K_t^2 R_t + 2K_t \dot K_t \dot R_t + 7\dot K_t^2 K_t^2 \Big)\,ds\\
 &=
 \int_0^{L(\gamma_t)} \Big(\! -2 \ddot K_t^2 - 2 K_t \ddot K_t R_t + 7\dot K_t^2 K_t^2 \Big)\,ds\\ 
 &\leq -2 \|\ddot K_t\|_{L^2}^2 + 2 \|R_t\|_{L^\infty} \|K_t \ddot K_t\|_{L^1}    + 7 \|\dot K_t K_t\|_{L^2}^2.
\end{align*}
We denote by $\delta\in(0,1)$ a small constant that we will fix later, and set
\begin{align*}
 I:=\big\{ t\in[0,\tau)\ \big|\  \|K_t\|_{L^2} \leq \delta \|\dot K_t\|_{L^2} \big\}.
\end{align*}
For each $t\in I$, since
\begin{align*}
\|\dot K_t\|_{L^2}^2
\leq 
\|K_t\|_{L^2}\|\ddot K_t\|_{L^2}
\leq
\delta\|\dot K_t\|_{L^2}\|\ddot K_t\|_{L^2},
\end{align*}
we have $\|\dot K_t\|_{L^2}\leq \delta\|\ddot K_t\|_{L^2}$. Therefore
\begin{equation}
\label{e:Cinfty_Zoll_estimates_2_1}
\begin{split}
 \|\dot K_t K_t\|_{L^2}
 &\leq
 \|K_t\|_{L^2} \|\dot K_t\|_{L^\infty}\\
 &\leq
 \|K_t\|_{L^2} \big(\|\dot K_t\|_{L^2} + L(\gamma_t)\|\ddot K_t\|_{L^2}\big)\\
 &\leq
 c_2\,\epsilon\,(\delta+b)\|\ddot K_t\|_{L^2}, 
\end{split}
\end{equation}
and, using Peter-Paul inequality,\index{Peter-Paul inequality}
\begin{equation}
\label{e:Cinfty_Zoll_estimates_2_2}
\begin{split}
 \|K_t \ddot K_t\|_{L^1}
 &\leq
 \tfrac12\big(\delta^{-1}\|K_t\|_{L^2}^2+ \delta\|\ddot K_t\|_{L^2}^2\big)
 \leq
 \delta^3 \|\ddot K_t\|_{L^2}^2.
\end{split}
\end{equation}
Now, we require $\epsilon$ and $\delta$ to be small enough (depending only on the compact interval $[a,b]$ and  on the Riemannian metric $g$) so that, plugging~\eqref{e:Cinfty_Zoll_estimates_2_1} and~\eqref{e:Cinfty_Zoll_estimates_2_2} into the above estimate of $\partial_t\|\dot K_t\|_{L^2}^2$, we obtain
\begin{align}
\label{e:final_Grayson_Zoll}
 \partial_t\|\dot K_t\|_{L^2}^2
 \leq
 -\|\ddot K_t\|_{L^2}^2
 \leq
 0,
 \qquad\forall t\in I.
\end{align}

Since $\|K_0\|_{L^2}+\|\dot K_0\|_{L^2}\leq\epsilon$, we have
\begin{align}
\label{e:bound_dot_K_t_outside_I}
\|\dot K_t\|_{L^2}^2\leq \delta^{-2}\|K_t\|_{L^2}^2\leq \underbrace{\delta^{-2}c_2}_{=:c_3}\epsilon^2,
\qquad\forall t\in\overline{[0,\tau)\setminus I}.
\end{align}
For each $t\in I$, if $r\in[0,\tau)$ is the minimal value such that $[r,t]\in I$, the inequalities~\eqref{e:final_Grayson_Zoll} and~\eqref{e:bound_dot_K_t_outside_I} imply
\begin{align*}
 \|\dot K_t\|_{L^2}^2
 \leq
 \|\dot K_r\|_{L^2}^2
 \leq
 \delta^{-2}\|K_{r}\|_{L^2}^2
 \leq
 \underbrace{\delta^{-2} c_3}_{=:c_4} \epsilon^2.
\end{align*}
Overall, we obtained the inequality $\|\dot K_t\|_{L^2}^2\leq c_4\epsilon^2$ for all $t\in[0,\tau)$, and we conclude
\begin{align*}
 \|K_t\|_{L^\infty}
 &\leq
 \|\dot K_t\|_{L^1} + L(\gamma_t)^{-1}\|K_t\|_{L^1}\\
 &\leq
 L(\gamma_t)^{1/2}\|\dot K_t\|_{L^2} + L(\gamma_t)^{-1/2}\|K_t\|_{L^2}\\
 &\leq\big(b\,c_4 + a^{-1}c_2\big)^{1/2}\epsilon.
 \qedhere
\end{align*}
\end{proof}

\subsection{The energy functional}\label{ss:energy}

The conventional setting for the variational theory of the closed geodesics in a closed Riemannian manifold, and in particular in our closed Riemannian surface $(M,g)$, is the one of the free loop space \[\Lambda:=W^{1,2}(S^1,M).\] The energy functional $E:\Lambda\to[0,\infty)$, which is defined as
\begin{align*}
 E(\gamma)=\int_{S^1} \|\dot\gamma(t)\|_g^2\,dt,
\end{align*}
is smooth, and its critical points are the 1-periodic solutions of the ODE $\nabla_t\dot\gamma\equiv0$, where $\nabla_t$ is the Levi-Civita covariant derivative. Namely, the critical points are the constant curves and the 1-periodic geodesics (parametrized with constant speed). 
The space of constant curves is the level set $E^{-1}(0)$ of global minima, and we denote by \[\crit^+(E)=\crit(E)\cap E^{-1}(0,\infty)\] the space of closed geodesics, which are the non-trivial critical points of $E$. 
At these critical points, the energy functional is related to the length functional by
\begin{align*}
 E(\gamma)^{1/2}=L(\gamma),\qquad\forall \gamma\in\crit^+(E).
\end{align*}
The circle $S^1$ acts on $\Lambda$ by time translation, i.e.~$t\cdot\gamma=\gamma(t+\cdot)$ for all $t\in S^1$ and $\gamma\in\Lambda$, and the energy $E$ is $S^1$-invariant. In particular, every $\gamma\in\crit^+(E)$ belongs to a circle of critical points $S^1\cdot\gamma\subset\crit^+(E)$. A closed geodesic $\gamma\in\crit^+(E)$ is called \emph{isolated} when $S^1\cdot\gamma$ is an isolated circle in $\crit^+(E)$.

It is often convenient to consider finite dimensional approximations of the free loop space $\Lambda$, given by the spaces
\begin{align*}
\Lambda_k:=\Big\{ \xx=(x_0,...,x_{k-1})\in  M^{\times k}\ \Big|\ d(x_i,x_{i+1})<\inj(M,g)\ \ \forall i\in\Z_k \Big\},
\end{align*}
where $k\geq 3$ is an integer, $d$ denotes the Riemannian distance, and $\inj(M,g)$ the injectivity radius. We see $\Lambda_k$ as a subspace of $\Lambda$ consisting of broken geodesic loops: namely, each $\xx$ corresponds to the piecewise smooth loop $\gamma_{\xx}\in\Lambda$ such that each restriction $\gamma_{\xx}|_{[i/k,(i+1)/k]}$ is the shortest geodesic segment joining $\gamma_{\xx}(i/k)=x_i$ and $\gamma_{\xx}((i+1)/k)=x_{i+1}$. Analogously, any tangent vector $\vv=(v_0,...,v_{k-1})\in T_{\xx}\Lambda_k$ corresponds to a unique piecewise smooth 1-periodic vector field $J_{\vv}\in T_{\gamma_{\xx}}\Lambda$  such that each restriction $J_{\vv}|_{[i/k,(i+1)/k]}$ is the unique Jacobi fields with endpoints $J_{\vv}(i/k)=v_i$ and $J_{\vv}((i+1)/k)=v_{i+1}$.

Consider a closed geodesic $\gamma\in\crit^+(E)$, and fix the integer $k$ to be large enough so that $E(\gamma)^{1/2}\leq k\,\inj(M,g)$. Therefore $\gamma=\gamma_{\yy}$ for some $\yy\in\Lambda_k$, and the same holds for any other point belonging to the critical circle $S^1\cdot\gamma$. 
In order to remove this redundancy and retain only the  point $\gamma$ from its critical circle, we fix an arbitrary curve $\lambda\subset M$ that intersects $\gamma$ orthogonally at $\gamma(0)$, and consider the space
\[\Lambda_k(\lambda) :=\big\{ \xx\in\Lambda_k\ \big|\  x_0\in\lambda\big\}.\]
We denote by $E_k:\Lambda_k(\lambda)\to[0,\infty)$, $E_k(\xx):=E(\gamma_{\xx})$ the restricted energy functional. 
Notice that $\yy$ is a critical point of $E_k$, and it is an isolated point of $\crit(E_k)$ if and only if $S^1\cdot\gamma$ is isolated in $\crit(E)$ (see \cite[Prop.~3.1]{Asselle:2018aa}).

We recall that the Morse index $\ind(\gamma)$ is defined as the maximal dimension of a vector subspace $V\subset T_\gamma \Lambda$ such that $d^2E(\gamma)$ is negative definite on $V$. The nullity $\nul(\gamma)$ is defined as the dimension of the kernel of $d^2E(\gamma)$.
The Morse index $\ind(\yy)$ and the nullity $\nul(\yy)$ are defined analogously employing the Hessian $d^2E_k(\yy)$. It turns out that 
\begin{align*}
 \ind(\gamma)=\ind(\yy),\qquad
 \nul(\gamma)=\nul(\yy)+1.
\end{align*}
The difference $\nul(\gamma)-\nul(\yy)=1$ is due to the fact that $E_k$ is defined on the codimension one submanifold $\Lambda_k(\lambda)$ of $\Lambda_k$.
The kernel of the Hessian $d^2E(\gamma)$ is the vector space of 1-periodic Jacobi fields along $\gamma$, and such a space always contains the velocity vector field $\dot\gamma$. The kernel of the Hessian $d^2E_k(\yy)$, instead, is the vector space of those $\bm v\in T_{\yy}\Lambda_k(\lambda)$ whose associated $J_{\vv}$ is a 1-periodic orthogonal Jacobi field along $\gamma$, where orthogonal means that $g(J_{\vv},\dot\gamma)\equiv0$. In particular, $\yy$ is a non-degenerate critical point of $E_k$ if and only if $S^1\cdot\gamma$ is a non-degenerate critical circle of $E$, if and only if the unit-speed reparametrization of $\gamma$ is a non-degenerate $E(\gamma)^{1/2}$-periodic orbit of the geodesic flow as defined just before Definition~\ref{d:stable_link}.
We refer the reader to, e.g., \cite[Section~4]{Mazzucchelli:2016aa}), for the  proofs of these classical facts.

There is one last index of homological nature that can be associated to an isolated closed geodesic $\gamma=\gamma_{\yy}$. We  recall the construction in the setting $\Lambda_k(\lambda)$, and refer the reader to, e.g., \cite{Rademacher:1992aa,Bangert:2010aa,Asselle:2018aa} for more details. We equip $\Lambda_k(\lambda)$ with the Riemannian metric $g|_\lambda\oplus g\oplus...\oplus g$, and denote by $\theta^t$ the flow of the anti-gradient $-\nabla E_k$. 
Since we did not require the curve $\lambda$ introduced above to be closed, $\Lambda_k(\lambda)$ is not necessarily a complete Riemannian manifold and the flow lines of $\theta^t$ may not be defined for all time. This will not cause any issue, since we will only work locally near the critical point $\yy$. We will always tacitly slow down the orbits of $\theta^t$ away from a neighborhood of $\yy$ in which we work and assume that $\theta^t:\Lambda_k(\lambda)\to\Lambda_k(\lambda)$ is well defined for all $t\in\R$. We denote the flowout of a subset $W\subset\Lambda_k(\lambda)$ by 
\[\Theta(W)=\bigcup_{t\geq0} \theta^t(W).\] 
Moreover, we denote \[W^{<\ell}:=\big\{\xx\in W\ \big|\ E_k(\xx)<\ell^2\big\}.\] 
A neighborhood $W\subset \Lambda_k(\lambda)$ of $\yy$ is called a \emph{Gromoll-Meyer neighborhood} when 
\begin{itemize}
\setlength{\itemsep}{5pt}
 \item $W\cap\crit(E_k)=\{\yy\}$,
 \item $\Theta(W)\setminus W\subset \Lambda_k(\lambda)^{<\ell-\delta}$ for $\ell=E_k(\yy)^{1/2}$ and for some $\delta>0$.
\end{itemize}
This notion can be associated to isolated critical points of arbitrary functions, and a simple argument from Morse theory shows that any such critical point admits an arbitrarily small Gromoll-Meyer neighborhood.
The \emph{local homology} of $\yy$ is the relative homology group
\begin{align*}
 C_*(\yy):=H_*(W,W^{<\ell-\delta}).
\end{align*}
The notation suggests that the local homology is independent of the choice of the Gromoll-Meyer neighborhood and of the small constant $\delta>0$, as can be easily showed by means of a deformation argument with the anti-gradient flow $\theta^t$.
Since $\gamma$ is an isolated closed geodesic, and thus $\yy$ is an isolated critical point of $E_k$, the local homology $C_*(\yy)$ is finitely generated \cite[page 364]{Gromoll:1969aa}. Furthermore, if $\gamma$ is non-degenerate, the Morse lemma \cite[Lemma~2.2]{Milnor:1963aa} implies that 
\begin{align}
\label{e:loc_hom_1}
 C_d(\yy)
 \cong
\left\{
  \begin{array}{@{}ll}
    \Z, & \mbox{if } d=\ind(\gamma), \vspace{5pt}\\ 
    0, & \mbox{if } d\neq\ind(\gamma).
  \end{array}
\right.
\end{align}

\begin{figure}
\begin{footnotesize}
\includegraphics{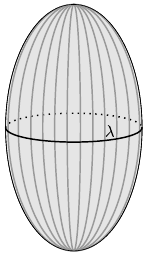}
\end{footnotesize}
\caption{The family of meridians (in gray) in a 2-sphere of revolution, intersecting the equator $\lambda$ orthogonally.}
\label{f:meridians}
\end{figure}%

In the proof of Theorem~\ref{mt:sphere}, we will also need to consider non-isolated closed geodesics, and more precisely the meridians of a 2-sphere of revolution (Figure~\ref{f:meridians}). More generally, the setting is the following. Let $\lambda$ be an embedded circle in the closed Riemannian surface $(M,g)$, and $Z\subset\crit^+(E)$ be a circle of closed geodesics such that each $\gamma\in Z$ intersects $\lambda$ orthogonally at $\gamma(0)$. Notice that $S^1\cdot Z\subset\crit^+(E)$ is a critical torus, and assume that it is isolated in $\crit^+(E)$. We denote by $\ell:=E(Z)^{1/2}$ the corresponding length, and choose the integer $k$ to be large enough so that $\ell<k\,\inj(M,g)$. The circle 
\[Z_k:=\big\{\xx\in\Lambda_k(\lambda)\ \big|\ \gamma_{\xx}\in Z\big\}\]
is a connected component of $\crit(E_k)$, and admits an arbitrarily small Gromoll Meyer neighborhood $W\subset\Lambda_k(\lambda)$, which is a neighborhood satisfying $W\cap\crit(E)=Z_k$ and $\Theta(W)\setminus W\subset \Lambda_k(\lambda)^{<\ell-\delta}$ for some $\delta>0$. The local homology of $Z_k$ is the relative homology group 
\begin{align*}
 C_*(Z_k):=H_*(W,W^{<\ell-\delta}).
\end{align*}
Assume that $Z_k$ is a non-degenerate critical circle, meaning that
\[
\ker(d^2E_k(\xx))=T_{\xx}Z_k,\qquad\forall \xx\in Z_k.
\]
Equivalently, $\nul(\xx)=\dim(Z_k)$. Let $N\to Z_k$ be the negative bundle, which is the vector bundle whose fibers $N_{\xx}$ consist of the direct sum of the negative eigenspaces of $d^2E_k(\xx_s)$. Let $0_N\subset N$ be the zero-section of $N$. By Morse-Bott lemma \cite[Lemma~3.51]{Banyaga:2004aa}, the local homology of $Z_k$ is is given by
\begin{align}
\label{e:loc_hom_2}
C_*(Z_k) \cong H_*(N,N\setminus0_N).
\end{align}
Notice that all critical points $\xx\in Z_k$ have the same Morse index $\ind(Z_k):=\ind(\xx)$, which is the rank of $N$.
If the negative bundle $N$ is orientable, Thom isomorphism theorem implies $H_*(N,N\setminus0_N)\cong H_{*-\ind(Z_k)}(Z_k)$, and since $Z_k$ is a circle we conclude
\begin{align}
\label{e:loc_hom_3}
C_d(Z_k)  
 \cong
\left\{
  \begin{array}{@{}ll}
    \Z, & \mbox{if } d\in\{\ind(Z_k),\ind(Z_k)+1\}, \vspace{5pt}\\ 
    0, & \mbox{if }d\not\in\{\ind(Z_k),\ind(Z_k)+1\}.
  \end{array}
\right.
\end{align}

\section{Morse theory within a flat knot type}\label{s:Morse}

\subsection{Neighborhoods of compact subsets of closed geodesics}\label{ss:nbhds}

Let $(M,g)$ be a closed  surface, $\bm\zeta=(\zeta_1,...,\zeta_n)$ be either the empty link (when $n=0$) or a flat link of closed geodesics, and $\KK$ a primitive flat knot type relative to $\bm\zeta$. We denote by
\begin{align*}
 \Gamma(\KK)=\Gamma_g(\KK)
\end{align*}
the subset of closed geodesics in $\KK$. 
We define the \emph{$\KK$-spectrum}  
\begin{align*}
\sigma(\KK) = \sigma_g(\KK) := \big\{ L(\gamma)\ \big|\ \gamma\in\Gamma(\KK) \big\}. 
\end{align*}
The variational characterization of closed geodesics, together with Sard theorem, implies that $\sigma(\KK)$ is a closed subset of $\R$ of zero Lebesgue measure.
In the next subsection we shall introduce the notion of local homology of a compact set of closed geodesic of a given length in the setting of the curve shortening flow. As a preliminary step, in this subsection we extend to this setting the notion of Gromoll-Meyer neighborhood \cite{Gromoll:1969aa,Gromoll:1969ab}, already encountered in Section~\ref{ss:energy}.

We recall that, for each $\gamma\in\Omega$, the curve shortening flow trajectory $t\mapsto\phi^t(\gamma)$ is defined on the maximal interval $[0,t_\gamma)$. We set
\begin{align}
\label{e:exit_time_K}
 t_{\gamma,\KK}:=t_{g,\gamma,\KK}:=\sup\big\{ t\in(0,t_{\gamma})\ \big|\ \phi^t(\gamma)\in\KK \big\},
\end{align}
so that $[0,t_{\gamma,\KK})$ is the maximal interval such that the trajectory $t\mapsto\phi^t(\gamma)$ stays in $\KK$.
For each subset $\mathcal Y\subset\KK$, we denote its flowout in $\KK$ under the curve shortening flow by 
\begin{align*}
 \Phi(\mathcal Y)
 =
 \Phi_g(\mathcal Y)
 :=
 \big\{ \phi^t(\gamma)\ \big|\ \gamma\in \mathcal Y,\ t\in[0,t_{\gamma,\KK}) \big\}.
\end{align*} 
We recall that $\Omega$ is endowed with the quotient $C^3$ topology. Since we will also consider subsets that are open in the coarser $C^2$ topology, we will always specify whether a subset is $C^3$-open or $C^2$-open.

\begin{Lemma}\label{l:encapsulated_nbhds}
For each compact interval $[a,b]\subset(0,\infty)$ and for each $C^2$-open neighborhood $\UU$ of $\Gamma(\KK)\cap L^{-1}([a,b])$ there exist $\delta>0$ and a $C^3$-open neighborhood $\VV\subset\UU$ of $\Gamma(\KK)\cap L^{-1}([a,b])$ such that, whenever $\gamma\in\VV$ and $\phi^t(\gamma)\not\in\UU$ for some $t\in(0,t_\gamma)$, we have $L(\gamma)-L(\phi^t(\gamma))\geq\delta$.
\end{Lemma}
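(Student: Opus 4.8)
The plan is to argue by contradiction and extract a convergent sequence, using the curvature control of Proposition~\ref{p:curvature_control} to ensure that the limit is a closed geodesic of $\KK$ not lying in $\UU$, contradicting that $\UU$ is a neighborhood of $\Gamma(\KK)\cap L^{-1}([a,b])$. Suppose no such $\delta$ and $\VV$ exist. Then for every $m\geq1$ we can find $\gamma_m\in\Omega$ with $d_{C^3}(\gamma_m,\Gamma(\KK)\cap L^{-1}([a,b]))<1/m$ (so in particular $\gamma_m\in\KK$ for large $m$ and $L(\gamma_m)\to[a,b]$ up to a subsequence) and a time $t_m\in(0,t_{\gamma_m})$ with $\phi^{t_m}(\gamma_m)\notin\UU$, yet $L(\gamma_m)-L(\phi^{t_m}(\gamma_m))<1/m$. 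By choosing $t_m$ minimal with $\phi^{t_m}(\gamma_m)\notin\UU$ (possible since $\UU$ is $C^2$-open and $\gamma_m$ converges $C^3$, hence $C^2$, into $\UU$, so $\gamma_m\in\UU$ for large $m$ and the exit time is positive), we may assume $\phi^{t}(\gamma_m)\in\overline\UU$ for all $t\in[0,t_m]$ and $\phi^{t_m}(\gamma_m)\in\partial\UU$ (boundary in the $C^2$ topology).

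The key point is that along the orbit segment $[0,t_m]$ the length drops by less than $1/m$, so by \eqref{e:derivative_length} the $L^2$-norm of the curvature, integrated over $[0,t_m]$, is less than $1/m$. I then want to apply Proposition~\ref{p:curvature_control} starting from a time just before $t_m$: since $L(\gamma_m)\in[a,b]$ (for $m$ large, after shrinking $[a,b]$ slightly to a compact subinterval $[a',b']$ containing the limit of $L(\gamma_m)$, and noting $L$ is non-increasing along the flow so all $L(\phi^t(\gamma_m))$ stay in a fixed compact subinterval of $(0,\infty)$), and since the total length loss is $o(1)$, one shows that $\phi^{t_m}(\gamma_m)$ has small $L^2$-norm of curvature — indeed $\|K\|_{L^2}^2 \le L(\gamma_m) - L(\phi^{t_m}(\gamma_m)) < 1/m$ at generic nearby times, and by the parabolic smoothing built into Proposition~\ref{p:curvature_control} (applied to $\gamma_0 := \phi^{s_m}(\gamma_m)$ for a suitable $s_m < t_m$ with $\|K_{s_m}\|_{L^2}+\|\dot K_{s_m}\|_{L^2}$ small), one gets $\|K_{\phi^{t_m}(\gamma_m)}\|_{L^\infty}\to 0$. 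Combined with the uniform $C^0$-bound on $\phi^{t_m}(\gamma_m)$ coming from $\overline\UU$ being contained in a fixed compact region and the length bound, Arzelà–Ascoli-type compactness (bootstrapping via the curve shortening PDE / elliptic estimates for near-geodesics) yields a subsequence of $\phi^{t_m}(\gamma_m)$ converging in $C^3$ — indeed in $C^\infty$ — to a closed geodesic $\gamma_\infty$ with $L(\gamma_\infty)\in[a,b]$.

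Finally I must check that $\gamma_\infty\in\KK$ and $\gamma_\infty\notin\UU$, which is the contradiction. For $\gamma_\infty\notin\UU$: since $\phi^{t_m}(\gamma_m)\in\partial\UU$ and the convergence is in $C^2$, the limit lies in $\overline\UU\setminus\interior\UU$ — here one needs $\UU$ genuinely $C^2$-open so that $\gamma_\infty\notin\UU$, or more carefully, $\gamma_\infty\in\partial\UU$, hence $\gamma_\infty\notin\UU$. For $\gamma_\infty\in\KK$: the loops $\phi^t(\gamma_m)$ for $t\in[0,t_m]$ stay in $\overline\UU\subset$ (a slightly enlarged) region, and more to the point, since $\gamma_m\in\KK$ and $\phi^{t_m}(\gamma_m)\in\overline\UU$ is $C^2$-close to $\gamma_\infty$, one uses Lemma~\ref{l:Angenent_primitive}(ii): if $\gamma_\infty\in\overline\KK$ then the whole segment lies in $\KK$; and since $\gamma_\infty$ is a closed geodesic with $L(\gamma_\infty)\in[a,b]$, if it lay in $\KK$ it would lie in $\Gamma(\KK)\cap L^{-1}([a,b])\subset\UU$, contradicting $\gamma_\infty\in\partial\UU$. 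So one must instead rule out $\gamma_\infty\in\partial_-\KK$ or $\gamma_\infty$ otherwise on $\partial\KK$; but $\KK$ being a \emph{primitive} relative flat knot type, $\partial\KK$ contains no closed geodesics that are limits from within $\KK$ along the flow — more precisely, a closed geodesic on $\partial\KK$ would be a fixed point of $\phi^t$ on $\overline\KK$, and by Lemma~\ref{l:Angenent_primitive}(i)--(ii) a fixed point reachable from $\KK$ must lie in $\overline\KK$ with its whole past orbit in $\KK$; combined with the exit-time continuity (Lemma~\ref{l:Angenent_exit_time}) and the fact that geodesics in $\Gamma(\KK)$ form a compact subset of $\KK$ away from $\partial\KK$ in any fixed length window, one concludes $\gamma_\infty\in\Gamma(\KK)\cap L^{-1}([a,b])\subset\UU$, the desired contradiction.

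The main obstacle I anticipate is precisely this last step: controlling the flat-knot type of the limit $\gamma_\infty$ and excluding that the exiting orbits escape to the boundary $\partial\KK$ (where singularities or tangencies with $\bm\zeta$ could occur) rather than to a genuine geodesic inside $\KK$. This is where primitivity of $\KK$ and Angenent's structural lemmas (Lemma~\ref{l:Angenent_primitive}, Lemma~\ref{l:subloops}, Lemma~\ref{l:Angenent_exit_time}) must be used carefully — in particular, one may need to first prove that $\Gamma(\KK)\cap L^{-1}([a,b])$ is compact in $\KK$ (so that $\UU$ can be taken with $\overline\UU\cap\partial\KK=\varnothing$, after possibly shrinking), which itself follows from Lemma~\ref{l:subloops} and the fact that $\KK$ is primitive so no sequence of geodesics in $\KK$ of bounded length degenerates onto $\partial\KK$.
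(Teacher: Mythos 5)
Your proof-by-contradiction-and-compactness is a genuinely different route from the paper's: the paper proves the lemma directly by exhibiting a fundamental system of $C^2$-neighborhoods $\UU(\epsilon)=\{\gamma\in\KK\ :\ L(\gamma)\in(a-\epsilon,b+\epsilon),\ \|K_\gamma\|_{L^\infty}<\epsilon\}$ and $C^3$-open sets $\VV(\delta)=\{\gamma\in\KK\ :\ L(\gamma)\in(a-\delta,b+\delta),\ \|K_\gamma\|_{L^2}+\|\dot K_\gamma\|_{L^2}<\delta\}$, and then plugging these into Proposition~\ref{p:curvature_control} in one stroke. That direct route avoids having to pass to a limit at all.

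There is, however, a genuine gap in how you invoke Proposition~\ref{p:curvature_control}. You pick $\gamma_m$ in the $C^3$-ball of radius $1/m$ and $\delta_m=1/m$; then $\|K_{\gamma_m}\|_{L^2}+\|\dot K_{\gamma_m}\|_{L^2}=:\epsilon_m$ is $O(1/m)$, so the Proposition only yields the dichotomy ``$\|K_{t}\|_{L^\infty}\le c\epsilon_m$ or $L(\gamma_m)-L(\gamma_t)\ge\epsilon_m^2$'', and $\epsilon_m^2=O(1/m^2)\ll1/m=\delta_m$, so your hypothesis $L(\gamma_m)-L(\phi^{t_m}(\gamma_m))<1/m$ does \emph{not} exclude the second alternative. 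Your proposed fix --- restarting the Proposition at a time $s_m<t_m$ where $\|K_{s_m}\|_{L^2}+\|\dot K_{s_m}\|_{L^2}$ is small, via ``parabolic smoothing built into Proposition~\ref{p:curvature_control}'' --- does not work: the Proposition takes the $W^{1,2}$ smallness of $K$ as a \emph{hypothesis at time $0$} and does not output such smallness at later times, and the length-drop identity $\tfrac{d}{dt}L(\gamma_t)=-\|K_t\|_{L^2}^2$ controls only $\int\|K_t\|_{L^2}^2\,dt$, giving no control whatsoever on $\|\dot K_{s_m}\|_{L^2}$. The correct fix is simply to decouple the two scales: either apply the Proposition with parameter $\epsilon=\max(\epsilon_m,\,m^{-1/2})$ so that $\epsilon^2\ge1/m$ excludes the second alternative, or --- cleaner and essentially what the paper does --- choose $\VV_m=\VV(\epsilon_m)$ and $\delta_m<\epsilon_m^2$ so that the Proposition applies verbatim.

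Your last step (identifying the limit $\gamma_\infty$ as lying in $\Gamma(\KK)\cap L^{-1}([a,b])$) is also muddled, and you correctly flag it as the weak point. Your appeal to Lemma~\ref{l:Angenent_primitive}(ii) does not apply as stated (it needs $\phi^{t_m}(\gamma_m)\in\overline\KK$, which is precisely what you have not established), and you conflate ``$\gamma_\infty\in\partial\KK$'' with ``$\gamma_\infty\notin\overline\KK$''. The clean observation you want is: for a \emph{primitive} relative flat knot type $\KK$, no closed geodesic lies on $\partial\KK$, because a self-tangency of a geodesic forces it to be iterated (a negative self-tangency would force a zero of $\dot\gamma$), and a geodesic tangency with a component $\zeta_i$ of $\bm\zeta$ forces $\gamma=\zeta_i$ by uniqueness of geodesics --- both excluded by primitivity. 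You must then separately rule out the orbit having already exited $\overline\KK$ before $t_m$ (so that $\gamma_\infty$ is a geodesic of a different flat knot type); here the uniform $L^\infty$-curvature bound from the (corrected) application of Proposition~\ref{p:curvature_control}, which holds on the whole interval $[0,t_m]$, is exactly what forbids the orbit from ever reaching $\Delta(\bm\zeta)$: a loop crossing $\partial\KK$ with uniformly small curvature and length in $[a,b]$ would, after passing to a limit, give a closed geodesic in $\Delta(\bm\zeta)$, which primitivity again excludes. This is precisely the compactness content hidden inside the paper's one-line assertion that $\{\UU(\epsilon)\}_{\epsilon>0}$ is a fundamental system of $C^2$-neighborhoods.
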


\begin{proof}
For each $\gamma\in\Omega$, we denote by $K_\gamma$ the signed geodesic curvature of its arclength parametrization with respect to a normal vector field, as in Section~\ref{ss:curvature}.
The $C^3$ topology on $\Omega$ guarantees that the function $\gamma\mapsto\|\dot K_\gamma\|_{L^\infty}$ is continuous.
The family of subsets
\[
\UU(\epsilon):=\big\{ \gamma\in\KK\ \big|\ L(\gamma)\in(a-\epsilon,b+\epsilon),\ \|K_\gamma\|_{L^\infty}<\epsilon \big\},\ \epsilon>0,
\]
is a fundamental system of $C^2$-open neighborhoods of the compact set of closed geodesics $\Gamma(\KK)\cap L^{-1}([a,b])$. For each $\delta>0$, the subset
\[
\VV(\delta):=\big\{ \gamma\in\KK\ \big|\ L(\gamma)\in(a-\delta,b+\delta),\ \|K_\gamma\|_{L^2}+\|\dot K_\gamma\|_{L^2}<\delta \big\}
\]
is a $C^3$-open neighborhood of $\Gamma(\KK)\cap L^{-1}([a,b])$. Therefore, the statement is a direct consequence of Proposition~\ref{p:curvature_control}.
\end{proof}

We recall that, just before its limit time defined in~\eqref{e:exit_time_K}, any orbit of the curve shortening flow starting in $\KK$ reaches the subsets $\overline\KK_\rho$ defined in \eqref{e:exit_and_subloops}, for $\rho>0$ arbitrarily small.
For each subset $\WW\subset\Omega$ and $b\in(0,\infty)$, we denote
\begin{align*}
 \WW^{<b}=\WW_g^{<b}:=\big\{\gamma\in\WW\ \big|\ L(\gamma)<b\big\}.
\end{align*}
Consider a spectral value $\ell\in\sigma(\KK)$, and a connected component $Z$ of the compact subset of closed geodesics $\Gamma(\KK)\cap L^{-1}(\ell)$. 
We define a \emph{Gromoll-Meyer neighborhood} of $Z$ to be a (not necessarily open) neighborhood $\VV\subset\KK$ of $Z$ such that
\begin{itemize}
\setlength{\itemsep}{5pt}

\item $\VV\cap\overline\KK_\rho=\varnothing$ for some $\rho>0$, 

\item $\VV\cap\Gamma(\KK)\cap L^{-1}(\ell)=Z$, 

\item $\Phi(\VV)\setminus\VV\subset\KK^{<\ell-\delta}$ for some $\delta>0$.

\end{itemize}
While in standard Morse theoretic settings such as in Section~\ref{ss:energy} one can always construct arbitrarily small open Gromoll-Meyer neighborhoods, in our setting of the curve shortening flow we can only insure the existence of arbitrarily $C^2$-small Gromoll-Meyer neighborhoods, essentially as a consequence of Lemma~\ref{l:encapsulated_nbhds}. We stress that Gromoll-Meyer neighborhoods are not $C^3$-open, but of course they must contain a $C^3$-open neighborhood of $Z$.

\begin{Lemma}\label{l:GM_nbhds}
Any connected component of $\Gamma(\KK)\cap L^{-1}(\ell)$ admits an arbitrarily $C^2$-small Gromoll-Meyer neighborhood.
\end{Lemma}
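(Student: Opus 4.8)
The plan is to mimic the classical Morse-theoretic construction of Gromoll-Meyer neighborhoods, but using the curve shortening semi-flow $\phi^t$ in place of an anti-gradient flow, with Lemma~\ref{l:encapsulated_nbhds} providing the quantitative ``energy drop outside a neighborhood'' estimate that substitutes for the usual Palais-Smale/gradient-bound arguments. Fix $\ell\in\sigma(\KK)$ and a connected component $Z$ of $\Gamma(\KK)\cap L^{-1}(\ell)$, together with a prescribed $C^2$-open neighborhood $\UU_0$ of $Z$ inside which we want the Gromoll-Meyer neighborhood to sit. First I would choose a compact interval $[a,b]\subset(0,\infty)$ with $\ell\in(a,b)$ so small that $\sigma(\KK)\cap[a,b]=\{\ell\}$; this is possible because $\sigma(\KK)$ is closed of measure zero, hence has isolated points dense nowhere, but more importantly because $\ell$ is isolated in $\sigma(\KK)$ is \emph{not} automatic — so I actually need to also shrink using the fact that $\Gamma(\KK)\cap L^{-1}([a,b])$ is compact and its components at level $\ell$ are finitely many; pick $[a,b]$ so that the only component of $\Gamma(\KK)\cap L^{-1}([a,b])$ meeting a fixed small $C^2$-ball around $Z$ is $Z$ itself, and so that the other spectral values in $[a,b]$ (if any) are realized only by geodesics far from $Z$. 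Then shrink $\UU_0$ to a $C^2$-open $\UU\subset\UU_0$ with $\overline\UU\cap\Gamma(\KK)\cap L^{-1}([a,b])=Z$ and $\overline\UU\cap\overline\KK_\rho=\varnothing$ for some $\rho\in(0,\rho_g)$ (possible since $Z$ consists of closed geodesics, which have no $\rho$-subloops and are not in $\partial_-\KK$). Apply Lemma~\ref{l:encapsulated_nbhds} to $[a,b]$ and $\UU$ to obtain $\delta>0$ and a $C^3$-open $\VV_0\subset\UU$ with the stated energy-drop property.

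Next I would define the candidate Gromoll-Meyer neighborhood by the standard recipe adapted to a semi-flow: roughly,
\[
\VV:=\big\{\gamma\in\VV_0\ \big|\ \phi^t(\gamma)\in\VV_0\ \text{for all }t\in[0,t_{\gamma,\KK})\ \text{with}\ L(\phi^t(\gamma))\geq\ell-\tfrac{\delta}{2}\big\},
\]
i.e.\ the set of loops whose forward trajectory stays in $\VV_0$ until (and if) the length has dropped by $\delta/2$. One should also intersect with a sublevel-type modification near the exit, as in Gromoll-Meyer, to make $\VV$ forward-invariant in the appropriate sense; the precise bookkeeping is: $\gamma\in\VV$ iff the trajectory $\phi^t(\gamma)$ does not leave $\VV_0$ before $L$ drops below $\ell-\delta/2$. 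I would then verify the three defining properties. For $\VV\cap\overline\KK_\rho=\varnothing$: this is inherited from $\overline\UU\cap\overline\KK_\rho=\varnothing$ since $\VV\subset\VV_0\subset\UU$. For $\VV\cap\Gamma(\KK)\cap L^{-1}(\ell)=Z$: a closed geodesic in $\VV_0$ at level $\ell$ lies in $\overline\UU\cap\Gamma(\KK)\cap L^{-1}([a,b])=Z$, and conversely $Z\subset\VV$ because each $\gamma\in Z$ is a fixed point of $\phi^t$ with $L\equiv\ell$, so the trajectory never drops below $\ell-\delta/2$ and stays in $\VV_0$. For $\Phi(\VV)\setminus\VV\subset\KK^{<\ell-\delta}$: if $\phi^{t_0}(\gamma)\in\Phi(\VV)\setminus\VV$ with $\gamma\in\VV$, then along the way the trajectory either left $\VV_0$ or dropped below $\ell-\delta/2$; in the first case Lemma~\ref{l:encapsulated_nbhds} (with the neighborhood $\UU$) forces $L(\phi^{t_0}(\gamma))\leq L(\gamma)-\delta<\ell+\eta-\delta$, which for $[a,b]$ chosen narrow enough gives $L(\phi^{t_0}(\gamma))<\ell-\delta$ — here I'd use that $\gamma\in\VV_0\subset\UU$ has $L(\gamma)$ close to a value in $[a,b]$ and refine constants so the arithmetic closes; in the second case, once $L$ has dropped below $\ell-\delta/2$ it keeps decreasing by \eqref{e:derivative_length}, and a further application of the same estimate (or just continuing the flow until it exits $\VV_0$, which must happen before reaching $\overline\KK_\rho$) pushes it below $\ell-\delta$ after possibly enlarging the drop threshold. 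The cleanest route is to set the interior threshold at $\ell-\delta/2$ but prove the exit estimate lands below $\ell-\delta$ by invoking Lemma~\ref{l:encapsulated_nbhds} once more on the tail of the trajectory.

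Finally, to get $\VV$ \emph{arbitrarily $C^2$-small}: since $\UU_0$ was an arbitrary $C^2$-open neighborhood of $Z$ and $\VV\subset\UU_0$, shrinking $\UU_0$ shrinks $\VV$; and $\VV$ does contain a $C^3$-open neighborhood of $Z$ because $\VV_0$ is $C^3$-open, $Z\subset\VV_0$, and continuity of $(t,\gamma)\mapsto\phi^t(\gamma)$ together with compactness of $Z$ (all of whose points are fixed) shows that a small enough $C^3$-neighborhood of $Z$ has trajectories staying in $\VV_0$ until length drops below $\ell-\delta/2$ — this uses Lemma~\ref{l:Angenent_primitive} and Lemma~\ref{l:Angenent_exit_time} to control that trajectories starting $C^3$-near $Z$ remain in $\KK$ for a definite amount of length-decrease. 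The main obstacle I anticipate is exactly this last continuity/compactness point combined with the semi-flow's lack of backward trajectories and its possible finite-time singularities: I must ensure the definition of $\VV$ is genuinely forward-stable, i.e.\ that $\phi^s(\VV)\cap\{L\geq\ell-\delta/2\}\subset\VV$, which requires carefully ordering the events ``leaves $\VV_0$'' versus ``drops below threshold'' and using that on $\overline\UU$, away from $\overline\KK_\rho$, Lemma~\ref{l:subloops}(i) guarantees any singularity formation is preceded by the appearance of a $\rho$-subloop, hence by entry into $\overline\KK_\rho$, which $\VV_0\subset\UU$ precludes — so trajectories from $\VV$ are nonsingular on the relevant time interval and the semi-flow behaves, locally, like an honest flow. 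Handling this cleanly, rather than the routine index/constant juggling, is where the real work lies.
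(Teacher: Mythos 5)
The key ingredient is the same (Lemma~\ref{l:encapsulated_nbhds} provides the quantitative length-drop estimate), but the set you build and the verification you sketch do not work, and the difficulty you flag at the end is real and not resolved by your argument.

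Your candidate $\VV$ consists of loops whose forward trajectory stays inside $\VV_0$ for as long as $L\geq\ell-\delta/2$. The problem is that Lemma~\ref{l:encapsulated_nbhds} only controls exit from the larger set $\UU$: if a trajectory starting $C^3$-near $Z$ wanders out of $\VV_0$ while remaining inside $\UU$ and with length still close to $\ell$, the length-drop estimate says nothing, so such a starting point is not in $\VV$ even though it is arbitrarily close to $Z$. Your attempted rescue — that ``continuity of $(t,\gamma)\mapsto\phi^t(\gamma)$ together with compactness of $Z$'' forces nearby trajectories to stay in $\VV_0$ until the length drops — fails precisely because the relevant time scale is not bounded: near a degenerate closed geodesic in $Z$, the curve shortening flow can take arbitrarily long (indeed infinite) time to decrease the length by $\delta/2$, and one cannot extract $C^3$-convergent subsequences from $\partial\VV_0$ to derive a contradiction. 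So the claim that $\VV$ contains a $C^3$-open neighborhood of $Z$ is unsubstantiated, and I do not see how to repair it along these lines. A secondary slip: you apply Lemma~\ref{l:encapsulated_nbhds} with $\UU$ a neighborhood of $Z$ alone, but the lemma requires the input $\UU$ to be a neighborhood of all of $\Gamma(\KK)\cap L^{-1}([a,b])$; the other components $Z'$ at level $\ell$ must be covered by a disjoint open set $\UU'$, and the lemma applied to $\UU\cup\UU'$.

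The paper sidesteps both problems with a much leaner construction. Let $\UU$ and $\UU'$ be small disjoint $C^2$-open neighborhoods of $Z$ and $Z':=\Gamma(\KK)\cap L^{-1}(\ell)\setminus Z$; apply Lemma~\ref{l:encapsulated_nbhds} to $\UU\cup\UU'$ to get $\delta>0$ and $C^3$-open $\VV\subset\UU$ of $Z$ (and $\VV'$ of $Z'$) with the $2\delta$ length-drop. The Gromoll-Meyer neighborhood is then simply the flowout intersected with a length band,
\[
\WW:=\Phi(\VV)\cap L^{-1}(\ell-\delta,\ell+\delta).
\]
This is automatically a neighborhood of $Z$ because it contains the $C^3$-open set $\VV\cap L^{-1}(\ell-\delta,\ell+\delta)\supset Z$, with no flow estimate needed. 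Forward-invariance within the band is built in since $\Phi(\Phi(\VV))=\Phi(\VV)$, so $\Phi(\WW)\setminus\WW\subset\KK^{<\ell-\delta}$ is immediate from monotonicity of $L$. And $\WW\subset\UU$ (hence disjoint from $\UU'$ and, after shrinking, from $\overline\KK_\rho$) follows from the length-drop estimate plus the connectedness of each trajectory segment starting in $\VV\subset\UU$: while $L>\ell-\delta$, the trajectory cannot leave $\UU\cup\UU'$, and being connected it stays in $\UU$. In short, replace your ``stay in $\VV_0$'' set by the flowout of $\VV_0$ intersected with a length band, and the neighborhood property becomes trivial rather than the hard part.
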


\begin{proof}
Let $Z$ be a connected component of $\Gamma(\KK)\cap L^{-1}(\ell)$, and we set $Z':=\Gamma(\KK)\cap L^{-1}(\ell)\setminus Z$ to be the union of the remaining connected components. We consider two arbitrarily small disjoint $C^2$-open neighborhoods $\UU$ and $\UU'$ of $Z$ and $Z'$ respectively. By Lemma~\ref{l:encapsulated_nbhds}, there exists $\delta>0$, a $C^3$-open neighborhood $\VV\subset\UU$ of $Z$, and a $C^3$-open neighborhood $\VV'$ of  $Z'$ such that, whenever $\gamma\in\VV\cup\VV'$ and $\phi^t(\gamma)\not\in\UU\cup\UU'$ for some $t\in(0,t_\gamma)$, we have $L(\gamma)-L(\phi^t(\gamma))\geq2\delta$. The intersection 
\[
\WW:=\Phi(\VV)\cap L^{-1}(\ell-\delta,\ell+\delta)
\]
is a Gromoll-Meyer neighborhood of $Z$ contained in $\UU$.
\end{proof}

\subsection{Local homology}\label{ss:hom_visible}
Let $\ell\in\sigma(\KK)$ be a spectral value, and $Z$ a connected component of $\Gamma(\KK)\cap L^{-1}(\ell)$. We assume that $Z$ is an \emph{isolated} family of closed geodesics, that is, it admits a neighborhood $\UU\subset\KK$ such that $\UU\cap\Gamma(\KK)=Z$.
The \emph{local homology} of $Z$ is the relative homology group with integer coefficients
\begin{align*}
C_*(Z)
:=
H_*(\UU\cup\KK^{<\ell-\delta},\KK^{<\ell-\delta}),
\end{align*}
where $\UU$ is a Gromoll-Meyer neighborhood of $Z$ such that $\UU\cap\Gamma(\KK)=Z$, and $\delta\geq0$ is small enough. By a simple deformation argument employing the curve shortening flow and Lemma~\ref{l:encapsulated_nbhds}, one readily sees that $C_*(Z)$ is independent of the choice of  $\UU$ and  $\delta$ (we stress that this is the case also for $\delta=0$). Moreover,  $C_*(Z)$ depends only on the Riemannian metric $g$ in a neighborhood of $Z$. More precisely, for any $C^2$-open neighborhood $\VV\subset\KK$ of $Z$, we can choose the Gromoll-Meyer neighborhood $\UU$ to be contained in $\VV$, and by the excision property of singular homology  the inclusion induces an isomorphism
\begin{align*}
 H_*(\UU\cup\VV^{<\ell-\delta},\VV^{<\ell-\delta})
 \ttoup^{\cong}
 C_*(Z).
\end{align*}
The family of closed geodesics $Z$ is called \emph{homologically visible} when it is isolated and has non-trivial local homology.

In this paper, we will particularly need to consider two kind of isolated compact sets of closed geodesics $Z\subset\Gamma(\KK)$. With an abuse of notation, we will describe them them as families of parametrized closed geodesics in the set of critical points $\crit^+(E)\subset\Lambda$, in the setting of  Section~\ref{ss:energy}.

\begin{itemize}
\setlength{\itemsep}{5pt}

\item[(i)] $Z\subset\crit^+(E)\cap E^{-1}(\ell^2)$ is a circle of closed geodesics, and each $\gamma\in Z$ intersects a simple closed geodesic $\lambda\subset M$ orthogonally at $\gamma(0)$ (as in the example of Figure~\ref{f:meridians}).

\item[(ii)] $Z=\{\gamma\}\subset\crit^+(E)\cap E^{-1}(\ell^2)$ is a singleton. In this case, we set $\lambda\subset M$ to be any open geodesic segment intersecting $\gamma$ orthogonally at $\gamma(0)$.

\end{itemize}
We recall that all the loops in $\Omega$ are oriented. In this subsection, on each $\zeta\in\KK$ sufficiently $C^2$-close to some $\gamma\in Z$, we fix the unique parametrization $\zeta:S^1\looparrowright M$ with constant speed $\|\dot\zeta\|_g\equiv L(\zeta)$ and such that $\zeta(0)$ is  an intersection point $\zeta\cap\lambda$; there may be more than one such intersection point, but we choose the unique one that makes $\zeta$ $C^2$-close to $\gamma$ parametrized as in points (i) and (ii).

We fix an integer $k>\ell/\inj(M,g)$, and consider the space of broken geodesic loops $\Lambda_k(\lambda)$ and the energy functional $E_k:\Lambda_k(\lambda)\to[0,\infty)$ introduced in Section~\ref{ss:energy}. We set
\begin{align*}
Z_k:=\big\{\xx\in\Lambda_k(\lambda)\ \big|\ \gamma_{\xx}\in Z \big\},
\end{align*}
We already introduced the local homology $C_*(Z_k)$ and pointed out that it can be studied by means of classical Morse theory. In particular, we know that $C_*(Z_k)$ is always finitely generated, and when $Z_k$ is a non-degenerate critical manifold of $E_k$ it is given by~\eqref{e:loc_hom_1} and~\eqref{e:loc_hom_2}. While certain methods of local Morse theory are not directly available in the setting $\Omega$ of unparametrized oriented immersed loops, we infer analogous properties for the local homology $C_*(Z)$ from those of $C_*(Z_k)$.

\begin{Lemma}\label{l:finitely_generated}
The local homology $C_*(Z)$ is isomorphic to a subgroup of $C_*(Z_k)$. In particular, $C_*(Z)$ is finitely generated.
\end{Lemma}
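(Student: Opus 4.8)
The plan is to compare the two Morse-theoretic settings --- the curve shortening flow semi-flow on the flat knot type $\KK\subset\Omega$, and the anti-gradient flow $\theta^t$ of the restricted energy functional $E_k$ on the finite-dimensional space $\Lambda_k(\lambda)$ --- by exhibiting a continuous map between suitable Gromoll-Meyer pairs that induces the claimed injection on homology. First I would set up the two relevant pairs. On the energy side, pick a Gromoll-Meyer neighborhood $W\subset\Lambda_k(\lambda)$ of $Z_k$ with $W\cap\crit(E_k)=Z_k$ and $\Theta(W)\setminus W\subset\Lambda_k(\lambda)^{<\ell-\delta}$, so that $C_*(Z_k)=H_*(W,W^{<\ell-\delta})$. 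On the curve shortening side, use Lemma~\ref{l:GM_nbhds} to pick a $C^2$-small Gromoll-Meyer neighborhood $\VV\subset\KK$ of $Z$ with $\VV\cap\overline\KK_\rho=\varnothing$, $\VV\cap\Gamma(\KK)\cap L^{-1}(\ell)=Z$, and $\Phi(\VV)\setminus\VV\subset\KK^{<\ell-\delta}$, so that $C_*(Z)=H_*(\VV\cup\KK^{<\ell-\delta},\KK^{<\ell-\delta})$, which by excision equals $H_*(\VV,\VV^{<\ell-\delta})$ up to the deformation retraction along the curve shortening flow (this is exactly the kind of identification justified in Section~\ref{ss:hom_visible}).

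The key geometric input is that every loop $\zeta\in\KK$ that is $C^2$-close to some $\gamma\in Z$ has, by the discussion preceding the lemma, a distinguished constant-speed parametrization with $\zeta(0)\in\lambda$; discretizing this parametrization at the $k$ equally spaced times $i/k$ (using $k>\ell/\inj(M,g)$ so the resulting polygon lies in $\Lambda_k(\lambda)$) gives a continuous, $\Diff_+(S^1)$-invariant map
\[
\iota:\VV\longrightarrow\Lambda_k(\lambda),\qquad \iota(\zeta)=(\zeta(0),\zeta(1/k),\dots,\zeta((k-1)/k)),
\]
well defined provided $\VV$ is taken $C^2$-small enough, with $\iota(Z)=Z_k$ and $E_k(\iota(\zeta))\le L(\zeta)^2$, with equality precisely when $\zeta$ is a (constant-speed) closed geodesic. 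The strict length-drop off the critical set means $\iota$ sends $\VV^{<\ell-\delta'}$ into $\Lambda_k(\lambda)^{<\ell-\delta'}\subset W^{<\ell-\delta}$ for suitable $\delta'$. Then I would argue that after shrinking $\VV$ (and replacing it by a genuine Gromoll-Meyer neighborhood contained in $\iota^{-1}(W)$ --- again Lemma~\ref{l:encapsulated_nbhds} and Lemma~\ref{l:GM_nbhds} guarantee this is possible) $\iota$ restricts to a map of pairs
\[
\iota:(\VV,\VV^{<\ell-\delta'})\longrightarrow(W,W^{<\ell-\delta}).
\]
Applying $H_*$ and combining with the excision/deformation identifications of both local homology groups yields a homomorphism $\iota_*:C_*(Z)\to C_*(Z_k)$.

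It remains to see $\iota_*$ is injective, and this is the step I expect to be the main obstacle. The idea is to produce a one-sided inverse up to the flow deformations: the map $\iota$ has a ``homotopy section'' obtained by viewing a broken geodesic polygon $\xx\in\Lambda_k(\lambda)$ near $Z_k$ as the genuine smooth loop $\gamma_{\xx}\in\Omega$ it represents, which lands in $\KK$ (since $\KK$ is open in $\Omega\setminus\Delta(\bm\zeta)$ and polygons close to $Z$ are close to $Z$ in $C^0$, hence --- after a short time of curve shortening flow to smooth them and regain $C^2$-closeness, using continuity of $\phi^t$ --- inside $\VV$); composing $\xx\mapsto\phi^{t_0}(\gamma_{\xx})$ with $\iota$ is homotopic, through maps of the relevant pairs, to a deformation that on $Z_k$ is the identity and is compatible with the anti-gradient/curve-shortening retractions defining the two local homologies. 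Concretely, the composition $\iota\circ(\text{smoothing})$ is homotopic to the retraction $\theta^{T}$ of $W$ that realizes the identity on $C_*(Z_k)$, while $(\text{smoothing})\circ\iota$ is homotopic within $\VV$ to the curve shortening retraction realizing the identity on $C_*(Z)$; the first of these shows the composite $C_*(Z)\to C_*(Z_k)\to C_*(Z)$ (the latter being smoothing-induced) is the identity, forcing $\iota_*$ to be injective. Finite generation of $C_*(Z)$ then follows from finite generation of $C_*(Z_k)$, which is the classical fact recalled in Section~\ref{ss:energy}. The delicate points to be careful about are: that the short time of curve shortening flow used to smooth polygons keeps us inside the chosen Gromoll-Meyer neighborhoods (controlled by Lemma~\ref{l:encapsulated_nbhds} and the length-monotonicity~\eqref{e:derivative_length}); that the reparametrization ambiguity is harmless because $\iota$ is built from the canonical $\lambda$-based constant-speed parametrization and is $\Diff_+(S^1)$-equivariant; and that all homotopies respect the sublevel sets, which again comes down to the strict length drop away from the critical set together with the fact that $\VV$ avoids $\overline\KK_\rho$ so no subloop collapse interferes.
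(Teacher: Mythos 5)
Your overall strategy---discretize loops near $Z$ by sampling at the $k$ division points to get a map toward $\Lambda_k(\lambda)$, then argue injectivity on local homology---is indeed the paper's strategy, and your $\iota$ is exactly the map the paper calls $r_1$. However, the injectivity step, which you yourself flag as ``the main obstacle,'' contains genuine gaps, and the route you sketch is not the one the paper takes.

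First, your ``homotopy section'' is built by ``viewing a broken geodesic polygon $\xx$ as the genuine smooth loop $\gamma_{\xx}\in\Omega$'' and running ``a short time of curve shortening flow to smooth them.'' But $\gamma_{\xx}$ is only piecewise smooth (it has corners at the sample points), hence is \emph{not} an element of $\Omega=\Imm(S^1,M)/\Diff_+(S^1)$, and the curve shortening flow as developed here is not defined on it. This is not a cosmetic issue: it is the reason the paper introduces the convolution smoothing $f_\epsilon(\gamma)=$ constant-speed reparametrization of $\pi(\gamma*\chi_\epsilon)$, extended continuously to $f_0=\mathrm{id}$, and the careful cutoff $\overline\epsilon(\gamma,\delta)$ controlling the energy increase $\sqrt{E(f_\epsilon(\gamma))}\le\sqrt{E(\gamma)}+\delta/2$. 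Without some such smoothing one cannot even write down the map you want back into $\Omega$, let alone control its length.

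Second, the crux of your argument is the assertion that ``$(\text{smoothing})\circ\iota$ is homotopic within $\VV$ to the curve shortening retraction realizing the identity on $C_*(Z)$,'' from which you deduce $\iota_*$ injective by exhibiting a left inverse. (As written you invoke ``the first of these,'' $\iota\circ(\text{smoothing})\simeq\mathrm{id}_W$, which would show the smoothing map injective, not $\iota_*$; you want the second composition.) But the homotopy within the \emph{local} Gromoll-Meyer pair is precisely what the paper does \emph{not} prove, because convolution can push the length up by as much as $\delta/2$ and can move loops out of $\VV$. The paper's homotopy $j_s$ (combining gradual discretization $r_s$ with gradual smoothing $h_{s\delta}$) is only a map of pairs into the much larger target $(\overline\KK{}^{<\ell+\delta}\cup\overline\KK_\rho,\ \overline\KK{}^{<\ell-\delta/2}\cup\overline\KK_\rho)$. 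To extract injectivity of $r_{1*}$ from this, the paper first proves the \emph{separate} fact that the inclusion of the local Gromoll-Meyer pair of $Z$ into this global pair is injective on homology; this uses excision together with the decomposition $\VV=\VV_1\sqcup\VV_2$ into neighborhoods of $Z$ and of the \emph{other} components of $\Gamma(\KK)\cap L^{-1}(\ell)$, giving a direct-sum splitting of the global relative homology. Your sketch has no analogue of this step. Without it, the homotopy $j_s$ only shows that $r_{1*}$ followed by $h_{\delta*}$ agrees with the inclusion in the big pair---it does not by itself yield $\iota_*$ injective, and it is not a left-inverse argument.

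In short: the sampling map is the right object, but the claimed left inverse by ``smooth and flow back into $\VV$'' is not available as stated (polygons are outside the domain of the flow, and the smoothed loops may leave $\VV$); and the actual mechanism of injectivity is the excision/disjoint-union decomposition of the homology of a global pair, combined with factoring the inclusion as $h_\delta\circ r_1$, which is a different and more delicate argument than the retraction you propose.
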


\begin{proof}
In the proof, we will need to regularize loops near the compact space of closed geodesics $Z$. This can be done by convolution, as follows. We embed $M$ into an Euclidean space $\R^n$, and consider a tubular neighborhood $N\subset\R^n$ of $M$ with associated smooth projection $\pi:N\to M$. Let $\chi:\R\to[0,\infty)$ be a smooth function supported in $[-1,1]$ and whose integral is 1. For $\epsilon>0$, the family of functions 
$\chi_\epsilon:\R\to[0,\infty)$, $\chi_\epsilon(t)=\epsilon^{-1}\chi(\epsilon^{-1}t)$ 
tend to the Dirac delta at the origin as $\epsilon\to0$. On a sufficiently small neighborhood $\XX\subset\Lambda$ of $Z$, for each $\epsilon>0$ small enough we have a continuous map
\begin{align*}
f_\epsilon:\XX \to \Lambda, 
\end{align*}
such that $f_\epsilon(\gamma)$ is the constant speed reparametrization of the loop $\pi(\gamma*\chi_\epsilon)$, the symbol $*$ denoting the convolution operation. We extend the family $f_\epsilon$ continuously at $\epsilon=0$ by setting $f_0:\XX \hookrightarrow \Lambda$ to be the inclusion.
Notice that
\begin{align*}
 L(f_\epsilon(\gamma))^2
 = 
 E(f_\epsilon(\gamma))
 \leq 
 E(\pi(\gamma*\chi_\epsilon)).
\end{align*}
By means of a partition of unity we can construct a continuous function 
\[\overline\epsilon:\XX\times[0,\infty)\to[0,\infty),\] 
such that $\overline\epsilon(\cdot,0)\equiv0$, and for each $\delta>0$ the values $\overline\epsilon(\gamma,\delta)>0$ are positive and small enough such that the following points hold:
\begin{itemize}
\setlength{\itemsep}{5pt}

\item[(i)] $\sqrt{E(f_\epsilon(\gamma))}\leq \sqrt{E(\gamma)}+\delta/2$ for all $\gamma\in\XX$ and $\epsilon\in[0,\overline\epsilon(\gamma,\delta)]$.

\item[(ii)] For a small enough $C^2$-open neighborhood $\YY\subset\KK\cap\XX$ of $Z$, 
consider the continuous homotopy 
\[r_s:\YY\to\XX,\ r_s(\gamma)=\gamma_s,\qquad s\in[0,1],\] 
were $\gamma_s$ is defined as follows: for each $i\in\Z_k$, we set
\[\gamma_s|_{[i/k,(i+1-s)/k]}:=\gamma|_{[i/k,(i+1-s)/k]},\]
whereas $\gamma_s|_{[(i+1-s)/k,(i+1)/k]}$ is defined as the shortest geodesic segment joining its endpoints. Notice that $r_0$ is the inclusion, $r_1$ takes values inside $\Lambda_k(\lambda)$ seen as a subset of broken geodesic loops in $\Lambda$, and
\[E\circ r_s(\gamma)\leq E(\gamma)= L(\gamma)^2.\]
We require $\YY$ to be sufficiently $C^2$-small so that
\[
f_\epsilon\circ r_s(\gamma)\in\KK,\qquad\forall \gamma\in\YY,\ s\in[0,1],\ \epsilon\in(0,\overline\epsilon(\gamma,\delta)].
\]

\item[(iii)] If $Y\subset\Lambda_k(\lambda)\cap\XX$ is a small enough neighborhood of $Z_k$, we have
\begin{align*}
 f_\epsilon(\gamma_{\xx})\in\KK,\qquad\forall\xx\in Y,\ \epsilon\in(0,\overline\epsilon(\gamma_{\xx},\delta)].
\end{align*}
\end{itemize}
We define the family of continuous maps
\begin{align*}
h_\delta:\XX\to\Lambda,
\qquad h_\delta(\gamma)=f_{\overline\epsilon(\gamma,\delta)}(\gamma).
\end{align*}
Notice that $h_0$ is the inclusion.
As usual, we write $h_\delta(\xx)=h_\delta(\gamma_{\xx})$ for all $\xx\in Y$.
We fix:
\begin{itemize}
\setlength{\itemsep}{5pt}

\item a Gromoll-Meyer neighborhood $W\subset Y$ of $Z_k$,

\item a $C^2$-open neighborhood $\VV_1\subset\YY$ of $Z$ that is small enough so that 
\[r_1(\VV_1)\subset W,\]

\item a $C^2$-open neighborhood $\VV_2\subset\KK\setminus\VV_1$ of the compact set of closed geodesics 
\[\Gamma:=\Gamma(\KK)\setminus Z\cap L^{-1}(\ell),\]

\item a Gromoll-Meyer neighborhood $\UU_1\subset\VV_1$ of $Z$,

\item a subset $\UU_2\subset\VV_2$ that is the union of Gromoll-Meyer neighborhoods of the connected components of $\Gamma$,

\item $\delta>0$ small enough as in the definition of local homology for the Gromoll-Meyer neighborhoods introduced thus far, and such that 
\[\Gamma(\KK)\cap L^{-1}[\ell-\delta,\ell+\delta]\subset\UU_1\cup\UU_2.\]
\end{itemize}

We replace $\UU_1$, $\UU_2$, and $W$ with $\UU_1^{<\ell+\delta/2}$, $\UU_2^{<\ell+\delta}$, and $W^{<\ell+\delta/2}$ respectively, so that in particular 
\[\UU:=\UU_1\cup\UU_2\subset L^{-1}[\ell-\delta,\ell+\delta),
\qquad
W\subset\Lambda_k(\lambda)^{<\ell+\delta/2}.\] 
We set $\VV:=\VV_1\cup\VV_2$. We also consider the subspaces $\overline K_\rho$ introduced in~\eqref{e:exit_and_subloops}, and we fix $\rho\in(0,\rho_g]$ small enough so that 
\[\overline\KK_\rho\cap\UU=\varnothing.\]
By excision, the inclusion induces a homology isomorphism
\begin{align*}
H_*(\UU\cup\VV^{<\ell-\delta/2},\VV^{<\ell-\delta/2})
\ttoup^{\cong}
H_*(\UU\cup\overline\KK{}^{<\ell-\delta/2}\cup\overline\KK_\rho,\overline\KK{}^{<\ell-\delta/2}\cup\overline\KK_\rho).
\end{align*}
Moreover, the inclusion
\[
\UU\cup\overline\KK{}^{<\ell-\delta/2}\cup\overline\KK_\rho
\hookrightarrow
\overline\KK{}^{<\ell+\delta}\cup\overline\KK_\rho
\]
is a homotopy equivalence, whose homotopy inverse can be built by pushing with the curve shortening flow. Overall, we infer that the inclusion induces a homology isomorphism
\begin{align*}
H_*(\UU\cup\VV^{<\ell-\delta/2},\VV^{<\ell-\delta/2})
\ttoup^{\cong}
H_*(\overline\KK{}^{<\ell+\delta}\cup\overline\KK_\rho,\overline\KK{}^{<\ell-\delta/2}\cup\overline\KK_\rho).
\end{align*}
Since $\VV$ is the disjoint union of $\VV_1$ and $\VV_2$, we have
\[
H_*(\UU\cup\VV^{<\ell-\delta/2},\VV^{<\ell-\delta/2})
\cong
H_*(\UU_1\cup\VV_1^{<\ell-\delta/2},\VV_1^{<\ell-\delta/2})
\oplus
H_*(\UU_2\cup\VV_2^{<\ell-\delta/2},\VV_2^{<\ell-\delta/2}),
\]
and therefore we infer  that the inclusion induces an injective homomorphism
\begin{align*}
H_*(\UU_1\cup\VV_1^{<\ell-\delta/2},\VV_1^{<\ell-\delta/2})
\eembup
H_*(\overline\KK{}^{<\ell+\delta}\cup\overline\KK_\rho,\overline\KK{}^{<\ell-\delta/2}\cup\overline\KK_\rho).
\end{align*}
Notice that the inclusion induces an isomorphism
\begin{align*}
 H_*(\UU_1\cup\VV_1^{<\ell-\delta},\VV_1^{<\ell-\delta})
 \ttoup^{\cong}
 H_*(\UU_1\cup\VV_1^{<\ell-\delta/2},\VV_1^{<\ell-\delta/2}),
\end{align*}
and therefore an injective homomorphism
\begin{align*}
H_*(\UU_1\cup\VV_1^{<\ell-\delta},\VV_1^{<\ell-\delta})
\eembup
H_*(\overline\KK{}^{<\ell+\delta}\cup\overline\KK_\rho,\overline\KK{}^{<\ell-\delta/2}\cup\overline\KK_\rho).
\end{align*}

By points (i) and (iii), we have 
\[
h_\delta(W)\subset\KK^{<\ell+\delta},\qquad
h_\delta(W^{<\ell-\delta})\subset\KK^{<\ell-\delta/2}.
\]
Moreover, by points (i) and (ii), we can construct a continuous homotopy
\begin{align*}
j_s:
(\UU_1\cup\VV_1^{<\ell-\delta},\VV_1^{<\ell-\delta})
\toup
(\overline\KK{}^{<\ell+\delta}\cup\overline\KK_\rho,\overline\KK{}^{<\ell-\delta/2}\cup\overline\KK_\rho),\quad s\in[0,1],
\end{align*}
given by
\[
j_s(\gamma)
=
\left\{
  \begin{array}{@{}ll}
    h_{2s\delta}(\gamma), & \mbox{if }s\in[0,1/2], \vspace{5pt}\\ 
    h_\delta\circ r_{2s-1}(\gamma), & \mbox{if }s\in[1/2,1]. \\ 
  \end{array}
\right.
\]
The map $j_0$ is the inclusion, and $j_1=h_\delta\circ r_1$. Overall, we obtain a commutative diagram
\[
\begin{tikzcd}[row sep=large]
C_*(Z)\cong H_*(\UU_1\cup\VV_1^{<\ell-\delta},\VV_1^{<\ell-\delta}) 
 \arrow[r, "r_{1*}"]
 \arrow[dr, hook] 
 &
 H_*(W,W^{<\ell-\delta}) = C_*(Z_k) \arrow[d,"h_{\delta*}"]\\
 & 
 H_*(\overline\KK{}^{<\ell+\delta}\cup\overline\KK_\rho,\overline\KK{}^{<\ell-\delta/2}\cup\overline\KK_\rho) 
\end{tikzcd}
\]
and we infer that $r_{1*}$ is injective.
\end{proof}

We recall that $Z\subset\Gamma(\KK)$ is originally an unpararametrized family of closed geodesics, but we fixed the parametrization with constant speed on each $\gamma\in Z$ so that $\gamma(0)\in\lambda$. Equipped with these parametrizations, $Z$ belongs to an isolated critical manifold $S^1\cdot Z\subset\crit(E)$, where the circle acts on $Z$ by time translation (see Section~\ref{ss:energy}). We say that $Z\subset\Gamma(\KK)$ is \emph{non-degenerate} when the corresponding $S^1\cdot Z$ is a non-degenerate critical manifold of $E$. Equivalently, $Z_k$ is a non-degenerate critical manifold of $\crit(E_k)$, i.e.
\[
\ker(d^2E_k(\xx))=T_{\xx}Z_k,
\qquad
\forall \xx\in Z_k.
\]
We denote by $N\to Z_k$ the negative bundle, which is the vector bundle whose fibers $N_{\xx}$ are the the direct sum of the negative eigenspaces of the Hessian $d^2E_k(\xx)$. We denote by $0_N\subset N$ its zero-section.  In~\eqref{e:loc_hom_1} and~\eqref{e:loc_hom_2}, we showed that the local homology $C_*(Z_k)$ is fully determined by the Morse index $\ind(Z_k)$, at least when   the negative bundle $N$ is orientable (which is trivially satisfied if $Z$ consists of a single closed geodesic as in point (ii)). We derive the same conclusion for the local homology $C_*(Z)$.

\begin{Lemma}\label{l:non_deg_visible}
If $Z$ is non-degenerate, its local homology is given by
\begin{align*}
C_*(Z)\cong H_*(N,N\setminus0_N).
\end{align*}
In particular, if $Z=\{\gamma\}$ consists of a single  closed geodesic as in point $(ii)$, we have
\[
C_d(\gamma)
\cong
\left\{
  \begin{array}{@{}ll}
    \Z, & \mbox{if } d=\ind(\gamma), \vspace{5pt}\\ 
    0, & \mbox{if } d\neq\ind(\gamma). 
  \end{array}
\right.
\]
If instead $Z$ consists of a circle of non-degenerate closed geodesics as in point $(i)$, and the negative bundle $N\to Z_k$ is orientable, we have
\begin{align}
\label{e:loc_hom_circle}
C_d(Z)  
 \cong
\left\{
  \begin{array}{@{}ll}
    \Z, & \mbox{if } d\in\{\ind(Z_k),\ind(Z_k)+1\}, \vspace{5pt}\\ 
    0, & \mbox{if }d\not\in\{\ind(Z_k),\ind(Z_k)+1\}.
  \end{array}
\right.
\end{align}
\end{Lemma}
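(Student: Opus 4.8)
The plan is to deduce the local homology of $Z$ from that of $Z_k$, for which the computation $\eqref{e:loc_hom_1}$, $\eqref{e:loc_hom_2}$, $\eqref{e:loc_hom_3}$ is already available. By Lemma~\ref{l:finitely_generated} the map $r_{1*}\colon C_*(Z)\hookrightarrow C_*(Z_k)$ is injective, so it suffices to produce a map in the opposite direction (or more precisely, to show that $r_{1*}$ is in fact an isomorphism). To this end I would revisit the commutative diagram at the end of the proof of Lemma~\ref{l:finitely_generated}, whose bottom map is the injection $C_*(Z)\cong H_*(\UU_1\cup\VV_1^{<\ell-\delta},\VV_1^{<\ell-\delta})\hookrightarrow H_*(\overline\KK{}^{<\ell+\delta}\cup\overline\KK_\rho,\overline\KK{}^{<\ell-\delta/2}\cup\overline\KK_\rho)$, and observe that the composite $h_{\delta*}\circ r_{1*}$ equals that bottom inclusion. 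The key point is a \emph{reverse} comparison: the regularizing maps $f_\epsilon$, together with the retraction $r_s$, allow one to push cycles from the broken-geodesic model $\Lambda_k(\lambda)$ back into $\KK$ without increasing length (beyond the controlled $\delta$-shifts of points (i)--(iii)), so that $h_\delta$ induces a map $C_*(Z_k)\to C_*(Z)$ which is a one-sided inverse of $r_{1*}$. Combining injectivity of $r_{1*}$ with this, I would conclude $r_{1*}$ is an isomorphism, hence $C_*(Z)\cong C_*(Z_k)\cong H_*(N,N\setminus 0_N)$ by the classical Morse--Bott computation $\eqref{e:loc_hom_2}$.

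The hardest step is establishing that $h_{\delta}$ descends to a well-defined map on the relative homology $C_*(Z_k)=H_*(W,W^{<\ell-\delta})\to C_*(Z)=H_*(\UU_1\cup\VV_1^{<\ell-\delta},\VV_1^{<\ell-\delta})$ and that it is a genuine homotopy inverse to $r_{1*}$ (not merely a left or right inverse up to the injection into the big pair). Concretely one must check two homotopies: $r_1\circ h_\delta\simeq \mathrm{id}$ on the pair for $Z_k$, and $h_\delta\circ r_1\simeq\mathrm{id}$ on the pair for $Z$. Both are built from the convolution homotopy $f_{s\overline\epsilon(\cdot,\delta)}$ and the broken-geodesic retraction $r_s$, exactly as the homotopy $j_s$ in Lemma~\ref{l:finitely_generated}; the issue is keeping all intermediate loops inside $\KK$ (so that one stays in the relevant relative pairs) and keeping their lengths below the sublevel thresholds. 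This is where the $C^2$-smallness of $\YY$ and the careful choice of $\overline\epsilon$ in points (i)--(iii) are used, together with the fact that $W$ is a Gromoll--Meyer neighborhood, so the anti-gradient flow $\theta^t$ of $E_k$ can be used to deform any cycle into $W$ after the convolution has regularized it.

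Once $C_*(Z)\cong H_*(N,N\setminus 0_N)$ is in hand, the remaining two displays are immediate. In case (ii), $Z=\{\gamma\}$ is a single non-degenerate closed geodesic, so $Z_k=\{\yy\}$, the negative bundle $N$ is just a vector space of dimension $\ind(\yy)=\ind(\gamma)$, and $H_*(N,N\setminus 0_N)\cong H_{*-\ind(\gamma)}(\mathrm{pt})$ gives the stated $\Z$ in degree $\ind(\gamma)$ and $0$ otherwise (this also re-derives $\eqref{e:loc_hom_1}$ in the relative-flat-knot setting). In case (i), $Z_k$ is a circle and $N\to Z_k$ is an orientable rank-$\ind(Z_k)$ bundle, so by the Thom isomorphism $H_*(N,N\setminus 0_N)\cong H_{*-\ind(Z_k)}(S^1)$, and since $H_0(S^1)\cong H_1(S^1)\cong\Z$ we obtain exactly $\eqref{e:loc_hom_circle}$. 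The orientability hypothesis on $N$ in case (i) is precisely what makes the Thom isomorphism available, and in case (ii) it is automatic since the base is a point; nothing further is needed.
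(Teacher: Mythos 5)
Your plan and the paper's agree on the skeleton — reduce $C_*(Z)$ to $C_*(Z_k)$ and then invoke the classical Morse--Bott computation — but they diverge at the crucial step, and the divergence matters. You propose to make $r_{1*}$ an isomorphism by constructing a one-sided inverse from the regularization $h_\delta$. The paper never shows $r_*$ is an isomorphism. Instead it builds an explicit finite-dimensional model: after modifying the broken-geodesic negative bundle $N\to Z_k$ into a bundle $Q\to Z$ of \emph{smooth} orthogonal vector fields, it embeds a tubular neighborhood $B\subset Q$ into $\Lambda$ via $i(\xi)(t)=\exp_{\gamma(t)}(\xi(t))$, checks that $r\circ i$ is an embedding into $\Lambda_k(\lambda)$, and deduces from the Morse--Bott lemma that $(r\circ i)_*\colon H_*(B,B')\to C_*(Z_k)$ is an isomorphism. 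Since this factors through $C_*(Z)$, one only gets that $r_*$ is \emph{surjective}. That surjectivity, combined with the abstract subgroup inclusion $C_d(Z)\hookrightarrow C_d(Z_k)$ from Lemma~\ref{l:finitely_generated}, gives equal ranks; the final isomorphism $C_*(Z)\cong C_*(Z_k)$ is then squeezed out precisely because $C_d(Z_k)$ is $0$ or $\Z$ in each degree (this is where the orientability of $N$ is consumed, via \eqref{e:loc_hom_3}). In other words, the paper deliberately sidesteps the inverse you are after and settles for a weaker map-theoretic conclusion plus an algebraic pinching argument.

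The gap in your proposal is in the step you yourself flag as the hardest. The map $h_\delta$, as constructed in Lemma~\ref{l:finitely_generated}, sends $(W,W^{<\ell-\delta})$ into $(\overline\KK{}^{<\ell+\delta}\cup\overline\KK_\rho,\ \overline\KK{}^{<\ell-\delta/2}\cup\overline\KK_\rho)$ — a \emph{larger} pair — not into $(\UU_1\cup\VV_1^{<\ell-\delta},\ \VV_1^{<\ell-\delta})$, which is what would be needed for $h_{\delta*}$ to land in $C_*(Z)$. Two concrete obstacles: the sublevel threshold gets worsened from $\ell-\delta$ to $\ell-\delta/2$ (from point (i)), and nothing guarantees that $h_\delta(\xx)$ for $\xx\in W$ lies inside a Gromoll--Meyer neighborhood $\UU_1$ of $Z$ in $\Omega$ rather than merely inside $\KK$. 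To fix both you would have to compose $h_\delta$ with a further deformation (curve shortening flow or the broken-geodesic anti-gradient flow) pushing back into $\UU_1\cup\VV_1^{<\ell-\delta}$, and then you would need a homotopy $r_1\circ(\text{deformation}\circ h_\delta)\simeq\mathrm{id}$ \emph{rel the pair} $(W,W^{<\ell-\delta})$; the naive convolution homotopy $f_{s\overline\epsilon}$ no longer obviously does this because the added deformation does not commute with $r_1$. None of this is impossible, but it is not a small adaptation of the $j_s$ homotopy from Lemma~\ref{l:finitely_generated} — it is genuinely new work, and if it succeeded it would prove more than the paper proves (an isomorphism $r_{1*}$, not merely a rank coincidence). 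The paper's negative-bundle embedding is precisely the device that avoids having to build and control such a reverse map, at the price of only getting surjectivity of $r_*$ — which, together with the already-proven injectivity and the $\Z$-or-$0$ structure of $C_*(Z_k)$, turns out to be enough. If you want to keep your route, you would need to carry out the deformation argument explicitly and verify that every intermediate stage stays in the correct pair.
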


\begin{proof}
A negative bundle for $Z$ is any vector sub-bundle $R\to Z$ of $T\Lambda|_Z$ of rank $\ind(Z)$ and such that 
\[d^2E(\gamma)[Y,Y]<0,\qquad\forall \gamma\in Z,\ Y\in R_\gamma\setminus\{0\}.\]
We recall that $\Lambda_k$ embeds as a subspace of 1-periodic broken geodesics in $\Lambda$. Under this embedding, a tangent vector $\vv\in T_{\xx}\Lambda_k$ corresponds to a 1-periodic broken Jacobi field $J_{\vv}$ along $\gamma_{\xx}$ such that $J_{\vv}(0)=v_0$ is tangent to the geodesic $\lambda$. 

Therefore, we can see the vector bundle $N\to Z_k$ as a negative bundle $N\to Z$, so that each fiber $N_{\gamma}$ is a vector space of dimension $\ind(Z)$ containing 1-periodic continuous broken Jacobi fields $Y$ along $\gamma$ such that $g(Y(0),\dot\gamma(0))=0$ (since $Y(0)$ is tangent to the geodesic $\lambda$).
While these vector fields are not smooth, we can slightly modify the vector bundle $N\to Z$ to make them smooth while preserving the negative definiteness of the Hessian of the energy. This can be done by convolution, similarly as in the proof of Lemma~\ref{l:finitely_generated}, which provides for all $\epsilon\geq0$ small enough a family of injective bundle homomorphisms
\begin{align*}
f_\epsilon:N\hookrightarrow T\Lambda|_{Z},
\end{align*}
which depends continuously on the parameter $\epsilon$. The map $f_0$ is simply the inclusion, whereas for each $\epsilon>0$ all vector fields in the image of $f_\epsilon$ are smooth. We fix $\epsilon>0$ small enough so that the vector bundle $P:=f_\epsilon(N)\to Z$ is a negative bundle for $Z$. We recall that, for each $\gamma\in Z$, the vector field $\dot\gamma$ belongs to the kernel of $d^2E(\gamma)$. Therefore, we have another injective bundle homomorphism
\begin{align*}
h:P\hookrightarrow T\Lambda|_Z,\qquad h(Y)=Y-g(Y(0),\dot\gamma(0))\dot\gamma.
\end{align*}
The vector bundle $Q:=h(P)\to Z$ is a negative bundle for $Z$, and each fiber $Q_\gamma$ consists of smooth 1-periodic vector fields $Y$ such that $g(Y(0),\dot\gamma(0))=0$. Notice that $N\to Z$ and $Q\to Z$ are isomorphic vector bundles.

We fix a Gromoll-Meyer neighborhood $W\subset\Lambda_k(\lambda)$ of $Z_k$, and a $C^2$-open neighborhood $\VV\subset\KK$ of $Z$ that is small enough so that we have a well defined continuous map
\begin{align*}
r:\VV\to W,\qquad r(\zeta)=(\zeta(0),\zeta(1/k),...,\zeta((k-1)/k)).
\end{align*}

We fix an open tubular neighborhood $B\subset Q$ of the zero-section $0_Q$, and require $B$ to be small enough so that we have a smooth embedding $i:B\hookrightarrow \Lambda$ given by
\begin{align*}
i(\xi)(t)=\exp_{\gamma(t)}(\xi(t)),\qquad\forall\gamma\in Z,\ \xi\in Q_\gamma,
\end{align*}
where $\exp$ denotes the Riemannian exponential map. Notice that $d i(0)\xi=\xi$, and therefore the restriction of $E\circ i$ to each fiber $B_\gamma$ has a non-degenerate local maximum at the origin. 
Moreover, $i(\xi)(0)\in\lambda$, and therefore $r\circ i(\xi)\in\Lambda_k(\lambda)$.
Since $E_k\circ r\circ i(\xi)\leq E\circ i(\xi)$ for all $\xi\in B$ and $r\circ i(0)= i(0)=\gamma$, we infer that the restriction of $E_k\circ r\circ i$ to each fiber $B_\gamma$ also has a non-degenerate local maximum at the origin, and that $d(r\circ i)(0)=dr(\gamma)$ is injective. Up to shrinking the tubular neighborhood $B$, we have that $r\circ i:B\hookrightarrow \Lambda_k(\lambda)$ is an embedding.

We consider a smaller tubular neighborhood $B'\subset \overline{B'}\subset B$ of the zero-section $0_Q$, and $\delta>0$ be small enough so that
\[
\sup_{B\setminus B'} E\circ i<\ell-\delta.
\]
Up to reducing $\delta$, the Morse-Bott lemma \cite[Lemma~3.51]{Banyaga:2004aa} readily implies that the composition $r\circ i$ induces an isomorphism
\begin{align}
\label{e:ri_isom}
 (r\circ i)_*:H_*(B,B')\ttoup^{\cong} H_*(W,W^{<\ell-\delta}).
\end{align}
Let $\UU\subset\VV$ be a Gromoll-Meyer neighborhood of $Z$. Up to further reducing $\delta$, the  isomorphism \eqref{e:ri_isom} factors as in the following commutative diagram.
\[
\begin{tikzcd}[row sep=large]
 H_*(B,B') 
 \arrow[rrr, "i_*"]
 \arrow[ddrrr, "(r\circ i)_*"', "\cong"] 
 & & &
 H_*(\UU\cup\VV^{<\ell-\delta},\VV^{<\ell-\delta}) \cong C_*(Z)
 \arrow[dd,"r_*"]\\\\
 & & &
 H_*(W,W^{<\ell-\delta})=C_*(Z_k) 
\end{tikzcd}
\]
This implies that $r_*$ is surjective, and therefore
\begin{align*}
\rank(C_d(Z))\geq \rank(C_d(Z_k)),\qquad\forall d\geq0.
\end{align*}
Moreover, by Lemma~\ref{l:finitely_generated}, $C_d(Z)$ is  isomorphic to a subgroup of $C_d(Z_k)$ in each degree $d$. Therefore, we have $C_*(Z)\cong C_*(Z_k)$ provided $C_d$ is either trivial or isomorphic to $\Z$ in each degree $d$. 

This holds under our assumptions. Indeed, in case (ii), $Z=\{\gamma\}$ consists of a single closed geodesic $\gamma=\gamma_{\xx}$, and therefore $Z_k=\{\xx\}$ consists of a single non-degenerate critical point; the local homology $C_d(\xx)=C_d(Z_k)$ is isomorphic to $\Z$ in degree $d=\ind(\gamma)$, and vanishes in all the other degrees $d$ (Equation~\eqref{e:loc_hom_1}). 
In case (i), $Z_k$ is a non-degenerate critical circle; If the negative bundle $N\to Z_k$ is orientable, the local homology $C_*(Z_k)$ is isomorphic to $\Z$ in degrees $d=\ind(Z_k)$ and $d=\ind(Z_k)+1$, and vanishes in all the other degrees $d$ (Equation~\eqref{e:loc_hom_3}). 
\end{proof}

\subsection{Global Morse theory}\label{ss:global}

In his seminal work \cite[Theorem~1.1]{Angenent:2005aa}, Angenent managed to frame the curve shortening flow in the setting of Morse-Conley theory \cite{Conley:1978aa}, and in particular proved that a primitive relative flat knot type $\KK$ contains a closed geodesic provided the quotient $\overline\KK/\overline\KK_\rho$ is not contractible. Here, $\overline\KK_{\rho}\subset\overline\KK$ is the subset defined in~\eqref{e:exit_and_subloops}, for any $\rho\in(0,\rho_g]$, where $\rho>0$ is given by Lemma~\ref{l:subloops}(ii).
In this section, we provide more results on global Morse theory within the primitive relative flat knot type $\KK$, which will be employed in the proofs of Theorems~\ref{mt:multiplicity} and~\ref{mt:sphere}.

The filtration 
$\overline\KK^{<\ell}\cup\overline\KK_\rho$, for $\ell>0$,
together with the curve shortening flow, implies:
\begin{itemize}
\setlength{\itemsep}{5pt}

\item[(i)] If $\Gamma(\KK)\cap L^{-1}[a,b)=\varnothing$, then the inclusion
\begin{align}
\label{e:inclusion_sublevel_sets_within_K}
 \overline\KK^{<a}\cup\overline\KK_\rho
 \hookrightarrow
 \overline\KK^{<b}\cup\overline\KK_\rho
\end{align}
 is a homotopy equivalence, and in particular induces a homology isomorphism.
 
\item[(ii)] If $\sigma(\KK)\cap (a,b)=\{\ell\}$, we have an isomorphism
\begin{align*}
 H_*(\overline\KK^{<b}\cup\overline\KK_\rho,\overline\KK^{<a}\cup\overline\KK_\rho)
 \cong
 \bigoplus_{Z}
 C_*(Z),
\end{align*}
where the direct sum ranges over the connected components $Z$ of $\Gamma(\KK)$.

\end{itemize}

We define the \emph{local homology} of the primitive relative flat knot type $\KK$ as the relative homology group
\begin{align*}
C_*(\KK):=H_*(\overline\KK,\overline\KK_\rho). 
\end{align*}
Angenent's Lemma~\ref{l:subloops} readily implies that $C_*(\KK)$ is independent of the choice of $\rho\in(0,\rho_g]$ and of the admissible Riemannian metric $g$, where admissible means that $\bm\zeta$ is a flat link of closed geodesics for $g$.
We say that $\KK$ is \emph{homologically visible} when $C_*(\KK)$ is non-trivial (the analogous notion was introduced for isolated compact sets of closed geodesics in $\Gamma(\KK)$ in Section~\ref{ss:hom_visible}). 
The length filtration mentioned above implies that there is an isomorphism
\begin{align*}
C_*(\KK)
\cong
\varinjlim_\ell H_*(\overline\KK^{<\ell}\cup\overline\KK_\rho,\overline\KK_\rho),
\end{align*}
where the direct limit is for $\ell\to\infty$.
This, together with the above properties (i) and (ii), implies that any homologically visible primitive relative flat knot type contains a closed geodesic. If $\Gamma_g(\KK)$ is discrete, we also have the following version of the classical Morse inequalities.

\begin{Prop}\label{p:Morse_inequality}
For each primitive relative flat knot type $\KK$, if the space of closed geodesics $\Gamma_g(\KK)$ is discrete, then
\begin{align*}
 \rank(C_d(\KK)) \leq \sum_{\gamma\in\Gamma(\KK)} \rank(C_d(\gamma)),\qquad
 \forall d\geq 1.
\end{align*}
\end{Prop}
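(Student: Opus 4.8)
The plan is to follow the standard Morse-theoretic argument, adapting it to the curve shortening flow setting using the length filtration of $\overline\KK\cup\overline\KK_\rho$ established in points (i) and (ii) above. Since $\Gamma_g(\KK)$ is discrete, it is a countable set of isolated closed geodesics, and we may enumerate the spectral values $\sigma(\KK)$ in increasing order (recall $\sigma(\KK)$ is closed of measure zero, hence has no accumulation from below except possibly at $+\infty$, so on each bounded length window there are only finitely many spectral values). For a spectral value $\ell$, by property (ii) the relative homology group $H_*(\overline\KK^{<\ell+\epsilon}\cup\overline\KK_\rho,\overline\KK^{<\ell-\epsilon}\cup\overline\KK_\rho)$ splits as the direct sum of the local homologies $C_*(\gamma)$ over the finitely many $\gamma\in\Gamma(\KK)$ with $L_g(\gamma)=\ell$; by Lemma~\ref{l:finitely_generated} each $C_*(\gamma)$ is finitely generated, so these relative groups are finitely generated in each degree.

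Next I would run the subadditivity argument for ranks along the filtration. Fix a degree $d\geq1$. For an increasing sequence of regular values $\ell_0<\ell_1<\ell_2<\dots\to\infty$ chosen so that each interval $(\ell_{j-1},\ell_j)$ contains at most one spectral value, the long exact sequence of the triple $(\overline\KK^{<\ell_j}\cup\overline\KK_\rho,\ \overline\KK^{<\ell_{j-1}}\cup\overline\KK_\rho,\ \overline\KK_\rho)$ gives, in each degree, the inequality
\begin{align*}
 \rank H_d(\overline\KK^{<\ell_j}\cup\overline\KK_\rho,\overline\KK_\rho)
 \leq
 \rank H_d(\overline\KK^{<\ell_{j-1}}\cup\overline\KK_\rho,\overline\KK_\rho)
 + \rank H_d(\overline\KK^{<\ell_j}\cup\overline\KK_\rho,\overline\KK^{<\ell_{j-1}}\cup\overline\KK_\rho).
\end{align*}
Iterating from $\ell_0$ (chosen below the infimum of $\sigma(\KK)$, where $\overline\KK^{<\ell_0}\cup\overline\KK_\rho$ deformation retracts onto $\overline\KK_\rho$ by property (i), so the relative group vanishes) up to $\ell_j$, and using property (ii) to identify each incremental relative group with $\bigoplus_{L_g(\gamma)\in(\ell_{j-1},\ell_j)} C_d(\gamma)$, yields
\begin{align*}
 \rank H_d(\overline\KK^{<\ell_j}\cup\overline\KK_\rho,\overline\KK_\rho)
 \leq
 \sum_{\gamma\in\Gamma(\KK),\ L_g(\gamma)<\ell_j} \rank C_d(\gamma).
\end{align*}

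Finally I would pass to the direct limit $\ell_j\to\infty$. By the isomorphism $C_*(\KK)\cong\varinjlim_\ell H_*(\overline\KK^{<\ell}\cup\overline\KK_\rho,\overline\KK_\rho)$ recorded just before the Proposition, and since homology commutes with direct limits (each class in $C_d(\KK)$ is represented at some finite filtration level), we get $\rank C_d(\KK)\leq \sup_j \rank H_d(\overline\KK^{<\ell_j}\cup\overline\KK_\rho,\overline\KK_\rho)\leq \sum_{\gamma\in\Gamma(\KK)}\rank C_d(\gamma)$, which is the claim. The main point requiring care — the ``main obstacle'' — is the interchange of limits and the finiteness bookkeeping: one must ensure that a fixed homology class in $C_d(\KK)$ is already ``born'' at a finite length level (so the direct limit genuinely computes a supremum of the finite-level ranks) and that no cancellation at higher levels can force $\rank C_d(\KK)$ to exceed the running sum; both follow from the fact that the connecting maps in the direct system are induced by inclusions and that the incremental pieces are the $C_d(\gamma)$'s, but this is where the argument must be stated cleanly. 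The hypothesis that $\Gamma_g(\KK)$ is discrete is used precisely to guarantee that property (ii) applies at every spectral value with only isolated closed geodesics contributing, so that the increments are honest local homology groups of individual geodesics.
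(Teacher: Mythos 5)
Your proof is correct and follows essentially the same route as the paper: identify the incremental relative homology groups along the length filtration with $\bigoplus_\gamma C_d(\gamma)$ via property (ii), apply subadditivity of rank (which is exactly what the long exact sequence of the triple gives), and pass to the direct limit in $\ell$. One small slip to flag: your parenthetical assertion that $\sigma(\KK)$ being closed of measure zero forces it to have no finite accumulation points is false (a Cantor set is closed, of measure zero, and perfect); the finiteness of spectral values on bounded windows actually comes from the discreteness hypothesis on $\Gamma_g(\KK)$ combined with the compactness of the set of closed geodesics of bounded length, which you in any case invoke correctly a few lines later.
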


\begin{proof}
Assume that $\Gamma(\KK)$ is discrete.
For each $[a,b)\subset\R$ such that $(a,b)\cap\sigma(\KK)=\ell$, property (ii) above implies
\begin{align}
\label{e:rank_level_homology}
\rank\big(H_d(\overline\KK^{<b}\cup\overline\KK_\rho,\overline\KK^{<a}\cup\overline\KK_\rho)\big)
=
 \sum_{\gamma}
 \rank(C_d(\gamma)),
\end{align}
where the sum on the right-hand side ranges over all closed geodesics $\gamma\in\Gamma(\KK)$ of length $L(\gamma)=\ell$.
We recall that the relative homology is sub-additive, meaning that $\rank(H_d(A,C))\leq\rank(H_d(A,B))+\rank(H_d(B,C))$ for all spaces $C\subseteq B\subseteq A$, see \cite[Section~5]{Milnor:1963aa}. This, together with~\eqref{e:rank_level_homology}, implies
\begin{align*}
\label{e:rank_level_homology}
\rank\big(H_d(\overline\KK^{<\ell}\cup\overline\KK_\rho,\overline\KK_\rho)\big)
=
 \sum_{\gamma}
 \rank(C_d(\gamma)),
 \qquad\forall \ell>0,
\end{align*}
where the sum on the right-hand side ranges over all closed geodesics $\gamma\in\Gamma(\KK)$ of length $L(\gamma)<\ell$. By taking the direct limit for $\ell\to\infty$, we obtain the desired inequality.
\end{proof}

A $C^\infty$-generic Riemannian metric is bumpy \cite{Anosov:1982aa}, meaning that all  closed geodesics are non-degenerate. In particular, for every such metric, the whole space of closed geodesics is a discrete subspace of $\Omega$, and we obtain the following corollary.

\begin{Cor}
If the Riemannian metric $g$ is bumpy, then each primitive relative flat knot type $\KK$ contains at least $\rank(C_*(\KK))$ closed geodesics.
\end{Cor}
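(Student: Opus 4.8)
The plan is to deduce the Corollary directly from the Morse inequalities of Proposition~\ref{p:Morse_inequality}, feeding in the local homology of non-degenerate closed geodesics provided by Lemma~\ref{l:non_deg_visible}. First, since $g$ is bumpy, every closed geodesic in $\Gamma_g(\KK)$ is non-degenerate, hence an isolated critical point of the energy functional (cf.\ Section~\ref{ss:energy}); consequently $\Gamma_g(\KK)$ is a discrete subset of $\Omega$, which is exactly the hypothesis of Proposition~\ref{p:Morse_inequality}. Thus $\rank(C_d(\KK))\leq\sum_{\gamma\in\Gamma(\KK)}\rank(C_d(\gamma))$ for every $d\geq1$.

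Next, each $\gamma\in\Gamma(\KK)$ is a single non-degenerate closed geodesic, i.e.\ a compact set of closed geodesics of type (ii) in the sense of Section~\ref{ss:hom_visible}, so Lemma~\ref{l:non_deg_visible} gives
\[
C_d(\gamma)\cong
\begin{cases}
\Z, & d=\ind(\gamma),\\
0, & d\neq\ind(\gamma),
\end{cases}
\]
and therefore $\rank(C_d(\gamma))$ is $1$ if $d=\ind(\gamma)$ and $0$ otherwise. The Morse inequality then reads $\rank(C_d(\KK))\leq\#\{\gamma\in\Gamma(\KK)\mid\ind(\gamma)=d\}$ for all $d\geq1$. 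For $d=0$ I would rerun the proof of Proposition~\ref{p:Morse_inequality} verbatim: it uses only the length-filtration properties (i)--(ii) of Section~\ref{ss:global}, the subadditivity of ranks of relative homology, and the direct limit $\ell\to\infty$, all valid in degree $0$; the sole additional input is that the bottom of the filtration is acyclic relative to $\overline\KK_\rho$, i.e.\ $H_0(\overline\KK^{<\ell}\cup\overline\KK_\rho,\overline\KK_\rho)=0$ for $\ell<\inf\sigma(\KK)$, which holds because loops of sufficiently small length lie in $\Omega_\rho$, so that $\overline\KK^{<\ell}\cup\overline\KK_\rho=\overline\KK_\rho$ for such $\ell$.

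Granting the bound $\rank(C_d(\KK))\leq\#\{\gamma\in\Gamma(\KK)\mid\ind(\gamma)=d\}$ in every degree $d\geq0$, summing over $d$ gives
\[
\rank(C_*(\KK))=\sum_{d\geq0}\rank(C_d(\KK))\leq\sum_{d\geq0}\#\{\gamma\in\Gamma(\KK)\mid\ind(\gamma)=d\}=\#\Gamma(\KK),
\]
the desired lower bound on the number of closed geodesics contained in $\KK$. The argument is essentially a formal consequence of the machinery already established, so I do not anticipate a serious obstacle; the only points needing care are the degree-zero bookkeeping above and the verification that bumpiness genuinely forces $\Gamma_g(\KK)$ to be discrete within the (a priori merely $C^2$-open) neighborhoods used to define the local homology.
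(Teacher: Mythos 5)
Your argument is correct and is essentially the paper's proof, which is stated in one line as a direct consequence of Proposition~\ref{p:Morse_inequality} and Lemma~\ref{l:non_deg_visible}; you have simply unfolded the chain of deductions. Your attention to the degree-zero case is a genuine (if small) point of care: Proposition~\ref{p:Morse_inequality} is stated only for $d\geq 1$, so as written the paper's one-line proof tacitly either assumes $C_0(\KK)=0$ or that the inequality also holds in degree $0$, and your observation that $\overline\KK{}^{<\ell}\cup\overline\KK_\rho=\overline\KK_\rho$ for $\ell$ below $\inf\sigma(\KK)$ (so the proof of the Proposition runs verbatim in degree zero) correctly plugs this gap.
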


\begin{proof}
The assertion is a direct consequence of the Morse inequality of Proposition~\ref{p:Morse_inequality} and Lemma~\ref{l:non_deg_visible}.
\end{proof}

The following lemma is a special case of Morse lacunary principle in the setting of primitive relative flat knot types.

\begin{Lemma}
\label{l:local_homology_parity}
Let $\KK$ be a primitive relative flat knot type. If $\Gamma(\KK)$ is non-empty, contains only non-degenerate closed geodesics, and their Morse indices have the same parity, then $\KK$ is homologically essential, and
\[
C_*(\KK) \cong \bigoplus_{\gamma\in\Gamma(\KK)} C_*(\gamma).
\] 
\end{Lemma}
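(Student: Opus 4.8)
The plan is to run the standard Morse-theoretic argument adapted to the filtration $\overline\KK^{<\ell}\cup\overline\KK_\rho$ introduced in Section~\ref{ss:global}. First I would enumerate the closed geodesics in $\Gamma(\KK)$. Since they are all non-degenerate, they form a discrete subset of $\Omega$, and since the $\KK$-spectrum $\sigma(\KK)$ is closed and of zero measure, the geodesics accumulate only at $+\infty$ in length; hence we may list the distinct spectral values $\ell_1<\ell_2<\ell_3<\dots$ with finitely many geodesics at each level. For each spectral value $\ell_j$, property (ii) of Section~\ref{ss:global} gives
\[
H_*\big(\overline\KK^{<\ell_j+\epsilon}\cup\overline\KK_\rho,\ \overline\KK^{<\ell_j-\epsilon}\cup\overline\KK_\rho\big)
\cong
\bigoplus_{\gamma:\,L(\gamma)=\ell_j} C_*(\gamma),
\]
and by Lemma~\ref{l:non_deg_visible} each $C_*(\gamma)$ is $\Z$ concentrated in degree $\ind(\gamma)$. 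By hypothesis all these degrees have a common parity $\varepsilon\in\{0,1\}$, so the relative homology above is concentrated in degrees of parity $\varepsilon$.

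Next I would argue by induction on $j$ that the long exact sequence of the triple $\big(\overline\KK^{<\ell_j+\epsilon}\cup\overline\KK_\rho,\ \overline\KK^{<\ell_j-\epsilon}\cup\overline\KK_\rho,\ \overline\KK_\rho\big)$ splits into short exact sequences. Indeed, using property (i) to identify $\overline\KK^{<\ell_j-\epsilon}\cup\overline\KK_\rho$ with $\overline\KK^{<\ell_{j-1}+\epsilon}\cup\overline\KK_\rho$ (no spectral values in between), the inductive hypothesis gives that $H_*(\overline\KK^{<\ell_{j-1}+\epsilon}\cup\overline\KK_\rho,\overline\KK_\rho)$ is concentrated in degrees of parity $\varepsilon$; the relative term at level $\ell_j$ is also concentrated in parity $\varepsilon$; hence the connecting homomorphism $\partial$ in the long exact sequence, which lowers degree by one and therefore changes parity, must vanish. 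This yields a short exact sequence
\[
0\to H_*\big(\overline\KK^{<\ell_{j-1}+\epsilon}\cup\overline\KK_\rho,\overline\KK_\rho\big)
\to H_*\big(\overline\KK^{<\ell_j+\epsilon}\cup\overline\KK_\rho,\overline\KK_\rho\big)
\to \bigoplus_{\gamma:\,L(\gamma)=\ell_j} C_*(\gamma)\to 0,
\]
which splits because the right-hand term is free. Summing the contributions and passing to the direct limit as $\ell_j\to\infty$, using the isomorphism $C_*(\KK)\cong\varinjlim_\ell H_*(\overline\KK^{<\ell}\cup\overline\KK_\rho,\overline\KK_\rho)$ from Section~\ref{ss:global}, gives $C_*(\KK)\cong\bigoplus_{\gamma\in\Gamma(\KK)}C_*(\gamma)$. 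Since $\Gamma(\KK)$ is non-empty, this group is non-trivial, so $\KK$ is homologically visible (here ``homologically essential'' is synonymous).

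The main obstacle to watch out for is purely bookkeeping: one must be careful that the direct limit behaves well, i.e.\ that the finitely-generated groups at each finite stage stabilize in each fixed degree. This is automatic here because each degree-$d$ piece of $C_*(\KK)$ receives contributions only from the finitely many geodesics $\gamma$ with $\ind(\gamma)=d$ (and, if we had circles of geodesics as in case (i) of Section~\ref{ss:hom_visible}, also $\ind(Z_k)+1=d$), but since by hypothesis all geodesics here are isolated non-degenerate points, only finitely many can share any given index — one should note this finiteness, which follows from discreteness of $\Gamma(\KK)$ together with properness of $L$ on $\overline\KK$ below any fixed length (a consequence of the Arzelà–Ascoli-type compactness in the $C^2$ topology used throughout). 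Everything else is the classical lacunary principle.
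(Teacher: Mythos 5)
Your proof is correct and follows essentially the same approach as the paper's: the parity argument forces the connecting homomorphisms in the long exact sequences of triples to vanish, yielding short exact sequences at each spectral level, and then one passes to the direct limit. The paper leaves the induction implicit, but your explicit version is the right underlying mechanism; your concluding worry about stabilization in the direct limit is unnecessary, since a direct limit along a chain of split monomorphisms onto free abelian cokernels is automatically the infinite direct sum, with no need for finitely many geodesics per index.
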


\begin{proof}
Since all closed geodesics in $\Gamma(\KK)$ are non-degenerate, in particular $\Gamma(\KK)$ and $\sigma(\KK)$ are discrete. By assumption, there exist $q\in\{0,1\}$ such that every closed geodesic $\gamma\in\Gamma(\KK)$ has Morse index of the same parity as $q$. Therefore the local homomology of every such $\gamma$ is trivial in all degrees of the same parity as $q+1$. This, together with property (ii) above, implies that, for each $a<b$ such that $[a,b)\cap\sigma(\KK)$ contains only one element, the relative homology group 
\[H_d(\overline\KK^{<a}\cup\overline\KK_\rho,\overline\KK^{<b}\cup\overline\KK_\rho)\] 
vanishes in all degrees $d$ of the same parity as $q+1$. Therefore, the inclusion~\eqref{e:inclusion_sublevel_sets_within_K} induces a short exact sequence
\begin{align*}
0
\toup
H_*(\overline\KK^{<a}\cup\overline\KK_\rho,\overline\KK_\rho)
\toup
H_*(\overline\KK^{<b}\cup\overline\KK_\rho,\overline\KK_\rho)
\toup
H_*(\overline\KK^{<b}\cup\overline\KK_\rho,\overline\KK^{<a}\cup\overline\KK_\rho)
\toup
0.
\end{align*}
This implies that, for each $\ell>0$, 
\begin{align*}
 H_*(\overline\KK^{<\ell}\cup\overline\KK_\rho,\overline\KK_\rho)
 \cong
 \bigoplus_{\gamma\in\Gamma(\KK)\cap L^{-1}[0,\ell)}
 C_*(\gamma).
\end{align*}
After taking a direct limit for $\ell\to\infty$, we infer
\begin{align*}
 C_*(\KK)
 \cong
 \bigoplus_{\gamma\in\Gamma(\KK)}
 C_*(\gamma).
\end{align*}
Since $\Gamma(\KK)$ is assumed to contain at least one closed geodesic, which is homologically visible being non-degenerate (Lemma~\ref{l:non_deg_visible}), we conclude that the local homology $C_*(\KK)$ is non-trivial.
\end{proof}

\section{$C^0$-stability of flat links of closed geodesics}\label{s:flat_links}

\subsection{The simplest case: flat knots of closed geodesics}

In this subsection, we shall prove the much simpler version of Theorem~\ref{mt:links} for the special case of flat links with only one component, that is, flat knots. Actually, the result for flat knots, Theorem~\ref{t:knots} below, has weaker assumptions: it only requires homotopically visible spectral values, as opposed to homologically visible ones, and does not even need the involved closed geodesics to be isolated.

Let $(M,g)$ be a closed  Riemannian surface, and $\KK$ a primitive flat knot type.
We define the \emph{visible $\KK$-spectrum} 
\[\sigmav(\KK)=\sigmavv{g}(\KK)\] 
to be the set of positive real numbers $\ell>0$ such that, for any sufficiently small neighborhood $[\ell_-,\ell_+]$ of $\ell$ and for some (and thus for all) $\rho\in(0,\rho_g]$, the inclusion 
\begin{align*}
 \overline\KK{}^{<\ell_-}\cup\overline\KK_{\rho}
 \hookrightarrow
 \overline\KK{}^{<\ell_+}\cup\overline\KK_{\rho}
\end{align*}
is not a homotopy equivalence. It will follow from Lemmas~\ref{l:non_deg_visible} and~\ref{l:local_homology_parity} that the length $L(\gamma)$ of any closed geodesic $\gamma\in\Gamma(\KK)$ that is non-degenerate, or more generally homologically visible, belongs to $\sigmav(\KK)$. Conversely, we have the following lemma.

\begin{Lemma}
$\sigmav(\KK)\subseteq\sigma(\KK)$.
\end{Lemma}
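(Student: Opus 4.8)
The plan is to prove the contrapositive: if $\ell\notin\sigma(\KK)$, then $\ell\notin\sigmav(\KK)$. So suppose $\ell>0$ is not the length of any closed geodesic in $\KK$, i.e. $\ell\notin\sigma(\KK)$. Since $\sigma(\KK)$ is a closed subset of $\R$ (this was noted right after the definition of the $\KK$-spectrum in Section~\ref{ss:nbhds}), there is a compact interval $[\ell_-,\ell_+]$ which is a neighborhood of $\ell$ and satisfies $[\ell_-,\ell_+]\cap\sigma(\KK)=\varnothing$; equivalently $\Gamma(\KK)\cap L^{-1}[\ell_-,\ell_+]=\varnothing$. I need to show that for this neighborhood (and any $\rho\in(0,\rho_g]$) the inclusion $\overline\KK{}^{<\ell_-}\cup\overline\KK_\rho\hookrightarrow\overline\KK{}^{<\ell_+}\cup\overline\KK_\rho$ is a homotopy equivalence.

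The key tool is property~(i) from Section~\ref{ss:global}: if $\Gamma(\KK)\cap L^{-1}[a,b)=\varnothing$, then the inclusion $\overline\KK^{<a}\cup\overline\KK_\rho\hookrightarrow\overline\KK^{<b}\cup\overline\KK_\rho$ is a homotopy equivalence. Since $\Gamma(\KK)\cap L^{-1}[\ell_-,\ell_+]=\varnothing$, a fortiori $\Gamma(\KK)\cap L^{-1}[\ell_-,\ell_+)=\varnothing$, so property~(i) with $a=\ell_-$, $b=\ell_+$ applies directly and yields that the relevant inclusion is a homotopy equivalence. Hence $\ell\notin\sigmav(\KK)$, which is the contrapositive of the claim, and therefore $\sigmav(\KK)\subseteq\sigma(\KK)$.

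One small point worth spelling out: the definition of $\sigmav(\KK)$ requires the failure of the homotopy equivalence for \emph{any} sufficiently small neighborhood $[\ell_-,\ell_+]$ of $\ell$, so to conclude $\ell\notin\sigmav(\KK)$ it suffices to exhibit \emph{one} such arbitrarily small neighborhood for which the inclusion \emph{is} a homotopy equivalence; since $\R\setminus\sigma(\KK)$ is open, any sufficiently small neighborhood of $\ell$ avoids $\sigma(\KK)$ entirely, so the argument in fact works for all small enough neighborhoods. I do not expect any real obstacle here — the lemma is essentially a direct consequence of the curve-shortening deformation property~(i) of Section~\ref{ss:global} — the only thing to be careful about is matching the quantifiers in the definition of the visible spectrum and recalling that $\sigma(\KK)$ is closed so that $\ell$ has a closed neighborhood disjoint from it.
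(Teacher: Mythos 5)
Your proof is correct and follows the same underlying strategy as the paper: between spectral values the curve shortening flow deforms the larger sublevel set onto the smaller one. The only difference is one of detail — you invoke property~(i) of Section~\ref{ss:global} as a black box, while the paper's proof constructs the homotopy inverse explicitly (via the exit-time function $\tau_\rho$ of Lemma~\ref{l:Angenent_exit_time}, together with the argument, based on Lemma~\ref{l:subloops}(i), that an orbit cannot stay forever in a spectral-free length band without either dropping below it or entering $\overline\KK_\rho$), which effectively gives the proof of property~(i) that Section~\ref{ss:global} leaves implicit. Your handling of the closed vs.\ half-open interval and of the quantifier in the definition of $\sigmav(\KK)$ is accurate.
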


\begin{proof}
Since $\sigma(\KK)$ is a closed subset of $\R$, for any $\ell\in(0,\infty)\setminus\sigma(\KK)$ we can find $\ell_0<\ell_1<\ell<\ell_2$ such that $[\ell_0,\ell_2]\cap\sigma(\KK)=\varnothing$. We define the continuous function 
\begin{align*}
\tau:\overline\KK{}^{< \ell_2}\cup\overline\KK_\rho\to[0,\infty],
\qquad
\tau(\gamma) = \sup\big\{ t\in[0,\tau_\rho(\gamma))\ \big|\  L(\phi^t(\gamma))\geq \ell_0\big\},
\end{align*}
where $\tau_\rho$ is the exit time function of Lemma~\ref{l:Angenent_exit_time}.
Notice that, actually, $\tau$ is everywhere finite. Indeed, if $\tau(\gamma)=\infty$, then there would exist a sequence $t_n\to\infty$ with $\phi^{t_n}(\gamma)$  converging to a closed geodesic $\zeta\in\Gamma(\KK)$ of length $L(\zeta)\in[\ell_0,\ell_2]$, contradicting the fact that $[\ell_0,\ell_2]\cap\sigma(\KK)=\varnothing$. The map
\begin{align*}
 \overline\KK{}^{<\ell_2}\cup\overline\KK_{\rho}
 \to
 \overline\KK{}^{<\ell_1}\cup\overline\KK_{\rho},
 \qquad
 \gamma\mapsto\phi^{\tau(\gamma)}(\gamma).
\end{align*}
is a homotopy inverse of the inclusion $\overline\KK{}^{<\ell_1}\cup\overline\KK_{\rho}
 \hookrightarrow
 \overline\KK{}^{<\ell_2}\cup\overline\KK_{\rho}$.
\end{proof}

We now prove the anticipated $C^0$-stability of visible spectral values of primitive flat knot types.

\begin{Thm}\label{t:knots}
Let $(M,g)$ be a closed Riemannian surface, $\KK$ a primitive flat knot type, and $\ell\in\sigmavv{g}(\KK)$ a visible spectral value. For each $\epsilon>0$, any Riemannian metric $h$ sufficiently $C^0$-close to $g$ has a visible spectral value in $(\ell-\epsilon,\ell+\epsilon)$, i.e.
\[\sigmavv{h}(\KK)\cap(\ell-\epsilon,\ell+\epsilon)\neq\varnothing.\] 
\end{Thm}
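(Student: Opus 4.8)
The plan is to compare the curve shortening flows of the two metrics on the fixed topological space $\overline\KK{}^{<\ell_+}\cup\overline\KK_\rho$, using the fact that the objects $\overline\KK$, $\overline\KK_\rho$ (up to adjusting $\rho$), and the exit set $\partial_-\KK$ are purely topological and do not depend on the metric, only the semi-flow $\phi^t_h$ does. First I would fix a small interval $[\ell_-,\ell_+]$ around $\ell$ with $\ell_\pm\notin\sigma_g(\KK)$ such that the inclusion $\overline\KK{}^{<\ell_-}\cup\overline\KK_\rho\hookrightarrow\overline\KK{}^{<\ell_+}\cup\overline\KK_\rho$ is not a homotopy equivalence (this is possible by definition of $\sigmav_g(\KK)$, shrinking the interval if needed and using Section~\ref{ss:global}(i)). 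The key observation is that the length functional $L_h$ is $C^0$-continuous in the metric: if $h$ is $C^0$-close to $g$, then $(1-\varepsilon_0)L_g\leq L_h\leq(1+\varepsilon_0)L_g$ pointwise on $\Omega$, with $\varepsilon_0\to0$ as $h\to g$. So I would argue by contradiction: suppose some $h$ arbitrarily $C^0$-close to $g$ has $\sigmav_h(\KK)\cap(\ell-\epsilon,\ell+\epsilon)=\varnothing$. Then on the whole interval $(\ell-\epsilon,\ell+\epsilon)$ there are no $h$-visible spectral values, so consecutive sublevel inclusions in the $h$-filtration $\overline\KK{}_h^{<s}\cup\overline\KK_{h,\rho}$ are homotopy equivalences, and composing them the inclusion $\overline\KK{}_h^{<\ell-\epsilon/2}\cup\overline\KK_{h,\rho}\hookrightarrow\overline\KK{}_h^{<\ell+\epsilon/2}\cup\overline\KK_{h,\rho}$ is a homotopy equivalence (here one needs a compactness-type argument, e.g. that the relevant $h$-spectrum is closed of measure zero and the deformation via the $h$-exit-time function of Lemma~\ref{l:Angenent_exit_time} works on any subinterval avoiding $\sigma_h(\KK)$, covering $(\ell-\epsilon,\ell+\epsilon)$ by finitely many such).

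Next I would interleave the two filtrations. Choosing $\varepsilon_0$ small enough (depending on $\epsilon$ and on the gap between $\ell_\pm$ and $\ell\mp\epsilon/2$), the $C^0$-comparison of $L_g$ and $L_h$ gives chains of inclusions
\[
\overline\KK{}^{<\ell_-}\cup\overline\KK_\rho
\ \subset\
\overline\KK{}_h^{<\ell-\epsilon/2}\cup\overline\KK_{h,\rho'}
\ \subset\
\overline\KK{}^{<\ell_+}\cup\overline\KK_{\rho}
\ \subset\
\overline\KK{}_h^{<\ell+\epsilon/2}\cup\overline\KK_{h,\rho''},
\]
after also absorbing the $C^0$-dependence of the subloop sets $\overline\KK_{h,\rho}$ into small changes $\rho\rightsquigarrow\rho'$ of the radius (the area of a disk filling a subloop is $C^0$-continuous in the metric, and the homotopy type of $\overline\KK{}^{<s}\cup\overline\KK_\rho$ is independent of $\rho\in(0,\rho_g]$, so one can safely shuttle between radii). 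Since the outer inclusion — from $\overline\KK{}_h^{<\ell-\epsilon/2}$-level to $\overline\KK{}_h^{<\ell+\epsilon/2}$-level — is a homotopy equivalence by the contradiction hypothesis, the retract sandwiched in the middle forces the $g$-inclusion $\overline\KK{}^{<\ell_-}\cup\overline\KK_\rho\hookrightarrow\overline\KK{}^{<\ell_+}\cup\overline\KK_\rho$ to induce an isomorphism on all homotopy groups (a standard two-out-of-three / sandwiching argument: if $i\circ j$ and $j'\circ i'$ are equivalences in a zig-zag of inclusions $A\subset B\subset C\subset D$ with $A\subset C$ the map in question, then $A\hookrightarrow C$ is a weak equivalence; combined with the CW-type of these loop-space strata, or with Whitehead, it is a genuine homotopy equivalence). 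This contradicts the choice of $[\ell_-,\ell_+]$, completing the proof.

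The main obstacle I expect is making the interleaving of the two filtrations fully rigorous, in particular handling the $\overline\KK_\rho$ and $\partial_-\KK$ parts uniformly: one must check that a loop with a $\rho$-subloop for $g$ has a $\rho'$-subloop for $h$ with $\rho'$ close to $\rho$ (uniformly, using only $C^0$-closeness, via $C^0$-continuity of areas of filling disks), and that $\partial_-\KK$ — which is metric-independent — sits correctly inside all four sets; the inclusions above must be checked to be compatible with these ``collars''. A secondary technical point is the compactness argument that upgrades ``no visible spectral value in $(\ell-\epsilon,\ell+\epsilon)$'' to ``the inclusion across that whole interval is a homotopy equivalence'': one needs that $\sigma_h(\KK)$, being closed, meets $[\ell-\epsilon/2,\ell+\epsilon/2]$ in a compact set, covers it by finitely many closed subintervals each disjoint from $\sigma_h(\KK)$ on its interior or containing a single non-visible value, and applies Section~\ref{ss:global}(i) (whose proof via the exit-time function carries over verbatim to $h$) on each. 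Once these collar and compactness bookkeeping issues are dispatched, the homotopy-theoretic sandwich is essentially formal.
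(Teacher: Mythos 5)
Your overall strategy is the same as the paper's: compare the $g$-filtration and the $h$-filtration of $\overline\KK$ by sublevel sets, exploit that $L_g$ and $L_h$ are $C^0$-comparable so the two filtrations interleave, and use the metric-independence of $\overline\KK$, $\partial_-\KK$ and (up to harmlessly rescaling $\rho$) of the $\rho$-subloop sets. The compactness argument you sketch — covering a spectrum-avoiding interval by finitely many small intervals over which the $h$-inclusion is a homotopy equivalence and composing — is also needed in the paper, at the very last step.

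However, there is a genuine gap in the sandwich step. Your chain $A\subset B\subset C\subset D$ with the sole hypothesis that $B\hookrightarrow D$ is a homotopy equivalence does \emph{not} imply that $A\hookrightarrow C$ is one; the two-out-of-six property requires \emph{two} overlapping composites to be equivalences before you can conclude anything about the individual maps. A concrete counterexample: $A=B=\{\mathrm{pt}\}$, $C=S^1$, $D=D^2$, with the obvious inclusions; here $B\hookrightarrow D$ is a homotopy equivalence but $A\hookrightarrow C$ is not. You stated the two-out-of-six lemma with both $A\to C$ and $B\to D$ being equivalences in the hypothesis, which is correct — but your argument only supplies $B\to D$, and $A\to C$ being an equivalence is exactly what you are trying to prove (to contradict visibility), so the reasoning is circular as written. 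There is also a nesting issue: for $\overline\KK_g^{<\ell_-}\subset\overline\KK_h^{<\ell-\epsilon/2}$ you need $\ell_-$ a definite amount below $\ell$, which has to be reconciled with shrinking $[\ell_-,\ell_+]$ to certify non-equivalence; this can be arranged after reducing to small $\epsilon$, but the chain must then be set up with more care.

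The repair is to use a longer chain, which is exactly what the paper does. Choose values $\ell-\epsilon<r_1<r_2<r_3<\ell<s_1<s_2<s_3<\ell+\epsilon$ with $\sigma_g(\KK)\cap([r_1,r_3]\cup[s_1,s_3])=\varnothing$, and after suitably choosing $\delta>1$ and $\rho$'s you get a six-term chain
\[
\GG(r_1)\ \subset\ \HH(r_2)\ \subset\ \GG(r_3)\ \subset\ \GG(s_1)\ \subset\ \HH(s_2)\ \subset\ \GG(s_3)
\]
(with appropriate $\rho$'s) in which the two $g$-inclusions $\GG(r_1)\hookrightarrow\GG(r_3)$ and $\GG(s_1)\hookrightarrow\GG(s_3)$ are homotopy equivalences (they cross no $g$-spectrum), the $g$-inclusion $\GG(r_3)\hookrightarrow\GG(s_1)$ is \emph{not} one (visibility of $\ell$), and the $h$-inclusion $\HH(r_2)\hookrightarrow\HH(s_2)$ is the map under scrutiny. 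Two applications of two-out-of-six — first to $\GG(r_1)\subset\HH(r_2)\subset\GG(r_3)\subset\HH(s_2)$ (two overlapping known equivalences $\GG(r_1)\to\GG(r_3)$ and $\HH(r_2)\to\HH(s_2)$, if the latter were an equivalence), then to $\GG(r_3)\subset\GG(s_1)\subset\HH(s_2)\subset\GG(s_3)$ — would force $\GG(r_3)\hookrightarrow\GG(s_1)$ to be an equivalence, contradiction. So $\HH(r_2)\hookrightarrow\HH(s_2)$ is not a homotopy equivalence, and then your compactness argument yields a visible $h$-spectral value in $[r_2,s_2)\subset(\ell-\epsilon,\ell+\epsilon)$. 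In short: the key missing idea is that you need two spectrum-avoiding $g$-intervals flanking the visible level, not one, to have enough overlapping equivalences for the two-out-of-six argument to close.
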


\begin{proof}
In order to simplify the notation, for each $b>0$ and $\rho>0$ we denote
\begin{align*}
 \GG(b,\rho):=
 \overline\KK{}_{g}^{<b}\cup\overline\KK_{g,\rho}.
\end{align*}
Let $\epsilon>0$ be small enough so that, for any neighborhood $[\ell_-,\ell_+]\subset(\ell-\epsilon,\ell+\epsilon)$ of $\ell$ and for any $\rho\in(0,\rho_g]$, the inclusion $\GG(\ell_-,\rho)\hookrightarrow\GG(\ell_+,\rho)$ is not a homotopy equivalence.
Since the $\KK$-spectrum $\sigma_g(\KK)$ is closed and has measure zero, there exist values
\[\ell-\epsilon<r_1<r_2<r_3<\ell<s_1<s_2<s_3<\ell+\epsilon\] 
such that \[\sigma_g(\KK)\cap\big([r_1,r_3]
\cup[s_1,s_3]\big)=\varnothing.\] 
We fix $\delta>1$ close enough to $1$ so that $r_i\delta\leq r_{i+1}$ and $s_i\delta\leq s_{i+1}$ for all $i=1,2$, and $r_3\delta\leq s_1$. Let $h$ be a Riemannian metric on $M$ such that
\begin{align}\label{e:g-h}
 \delta^{-1}\|\cdot\|_h
 \leq
 \|\cdot\|_{g}
 \leq
 \delta\|\cdot\|_h,
 \qquad
 \delta^{-1}\mu_h
 \leq
 \mu_{g}
 \leq
 \delta\mu_h,
\end{align}
where $\mu_g$ and $\mu_h$ are the Riemannian densities on $M$ associated with $g$ and $h$ respectively.
For each $b>0$ and $\rho>0$ we denote
\begin{align*}
 \HH(b,\rho):=
 \overline\KK{}_{h}^{<b}\cup\overline\KK_{h,\rho}.
\end{align*}
Notice that, by~\eqref{e:g-h}, we have
\begin{align*}
\GG(b,\rho)\subseteq\HH(b\delta,\rho\delta)\subseteq\GG(b\delta^2,\rho\delta^2). 
\end{align*}
We fix $0<\rho_1<\rho_2<\rho_3<\rho_4<\rho_5\leq\min\{\rho_g,\rho_h\}$ such that $\rho_i\delta\leq \rho_{i+1}$ for all $i=1,2,3,4$. We obtain a diagram of inclusions
\[
\begin{tikzcd}[row sep=large]
    \GG(r_1,\rho_1)
    \arrow[rr, hook] 
    \arrow[drr, hook, "i_1"', "\simeq"] 
    && 
    \HH(r_2,\rho_2)
    \arrow[d, hook] 
    \arrow[rrrr, hook, "j"]
    &&&&
    \HH(s_2,\rho_2)
    \arrow[d, hook, "i_4"', "\simeq"]
    \\
    &&
	\GG(r_3,\rho_3)
	\arrow[rr, hook,"\not\simeq","i_2"']&&
    \GG(s_1,\rho_3) 
    \arrow[rr, hook] 
    \arrow[drr, hook, "i_3"', "\simeq"] 
    &&
    \HH(s_2,\rho_4)
    \arrow[d, hook] 
    \\
    &&&&&&
	\GG(s_3,\rho_5)
\end{tikzcd}
\]
Since  $i_1$, $i_3$, and $i_4$ are homotopy  equivalences, whereas $i_2$ is not a homotopy equivalence, we infer that $j$ 
is not a homotopy equivalence neither, and therefore 
\[
\sigmavv{h}(\Sigma)\cap[r_2,s_2)\neq\varnothing.
\qedhere
\]
\end{proof}

\subsection{Intertwining curve shortening flow trajectories}

In order to prove the $C^0$-stability for flat links of closed geodesics (Theorem~\ref{mt:links}), we first need a refinement of Theorem~\ref{t:knots} that not only provides the $C^0$-stability of a closed geodesic of given primitive flat knot type, but also connects neighborhoods of  corresponding closed geodesics of the old and new metrics by means of curve shortening flow lines.

Let $\KK$ be a primitive flat knot type. As in the proof of Theorem~\ref{t:knots}, in order to simplify the notation we set
\begin{align*}
\GG(b):=
 \overline\KK{}_{g}^{<b},
 \qquad
 \GG(b,\rho):=
 \GG(b)\cup\overline\KK_{g,\rho}.
\end{align*}
Let $\gamma\in\Gamma_g(\KK)$ be a homologically visible closed geodesic of length $\ell:=L_g(\gamma)$. By Lemma~\ref{l:GM_nbhds}, we can find an arbitrarily $C^2$-small Gromoll-Meyer neighborhood $\UU$ of $\gamma$, and an arbitrarily $C^2$-small Gromoll-Meyer neighborhood $\VV$ of the compact set of closed geodesics $\Gamma_g(\KK)\cap L_g^{-1}(\ell)\setminus\{\gamma\}$. We require $\UU$ and $\VV$ to be small enough so that $\UU\cap\Gamma_g(\KK)=\{\gamma\}$ and  $\UU\cap\VV=\varnothing$. Let $\epsilon>0$ be sufficiently small so that
\begin{align}
\label{e:GM_property_U'_V'}
 (\Phi_g(\UU)\setminus\UU) \cup (\Phi_g(\VV)\setminus\VV)\subset\GG(\ell-\epsilon).
\end{align}
We denote by $\Sigma$ a relative cycle representing a non-zero element of the local homology group $C_*(\gamma)=H_*(\UU\cup \GG(\ell-\epsilon),\GG(\ell-\epsilon))$. By an abuse of terminology, we will occasionally forget the relative cycle structure of $\Sigma$, and simply treat it as a compact subset of $\UU\cup \GG(\ell-\epsilon)$. We will refer to such a $\Sigma$ as to a \emph{Gromoll-Meyer relative cycle}.

\begin{Lemma}\label{l:deformation_lemma}
For each sufficiently small neighborhood $[\ell_-,\ell_+]$ of $\ell$, for each Riemannian metric $h$ sufficiently $C^0$-close to $g$,  for each $C^3$-open neighborhood $\WW$ of $\Gamma_h(\KK)\cap L_h^{-1}[\ell_-,\ell_+]$, and for each $\rho\in(0,\rho_h]$ small enough, the following points hold. 
\begin{itemize}
\setlength{\itemsep}{5pt}

\item[$(i)$]  
We consider the following modified curve shortening flow of $h$, which stops the orbits once they enter $\overline\KK_{h,\rho}$$:$
\begin{align*}
\psi_h^t(\zeta):=\phi_h^{\max\{t,\tau_{h,\rho}(\zeta)\}}(\zeta),\qquad\forall\zeta\in\KK,
\end{align*}
where $\tau_{h,\rho}$ is the exit-time function of Lemma~\ref{l:Angenent_exit_time}.
For each $t\geq0$, there exists $\tau_1(t)\geq0$ such that 
\[\psi_h^{[0,\tau_1(t)]}(\zeta)\, \cap\, \big(\WW\cup\HH(\ell_-,\rho)\big)\neq\varnothing,
\qquad
\forall \zeta\in\psi_h^t(\Sigma).\]
In particular
\[
\psi_h^{\tau_1(0)}(\Sigma)
\subset
\Phi_h(\WW)\cup \HH(\ell_-,\rho).
\]
where $\HH(\ell_-,\rho):=
\overline\KK{}_{h}^{<\ell_-}\cup\overline\KK_{h,\rho}$.

\item[$(ii)$] For each $t\geq\tau_1(0)$, there exists $\tau_2(t)\geq0$ and, for each $s\geq\tau_2(t)$, there exists $\zeta\in\Sigma$ such that $t\leq\tau_{h,\rho}(\zeta)$ and
\begin{align*}
\phi_h^{[0,t]}(\zeta) \cap \WW\neq\varnothing,
\qquad
\phi_h^{t}(\zeta)\not\in\HH(\ell_-,\rho)
\qquad
\phi_g^s\circ\phi_h^t(\zeta)\in\UU.
\end{align*}
\end{itemize}
\end{Lemma}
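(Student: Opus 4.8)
The plan is to prove the two assertions by tracking how the Gromoll–Meyer relative cycle $\Sigma\subset\UU\cup\GG(\ell-\epsilon)$ behaves under the curve shortening flows of $g$ and of the nearby metric $h$. The guiding principle is that, by the definition of a Gromoll–Meyer neighborhood and of a relative cycle representing a nonzero local homology class, the curve shortening flow of $g$ deforms the part of $\Sigma$ lying in $\UU$ toward $\gamma$, staying in $\UU$ until possibly dropping the length below $\ell-\epsilon$; and a $C^0$-small change of metric distorts lengths and subloop areas only by a multiplicative factor close to $1$ (as in the inclusions $\GG(b,\rho)\subseteq\HH(b\delta,\rho\delta)\subseteq\GG(b\delta^2,\rho\delta^2)$ from the proof of Theorem~\ref{t:knots}), and distorts the geodesic curvature $\kappa$ pointwise in a controlled way, so that the two flows $\phi_g^t$ and $\phi_h^t$ are $C^0$-close on compact time intervals.

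\emph{Part (i).} Fix the neighborhood $[\ell_-,\ell_+]$ so small that $[\ell_-,\ell_+]\cap\sigma_g(\KK)=\{\ell\}$ and $\ell_-<\ell-\epsilon$; this is possible since $\sigma_g(\KK)$ is closed of measure zero. First I would observe that $\psi_h^t(\Sigma)$ is a compact subset of $\overline\KK$, contained (for $h$ close to $g$) in $\HH(\ell_+,\rho)$ by the length distortion bound. The key claim is that there are \emph{no} closed geodesics of $h$ in $\HH(\ell_+,\rho)\setminus(\WW\cup\HH(\ell_-,\rho))$ accumulating orbits of $\psi_h^t$: indeed any orbit of the (stopped) curve shortening flow that does not reach $\overline\KK_{h,\rho}$ and does not drop below length $\ell_-$ must, by Lemma~\ref{l:subloops}(i) together with Lemma~\ref{l:Angenent_primitive}, subconverge to a closed geodesic $\zeta\in\Gamma_h(\KK)$ with $L_h(\zeta)\in[\ell_-,\ell_+]$, hence $\zeta\in\WW$. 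A compactness/uniform-continuity argument on $\psi_h^t(\Sigma)$ — the map $(t,\zeta)\mapsto\psi_h^t(\zeta)$ is continuous and the length is nonincreasing along it — then yields a \emph{uniform} time $\tau_1(t)$ after which every point of $\psi_h^t(\Sigma)$ has entered the open set $\WW\cup\HH(\ell_-,\rho)$; applying this at $t=0$ and flowing for the additional time $\tau_1(0)$ places $\psi_h^{\tau_1(0)}(\Sigma)$ inside $\Phi_h(\WW)\cup\HH(\ell_-,\rho)$, using that $\HH(\ell_-,\rho)$ is forward-invariant for $\psi_h^t$ and $\WW$ flows into $\Phi_h(\WW)$.

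\emph{Part (ii).} Here the argument runs in the opposite direction: starting from the hypothesis that $\Sigma$ represents a nonzero class in $C_*(\gamma)$, I want to produce, for any prescribed large $g$-flow time $s$, an individual loop $\zeta\in\Sigma$ whose $h$-orbit first passes through $\WW$, then at some time $t\ge\tau_1(0)$ is still above level $\ell_-$ (not yet in $\HH(\ell_-,\rho)$), and after flowing time $s$ under $\phi_g$ lands inside $\UU$. The point is that if, on the contrary, for some $s$ \emph{every} such $\zeta$ failed to have $\phi_g^s\circ\phi_h^t(\zeta)\in\UU$, then the composed map $\zeta\mapsto\phi_g^s\circ\phi_h^{t}(\zeta)$ (with $t$ the stopping time before reaching $\overline\KK_{h,\rho}$, and extended by the stopped flow afterwards) would furnish a homotopy pushing all of $\Sigma$ off $\UU$ into $\GG(\ell-\epsilon)$ — because the $g$-flow on the complement of $\UU$ decreases length below $\ell-\epsilon$ in bounded time, by \eqref{e:GM_property_U'_V'} and the absence of $g$-closed geodesics of length in $[\ell_-,\ell]$ other than $\gamma$ in $\UU$ — contradicting $[\Sigma]\neq0\in H_*(\UU\cup\GG(\ell-\epsilon),\GG(\ell-\epsilon))$. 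Extracting the time $\tau_2(t)$ and the loop $\zeta$ from this contradiction, together with the elementary fact that entering $\WW$ and being above $\ell_-$ are open conditions preserved under small perturbations of the parameters, gives the statement.

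\emph{Main obstacle.} The genuinely delicate step is the $C^0$-closeness of the two semi-flows $\phi_g^t$ and $\phi_h^t$ needed to intertwine them in part (ii): the curve shortening equation involves $\kappa$, which depends on first derivatives of the metric, so a priori a $C^0$-small change of $h$ does \emph{not} keep $\phi_h^t$ close to $\phi_g^t$. The way around this is that we never need the full flows to be close — we only ever use $\phi_h^t$ until the stopping time $\tau_{h,\rho}$ and only on the relative cycle $\Sigma$, which has been chosen inside a Gromoll–Meyer neighborhood; there the relevant estimates are the monotonicity \eqref{e:derivative_length} of the length, the subloop-area decay of Lemma~\ref{l:subloops}(ii), and the curvature control of Proposition~\ref{p:curvature_control}, all of which are governed by quantities (length, subloop area, the $L^2$-norms of $K$ and $\dot K$) that \emph{do} vary continuously under $C^0$-perturbation of the metric together with the $C^3$-topology on $\Omega$. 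Assembling these into the precise choice of $[\ell_-,\ell_+]$, of how $C^0$-close $h$ must be, of $\WW$, and of $\rho$ is where most of the work of the proof lies.
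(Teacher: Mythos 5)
Your Part (i) matches the paper's argument in substance: compactness of $\psi_h^t(\Sigma)$, subconvergence of a hypothetical escaping orbit to a closed geodesic of $h$ in the length window $[\ell_-,\ell_+]$ (so lying in $\WW$), contradiction. That part is fine, modulo the standard care with the exit set $\overline\KK_{h,\rho}$.

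Part (ii), however, has a genuine gap. You argue that if for some $s$ no $\zeta\in\Sigma$ lands in $\UU$ under $\phi_g^s\circ\phi_h^t$, then the composed map pushes \emph{all} of $\Sigma$ into $\GG(\ell-\epsilon)$, ``because the $g$-flow on the complement of $\UU$ decreases length below $\ell-\epsilon$ in bounded time.'' This claim is false in general: the lemma does not assume $\gamma$ is the only closed geodesic of $g$ at level $\ell$, only that it is homologically visible (hence isolated). Orbits of $\psi_g^s$ starting outside $\UU$ at levels just above $\ell-\epsilon$ may perfectly well converge to closed geodesics in $\Gamma_g(\KK)\cap L_g^{-1}(\ell)\setminus\{\gamma\}$, and thus never drop below $\ell-\epsilon$. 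Consequently the homotopy you construct does not map $\Sigma$ into the pair $(\UU\cup\GG(\ell-\epsilon),\GG(\ell-\epsilon))$ — it escapes into neighborhoods of the other critical set at level $\ell$ — and you cannot directly conclude $[\Sigma]=0$ in that local homology group. The paper avoids this by introducing a second Gromoll--Meyer neighborhood $\VV$ of $\Gamma_g(\KK)\cap L_g^{-1}(\ell)\setminus\{\gamma\}$, disjoint from $\UU$, and working in the larger pair $\big(\UU\cup\VV\cup\GG(a_0,\rho_1),\GG(a_0,\rho_1)\big)$. The excision splitting
\[
H_*\big(\UU\cup\GG(a_0),\GG(a_0)\big)\oplus H_*\big(\VV\cup\GG(a_0),\GG(a_0)\big)\;\cong\;H_*\big(\UU\cup\VV\cup\GG(a_0),\GG(a_0)\big)
\]
is the crucial structural input: one shows that the composition $\nu_2\circ\nu_1$ of the smoothed arrival-time maps (flow by $\psi_h$ into $\Phi_h(\WW)\cup\HH(\delta a_0,\rho_2)$, then flow by $\psi_g$ back into $\UU\cup\VV\cup\GG(a_0,\rho_1)$) is homotopic to the inclusion, hence acts as the identity on this direct sum. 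If $\psi_g^s\circ\psi_h^t(\Sigma)$ landed inside $\VV\cup\GG(a_0,\rho_1)$, the class $(\nu_2\circ\nu_1)_*[\Sigma]$ would live in the $\VV$-summand, contradicting that it equals $[\Sigma]$ in the $\UU$-summand. Without this decomposition your contradiction does not go through.

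Finally, your ``main obstacle'' paragraph — worrying about $C^0$-closeness of the two semi-flows — is not where the difficulty actually lies. The paper's argument never compares $\phi_g^t$ and $\phi_h^t$ as flows; it only uses that $C^0$-closeness of metrics controls lengths and subloop areas (hence the inclusions $\GG(b,\rho)\subseteq\HH(b\delta,\rho\delta)\subseteq\GG(b\delta^2,\rho\delta^2)$), which lets one nest the sublevel sets of $L_g$ and $L_h$. The real work is the two-sided homological bookkeeping with $\UU$ and $\VV$, which your sketch omits.
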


\begin{proof}
While the Gromoll-Meyer neighborhood $\UU$ is not open, it contains a $C^3$-open neighborhood $\UU'\subset\UU$ of $\gamma$. Analogously, $\VV$ contains a $C^3$-open neighborhood $\VV'\subset\VV$ of $\Gamma_g(\KK)\setminus\{\gamma\}$.
For a sufficiently small neighborhood $[a_0,a_2]\subset(\ell-\epsilon,\infty)$ of $\ell$, all the closed geodesics in $\Gamma_g(\KK)\setminus\{\gamma\}$ of length in $[a_0,a_2]$ are contained in $\VV'$, i.e.
\begin{align*}
 \Gamma_g(\KK)\cap L_g^{-1}[a_0,a_2]\setminus\{\gamma\}\subset\VV'.
\end{align*}
We require $a_2\not\in\sigma_g(\KK)$, which is possible since the $\KK$-spectrum $\sigma_g(\KK)$ is closed and has measure zero. Therefore there exists $a_1\in(\ell,a_2)$ such that 
\[
[a_1,a_2]\subset(\ell,a_2]\setminus\sigma_g(\KK).
\] 
Up to replacing $\UU$ and $\VV$ with $\UU\cap\GG(a_1)$ and $\VV\cap\GG(a_1)$ respectively, we can assume without loss of generality that
\begin{align*}
\UU\cup\VV\subset\GG(a_1). 
\end{align*}
By the excision property of singular homology, the inclusions
\begin{align*}
&i_1:\big(\UU\cup\GG(a_0) , \GG(a_0)\big)
\hookrightarrow 
\big(\UU\cup\VV\cup\GG(a_0) , \GG(a_0)\big),\\
&i_2:\big(\VV\cup\GG(a_0) , \GG(a_0)\big)
\hookrightarrow 
\big(\UU\cup\VV\cup\GG(a_0) , \GG(a_0)\big)
\end{align*}
induce an isomorphism
\begin{equation}
\label{e:GM_splitting}
\begin{tikzcd}
H_*(\UU\cup\GG(a_0) , \GG(a_0))
\oplus 
H_*(\VV\cup\GG(a_0) , \GG(a_0))
\arrow[d,"\cong","i_{1*}\oplus i_{2*}"']\\
H_*(\UU\cup\VV\cup\GG(a_0) , \GG(a_0))
\end{tikzcd} 
\end{equation}

We fix a constant $\delta>1$ close enough to $1$ so that
$\delta^2a_0<\ell$ and $\delta^3a_1<a_2$.
Let $h$ be any Riemannian metric on $M$ that is sufficiently $C^0$-close to $g$ so that
\begin{align*}
 \delta^{-1}\|\cdot\|_h
 \leq
 \|\cdot\|_{g}
 \leq
 \delta\|\cdot\|_h,
 \qquad
 \delta^{-1}\mu_h
 \leq
 \mu_{g}
 \leq
 \delta\mu_h,
\end{align*}
where $\mu_h$ and $\mu_g$ are the Riemannian densities on $M$ associated with $h$ and $g$ respectively.
For the Riemannian metric $h$, we introduce the notation
\begin{align*}
\HH(b):=\overline\KK{}_{h}^{<b},
\qquad
 \HH(b,\rho):=
\HH(b)\cup\overline\KK_{h,\rho}.
\end{align*}
Let $\WW\subset\HH(\delta^2a_1)\subset\GG(a_2)$ be a $C^3$-open neighborhood of $\Gamma_h(\KK)\cap L_h^{-1}[\delta a_0,\delta a_1]$. We fix $0<\rho_1<\rho_2<\rho_3$ such that $\delta\rho_1<\rho_2$, $\delta\rho_2<\rho_3$, $\rho_2<\rho_h$, and $\rho_3<\rho_g$. The constant $\rho_2$ will be the $\rho$ in the statement of the lemma, and therefore we set
\begin{align*}
\psi_h^t(\zeta):=\phi_h^{\max\{t,\tau_{h,\rho_2}(\zeta)\}}(\zeta),\qquad\forall\zeta\in\overline\KK.
\end{align*}

Since 
$\GG(a_1,\rho_1)
\subset
\HH(\delta a_1,\rho_2)$, the arrival-time function
\begin{gather*}
  s_1 :\GG(a_1,\rho_1)\to[0,\infty),
 \qquad
 s_1(\zeta) =\inf\big\{t\geq0\ \big|\ \psi_h^t(\zeta)\in \WW\cup\HH(\delta a_0,\rho_2) \big\}
\end{gather*}
is everywhere finite. Moreover, since $\psi_h^t$ preserves the subset $\Phi_h(\WW)\cup\HH(\delta a_0,\rho_2)$, we have
\begin{align*}
\psi_h^t(\zeta)\in\Phi_h(\WW)\cup\HH(\delta a_0,\rho_2),
\qquad
\forall\zeta\in\UU\cup\VV\cup\GG(a_0,\rho_1),\ t> s_1(\zeta).
\end{align*}
Since the arrival set $\WW\cup\HH(\delta a_0,\rho_2)$ is open and $\psi_h^t$ is continuous, $s_1$ is upper semi-continuous. Our loop space $\Omega$ is Hausdorff and metrizable, and in particular admits a partition of unity subordinated to any given open cover. Since $s_1$ is upper semicontinuous, by means of a suitable partition of unity we can construct a continuous function $\sigma_1:\GG(a_1,\rho_1)\to[0,\infty)$ such that $\sigma_1(\zeta)> s_1(\zeta)$ for all $\zeta\in \GG(a_1,\rho_1)$. Since $\UU\cup\VV\cup\GG(a_0,\rho_1)\subset\GG(a_1,\rho_1)$, we can build a continuous map
\begin{align*}
 \nu_1: \UU\cup\VV\cup\GG(a_0,\rho_1)\to\Phi_h(\WW)\cup\HH(\delta a_0,\rho_2),
 \qquad
 \nu_1(\zeta)=\psi_h^{\sigma_1(\zeta)}(\zeta).
\end{align*}

We now introduce a modified curve shortening flow for the Riemannian metric $g$, which stops the orbits once they enter $\overline\KK_{g,\rho_1}$$:$
\begin{align*}
\psi_g^t(\zeta):=\phi_g^{\max\{t,\tau_{g,\rho_1}(\zeta)\}}(\zeta),\qquad\forall\zeta\in\overline\KK.
\end{align*}
We consider the open sets $\UU'\subset\UU$ and $\VV'\subset\VV$ introduced at the beginning of the proof.
Since their union $\UU'\cup\VV'$ contains $\Gamma_g(\KK)\cap L_g^{-1}[a_0,a_2]$, in particular the arrival-time function
\begin{gather*}
  s_2 :\GG(a_2,\rho_3)\to[0,\infty),
 \qquad
 s_2(\zeta) =\inf\big\{t\geq0\ \big|\ \psi_g^t(\zeta)\in\UU'\cup\VV'\cup\GG(a_0,\rho_1) \big\}
\end{gather*}
is everywhere finite. By~\eqref{e:GM_property_U'_V'} and $\GG(\ell-\epsilon)\subset\GG(a_0,\rho_1)$, the semi-flow $\psi_g^t$ preserves $\UU\cup\VV\cup\GG(a_0,\rho_1)$, and therefore
\begin{align*}
 \psi_g^t(\zeta)\in\UU\cup\VV\cup\GG(a_0,\rho_1),
 \qquad
 \forall\zeta\in\GG(a_2,\rho_3),\ t>s_2(\zeta).
\end{align*}
Once again, since the arrival set $\UU'\cup\VV'\cup\GG(a_0,\rho_1)$ is open, the arrival-time function $s_2$ is upper semi-continuous, and by means of a partition of unity we construct a continuous function  $\sigma_2:\GG(a_2,\rho_3)\to[0,\infty)$ such that $\sigma_2(\zeta)> s_2(\zeta)$ for all $\zeta\in\GG(a_2,\rho_3)$. Therefore, we obtain a continuous map
\begin{align*}
 \nu_2:\GG(a_2,\rho_3)\to \UU\cup\VV\cup\GG(a_0,\rho_1),
 \qquad
 \nu_2(\zeta)=\psi_g^{\sigma_2(\zeta)}(\zeta).
\end{align*}

Since $\GG(a_0,\rho_1)\subset\HH(\delta a_0,\rho_2)\subset\GG(\ell,\rho_3)$ and $\Phi_h(\WW)\subset\HH(\delta^2 a_1)\subset\GG(a_2)$, overall we obtain a diagram
\[
\begin{tikzcd}
    \big(\UU\cup\VV\cup\GG(a_0,\rho_1) , \GG(a_0,\rho_1)\big)
    \arrow[dd, "\nu_1"] \\
    & 
    \big(\GG(a_2,\rho_3) ,  \GG(\delta^2 a_0,\rho_3)\big)
    \arrow[ul, "\nu_2"']
    \\
    \big(\Phi_h(\WW)\cup\HH(\delta a_0,\rho_2),\HH(\delta a_0,\rho_2)\big)
    \arrow[ur, hook, "i"]
\end{tikzcd}
\]
where $i$ is an inclusion. The composition $\nu_2\circ i$ is a homotopy inverse of $\nu_1$, and in particular $\nu_2\circ\nu_1$ induces the identity isomorphism on the relative homology group
$H_*\big(\UU\cup\VV\cup\GG(a_0,\rho_1) , \GG(a_0,\rho_1)\big)$.

Consider the Gromoll-Meyer relative cycle $\Sigma$ introduced before the statement, and fix a value $t\geq0$. We claim that there exists $\tau_1(t)\geq0$ such that
\[\psi_h^{[0,\tau_1(t)]}(\zeta)\, \cap\, \big(\WW\cup\HH(\delta a_0,\rho_2)\big)\neq\varnothing,
\qquad
\forall \zeta\in\psi_h^t(\Sigma).\]
Indeed, assume by contradiction that such a $\tau_1(t)$ does not exist. Therefore there exists a sequence $\zeta_n\in\psi_h^t(\Sigma)$ such that 
\[\psi_h^{[0,n]}(\zeta_n)\, \cap\, \big(\WW\cup\HH(\delta a_0,\rho_2)\big)=\varnothing.\]
Since $\psi_h^t(\Sigma)$ is compact, we can extract a subsequence of $\zeta_n$ converging to some $\zeta\in\psi_h^t(\Sigma)$, and we have $\psi_h^s(\zeta)=\phi_h^s(\zeta)\not\in\WW\cup\HH(\delta a_0,\rho_2)$ for all $s\geq0$. This implies that, for some $s_n\to\infty$, the sequence $\phi_h^{s_n}(\zeta)$ converges to a closed geodesic in $\Gamma_h(\KK)\cap L_h^{-1}[\delta a_0,\delta a_1]$. However, this latter set is contained in $\WW$, which gives a contradiction. This proves point (i).

As for point (ii), for a fixed value $t\geq\tau_1(0)$, we set
\begin{align*}
 \tau_2(t):=\max_{\zeta\in\psi_h^t(\Sigma)} \sigma_2(\zeta).
\end{align*}
We set
$\Sigma':=\big\{\zeta\in\Sigma\ \big|\ \psi_h^t(\zeta)\not\in \HH(\delta a_0,\rho_2) \big\}$.
Notice that 
\[
\phi_h^{[0,t]}(\zeta)\cap \WW
=\psi_h^{[0,t]}(\zeta)\cap \WW\neq\varnothing,
\qquad\forall\zeta\in\Sigma'.
\]
Moreover, since $\psi_h^{t}(\Sigma\setminus\Sigma') \subset \HH(\delta a_0,\rho_2)$, we have
\begin{align}
\label{l:pushed_below_lemma}
 \psi_g^{s}\circ \psi_h^{t}(\Sigma\setminus\Sigma')\subset\GG(a_0,\rho_1),\qquad\forall s\geq\tau_2(t).
\end{align}
We are left to show that, for all $s\geq\tau_2(t)$, there exists $\zeta\in\Sigma'$ such that \[\phi_g^s\circ\phi_h^t(\zeta)\in\UU.\]
Assume by contradiction that this does not hold, so that, in particular, for some $s\geq\tau_2(t)$ we have
$\psi_g^s\circ\psi_h^t(\Sigma')\cap\UU\setminus\GG(a_0,\rho_1)=\varnothing$. This and~\eqref{l:pushed_below_lemma} imply that
\begin{align*}
\psi_g^s\circ\psi_h^t(\Sigma)
\subset
\VV\cup\GG(a_0,\rho_1)
\end{align*}
However,
\begin{align*}
(\nu_2\circ\nu_1)_*[\Sigma]=[\psi_g^s\circ\psi_h^t(\Sigma)].
\end{align*}
This, together with the splitting~\eqref{e:GM_splitting} and the excision
\[
H_*(\UU\cup\VV\cup\GG(a_0) , \GG(a_0))
\ttoup^{\cong}
H_*(\UU\cup\VV\cup\GG(a_0,\rho_1) , \GG(a_0,\rho_1)),
\]
implies that $(\nu_2\circ\nu_1)_*[\Sigma]$ belongs to the direct summand $H_*(\VV\cup\GG(a_0,\rho_1) , \GG(a_0,\rho_1))$ of the relative homology group $H_*(\UU\cup\VV\cup\GG(a_0,\rho_1) , \GG(a_0,\rho_1))$. This contradicts the fact that $(\nu_2\circ\nu_1)_*$ is the identity in relative homology.
\end{proof}

\subsection{Primitive flat link types}
We now consider a finite collection of  homologically visible closed geodesics $\gamma_i\in\Gamma_g(\KK_i)\cap L_g^{-1}(\ell_i)$, for $i=1,...,n$, where the $\KK_i$'s are flat knot types. Let $\UU_i$ be a Gromoll-Meyer neighborhood of $\gamma_i$ such that $\UU_i\cap\Gamma_g(\KK_i)=\{\gamma_i\}$. We apply Lemma~\ref{l:deformation_lemma} simultaneously to all these closed geodesics, and obtain the following statement, which is the last ingredient for the proof of Theorem~\ref{mt:links}.

\begin{Lemma}\label{l:intertwining}
For each $\epsilon>0$, for each Riemannian metric $h$ sufficiently $C^0$-close to $g$, and for each collection of $C^2$-open neighborhoods $\ZZ_i$ of $\Gamma_h(\KK_i)\cap L_h^{-1}[\ell_i-\epsilon,\ell_i+\epsilon]$, there exist $t_1,t_2,t_3\in[0,\infty)$ and, for all $i\in\{1,...,n\}$, an element $\zeta_i\in\UU_i$ such that 
\[\phi_h^{t_1}(\zeta_i)\in\ZZ_i,
\qquad
\phi_g^{t_3}\circ\phi_h^{t_2+t_1}(\zeta_i)\in\UU_i.\]
\end{Lemma}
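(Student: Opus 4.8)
The plan is to apply Lemma~\ref{l:deformation_lemma} to all $n$ homologically visible closed geodesics $\gamma_i$ at once, and then to \emph{synchronize} the resulting times by means of the curvature estimates of Section~\ref{ss:curvature}.

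First I would fix, for each $i$, a Gromoll--Meyer relative cycle $\Sigma_i$ representing a non-zero element of the local homology $C_*(\gamma_i)$ (which is non-trivial since $\gamma_i$ is homologically visible); by excision we may take $\Sigma_i\subset\UU_i$, and we may shrink $\UU_i$ so that $L_g<\ell_i+\tfrac\epsilon2$ on $\UU_i$. Since the hypotheses of Lemma~\ref{l:deformation_lemma} involve only finitely many data, we may choose $\epsilon>0$ small enough that $[\ell_i-\epsilon,\ell_i+\epsilon]$ is an admissible neighborhood of $\ell_i$ for every $i$, and then choose the Riemannian metric $h$ sufficiently $C^0$-close to $g$ so that all $n$ instances of Lemma~\ref{l:deformation_lemma} apply simultaneously (and $L_h<\ell_i+\epsilon$ on $\UU_i$). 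Given the neighborhoods $\ZZ_i$, for each $i$ I would pick a small enough $C^3$-open neighborhood $\WW_i\subseteq\ZZ_i$ of $\Gamma_h(\KK_i)\cap L_h^{-1}[\ell_i-\epsilon,\ell_i+\epsilon]$ and a small $\rho_i\in(0,\rho_h]$, and denote by $\tau_1^i,\tau_2^i$ the functions and by $\HH_i:=\overline\KK{}_h^{<\ell_i-\epsilon}\cup\overline\KK_{h,\rho_i}$ the set produced by Lemma~\ref{l:deformation_lemma}.

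Next I would choose the common times. Set $T:=\max_i\tau_1^i(0)$; let $t\geq T$ be a large value to be fixed below, and set $t_3:=\max_i\tau_2^i(t)$. Lemma~\ref{l:deformation_lemma}(ii), applied to each $i$ with these $t$ and $s=t_3$, provides $\zeta_i\in\Sigma_i\subseteq\UU_i$ such that $t\leq\tau_{h,\rho_i}(\zeta_i)$, $\phi_h^t(\zeta_i)\notin\HH_i$, and $\phi_g^{t_3}\circ\phi_h^t(\zeta_i)\in\UU_i$. Since $L_h$ is non-increasing along $\phi_h$, since $L_h(\zeta_i)<\ell_i+\epsilon$, and since $\phi_h^t(\zeta_i)\notin\HH_i$ forces $L_h(\phi_h^t(\zeta_i))\geq\ell_i-\epsilon$, the total length loss satisfies $L_h(\zeta_i)-L_h(\phi_h^t(\zeta_i))\leq2\epsilon$, a bound independent of $t$; moreover, for all $r\in[0,t]$ the loop $\phi_h^r(\zeta_i)$ lies in $\KK_i$, has no $\rho_i$-subloop, and has length in $[\ell_i-\epsilon,\ell_i+\epsilon]$. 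A compactness argument based on the curvature control of Proposition~\ref{p:curvature_control} (in the spirit of Lemma~\ref{l:encapsulated_nbhds} and of the finiteness argument preceding Theorem~\ref{t:knots}) yields a constant $c>0$ such that any loop of $\KK_i$ with length in $[\ell_i-\epsilon,\ell_i+\epsilon]$, without $\rho_i$-subloop, and lying outside $\ZZ_i$ satisfies $\int_{S^1}\kappa^2\,\|\dot\gamma\|_h\,ds\geq c$; hence, by~\eqref{e:derivative_length}, the set $F_i:=\{r\in[0,t]\mid\phi_h^r(\zeta_i)\notin\ZZ_i\}$ obeys $|F_i|\leq 2\epsilon/c$, again independently of $t$. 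Choosing $t>T+2n\epsilon/c$ we get $[0,t]\setminus\bigcup_iF_i\neq\varnothing$; picking $t_1$ in this set and setting $t_2:=t-t_1\geq0$ gives, for every $i$, both $\phi_h^{t_1}(\zeta_i)\in\ZZ_i$ and $\phi_g^{t_3}\circ\phi_h^{t_2+t_1}(\zeta_i)=\phi_g^{t_3}\circ\phi_h^{t}(\zeta_i)\in\UU_i$, with $\zeta_i\in\UU_i$, which is the assertion.

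The main obstacle is precisely this last synchronization. Lemma~\ref{l:deformation_lemma} is proved one geodesic at a time and only controls the return times $\tau_1^i,\tau_2^i$ separately, so one must exhibit a \emph{single} time $t_1$ at which the evolved relative cycles sit simultaneously inside the prescribed neighborhoods $\ZZ_i$. The decisive input is that the length dissipated along a curve shortening trajectory issued from $\UU_i$ and staying out of $\HH_i$ is uniformly (indeed $O(\epsilon)$-) bounded, so that such a trajectory can spend only a uniformly bounded amount of time away from the compact set of closed geodesics of $\KK_i$ near length $\ell_i$; enlarging the evolution time $t$ then forces the ``good'' time-intervals of the $n$ trajectories to overlap. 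The technical heart is the uniform lower bound $c>0$ on the dissipation rate off $\ZZ_i$, which is exactly where the curvature estimates of Section~\ref{ss:curvature} are used.
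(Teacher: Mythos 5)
Your overall architecture coincides with the paper's: apply Lemma~\ref{l:deformation_lemma} simultaneously to all the $\gamma_i$, then locate one common time at which the $n$ evolved relative cycles sit inside the prescribed neighborhoods $\ZZ_i$, exploiting the fact that the total length each trajectory can dissipate while staying above $\ell_i-\epsilon$ is $O(\epsilon)$. The gap is in the step you yourself single out as the technical heart: the claimed uniform constant $c>0$ such that every loop of $\KK_i$ with length in $[\ell_i-\epsilon,\ell_i+\epsilon]$, with no $\rho_i$-subloop and lying outside the $C^2$-open set $\ZZ_i$, dissipates at rate $\int_{S^1}\kappa^2\,\|\dot\gamma\|_h\,ds\geq c$. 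No compactness argument can deliver this, because smallness of the dissipation rate only controls loops in a $C^1$ sense: a loop whose geodesic curvature is a spike of fixed height supported on an arc of length $\eta$ has $\|K\|_{L^2}^2=O(\eta)$, hence arbitrarily small dissipation, while staying outside any prescribed $C^2$-neighborhood of $\Gamma_i$; such loops can be realized as localized normal-graph perturbations of a geodesic in $\Gamma_i$, so they do lie in $\KK_i$, have admissible length, and carry no $\rho_i$-subloop. Proposition~\ref{p:curvature_control} does not supply the missing bound either: it propagates $C^2$-smallness forward in time from $C^3$-small initial data (or records a length drop of $\epsilon^2$); it says nothing about an arbitrary loop that merely fails to be in $\ZZ_i$. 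Consequently your measure estimate $|F_i|\leq 2\epsilon/c$ is unsubstantiated, and with it the existence of the common time $t_1$.

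The paper closes exactly this hole by discretizing rather than integrating in time. Via Lemma~\ref{l:encapsulated_nbhds} it fixes $\delta>0$ and $C^3$-open sets $\WW_i\subset\ZZ_i$ such that an orbit entering $\WW_i$ must lose length at least $\delta$ before leaving $\ZZ_i$; it then iterates the function $\tau_1$ to produce times $t_0<t_1<t_2<\dots$ so that, by part (i) of Lemma~\ref{l:deformation_lemma}, each orbit $\phi_h^r(\zeta_{i,k})$ visits $\WW_i$ in every window $[t_j,t_{j+1}]$. Hence every index $j$ with $\phi_h^{t_j}(\zeta_{i,k})\notin\ZZ_i$ forces a length drop of at least $\delta$, and the $4\epsilon$ length budget caps the number of such $j$ by $N=\lfloor 4\epsilon/\delta\rfloor$ per geodesic; a pigeonhole over $k\geq nN+2$ windows then yields a single time $t_j$ that is good for all $i$ simultaneously. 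If you replace your pointwise dissipation bound by this excursion-counting argument (which is where the curvature estimates actually enter, through Lemma~\ref{l:encapsulated_nbhds}), your proof becomes the paper's.
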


\begin{proof}
We fix Gromoll-Meyer relative cycles $\Sigma_i\subset\UU_i$ for each $\gamma_i$.
Notice that it is enough to prove the lemma for small values of $\epsilon>0$, as the statement would then hold for larger values of $\epsilon$ as well. We require $\epsilon$ to be small enough so that we can apply Lemma~\ref{l:deformation_lemma} simultaneously to all closed geodesics $\gamma_i$ with the neighborhood $[\ell_i-\epsilon,\ell_i+\epsilon]$ of their length and their Gromoll-Meyer relative cycles $\Sigma_i\subset\UU_i$. We consider the compact sets of closed geodesics
\[\Gamma_i:=\Gamma_h(\KK_i)\cap L_h^{-1}[\ell_i-\epsilon,\ell_i+\epsilon],\qquad i=1,...,n,\]
and $C^2$-open neighborhoods $\ZZ_i\subset L_h^{-1}(\ell_i-2\epsilon,\ell_i+2\epsilon)$ of $\Gamma_i$.
By Lemma~\ref{l:encapsulated_nbhds}, there exist $\delta>0$ and, for each $i=1,...,n$, a $C^3$-open neighborhood $\WW_i\subset\ZZ_i$ of $\Gamma_i$ such that, whenever $\zeta\in\WW_i$ and $\phi_h^t(\zeta)\not\in\ZZ_i$, we have $L(\zeta)-L(\phi_h^t(\zeta))\geq\delta$. Notice that  $N:=\lfloor 4\epsilon/\delta \rfloor$ is an upper bound for the number of times that any orbit $\phi_h^t(\zeta)$ can go from outside $\ZZ_i$ to inside $\WW_i$, i.e.
\begin{align*}
N
\geq
\max_{i=1,...,n}\
\max_{\zeta\in\KK_i}\ \#\left\{k\geq0\ \left|  
\begin{array}{l@{}}
\exists\ 0\leq a_1< b_1< ...< a_k< b_k\mbox{ and } \zeta\in\KK_i\\
\mbox{such that}\\ 
\phi_h^{a_j}(\zeta)\not\in\ZZ_i,\ \phi_h^{b_j}(\zeta)\in\WW_i,\ \forall j=1,...,k
\end{array}
\right.\right\}.
\end{align*}
Lemma~\ref{l:deformation_lemma} provides $\rho\in(0,\rho_h]$ and two functions $\tau_{i,1}$ and $\tau_{i,2}$ satisfying the properties stated for the data associated to the closed geodesic $\gamma_i$. We set
\begin{align*}
 \tau_1(t):=\max_{i=1,...,n} \tau_{i,1}(t),\qquad
 \tau_2(t):=\max_{i=1,...,n} \tau_{i,2}(t),
\end{align*}
so that the functions $\tau_1$ and $\tau_2$ satisfy the properties stated in Lemma~\ref{l:deformation_lemma} with respect to the data associated to any of the closed geodesics $\gamma_i$. We define a sequence of real numbers $t_k$ and $s_k$, for $k\geq0$, by 
\[
t_0:=0,
\qquad
t_{k+1}:=t_k+\tau_1(t_k),
\qquad
s_k:=\tau_2(t_k).
\]
By Lemma~\ref{l:deformation_lemma}(ii), there exist $\zeta_{i,k}\in\Sigma_i$ such that 
\begin{align*}
 \phi_h^{[0,t_k]}(\zeta_{i,k})\cap \WW_i \neq \varnothing,
 \qquad
 \phi_h^{t_k}(\zeta_{i,k})\not\in\HH_i(\ell_i-\epsilon,\rho),
 \qquad
 \phi_g^{s_k}\circ\phi_h^{t_k}(\zeta_{i,k})\in\UU_i,
\end{align*}
where $\HH_i(\ell_i-\epsilon,\rho):=\overline\KK{}_{i,h}^{<\ell_i-\epsilon}\cup\overline\KK_{i,h,\rho}$. We set
\[
J(i,k):=\Big\{ j\in\{1,...,k-1\}\ \Big|\ \phi_h^{t_j}(\zeta_{i,k})\not\in\ZZ_i \Big\}.
\]
By Lemma~\ref{l:deformation_lemma}(i), for each $j\in J(i,k)$ we have 
\[\phi_h^{[t_j,t_{j+1}]}(\zeta_{i,k})\cap\WW_i\neq\varnothing.\]
Namely, the loop $\phi_h^{t}(\zeta_{i,k})$ is outside $\ZZ_i$ for $t=t_j$, but enters $\WW_i$ for some $t\in[t_j,t_{j+1}]$. This implies the cardinality bound $\# J(i,k) \leq N$.
Therefore, the set
\[
J(k):=\bigcup_{i=1,...,n} J(i,k)
\]
has cardinality $\#J(k)\leq nN$. For any $k\geq nN+2$, the set $\{1,...,k-1\}\setminus J(k)$ is non-empty, and for every $j\in\{1,...,k-1\}\setminus J(k)$ we have
\[
 \phi_h^{t_{j}}(\zeta_{i,k})\in\ZZ_i,
 \qquad
 \phi_g^{s_k}\circ\phi_h^{t_{k}}(\zeta_{i,k})\in\UU_i,
 \qquad\forall i=1,...,k.
\qedhere
\]
\end{proof}

We can now provide the proof of Theorem~\ref{mt:links}. Actually, we will prove the following slightly stronger statement, which relaxes the non-degeneracy condition of Definition~\ref{d:stable_link}, and replaces it with the homological visibility.

\begin{Thm}\label{t:links}
Let $(M,g)$ be a closed Riemannian surface, $\LL$ a primitive flat link type, and $\bm\gamma=(\gamma_1,...,\gamma_n)\in\LL$ a flat link of homologically visible closed geodesics such that, for each $i\neq j$, the components $\gamma_i,\gamma_j$ have distinct flat knot types or distinct lengths $L_g(\gamma_i)\neq L_g(\gamma_j)$. For each $\epsilon>0$, any Riemannian metric $h$ sufficiently $C^0$-close to $g$ has a flat link of closed geodesics $\bm\zeta\in\LL$ and such that $\| L_h(\bm\zeta)- L_g(\bm\gamma)\|<\epsilon$.
\end{Thm}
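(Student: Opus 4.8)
The strategy is to reduce the link statement to the knot statement (Theorem~\ref{t:knots}, or rather its refinement packaged in Lemma~\ref{l:intertwining}) by running the curve shortening flow \emph{in parallel} on all components. First I would fix, for each $i=1,\dots,n$, a homologically visible closed geodesic $\gamma_i\in\Gamma_g(\KK_i)\cap L_g^{-1}(\ell_i)$, where $\KK_i$ is the flat knot type of $\gamma_i$ and $\LL\subset\KK_1\times\cdots\times\KK_n$; by the hypothesis, the pairs $(\KK_i,\ell_i)$ are pairwise distinct (either distinct flat knot types, or equal flat knot types with distinct lengths). Choose Gromoll--Meyer neighborhoods $\UU_i$ of $\gamma_i$ with $\UU_i\cap\Gamma_g(\KK_i)=\{\gamma_i\}$, small enough in the $C^2$ topology that the product $\UU_1\times\cdots\times\UU_n$ lifts into the fixed flat link type $\LL$ (this uses that $\LL$ is an open subset of $\Omega^{\times n}\setminus\Delta_n$ and that flat-link-type membership is a $C^2$-open condition on each factor, together with the fact that $\bm\gamma=(\gamma_1,\dots,\gamma_n)$ already lies in $\LL$). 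Shrinking the $\UU_i$ further, I would also assume each $\UU_i$ is so $C^2$-small that any $\bm\zeta=(\zeta_1,\dots,\zeta_n)$ with $\zeta_i\in\UU_i$ satisfies $\|L_h(\bm\zeta)-L_g(\bm\gamma)\|<\epsilon$ for every metric $h$ sufficiently $C^0$-close to $g$ — here I use that length varies continuously under $C^0$-perturbation of the metric and that loops in $\UU_i$ have length close to $\ell_i$.

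Next I would apply Lemma~\ref{l:intertwining} to the collection $\gamma_1,\dots,\gamma_n$. For a given $\epsilon'>0$ (to be taken smaller than $\epsilon$ and small enough that the intervals $[\ell_i-\epsilon',\ell_i+\epsilon']$ are pairwise disjoint or correspond to distinct $\KK_i$), and for $h$ sufficiently $C^0$-close to $g$, Lemma~\ref{l:intertwining} produces times $t_1,t_2,t_3\ge 0$ and elements $\zeta_i\in\UU_i$ such that
\[
\phi_h^{t_1}(\zeta_i)\in\ZZ_i,
\qquad
\phi_g^{t_3}\circ\phi_h^{t_2+t_1}(\zeta_i)\in\UU_i,
\]
where $\ZZ_i$ is any prescribed $C^2$-open neighborhood of $\Gamma_h(\KK_i)\cap L_h^{-1}[\ell_i-\epsilon',\ell_i+\epsilon']$. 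The point of the parallel statement is that the \emph{same} times $t_1,t_2,t_3$ work for all $i$ simultaneously. Now consider the $h$-curve-shortening trajectory of the multi-loop $\bm\zeta=(\zeta_1,\dots,\zeta_n)$, i.e. $\phi_h^t(\bm\zeta):=(\phi_h^t(\zeta_1),\dots,\phi_h^t(\zeta_n))$. Each component stays inside $\KK_i$ for $t$ in a suitable interval, and one needs to check that the multi-loop stays inside $\LL$ along the relevant portion of the flow. For this I would invoke that $\LL$ is a connected component of $\Omega^{\times n}\setminus\Delta_n$ and use the flat-knot-type analogue of Lemma~\ref{l:Angenent_primitive}: if the trajectory of $\bm\zeta$ leaves $\overline\LL$ it can never return, and since we have arranged (via the $\ZZ_i$ and $\UU_i$ being $C^2$-small) that both endpoints $\phi_h^{t_1}(\bm\zeta)$ and $\phi_g^{t_3}\circ\phi_h^{t_2+t_1}(\bm\zeta)$ sit inside $\LL$, the whole concatenated trajectory must remain in $\overline\LL$, in fact in $\LL$ except possibly at finitely many instants — and one shows these instants can be avoided. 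Taking $\ZZ_i$ to shrink down onto $\Gamma_h(\KK_i)\cap L_h^{-1}[\ell_i-\epsilon',\ell_i+\epsilon']$ and passing to a limit along the compact multi-loops $\phi_h^{t_1}(\bm\zeta)$, one extracts for each $i$ a closed geodesic $\eta_i$ of $h$ with $\eta_i\in\KK_i$ and $L_h(\eta_i)\in[\ell_i-\epsilon',\ell_i+\epsilon']$; moreover the limiting multi-loop $\bm\eta=(\eta_1,\dots,\eta_n)$ lies in $\overline\LL$. Since the $\eta_i$ have pairwise distinct flat knot types or pairwise distinct lengths (inherited from the disjointness arrangement), no two of them can be tangent or equal, so $\bm\eta$ avoids $\Delta_n$ and hence $\bm\eta\in\LL$. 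Finally $\|L_h(\bm\eta)-L_g(\bm\gamma)\|<\epsilon$ by the choice of $\epsilon'$.

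The main obstacle I anticipate is precisely the step of keeping the multi-loop inside the flat \emph{link} type $\LL$ throughout the (concatenated $h$- and $g$-) curve shortening evolution, rather than merely keeping each component inside its flat \emph{knot} type $\KK_i$: a priori, two components could become tangent to each other at some intermediate time even though each individually stays in its $\KK_i$. This is where one genuinely needs the hypothesis that the components have distinct flat knot types or distinct lengths — distinct flat knot types of the \emph{geodesic} limits is not enough during the flow, so the argument must be set up (by choosing the $\UU_i$ and the $\ZZ_i$ with disjoint length windows, and by using the monotonicity of length along the flow, Equation~\eqref{e:derivative_length}) so that two evolving components can never simultaneously occupy the same length window, hence never collide. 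Making this collision-avoidance rigorous, and correctly invoking the relative-flat-knot-type formalism of Section~\ref{ss:primitive} with $\bm\zeta$'s other components playing the role of the obstacle link, is the technical heart of the proof; everything else is a routine assembly of Lemma~\ref{l:intertwining} with compactness and continuity of length under $C^0$-perturbation.
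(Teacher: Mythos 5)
Your high-level strategy matches the paper's: apply Lemma~\ref{l:intertwining} to obtain a concatenated $h$-then-$g$ curve shortening trajectory of multi-loops whose endpoints land in prescribed small neighborhoods, and argue that the flat link type is preserved along this trajectory. But the step you isolate as the technical heart --- keeping the multi-loop inside $\LL$ --- is precisely where your proposed mechanism breaks down. Arranging that distinct components never ``simultaneously occupy the same length window'' does not prevent two curves from becoming tangent: tangency is a local phenomenon and has nothing to do with total length (a short loop can perfectly well be tangent to a long one), so disjoint length windows tell you the components remain geometrically distinct but give no control over crossings of $\Delta_n$. The correct and essential ingredient, which your proposal never explicitly invokes, is the monotonicity of intersection numbers under curve shortening flow (\cite[Lemma~3.3]{Angenent:2005aa}): the function $t\mapsto\#(\nu_{i_1,t}\cap\nu_{i_2,t})$ is non-increasing along any curve shortening evolution, hence constant because both endpoints of the concatenated trajectory realize the common value $\#(\gamma_{i_1}\cap\gamma_{i_2})$ (since both the start and the finish sit in the $C^2$-small $\UU_i$'s). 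Combined with Lemma~\ref{l:Angenent_primitive} to rule out self-tangencies of individual components, this constancy is what forces $\bm\nu_t\in\LL$ throughout --- no length-window setup and no ``one shows these instants can be avoided'' hand-waving needed.

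Relatedly, you misattribute the role of the hypothesis that distinct components have distinct flat knot types or distinct lengths. It is not needed to avoid collisions during the flow --- the intersection-number argument handles that unconditionally. It enters only at the very end, to guarantee that the closed geodesics $\zeta_i$ extracted for the metric $h$ are pairwise geometrically distinct, so that $\bm\zeta$ genuinely has $n$ components: if $\KK_i\ne\KK_j$ this is automatic, and if $\KK_i=\KK_j$ with $|\ell_i-\ell_j|>2\epsilon$ the disjoint length windows force $L_h(\zeta_i)\ne L_h(\zeta_j)$. Your limiting argument (shrinking the $\ZZ_i$ and extracting a convergent subsequence) is also unnecessary: the paper simply picks, once and for all, a closed geodesic $\zeta_i$ inside the connected component $\ZZ_{i,k_i}$ of $\ZZ_i$ that the trajectory hits at time $t_1$, and the constancy of intersection numbers together with the $C^2$-smallness of the $\ZZ_{i,k_i}$ immediately places $\bm\zeta=(\zeta_1,\ldots,\zeta_n)$ in $\LL$.
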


\begin{proof}
Let $\KK_i$ be the flat knot type of the component $\gamma_i$, and $\ell_i:=L_g(\gamma_i)$ its length. Let $\epsilon>0$ be small enough so that, for all $i\neq j$, either $\ell_i=\ell_j$ or $|\ell_i-\ell_j|>2\epsilon$. Let $h$ be a Riemannian metric that is sufficiently $C^0$-close to $g$ so that Lemma~\ref{l:intertwining} holds. For each $i$, we have a non-empty compact set of closed geodesics 
\[\Gamma_i:=\Gamma_h(\KK_i)\cap L_h^{-1}[\ell_i-\epsilon,\ell_i+\epsilon].\] 
We fix Gromoll-Meyer neighborhoods $\UU_i\subset\KK_i$ of the components $\gamma_i$ such that $\UU_i\cap\Gamma_g(\KK_i)=\{\gamma_i\}$, and $C^2$-open neighborhoods $\ZZ_{i}\subset\KK_i$ of $\Gamma_i$.
Since $\Gamma_i$ is compact, $\ZZ_i$ has only finitely many connected components $\ZZ_{i,1},...\ZZ_{i,q_i}$ intersecting $\Gamma_i$.
We require the neighborhoods $\UU_i$ and $\ZZ_i$ to be sufficiently $C^2$-small so that, for each $i_1,i_2,k,l$ with $i_1\neq i_2$, the intersection numbers $\#(\nu_{i_1}\cap\nu_{i_2})$ are independent of the specific choices of $\nu_{i_1}\in\UU_{i_1}$ and $\nu_{i_2}\in\UU_{i_2}$, or of the specific choice of $\nu_{i_1}\in\ZZ_{i_1,k}$ and $\nu_{i_2}\in\ZZ_{i_2,l}$. By Lemma~\ref{l:intertwining}, there exist $t_1,t_2,t_3\in[0,\infty)$ such that, for each $i\in\{1,...,n\}$, there exists $\nu_i\in\UU_i$ and $k_i\in\{1,...,q_i\}$ satisfying
\begin{align*}
\phi_h^{t_1}(\nu_i)\in\ZZ_{i,k_i},
\qquad
\phi_g^{t_3}\circ\phi_h^{t_2+t_1}(\nu_i)\in\UU_i.
\end{align*}
For each $i\in\{1,...,n\}$, we fix a closed geodesic $\zeta_i\in \ZZ_{i,k_i}\cap\Gamma_i$.
We claim that $\bm\zeta=(\zeta_{1},...,\zeta_{n})$ has the same flat link type as $\bm\gamma=(\gamma_1,...,\gamma_n)$. Indeed, for each $i\neq j$ the components $\gamma_i$ and $\gamma_j$ have distinct flat knot type or lengths satisfying $|\ell_i-\ell_j|>2\epsilon$, and therefore the components of $\bm\zeta$ are pairwise distinct. We consider the continuous path of multi-loops $\bm\nu_t=(\nu_{1,t},...,\nu_{n,t})$, where
\begin{align*}
\nu_{i,t}
:=
\left\{
  \begin{array}{@{}ll}
    \phi_h^t(\nu_i), & t\in[0,t_1+t_2], \vspace{5pt} \\ 
    \phi_g^{t-t_1-t_2}\circ\phi_h^{t_2+t_1}(\nu_i), &  t\in[t_1+t_2,t_1+t_2+t_3]. 
  \end{array}
\right.
\end{align*}
By \cite[Lemma~3.3]{Angenent:2005aa}, the number of intersections between two geometrically distinct curves evolving for time $t$ under the curve shortening flow is a non-increasing function of $t$. Therefore, for all $i_1\neq i_2$ the functions $t\mapsto\#(\nu_{i_1,t}\cap\nu_{i_2,t})$ are non-increasing, and therefore constant, since
\[
\#(\nu_{i_1,0}\cap\nu_{i_2,0})
=
\#(\gamma_{i_1}\cap\gamma_{i_2})
=
\#(\nu_{i_1,t_1+t_2+t_3}\cap\nu_{i_2,t_1+t_2+t_3}).
\]
This implies that each $\bm\nu_t$ has the same flat link type of $\bm\gamma$. Finally, $\bm\nu_{t_1}$ has the same flat link type as $\bm\zeta$.
\end{proof}

\section{Forced existence of closed geodesics}
\label{s:forcing}

As we already mentioned in Section \ref{ss:global}, Angenent's work \cite{Angenent:2005aa} implies that the local homology of a primitive relative flat knot type $\KK$ is independent of the choice of the admissible  Riemannian metric $g$. 
Its non-vanishing implies the existence of closed geodesics of flat knot type $\KK$ for any such $g$. With this in mind, in order to prove Theorems~\ref{mt:multiplicity} and~\ref{mt:sphere}, we need to study the local homology of certain flat knot types relative to a contractible simple closed geodesic on Riemannian closed oriented surfaces of positive genus, or relative to a pair of disjoint simple closed geodesics on Riemannian 2-spheres. We shall conveniently employ certain model Riemannian metrics, introduced by Donnay \cite{Donnay:1988ab}, and Burns and Gerber \cite{Burns:1989aa}, for which enough properties of the geodesic flow are known.

\subsection{Focusing caps}
The main ingredient for the construction of the model Riemannian metrics is a certain Riemannian disk of revolution $(B^2,g_0)$, introduced by Donnay, Burns and Gerber in the above mentioned works, and whose construction we now recall.
As a set, the disk $B^2=B^2(r_0)\subset\R^2$ is the compact Euclidean one of radius $r_0>0$ centered at the origin. The Riemannian metric is of the form
\[g_0 = dr^2+\rho(r)^2d\theta^2,\] 
where $(r,\theta)\in[0,r_0]\times\R$ are the polar coordinates on $B^2$, and $\rho:[0,r_0]\to[0,1]$ is a smooth function such that $\rho(0)=0$ and $\rho(r_0)=1$. The associated Gaussian curvature is independent of $\theta$, and indeed is given by 
\[R_{g_0}(r)=-\ddot\rho(r)/\rho(r).\]
The Riemannian disk $(B^2,g_0)$ is called a \emph{focusing cap} when it satisfies the following three properties.
\begin{itemize}
\setlength{\itemsep}{5pt}

\item[(i)] $\dot\rho|_{[0,r_0)}>0$,

\item[(ii)] $\dot R_{g_0}|_{(0,r_0)}<0$ and $R_{g_0}(r_0)=0$,

\item[(iii)] The equator $\partial B^2=\{r=r_0\}$ is a closed geodesic.

\end{itemize}

\begin{Remark}\label{r:attach_cap}
One can also require that all the derivatives of the function $\rho$ vanish at $r=r_0$. With this extra assumption, the focusing cap can be smoothly attached to a flat cylinder $([r_0,r_1]\times\R/2\pi\Z,dr^2+d\theta^2)$.
\end{Remark}

Let $\pi:SB^2\to B^2$ be the unit tangent bundle of the focusing cap. Any unit vector $v\in SB^2$ based at a point $x=\pi(v)$ distinct from the origin is uniquely determined by the triple $(r,\theta,\xi)\in[0,r_0]\times\R\times\R$, where $(r,\theta)$ are the polar coordinates of $x$, and $\xi$ is the signed angle between the parallel through $x$ oriented counterclockwise and $v$ (Figure~\ref{f:total_rotation}(a)). We denote by $\gamma_v(t)$ the associated geodesic such that $\gamma(0)=x$ and $\dot\gamma(0)=v$.

We recall that orthogonal Jacobi fields $J$ along a geodesic $\gamma$ have the form $J(t)=u(t)n_\gamma(t)$, where $n_\gamma$ is a unit normal vector field to $\dot\gamma$, and the real-valued function $u$ is a solution of the scalar Jacobi equation $\ddot u+R_{g_0} u=0$. We briefly call such a $u$ a scalar orthogonal Jacobi field along $\gamma$.

The following proposition, due to Donnay, Burns, and Gerber, summarizes the main properties of the focusing caps.

\begin{Prop}
\label{p:focusing_cap}$ $
\begin{itemize}
\setlength{\itemsep}{5pt}

\item[(i)] \cite[Prop.~3.1]{Donnay:1988ab} The focusing cap is non-trapping: the only geodesic $\gamma(t)$ defined for all $t\geq0$ is the equator $\partial B^2$.

\item[(ii)] \cite[Sect.~5]{Donnay:1988ab} Let $v=(r_0,\theta_0,\xi)\in \partial SB^2$ be a unit vector based at a point $x\in\partial B^2$ on the equator and pointing transversely inside the focusing cap with a tangent angle $\xi\in(0,\pi/2]$. The associated geodesic $\gamma_v(t)=(r(t),\theta(t))$ reaches the equator at some positive time $t=T$,  spanning total rotation
\begin{align*}
 \Theta(\xi):=
 \left\{
  \begin{array}{@{}ll}
    \pi, & \mbox{if }\xi=\pi/2, \vspace{5pt}\\ 
    \theta(T)-\theta(0), & \mbox{if }\xi\in(0,\pi/2), \\ 
  \end{array} 
 \right.
\end{align*}
see Figure~\ref{f:total_rotation}(b).
The function $\Theta:(0,\pi/2]\to\R$ is smooth, and satisfies 
\[ \dot\Theta|_{(0,\pi/2]}<0.\]

\begin{figure}
\includegraphics{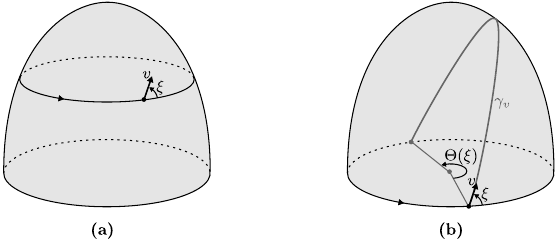}
\caption{\textbf{(a)} The coordinate $\xi$. \textbf{(b)} The total rotation $\Theta(\xi)$.}
\label{f:total_rotation}
\end{figure}

\item[(iii)] \cite[Prop.~6.1]{Donnay:1988ab} Let $v=(r_0,\theta_0,\xi)\in \partial SB^2$ be a unit tangent vector based at some point of the equator $\partial B^2$ and pointing transversely inside the focusing cap with a tangent angle $\xi\in(0,\pi/2]$. Let $\gamma_v:[0,T]\to B^2$ be the associated maximal geodesic segment. Any scalar orthogonal Jacobi field $u:[0,T]\to\R$ along $\gamma_v$ satisfies 
\begin{align*}
u(T) & = -u(0)+\sin(\xi)\dot\Theta(\xi)\dot u(0),\\
\dot u(T) & = -\dot u(0).
\end{align*}
By symmetry, if instead $\xi\in[\pi/2,\pi)$ such a scalar Jacobi field satisfies 
\begin{align*}
u(T) & = -u(0)+\sin(\xi)\dot\Theta(\pi-\xi)\dot u(0),\\
\dot u(T) & = -\dot u(0).
\end{align*}

\end{itemize}
\end{Prop}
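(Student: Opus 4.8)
The plan is to exploit throughout that the focusing cap $(B^2,g_0)$ is a surface of revolution, reducing all three assertions to the analysis of the Clairaut integral and of the reflection isometries $\theta\mapsto\mathrm{const}-\theta$. Along any unit‑speed geodesic $\gamma_v(t)=(r(t),\theta(t))$ with tangent angle $\xi(t)$ to the counterclockwise parallel, the function $c:=\rho(r(t))\cos\xi(t)$ is constant, and $\|\dot\gamma_v\|_{g_0}\equiv 1$ gives $\dot r^2=1-c^2/\rho(r)^2$ and $\dot\theta=c/\rho(r)^2$, hence $\tfrac{d\theta}{dr}=\pm\tfrac{c}{\rho\sqrt{\rho^2-c^2}}$. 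For the geodesic issued from the equator with angle $\xi\in(0,\pi/2]$ one has $\rho(r_0)=1$, so $c=\cos\xi\in[0,1)$; since $\rho$ is strictly increasing on $[0,r_0)$ with $\rho(0)=0$, there is a unique $r_{\min}\in[0,r_0)$ with $\rho(r_{\min})=\cos\xi$, and the orbit is confined to $\{r_{\min}\le r\le r_0\}$, bouncing off $r_{\min}$, which is a nondegenerate turning point because $\dot\rho(r_{\min})>0$. I will also repeatedly use that the equator is a geodesic precisely because $\dot\rho(r_0)=0$.

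For (i): if $v$ is not tangent to $\partial B^2$ then $|c|<1$, so $\dot r^2=1-c^2\ne 0$ on $\partial B^2$, and the improper integral $\int_{r_{\min}}^{r_0}(1-c^2/\rho^2)^{-1/2}\,dr$ converges — the integrand has only an inverse–square–root singularity at $r_{\min}$ (as $\dot\rho(r_{\min})>0$) and none at $r=0$ in the meridian case $c=0$. Hence $\gamma_v$ reaches $\partial B^2$ in finite forward time and is not forward–complete; only the equator, for which $c=1$ and $\dot r\equiv 0$, survives. For (ii), integrating $\tfrac{d\theta}{dr}$ over the symmetric excursion $r_0\to r_{\min}\to r_0$ gives
\[
\Theta(\xi)=2\int_{r_{\min}}^{r_0}\frac{\cos\xi\,dr}{\rho(r)\sqrt{\rho(r)^2-\cos^2\xi}},\qquad \rho(r_{\min})=\cos\xi ,
\]
which extends smoothly to the meridian value $\Theta(\pi/2)=\pi$, and smoothness on $(0,\pi/2]$ follows from a change of variables removing the $r_{\min}$–singularity. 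The only delicate point is the monotonicity $\dot\Theta|_{(0,\pi/2]}<0$: differentiating this formula in $\xi$ (both $c=\cos\xi$ and $r_{\min}$ varying) produces a boundary term and a bulk term, and controlling their combined sign is exactly where the decreasing–curvature hypothesis $\dot R_{g_0}|_{(0,r_0)}<0$ enters. This is the content of \cite[Sect.~5]{Donnay:1988ab}, and I expect reproducing it to be the main technical obstacle.

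For (iii), write the scalar orthogonal Jacobi equation $\ddot u+R_{g_0}(r(t))u=0$ along the maximal segment $\gamma_v:[0,T]\to B^2$ and let $\Phi\in\mathrm{SL}(2,\R)$ be the endpoint map $(u(0),\dot u(0))\mapsto(u(T),\dot u(T))$ (the Wronskian is constant, the equation having no first–order term). The rotational Killing field $\partial_\theta$ restricts along $\gamma_v$ to a Jacobi field whose orthogonal component is the scalar Jacobi field $u_{\mathrm{rot}}(t)=\pm\rho(r(t))\sin\xi(t)$; at each equator crossing $\rho=1$, and since $\dot\rho(r_0)=0$ one computes $\dot\xi=0$ there, so $u_{\mathrm{rot}}$ has vanishing derivative at $t=0$ and $t=T$, while its value changes sign between the inward and outward crossings (i.e.\ $u_{\mathrm{rot}}(T)=-u_{\mathrm{rot}}(0)$, consistently with the reflection symmetry about the meridian through the lowest point). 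This forces the first column of $\Phi$ to be $(-1,0)$, hence by $\det\Phi=1$ also $\dot u(T)=-\dot u(0)$; the remaining entry is obtained by differentiating in $\xi$ the family of geodesics issued from the equator — this variation is an orthogonal Jacobi field whose value and derivative at $t=T$ are read off from $\Theta$ and $\dot\Theta$, producing the coefficient $\sin\xi\,\dot\Theta(\xi)$, as in \cite[Prop.~6.1]{Donnay:1988ab}. Finally, the case $\xi\in[\pi/2,\pi)$ follows by applying the isometry $\theta\mapsto-\theta$ of $(B^2,g_0)$, which leaves the Jacobi equation unchanged but replaces the tangent angle $\xi$ by $\pi-\xi$.
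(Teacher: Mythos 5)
The paper gives no proof of Proposition~\ref{p:focusing_cap}; it simply cites \cite{Donnay:1988ab} (Prop.~3.1, Sect.~5, Prop.~6.1) for items (i), (ii), (iii), so there is no ``paper's own proof'' to compare against. Evaluated on its own merits, your sketch sets up the correct framework, but it has a genuine gap precisely at the point where the focusing hypothesis does its work.

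Your argument for (i) is essentially complete: by Clairaut, any geodesic with constant $|c|=\rho(r)|\cos\xi|<1$ is confined to $\{\rho(r)\ge|c|\}$, bounces exactly once off the parallel $\rho=|c|$, and the convergence of the improper integral $\int_{r_{\min}}^{r_0}(1-c^2/\rho^2)^{-1/2}\,dr$ (integrable $1/\sqrt{\ }$ singularity at $r_{\min}$ because $\dot\rho(r_{\min})>0$, none at $r_0$ or at the center) forces the geodesic to hit $\partial B^2$ in finite time; the equator is the unique orbit with $|c|=1$. For (iii), using the Killing field $\partial_\theta$ as the first column of the endpoint map $\Phi$, the Wronskian for the constant-unit-determinant, and the $\xi$-variation of $\gamma_v$ for the remaining coefficient is the right decomposition, and your sign tracking via $\xi(T)=-\xi(0)$ and $\dot\rho(r_0)=0$ is correct. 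You should, however, make two steps explicit: (a) the $(2,1)$-entry of $\Phi$ vanishes because $\dot u_{\mathrm{rot}}(0)=\dot u_{\mathrm{rot}}(T)=0$, which follows directly from $\dot\rho(r_0)=0$ and $\dot\xi|_{r=r_0}=0$, rather than from the determinant; and (b) when differentiating in $\xi$, the exit time $T=T(\xi)$ itself varies, so one must separate the tangential correction (along $\dot\gamma$, which does not contribute to the scalar Jacobi field) from the normal component $\sin\xi\,\dot\Theta(\xi)$.

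The real gap is in (ii), and you acknowledge it yourself: you do not prove $\dot\Theta|_{(0,\pi/2]}<0$, you only note that this is where $\dot R_{g_0}|_{(0,r_0)}<0$ must enter. This is not a cosmetic omission — it is the unique point at which the defining ``focusing'' hypothesis is used, and everything downstream (the cone contraction in Proposition~\ref{p:model_metric_positive_genus}(iv), hence Lemma~\ref{l:hyperbolic} and Lemma~\ref{l:same_parity}, and also Lemma~\ref{l:no_orthogonal_Jacobi} for the sphere model) depends on it. Moreover, differentiating the Clairaut integral in $\xi$ is itself delicate: naive differentiation under the integral sign produces a non-integrable singularity at $r=r_{\min}$, so one must first make a desingularizing substitution before the curvature monotonicity can be applied. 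Your sketch also does not justify the $C^\infty$ extension of $\Theta$ to $\xi=\pi/2$: as $\xi\to\pi/2$ the Clairaut constant $c=\cos\xi\to0$ and $r_{\min}\to0$, a singular limit in your integral formula, and the continuity at $\Theta(\pi/2)=\pi$ is not a pointwise consequence of the displayed expression. Until these two issues in (ii) are addressed, the proof is incomplete.
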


\subsection{Model metric in positive genus}
We now introduce the model Riemannian metric $g_0$ on a closed connected oriented surface of positive genus $M$. We first introduce some notation, and refer the reader to, e.g., \cite[Sect.~1.11]{Guillarmou:2024aa}, for the background on the geometry of unit tangent bundles of Riemannian surfaces.
Let $\psi_t:SM\to SM$ be the geodesic flow on the unit tangent bundle associated with $g_0$. Its orbits have the form $\psi_t(\dot\gamma(0))=\dot\gamma(t)$, where $\gamma:\R\to M$ is a geodesic parametrized with unit speed $\|\dot\gamma\|_{g_0}\equiv1$. The unit tangent bundle $SM$ admits a frame $X,X_\perp,V$ that is orthonormal with respect to the Sasaki Riemannian metric on $SM$ induced by $g_0$, where $X$ is the geodesic vector field, $V$ is a unit vector field tangent to the fibers of $SM$, and $X_\perp=[X,V]$. The sub-bundle of $T(SM)$ spanned by $X_\perp,V$ is the contact distribution of $SM$, and is invariant under the linearized geodesic flow $d\psi_t$.

\begin{Prop}[\cite{Donnay:1988ab,Burns:1989aa}]
\label{p:model_metric_positive_genus}
On any closed connected oriented surface $M$ of positive genus, there exists a Riemannian metric $g_0$ with the following properties:
\begin{itemize}
\setlength{\itemsep}{5pt}

\item[(i)] It has a contractible simple closed geodesic $\zeta$, which bounds an open disk $B\subset M$.

\item[(ii)] The Gaussian curvature $R_{g_0}$ is strictly negative on $U:=M\setminus\overline B$, and vanishes along $\zeta$.

\item[(iii)] The disk $B$ is non-trapping: no forward orbit $\psi_{[0,\infty)}(v)$ is entirely contained in the subset $SB\subset SM$.

\item[(iv)] The cone bundle $C$ over $SU$, given by
\begin{align}
\label{e:cone_field}
C_v
=
\big\{
aX_\perp(v) + b V(v)\ 
\big|\ 
a,b\in\R\mbox{ such that }ab\leq0
\big\},
\end{align}
is positively invariant and contracted by the linearized geodesic flow: for all $v\in SU$ and $t>0$ such that $\psi_t(v)\in SU$, we have
\begin{align*}
 d\psi_t(v)C_v\setminus\{0\}\subset \interior(C_{\psi_t(v)});
\end{align*}
see Figure~\ref{f:cone}.
\end{itemize}
\end{Prop}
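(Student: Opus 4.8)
This proposition is essentially due to Donnay \cite{Donnay:1988ab} and Burns--Gerber \cite{Burns:1989aa}, and the plan is to recall their construction, which glues a focusing cap to a negatively curved surface of genus $g$. I would start from a focusing cap $(B^2(r_0),g_0)$, $g_0=dr^2+\rho(r)^2\,d\theta^2$, normalized as in Remark~\ref{r:attach_cap} so that all derivatives of $\rho$ vanish at $r=r_0$; recall that along the cap one has $R_{g_0}>0$ on $\{r<r_0\}$, $R_{g_0}(r_0)=0$ and $\dot\rho(r_0)=0$, so that the equator $\{r=r_0\}$ is a geodesic. Next I would extend $\rho$ smoothly past $r_0$ to $[r_0,r_1]$, keeping all its derivatives zero at $r_0$ (which is possible, e.g.\ with $\rho=1+e^{-1/(r-r_0)}$ near $r_0$) but with $\ddot\rho>0$ on $(r_0,r_1]$; this produces a warped-product collar $([r_0,r_1]\times(\R/2\pi\Z),dr^2+\rho^2 d\theta^2)$ of strictly negative curvature smoothly attached to the cap. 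Finally I would glue to the end $\{r=r_1\}$ of this collar a compact surface $N$ of genus $g$ with one boundary circle, carrying a metric that coincides with the warped-product collar near $\partial N$ and has strictly negative curvature on $\interior(N)$; such a metric exists since $\chi(N)=1-2g<0$ and the germ of the metric along $\partial N$ can be prescribed, and building it is the step carried out in \cite{Donnay:1988ab,Burns:1989aa}. Denote by $g_0$ also the resulting metric on the closed oriented genus-$g$ surface $M$, and set $\zeta=\{r=r_0\}$ and $B=\{r<r_0\}$.

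Properties (i)--(iii) are then immediate. The curve $\zeta$ is a simple closed geodesic bounding the embedded disk $B$, hence contractible in $M$, which is (i). On $U=M\setminus\overline B$ the metric is either the negatively curved collar $\{r\in(r_0,r_1]\}$, where $R_{g_0}=-\ddot\rho/\rho<0$, or the negatively curved metric on $N$, so $R_{g_0}<0$ on $U$; and $R_{g_0}$ vanishes along $\zeta$ since $\ddot\rho(r_0)=0$; this is (ii). For (iii), a forward orbit $\psi_{[0,\infty)}(v)$ contained in $SB$ would be the velocity field of a geodesic of the focusing cap that is defined for all $t\geq 0$ and stays inside the open cap; by Proposition~\ref{p:focusing_cap}(i) the only such geodesic is the equator, which is not contained in $B$, a contradiction.

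For property (iv) I would argue as in the standard unstable-cone estimate for negatively curved metrics. Along any geodesic segment contained in $U$ a scalar orthogonal Jacobi field solves $\ddot u+R_{g_0}u=0$ with $R_{g_0}<0$, so $\ddot u$ has the sign of $u$; an elementary ODE argument then shows that the cone $\{(u,\dot u):u\dot u\geq 0\}$ is forward invariant and that its boundary is pushed strictly into its interior. Under the usual identification of the contact distribution of $SM$ with pairs $(u,\dot u)$ given by a scalar orthogonal Jacobi field and its derivative (see \cite{Guillarmou:2024aa}), this cone corresponds, up to the sign conventions in the identification, to $C_v$, which yields $d\psi_t(v)C_v\setminus\{0\}\subset\interior(C_{\psi_t(v)})$ whenever $\psi_{[0,t]}(v)\subset SU$. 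For orbit segments that leave $SU$, cross the cap and return, the Jacobi-field transition formula of Proposition~\ref{p:focusing_cap}(iii) applies: since $\dot\Theta<0$, the coefficient of $\dot u(0)$ in $u(T)$ is negative and $\dot u(T)=-\dot u(0)$, so the map $(u(0),\dot u(0))\mapsto(u(T),\dot u(T))$ across the cap carries $\{u\dot u\geq 0\}$ into itself, which together with the strict contraction along the portions of the orbit inside $SU$ gives the stated inclusion.

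The one genuinely delicate point is the smooth gluing in the first step: one must pass from the focusing cap, whose curvature is positive just inside $\zeta$ and zero on $\zeta$, to a region with strictly negative curvature immediately outside $\zeta$, while keeping $\zeta$ a geodesic and keeping the metric in warped-product form near $\zeta$ so that the sign of the curvature is governed by $\ddot\rho$. The infinite-order flatness of $\rho-1$ at $r_0$ from Remark~\ref{r:attach_cap} is exactly what makes this interpolation possible; once it is in place there is enough freedom to complete the negatively curved genus-$g$ piece, which is the content of \cite{Donnay:1988ab,Burns:1989aa}.
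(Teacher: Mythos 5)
Your proposal is correct and follows essentially the same route the paper sketches: construct $g_0$ by grafting a focusing cap onto a negatively curved surface, deduce (iii) from the non-trapping property of the cap (Proposition~\ref{p:focusing_cap}(i)), and deduce (iv) by combining the standard unstable-cone estimate in negative curvature with the Jacobi transition formula for cap crossings (Proposition~\ref{p:focusing_cap}(iii)). The only difference is cosmetic: the paper, following Burns--Gerber, starts from a hyperbolic cusped metric on $M\setminus\{x\}$ and modifies it near the cusp to insert the cap, whereas you start from the cap and build outward through a negatively curved warped-product collar before attaching the rest of the surface; both describe the same class of metrics. Your sign-convention remark about identifying $C_v=\{aX_\perp+bV\ |\ ab\leq 0\}$ with $\{u\dot u\geq 0\}$ is also correct: with $X_\perp=[X,V]$ the linearized flow acts by $a\mapsto a-bt$, $b\mapsto b$ in the flat model, so the correspondence is $a\leftrightarrow u$, $b\leftrightarrow-\dot u$, and both the negative-curvature estimate and the cap-crossing computation you give then carry over verbatim.
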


\begin{figure}
\begin{footnotesize}
\includegraphics{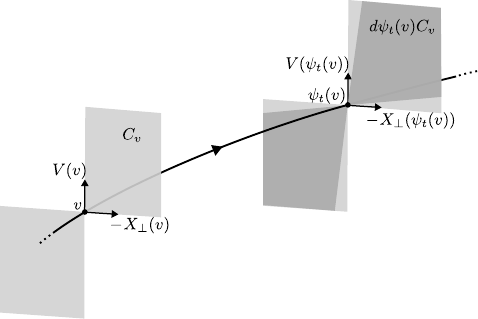}
\end{footnotesize}
\caption{The cone bundle $C$.}
\label{f:cone}
\end{figure}

The Riemannian metric $g_0$ is constructed starting from a hyperbolic metric on the pinched surface $M\setminus\{x\}$, having a cusp at $x$. Next one modifies the metric in a neighborhood $N$ of $x$, so that $N$ contains a simple closed geodesic $\zeta\subset N$ that bounds a focusing cap $\overline B$, the Gaussian curvature vanishes along $\zeta$ and is strictly negative outside $\overline B$ (see \cite[Section~1]{Burns:1989aa}). Property (iii)  follows from Proposition~\ref{p:focusing_cap}(i). The cone invariance in property (iv) can be easily verified along orbit segments that stay in the complement of the focusing cap $\overline B$, where the curvature in negative. The fact that the invariance still holds after crossing the interior $B$ of the focusing cap follows from Proposition~\ref{p:focusing_cap}(iii).

We recall that a closed geodesic $\gamma$, parametrized with unit speed and having minimal period $\tau>0$, is \emph{hyperbolic} when $d\psi_\tau(\dot\gamma(0))$ has an eigenvalue $q\in\R\setminus[-1,1]$, and thus its eigenvalues are $1,q,1/q$. The unstable bundle $\Eu$ over $\dot\gamma$ is the line bundle given by
\[
\Eu_{\dot\gamma(0)}:=\ker\big(d\psi_\tau(\dot\gamma(0))-qI\big).
\]
The eigenvalue $q$ is called the unstable Floquet multiplier of $\gamma$.

\begin{Lemma}\label{l:hyperbolic}
Let $\gamma$ be a closed geodesic of $g_0$ geometrically distinct from $\zeta$. Then $\gamma$ is hyperbolic, and $\Eu_{\dot\gamma(t)}\subset C_{\dot\gamma(t)}$ for all $t\in\R$ such that $\gamma(t)\in U$ $($here, $\gamma$ is parametrized with unit speed $\|\dot\gamma\|_{g_0}\equiv1$$)$.
\end{Lemma}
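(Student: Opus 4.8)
The plan is to use the cone field $C$ over $SU$ from Proposition~\ref{p:model_metric_positive_genus}(iv) as a source of hyperbolicity, exactly as in standard cone-field criteria for hyperbolic sets. First I would show that $\gamma$ cannot be entirely contained in $B$: by Proposition~\ref{p:model_metric_positive_genus}(iii) the disk $B$ is non-trapping, so no forward orbit stays in $SB$, hence any closed orbit other than the lift of $\zeta$ must spend time in $SU$. Actually I want slightly more: I want to find $t_0\in\R$ with $\gamma(t_0)\in U$, and then, since $\gamma$ is a closed geodesic of some minimal period $\tau$, consider the linearized return map $P:=d\psi_\tau(\dot\gamma(t_0))$ acting on the contact plane spanned by $X_\perp,V$ at $\dot\gamma(t_0)$. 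The key point is that the orbit segment $\psi_{[0,\tau]}(\dot\gamma(t_0))$ starts and ends in $SU$; although it may cross $SB$, Proposition~\ref{p:focusing_cap}(iii) is precisely what guarantees that the cone $C$ is still mapped into itself across such an excursion (as stated in the last paragraph before the lemma). So $P(C_{\dot\gamma(t_0)})\subset \interior(C_{\dot\gamma(t_0)})\cup\{0\}$, with strict inclusion of the nonzero part coming from the negative-curvature stretch in $U$ that the orbit necessarily traverses.

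Next I would extract hyperbolicity from this strict cone invariance. The standard argument: the nested intersection $\bigcap_{n\ge 1}P^n(C_{\dot\gamma(t_0)})$ is a single line $\Eu_{\dot\gamma(t_0)}$ which is $P$-invariant, hence an eigenline, and the associated eigenvalue $q$ has $|q|>1$ because a suitable Sasaki-type norm (or a cross-ratio/width functional on the cone) is strictly expanded — this is where the strict inclusion $d\psi_t(v)C_v\setminus\{0\}\subset\interior(C_{\psi_t(v)})$ on the $U$-portion is used quantitatively. Since $\det P = 1$ (the linearized geodesic flow is symplectic on the contact plane), the other eigenvalue is $1/q\in(-1,1)$, so $q\notin[-1,1]$ and $\gamma$ is hyperbolic with unstable Floquet multiplier $q$, and $\Eu_{\dot\gamma(t_0)}$ is exactly the unstable line as defined in the excerpt. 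Finally, to get $\Eu_{\dot\gamma(t)}\subset C_{\dot\gamma(t)}$ for every $t$ with $\gamma(t)\in U$: by definition the unstable bundle is $d\psi_t$-transported, $\Eu_{\dot\gamma(t)}=d\psi_t(\dot\gamma(t_0))\Eu_{\dot\gamma(t_0)}$; for $t>t_0$ with the whole segment $\gamma([t_0,t])$ a subsegment of one period, cone invariance (including across $B$-excursions, by Prop.~\ref{p:focusing_cap}(iii)) gives $\Eu_{\dot\gamma(t)}\subset C_{\dot\gamma(t)}$ as soon as $\gamma(t)\in U$; for general $t$ one reduces to this case by periodicity, choosing a representative $t_0$ of the orbit lying in $U$ and close enough to $t$ on the left.

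The main obstacle I anticipate is the careful bookkeeping of the cone invariance across passages through the focusing cap $B$, where the curvature is \emph{not} negative: one has to invoke Proposition~\ref{p:focusing_cap}(iii) to see that an orbit entering $\partial B$ with tangent angle $\xi$ and leaving with the reflected data still maps the cone $\{ab\le 0\}$ into itself — this is a direct computation with the explicit formulas $u(T)=-u(0)+\sin(\xi)\dot\Theta(\xi)\dot u(0)$, $\dot u(T)=-\dot u(0)$ together with $\dot\Theta<0$, but it must be done in the right coordinates on the contact plane so that the sign condition $ab\le 0$ is manifestly preserved (and the geodesic always re-enters $U$ because of the non-trapping property, so the excursion is finite). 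A secondary, more routine point is checking that the quantitative strict expansion on the $U$-portion is not undone on the $B$-portion; since across $B$ the map is (up to sign) a \emph{shear} of the form above, it does not contract the relevant width functional, so the net effect over a full period is strict expansion. Once these two points are in place, the extraction of the eigenvalue $q$ with $|q|>1$ and the transport statement for $\Eu$ are formal.
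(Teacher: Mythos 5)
Your proposal is correct and follows essentially the same route as the paper: non-trapping forces $\gamma$ to meet $U$, strict invariance of the cone field $C$ forces the linearized return map to have a fixed line inside the cone which is an eigenline with eigenvalue $q\in\R\setminus[-1,1]$, and the transport of $\Eu$ by $d\psi_t$ gives the containment at every $t$ with $\gamma(t)\in U$. The two points you flag as potential obstacles are already built into the hypotheses: Proposition~\ref{p:model_metric_positive_genus}(iv) asserts strict cone invariance for \emph{any} $v,\psi_t(v)\in SU$ regardless of intermediate excursions through $B$, and once one has two distinct real eigenvalues together with $\det=1$ on the contact plane, the conclusion $q\notin[-1,1]$ follows at once without any norm-expansion estimate.
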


\begin{proof}
Let $\gamma$ be a closed geodesic of $g_0$ geometrically distinct from $\zeta$. Proposition~\ref{p:model_metric_positive_genus}(iii) implies that $\gamma$ must intersect the open set $U$ where the cone bundle $C$ is defined. 
We parametrize $\gamma$ with unit speed, with $\gamma(0)$ in $U$, and denote by $\tau>0$ its minimal period.
Proposition~\ref{p:model_metric_positive_genus}(iv) implies that $d\psi_\tau(\dot\gamma(0))$ is a contraction on the space of lines $\ell\subset C_{\dot\gamma(0)}$ (here, line means 1-dimensional vector subspace). Therefore, $d\psi_\tau(\dot\gamma(0))|_{C_{\dot\gamma(0)}}$ has a unique fixed line $\ell=d\psi_\tau(\dot\gamma(0))\ell\subset C_{\dot\gamma(0)}$, which must be an eigenspace of  $d\psi_\tau(\dot\gamma(0))$ corresponding to an eigenvalue $q\in\R\setminus[-1,1]$.
\end{proof}

The parity of the Morse index of a hyperbolic closed geodesics is completely determined by its Floquet multipliers. On a Riemannian surface, a hyperbolic closed geodesic $\gamma$ with unstable Floquet multiplier $q$ has $\ind(\gamma)$ even if and only if $q>0$, namely if and only if the unstable bundle $\Eu_{\dot\gamma}$ is orientable, see e.g. \cite[Corollary~3.6]{Wilking:2001aa}.

\begin{Lemma}\label{l:same_parity}
Let $\KK$ be a flat-knot type relative to $\zeta$, and $\gamma_0,\gamma_1\in\Gamma_{g_0}(\KK)$ two closed geodesics. Then the Morse indices $\ind(\gamma_0)$ and $\ind(\gamma_1)$ have the same parity.
\end{Lemma}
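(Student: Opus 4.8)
The statement we must prove is Lemma~\ref{l:same_parity}: two closed geodesics $\gamma_0,\gamma_1$ of the model metric $g_0$ lying in the same relative flat-knot type $\KK$ (relative to the central geodesic $\zeta$) have Morse indices of the same parity. By the criterion recalled just before the statement (citing \cite[Corollary~3.6]{Wilking:2001aa}), the parity of $\ind(\gamma_i)$ is even if and only if the unstable Floquet multiplier $q_i$ is positive, equivalently if and only if the unstable line bundle $\Eu_{\dot\gamma_i}$ over the closed orbit $\dot\gamma_i$ is orientable. So the lemma reduces to showing that the orientability of $\Eu_{\dot\gamma_0}$ and $\Eu_{\dot\gamma_1}$ agree. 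First I would note that by Lemma~\ref{l:hyperbolic} both $\gamma_0$ and $\gamma_1$ are hyperbolic (they are geometrically distinct from $\zeta$, being in a relative flat-knot type whose boundary avoids $\zeta$), so these unstable bundles are genuinely defined, and moreover $\Eu_{\dot\gamma_i(t)}\subset C_{\dot\gamma_i(t)}$ whenever $\gamma_i(t)\in U$.

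The key idea is that the cone bundle $C$ over $SU$ trivializes the question of orientability of any line bundle contained in it. Concretely: over the open set $SU$, the frame $X_\perp, V$ identifies the contact distribution with $SU\times\R^2$, and the cone $C_v$ is the union of the second and fourth (closed) quadrants. A continuous line field contained in the interior of such a cone (away from the axes) lives in the space of lines through the origin meeting the open second-and-fourth-quadrant region; this space is contractible (it is an open arc in $\mathbb{RP}^1$), hence any line subbundle of $\interior(C)|_{SU'}$ over any base $SU'$ is trivial, i.e. orientable, and even comes with a canonical orientation. So I would argue as follows. Since $\gamma_0$ and $\gamma_1$ are in the same relative flat-knot type $\KK$, and $\KK$ is primitive (by hypothesis on the relative flat-knot types we consider; here we only need that $\gamma_0,\gamma_1\in\Gamma_{g_0}(\KK)$ so they are connected by a path in $\KK$), there is a continuous path $s\mapsto\alpha_s$ in $\KK$ from $\gamma_0$ to $\gamma_1$. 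Each $\alpha_s$ is an immersed loop transverse to $\zeta$, intersecting it in a fixed number of points, and by Proposition~\ref{p:model_metric_positive_genus}(iii) no $\alpha_s$ can be entirely contained in $\overline B$; actually one must check that a closed geodesic in $\KK$ always intersects $U$, which is Lemma~\ref{l:hyperbolic}'s input. The main point is then to propagate the canonical orientation of $\Eu$ inside $C$ along the deformation.

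The technical heart — and what I expect to be the main obstacle — is to make sense of "$\Eu$ varies continuously along the path $\alpha_s$" when the $\alpha_s$ need not be geodesics. The clean way around this is \emph{not} to deform $\gamma_0$ to $\gamma_1$ through geodesics (which we cannot do), but to compare the two orientations directly via the cone. Here is the argument I would write. For $i=0,1$, parametrize $\gamma_i$ with unit speed. The closed orbit $\dot\gamma_i\subset SM$ meets $SU$ (Lemma~\ref{l:hyperbolic}); fix a parameter $t_i$ with $\gamma_i(t_i)\in U$. At $v_i:=\dot\gamma_i(t_i)$ the line $\Eu_{v_i}$ lies in $C_{v_i}$, and after one more application of cone strict invariance (apply $d\psi_\tau$ once) we may assume $\Eu_{v_i}\subset\interior(C_{v_i})$. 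Using the frame $X_\perp,V$ over $SU$, the cone $\interior(C_{v_i})$ is the second-and-fourth open quadrant, so $\Eu_{v_i}$ is spanned by a vector $aX_\perp(v_i)+bV(v_i)$ with $ab<0$; choose the representative with $a>0,b<0$. This choice is a canonical orientation $o_i$ of $\Eu_{v_i}$, hence of the whole bundle $\Eu_{\dot\gamma_i}$ by transport along the orbit. Now the claim "$\Eu_{\dot\gamma_i}$ is orientable $\iff$ $q_i>0$" becomes: $q_i>0$ iff the holonomy of $\Eu_{\dot\gamma_i}$ around the closed orbit preserves $o_i$, iff $d\psi_{\tau_i}(v_i)$ acts on $\Eu_{v_i}$ by a positive scalar, which is exactly $q_i>0$. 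So far this only reproves the orientability criterion; the deformation invariance enters through the following observation, which closes the proof: the \emph{sign pattern} of the $\Eu$-generator in the $(X_\perp,V)$-frame is the same ($a>0,b<0$ after the quadrant choice) for \emph{both} $i=0$ and $i=1$, because both lie in $\interior(C)$, and a line in the open second-fourth quadrant region always admits a generator with $a>0,b<0$; there is no monodromy ambiguity \emph{within} a single visit to $SU$ since that region of $\mathbb{RP}^1$ is simply connected. Hence the parity of $\ind(\gamma_i)$ — determined by whether this canonical orientation is preserved by the return map — is governed entirely by the dynamics of the cone contraction, which is identical in structure for every hyperbolic geodesic distinct from $\zeta$; in particular for all of them the unstable bundle is orientable (the cone contraction $d\psi_{\tau_i}(v_i)|_{C_{v_i}}$ is orientation-preserving on the quadrant region, so $q_i>0$), giving $\ind(\gamma_i)$ even for every $\gamma_i\in\Gamma_{g_0}(\KK)$ — in particular of the same parity. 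The one gap to fill carefully is the passage through the focusing cap $B$, where $C$ is not defined: Proposition~\ref{p:focusing_cap}(iii) gives the explicit transfer law for scalar Jacobi fields across $B$, from which one reads that a line in $C_v$ entering $B$ re-emerges in $C$ on the other side with the same $(a>0,b<0)$ quadrant label (the map $\dot u\mapsto -\dot u$, $u\mapsto -u+\sin\xi\,\dot\Theta(\xi)\dot u$ with $\dot\Theta<0$ sends the fourth quadrant $\{u>0,\dot u<0\}$-type data to itself in the appropriate sign convention). Verifying that sign bookkeeping through the cap is the only real computation; everything else is soft topology of $\mathbb{RP}^1$.
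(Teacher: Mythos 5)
Your opening reduction is exactly right: both you and the paper rephrase the claim as ``the orientability of the unstable line bundles $\Eu_{\dot\gamma_0}$ and $\Eu_{\dot\gamma_1}$ agree,'' and both recognize that the cone bundle $C$ and an isotopy within $\KK$ are the tools. But the final step of your argument is not merely a gap to be filled by bookkeeping---it proves something false. You conclude that ``the cone contraction \ldots is identical in structure for every hyperbolic geodesic distinct from $\zeta$; in particular for all of them the unstable bundle is orientable \ldots\ giving $\ind(\gamma_i)$ even for every $\gamma_i$.'' If that were correct, Lemma~\ref{l:same_parity} would hold trivially across \emph{all} of $\Gamma_{g_0}$ with no reference to $\KK$, and that is not the case: in the proof of Theorem~\ref{mt:multiplicity} the paper produces, for this very metric $g_0$, min-max closed geodesics $\gamma_n$ with $\ind(\gamma_n)=1$, which by the orientability criterion you invoke forces $q<0$ and a non-orientable $\Eu_{\dot\gamma_n}$. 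So the blanket orientability claim is wrong, and the fact that your argument never actually uses the hypothesis $\gamma_0,\gamma_1\in\KK$ should have been the warning sign.

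The specific place where the ``soft topology of $\mathbb{RP}^1$'' reasoning breaks down is the passage through the cap, together with the global monodromy around the orbit. Inside $SU$ the frame $X_\perp,V$ does trivialize the cone and there is a canonical ``fourth-quadrant'' generator of any line in $\interior(C)$; but the Jacobi transfer $(u,\dot u)\mapsto(-u+\sin\xi\,\dot\Theta(\xi)\dot u,\,-\dot u)$ across the focusing cap can rotate a fourth-quadrant vector out of the cone (e.g.\ $(u,\dot u)=(1,-1)$ maps to $(-1-c,1)$ with $c=\sin\xi\,\dot\Theta(\xi)<0$, which has both coordinates positive when $c<-1$). The cone is only recovered after the orbit has spent additional time in the negatively-curved region $SU$, and whether the closed-orbit holonomy ultimately sends the fourth quadrant back to the fourth quadrant or to the second quadrant depends on the total rotation accumulated; it is precisely this rotation count that makes the Morse index parity nontrivial and geodesic-dependent. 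There is no uniform sign you can read off at a single visit to $SU$.

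The correct mechanism, which your plan alludes to and then abandons, is the paper's: extend $C$ to $SM\setminus(\pm\dot\zeta)$ by pushing it forward through the cap (accepting discontinuity at $\partial_+SB$ and semicontinuity elsewhere), take an isotopy $\gamma_s\in\KK$ from $\gamma_0$ to $\gamma_1$, lift it to $\Gamma:[0,1]\times S^1\to SM$, and observe that $\Eu_{\dot\gamma_i}\subset\Gamma_i^*C$ is orientable iff $\Gamma_i^*C$ is orientable. The orientability of $\Gamma^*C$ over the annulus $[0,1]\times S^1$ is a homotopy invariant, so $\Gamma_0^*C$ and $\Gamma_1^*C$ are orientable together or not; this is where the hypothesis $\gamma_0,\gamma_1\in\KK$ is genuinely used. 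Nothing is asserted about the \emph{value} of the parity, only its equality across $\KK$, and that is all the lemma claims.
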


\begin{proof}
Let $n_\zeta$ be the unit normal vector field to $\zeta$ pointing outside $B$.
The boundary of $SB$ splits as a disjoint union $\partial SB=\dot\zeta\cup-\dot\zeta\cup\partial_+SB\cup\partial_-SB$, where
\begin{align*}
 \partial_\pm SB
 =
 \big\{ v\in\partial SB\ \big|\ \pm g(v,n_\zeta)>0 \big\}.
\end{align*}
We extend the cone field $C$ to $SM\setminus(\dot\zeta\cup-\dot\zeta)$ as follows: first we extend it continuously to $\partial_\pm SB $ as in~\eqref{e:cone_field}; next, for each $v\in\partial_-SB$ and $t>0$ such that $\psi_{(0,t]}(v)\subset SB$, we set
\begin{align*}
 C_{\psi_t(v)} = d\psi_t(v)C_v.
\end{align*}
The resulting cone field $C$ on $SM\setminus(\dot\zeta\cup-\dot\zeta)$ is discontinuous at $\partial_+SB$, but nevertheless it is continuous (and even piecewise smooth) elsewhere. Moreover, Proposition~\ref{p:model_metric_positive_genus}(iv) guarantees that $C$ has a semi-continuity with respect to the Hausdorff topology, and
\begin{align*}
d\psi_t(v)C_v\subseteq C_{\psi_t(v)},
\qquad \forall v\in SM\setminus(\dot\zeta\cup-\dot\zeta),\ t>0.
\end{align*}

Let $\gamma_s\in\KK$ be an isotopy from $\gamma_0$ to $\gamma_1$ within the relative flat knot type $\KK$. We fix parametrizations $\gamma_s:S^1\looparrowright M$ depending smoothly on $s$, and define a continuous map
\[ \Gamma:[0,1]\times S^1\to SM,\qquad \Gamma(s,t)=\dot\gamma_s(t)/\|\dot\gamma_s(t)\|_{g_0}. \]
We also write $\Gamma_s(t):=\Gamma(s,t)$.
An orientation on the cone bundle $\Gamma^*C$ is a choice of connected component of $C_{\Gamma(s,t)}\setminus\{0\}$ which is continuous in $(s,t)$. Notice that this notion makes sense even if $C$ is only semi-continuous. For each $s\in\{0,1\}$, the unstable bundle $\Eu_{\dot\gamma_s}$ is contained in $\Gamma_s^*C$. Therefore  $\Eu_{\dot\gamma_s}$ is orientable if and only if $\Gamma_s^*C$ is orientable, and thus if and only if the whole $\Gamma^*C$ is orientable. We conclude that $\Eu_{\dot\gamma_0}$ and $\Eu_{\dot\gamma_1}$ are either both orientable or both unorientable. 
\end{proof}

In order to detect homologically visible flat knot types relative to $\zeta$, we first study the closed geodesics in the negatively curved open subset $U=M\setminus\overline B$. Since $\overline U$ is a compact surface with geodesic boundary, it is preserved by the curve shortening flow of $g_0$, meaning that the evolution of any immersed loop starting inside $\overline U$ remains in $\overline U$. The same holds for more classical gradient flows, for instance for the one in the setting of piecewise broken geodesics (Section~\ref{ss:energy}), and allows us to apply Morse theoretic methods to the subspace of loops contained in $\overline U$.

While the statement of Theorem~\ref{mt:multiplicity} involves free homotopy classes of loops in $M$, in the next lemma we rather consider free homotopy classes of loops in $\overline U$, that is, connected components of $C^\infty(S^1,\overline U)$.

\begin{Lemma}\label{l:unique_minimizer}
In any connected component $\UU\subset C^\infty(S^1,\overline U)$ consisting of loops that are non-contractible in $M$, there exists a unique closed geodesic $\gamma$ of $g_0$, and such a $\gamma$ is the shortest loop in $\UU$, i.e.
\begin{align*}
 L_{g_0}(\gamma) = \min_{\eta\in\UU} L_{g_0}(\eta).
\end{align*}
\end{Lemma}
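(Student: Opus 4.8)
The plan is to establish existence and uniqueness separately, using negative curvature on $\overline U$ in two different ways. For existence, I would first observe that since $\overline U$ is a compact surface with geodesic boundary $\zeta$, it is invariant under the length-decreasing flows at our disposal; in particular one can run a minimization of $L_{g_0}$ within the connected component $\UU\subset C^\infty(S^1,\overline U)$. Take a minimizing sequence $\eta_k\in\UU$; after reparametrizing with constant speed and passing to a subsequence, the $\eta_k$ converge (in $W^{1,2}$, or after applying the curve shortening flow a little to gain regularity) to a closed geodesic $\gamma$ of $(\overline U,g_0)$. The limit cannot be a constant loop nor a (possibly iterated) cover of the boundary geodesic $\zeta$, because the loops in $\UU$ are non-contractible in $M$ while $\zeta$ bounds the disk $B$; hence $\gamma$ is a genuine closed geodesic and, being a geodesic of $(\overline U,g_0)$ that does not touch $\partial\overline U=\zeta$ in a tangential way, it is a closed geodesic of $(M,g_0)$. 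Its length realizes $\inf_{\eta\in\UU}L_{g_0}(\eta)$, which proves the displayed minimality formula once uniqueness is in hand.

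For uniqueness I would pass to the universal cover, exploiting that $g_0$ has strictly negative Gaussian curvature on $U$ (Proposition~\ref{p:model_metric_positive_genus}(ii)). The key point: any closed geodesic $\gamma\in\Gamma_{g_0}(\UU)$ lies in $\overline U$ and, since it is not $\zeta$, Proposition~\ref{p:model_metric_positive_genus}(iii) forces it to enter the open negatively curved region $U$; combined with the fact that $\gamma$ is non-contractible in $M$, it represents a nontrivial element of $\pi_1(M)$. Lift $\gamma$ to the universal cover $\widetilde M$: it lifts to a complete geodesic line $\widetilde\gamma$ invariant under a deck transformation $g$ representing the conjugacy class determined by $\UU$. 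If $\gamma_0,\gamma_1$ were two geometrically distinct closed geodesics in $\UU$, their free homotopy in $\overline U$ lets us choose lifts $\widetilde\gamma_0,\widetilde\gamma_1$ invariant under the same deck transformation $g$. Now a standard convexity/strict-convexity argument for the energy along geodesic variations in nonpositive curvature — with the strict negativity on $U$ upgrading it — shows that two distinct $g$-invariant geodesic lines cannot coexist: one constructs the geodesic variation between $\widetilde\gamma_0$ and $\widetilde\gamma_1$, observes the length is a convex function of the variation parameter with equality (flatness) ruled out by the strict negativity along the portions of the geodesics lying over $U$, and deduces $\widetilde\gamma_0=\widetilde\gamma_1$. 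Equivalently, one invokes the fact that on a surface with a metric of nonpositive curvature, each nontrivial free homotopy class contains a unique closed geodesic provided the class is ``visible'' in the negatively curved part — here this is exactly guaranteed because every geodesic in $\UU$ meets $U$.

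The step I expect to be the main obstacle is the uniqueness argument at the level of the universal cover, specifically handling the fact that $g_0$ is only \emph{nonpositively} (indeed flat) along $\zeta$ and in the attaching region, so the classical ``strictly negative curvature $\Rightarrow$ unique closed geodesic per class'' theorem does not apply verbatim. The resolution is to use that every closed geodesic in $\UU$ must spend a nontrivial amount of time in $U$ (Proposition~\ref{p:model_metric_positive_genus}(iii) again, applied to the complementary disk $B$): then in the convexity estimate for the energy functional along the geodesic homotopy in $\widetilde M$, the second-variation/comparison term is nonnegative everywhere and \emph{strictly} positive on the preimage of $U$, which already forces the geodesic homotopy to be constant and hence $\gamma_0=\gamma_1$. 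One must also verify that the minimizer produced in the existence step does not degenerate onto the boundary $\zeta$; this again follows because $\zeta$ bounds a disk, so a cover of $\zeta$ is null-homotopic in $M$ and cannot lie in the non-contractible component $\UU$, while a loop in $\overline U$ freely homotopic in $\overline U$ to something touching $\zeta$ tangentially could be shortened by pushing into $U$ — ruling this out is where one uses that $\zeta$ is itself a geodesic and a minimizing sequence can be taken in the interior.
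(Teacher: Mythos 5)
Your \emph{existence} argument is in the same spirit as the paper's (both rely on the fact that $\overline U$ has geodesic boundary $\zeta$ and so is preserved by length-decreasing deformations, and both rule out degeneration onto a cover of $\zeta$ by contractibility), though the paper runs the curve shortening flow from a representative of minimal self-intersection rather than a direct minimization; this is a cosmetic difference.

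Your \emph{uniqueness} argument, however, has a genuine gap, and you have misdiagnosed where the difficulty lies. You say the obstacle is that the curvature is merely nonpositive rather than strictly negative near $\zeta$. The actual problem is that $(M,g_0)$ has \emph{positive} Gaussian curvature on the interior of the focusing cap $B$ (this is forced by Gauss--Bonnet, and is also visible from $R_{g_0}=-\ddot\rho/\rho>0$ near the center of the cap). Consequently the universal cover $\widetilde M$ is \emph{not} a Hadamard surface, and the geodesic variation you construct between the lifts $\widetilde\gamma_0$ and $\widetilde\gamma_1$ has no reason to remain in the preimage of $\overline U$: its interior geodesic segments may enter the lifted caps, where the second variation term $-R_{g_0}\,|J|^2$ is \emph{negative}, destroying the convexity of length along the homotopy. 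Your claim that ``the second-variation/comparison term is nonnegative everywhere'' is therefore false as stated. One could repair this by lifting to the universal cover of $\overline U$ intrinsically (a Hadamard surface with geodesic boundary, since $R_{g_0}\le 0$ on $\overline U$ and $\zeta$ is geodesic) and checking that geodesics of $\widetilde{\overline U}$ between interior points stay in $\widetilde{\overline U}$, but you have not done this, and it is a real step.

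The paper avoids the issue entirely by a different mechanism: it observes that every closed geodesic of $\UU$ lies in the open set $U$ of negative curvature, hence is a \emph{strict local minimizer} of $L_{g_0}$ (index $0$, nondegenerate). If two such minimizers $\alpha,\beta$ existed, a mountain-pass argument within $\UU\subset C^\infty(S^1,\overline U)$ — which is admissible because the gradient/curve-shortening flow preserves $\overline U$ — would produce a third closed geodesic in $\UU$ that is \emph{not} a strict local minimizer, contradicting the previous sentence. This min-max argument never leaves $\overline U$ and never invokes the geometry of $M$ over the cap $B$, which is precisely what your universal-cover approach fails to control.
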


\begin{proof}
Let $\UU\subset C^\infty(S^1,\overline U)$ be a connected component of loops that are non-contractible in $M$. We fix an element $\gamma_0\in\UU$ that is an immersed loop and has minimal number of self-intersections. In particular, there is no non-empty subinterval $(a,b)\subset S^1$ such that $\gamma_0(a)=\gamma_0(b)$ and $\gamma|_{[a,b]}$ is a contractible loop in $\overline U$. Since $\overline U$ has geodesic boundary $\partial U=\zeta$, the curve shortening flow $\phi^t$ of $g_0$ preserves the compact set $\overline U$. Namely, the immersed loop $\gamma_t:=\phi^t(\gamma_0)$ is contained in $\overline U$ for all $t\in[0,t_{\gamma_0})$. Since $\gamma_0$ is non-contractible in $M$ and has no subloops that are contractible in $\overline U$, Lemma~\ref{l:subloops}(i) implies that $\gamma_t$ converges to a closed geodesic $\gamma\in\UU$ as $t\to t_{\gamma_0}$. 
Since $\gamma$ is non-contractible in $M$, it is geometrically distinct from $\zeta$, and therefore it is contained in the open set $U$.
Since the Gaussian curvature $R_{g_0}$ is negative on $U$, all the closed geodesics in $\UU$ are strict local minimizers of the length functional $L_{g_0}$.  
If $\UU$ contained two geometrically distinct closed geodesics $\alpha,\beta$, we could define the  min-max value
\begin{align*}
 c:=\inf_{h_s} \max_{s\in[0,1]} L_{g_0}(h_s),
\end{align*}
where the infimum ranges over the family of homotopies $h_s\in\UU$ such that $h_0=\alpha$ and $h_1=\beta$. Standard Morse theory would imply that the value $c$ is the length of a closed geodesic in $\UU$ that is not a strict local minimizer of $L_{g_0}$, which would give a contradiction.
\end{proof}

\begin{Lemma}\label{l:infinitely_many_Donnay}
In any connected component $\UU\subset C^\infty(S^1,M)$ of non-contractible loops, there exists a sequence of hyperbolic closed geodesics $\gamma_n\in\UU$ of $g_0$ with diverging length $L_{g_0}(\gamma_n)\to\infty$, Morse index $\ind(\gamma_n)=1$, and such that $\gamma_n\cap\zeta\neq\varnothing$.
\end{Lemma}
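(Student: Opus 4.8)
The plan is to run a mountain‑pass argument for the energy functional between consecutive length‑minimizing closed geodesics lying in the negatively curved region $U=M\setminus\overline B$, exploiting the two special features of the model metric $g_0$: closed geodesics contained in $\overline U$ are strict local minima of the energy because $R_{g_0}\le0$ there, and every closed geodesic geometrically distinct from $\zeta$ is hyperbolic, hence non‑degenerate (Lemma~\ref{l:hyperbolic}).

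\textbf{Step 1 (infinitely many minimizers inside $U$).} Since $B$ is a disk, the inclusion $\overline U\hookrightarrow M$ induces a surjection on free homotopy classes, and the preimage of $\UU$ is represented by the elements $w^kg\in\pi_1(\overline U)$, $k\in\Z$, where $g$ lifts a generator of $\UU$ and $w=[\zeta]$. As $\UU$ is non‑contractible in $M$ we have $g\notin\langle w\rangle$, and since $\pi_1(\overline U)$ is free with $w$ of infinite order the elements $w^kg$ fall into infinitely many conjugacy classes; thus there are infinitely many free homotopy classes of $\overline U$ mapping onto $\UU$, each consisting of loops non‑contractible in $M$. Lemma~\ref{l:unique_minimizer} provides in each such class a unique closed geodesic of $g_0$, and since $\overline U$ is compact with non‑positive curvature (Proposition~\ref{p:model_metric_positive_genus}(ii)) only finitely many free homotopy classes of $\overline U$ carry a loop of bounded length, so these geodesics, listed by non‑decreasing length $\alpha_1,\alpha_2,\dots$, satisfy $L_{g_0}(\alpha_j)\to\infty$; passing to a subsequence we may take them pairwise geometrically distinct. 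Each $\alpha_j$ lies in $\UU$ and, being distinct from $\zeta$, is disjoint from it, hence contained in $U$.

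\textbf{Step 2 (mountain pass).} Because $R_{g_0}<0$ on $U$, the index form of each $\alpha_j$ is positive definite, so $\alpha_j$ determines a strict local minimum of the energy $E$ on the loop‑space component determined by $\UU$. For each $j$ consider
\[
c_j:=\inf_{h}\ \max_{s\in[0,1]}L_{g_0}(h_s),
\]
the infimum over continuous paths $h$ in that component joining $\alpha_j$ to $\alpha_{j+1}$, exactly as in the proof of Lemma~\ref{l:unique_minimizer}. Since $\alpha_j\neq\alpha_{j+1}$ are distinct strict local minima, $c_j>\max\{L_{g_0}(\alpha_j),L_{g_0}(\alpha_{j+1})\}\ge L_{g_0}(\alpha_j)$, and the sharp mountain‑pass principle (using that $E$ satisfies the Palais--Smale condition) yields a closed geodesic $\gamma_j\in\UU$ with $L_{g_0}(\gamma_j)=c_j$, non‑trivial local homology in degree $1$, and not a local minimum of $E$; in particular $L_{g_0}(\gamma_j)\to\infty$. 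Since $\gamma_j\in\UU$ is non‑contractible in $M$, it is geometrically distinct from the contractible geodesic $\zeta$, so Lemma~\ref{l:hyperbolic} shows $\gamma_j$ is hyperbolic, hence non‑degenerate. Standard local Morse theory then forces its local homology to be concentrated in degrees $\ind(\gamma_j)$ and $\ind(\gamma_j)+1$; being non‑trivial in degree $1$ and $\gamma_j$ not being a local minimum (so $\ind(\gamma_j)\neq0$), we get $\ind(\gamma_j)=1$. Finally, every closed geodesic of $g_0$ contained in $\overline U$ is a strict local minimum, so $\gamma_j$ must enter the cap $B$; being non‑contractible it cannot lie in the disk $B$, so it crosses $\zeta$, and because distinct geodesics meet transversally this gives $\gamma_j\cap\zeta\neq\varnothing$. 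Relabelling to extract strictly increasing lengths produces the required sequence.

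\textbf{Main obstacle.} The delicate point is making Step~2's index count rigorous: one must relate the local homology of the critical circle of $\gamma_j$ in the free loop space to the Morse index of a non‑degenerate hyperbolic closed geodesic on a surface, and confirm that the sharp mountain pass detects a homology class in degree $1$ rather than in higher degree. Both are standard facts of local Morse theory; the genuinely new ingredient is that hyperbolicity—and with it non‑degeneracy and the clean index computation—comes for free from the model metric via Lemma~\ref{l:hyperbolic}, while the covering‑space bookkeeping of Step~1 uses only that $\pi_1(\overline U)$ is free and that $\overline U$ is non‑positively curved.
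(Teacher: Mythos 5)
Your proposal is correct and follows essentially the same strategy as the paper's proof: construct infinitely many distinct local minimizers $\alpha_j$ in connected components of $C^\infty(S^1,\overline U)$ mapping to $\UU$ (the paper uses the concrete loops $\alpha*\zeta^n$, you use $\pi_1$-bookkeeping in the free group $\pi_1(\overline U)$---same idea), then run a mountain pass within $\UU$ between these minimizers and invoke Lemma~\ref{l:hyperbolic} for hyperbolicity, hence non-degeneracy and $\ind=1$. Two small remarks: the assertion that \emph{every} closed geodesic contained in $\overline U$ is a strict local minimizer should exclude $\zeta$ and its iterates (along which the curvature vanishes), though this is harmless here since the min-max geodesics, being non-contractible, are automatically geometrically distinct from $\zeta$; and you explicitly supply the argument that $\gamma_j$ must cross $\zeta$ (a min-max critical point cannot be a local minimizer, hence cannot sit in $\overline U$, hence must enter the disk $B$ and, being non-contractible, must cross $\partial B=\zeta$), a point the paper's proof leaves implicit.
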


\begin{proof}
Consider an arbitrary connected component of non-contractible loops $\UU\subset C^\infty(S^1,M)$. We can find a loop $\alpha\in\UU$ that is fully contained in $\overline U$ and has starting point $\alpha(0)=\zeta(0)$. Since the closed geodesic $\zeta$ is contractible, for each positive integer $n$ the concatenation $\alpha*\zeta^n$ is again a loop in $\UU$. Let $\UU_n$ be the connected component of $C^\infty(S^1,\overline U)$ containing $\alpha*\zeta^n$ (Figure~\ref{f:connected_components}). 
\begin{figure}
\begin{footnotesize}
\includegraphics{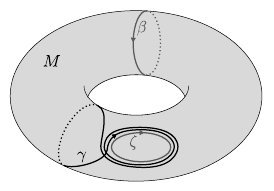}
\end{footnotesize}
\caption{A loop $\gamma\in\UU_2$. Here, $\UU$ is the connected component of the loop $\beta$.}
\label{f:connected_components}
\end{figure}%
The $\UU_n$'s are pairwise distinct, and contain loops that are non-contractible. By Lemma~\ref{l:unique_minimizer}, $\UU_n$ contains a unique closed geodesic $\alpha_n:S^1\to\overline U$, which is the shortest loop in $\UU_n$. 
Since $\{\alpha_n\ |\ n\geq1\}$ is a discrete non-compact subset of $C^\infty(S^1,\overline U)$, while the space of closed geodesics in $C^\infty(S^1,\overline U)$ of length bounded from above by any given constant is compact, we infer that $L_{g_0}(\alpha_n)\to\infty$  as $n\to\infty$.

We consider the min-max values
\begin{align*}
 c_n:=\inf_{h_s} \max_{s\in[0,1]} L_{g_0}(k_s),
\end{align*}
where the infimum ranges over the family of continuous homotopies $h_s\in\UU$, $s\in[0,1]$, such that $h_0=\alpha_1$ and $h_1=\alpha_n$. Since the circles $\{\alpha_n(t+\cdot)\ |\ t\in S^1\}\subset\UU$ are strict local minimizers of the length functional $L_{g_0}$, Morse theory implies that $c_n=L_{g_0}(\gamma_n)>L_{g_0}(\alpha_n)$, where $\gamma_n$ is a closed geodesic of 1-dimensional min-max type. Since $\gamma_n$ belongs to $\UU$, it is geometrically distinct from $\zeta$. By Lemma~\ref{l:hyperbolic}, $\gamma_n$ is hyperbolic, and in particular non-degenerate. Therefore, $\gamma_n$ has Morse index $\ind(\gamma_n)=1$. 
\end{proof}

\begin{proof}[Proof of Theorem~\ref{mt:multiplicity}]
Let $g$ be a Riemannian metric on $M$ having a contractible simple closed geodesic $\zeta$, and $\UU\subset C^\infty(S^1,M)$ a primitive free homotopy class of loops. 
In particular, $\UU$ does not contain contractible loops, and therefore does not contain $\zeta$ nor any of its iterates.
Let $g_0$ be the Riemannian metric on $M$ given by Proposition~\ref{p:model_metric_positive_genus}, having the same $\zeta$ as simple closed geodesic. By Lemma~\ref{l:infinitely_many_Donnay}, $g_0$ admits an infinite sequence of hyperbolic closed geodesics $\gamma_n\in\UU$ such that $\ind(\gamma_n)=1$, $L(\gamma_n)\to\infty$, and $\gamma_n\cap\zeta\neq\varnothing$. Since $\UU$ is primitive, none of the $\gamma_n$'s is an iterated closed geodesic, and therefore we can assume that the $\gamma_n$'s are pairwise geometrically distinct. Each $\gamma_n$ has some primitive flat knot type $\KK_n$ relative to $\zeta$. Notice that any $\gamma\in\KK_n$ must intersect $\zeta$. By Lemma~\ref{l:same_parity}, all closed geodesics of $g_0$ in $\KK_n$ must have odd Morse index. By Lemma~\ref{l:local_homology_parity}, we have
\[
C_*(\KK_n) \cong \bigoplus_{\gamma\in\Gamma_{g_0}(\KK_n)} C_*(\gamma),
\] 
and in particular $C_1(\KK_n)$ contains a subgroup isomorphic to $C_1(\gamma_n)\cong\Z$. This, together with Proposition~\ref{p:Morse_inequality}, implies that $\KK_n$ contains a primitive closed geodesic of the original Riemannian metric $g$. We have two possible cases:
\begin{itemize}

\item If the family $\KK_n$, $n\geq1$, consists of infinitely many pairwise distinct flat knot types, then we immediately conclude that $\cup_{n\geq1}\KK_n$ contains infinitely many primitive closed geodesics of $g$.

\item If there exists a sequence of positive integers $n_j\to\infty$ such that 
\[\KK:=\KK_{n_1}=\KK_{n_2}=\KK_{n_3}=...\] 
then $C_1(\KK)$ contains a subgroup isomorphic to 
\[\bigoplus_{j\geq1}C_1(\gamma_{n_j})=\bigoplus_{j\geq1}\Z.\] 
In particular $C_1(\KK)$ has infinite rank.
If the space of closed geodesics $\Gamma_g(\KK)$ is not discrete, in particular it contains infinitely many closed geodesics. If instead $\Gamma_g(\KK)$ is discrete, by Proposition~\ref{p:Morse_inequality} we infer
\begin{align*}
 \sum_{\gamma\in\Gamma_g(\KK)}
 \rank(C_1(\gamma))
 \geq
 \rank(C_1(\KK))
 =\infty,
\end{align*}
and since each local homology group $C_1(\KK)$ has finite rank (Lemma~\ref{l:non_deg_visible}), we infer that $\KK$ contains infinitely many closed geodesics.
\qedhere
\end{itemize}
\end{proof}

\subsection{Model metric in genus zero}
We now construct a model Riemannian metric $g_0$ on the 2-sphere $S^2$.
Let $B_1$ and $B_2$ be two copies of a focusing cap as in Remark~\ref{r:attach_cap},  with rotation function $\Theta:(0,\pi)\to\R$. We consider the flat cylinder \[(C=[0,1]\times S^1,dr^2+d\theta^2),\] 
where $r$ is the coordinate on $[0,1]$, and $\theta$ is the coordinate on $S^1$.
We obtain a 2-sphere of revolution $(S^2,g_0)$ by capping off $C$ with $B_1$ and $B_2$.
Such a 2-sphere has a family of equatorial simple closed geodesics $\gamma_z:=\{z-1\}\times S^1$, for $z\in[1,2]$. The extremal ones $\gamma_1$ and $\gamma_{2}$ are the boundaries of the focusing caps $B_1$ and $B_2$ respectively. There is also an $S^1$ family of simple closed geodesics consisting of the meridians, that is, the geodesics passing through the center $x_1$ of $B_1$, and thus passing through the center $x_2$ of $B_2$ as well (Figure~\ref{f:model_sphere}).

\begin{Lemma}\label{l:only_meridians}
Any geodesic other than the $\gamma_z$'s and the meridians has a transverse self-intersection.
\end{Lemma}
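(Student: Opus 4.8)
The plan is to use the Clairaut first integral of the surface of revolution $(S^2,g_0)$ to reduce the statement to a single family of ``oscillating'' geodesics, and then to read off a transverse self-intersection from the rotation estimate in Proposition~\ref{p:focusing_cap}. Writing the metric as $dr^2+\rho(r)^2d\theta^2$ off the two cap-centres (so $\rho\equiv1$ on the cylinder and $0<\rho<1$ in the interiors of the caps), every unit-speed geodesic $\gamma$ has a constant Clairaut invariant $c=\rho\cos\xi\in[-1,1]$, where $\xi$ is the tangent angle of Proposition~\ref{p:focusing_cap}. Replacing $\gamma$ by its reverse, or composing with the isometry $\theta\mapsto-\theta$, changes $c$ into $-c$ without changing the image, so I may assume $c\ge0$. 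If $c=0$ then $\dot\theta\equiv0$ and $\gamma$ is a meridian; if $c=1$ then $\rho\equiv1$ and $\cos\xi\equiv1$ along $\gamma$, so $\gamma$ lies on a parallel of the closed cylinder, i.e.\ $\gamma=\gamma_z$ for some $z\in[1,2]$. Both are excluded, so I am left with $c\in(0,1)$; set $\xi:=\arccos c\in(0,\pi/2)$. For such $c$, Clairaut forces $\rho\ge c$ along $\gamma$, so $\gamma$ oscillates in the meridional direction between the two turning parallels $\{\rho=c\}$, one inside each cap $B_i$: it cannot stay in a cap by the non-trapping property of Proposition~\ref{p:focusing_cap}(i), nor in the cylinder, where $\dot r=\pm\sin\xi\ne0$. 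Hence $\gamma$ is, up to reparametrisation, an infinite concatenation of congruent ``half-period'' arcs, each running monotonically from one turning parallel to the other.

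The quantitative input is the angular advance $\omega$ of one half-period arc, which runs from a turning parallel, up and out of one cap, straight across the flat cylinder, and down into the other cap to its turning parallel. By Proposition~\ref{p:focusing_cap}(ii) a full crossing of a cap from the equator back to the equator spans total rotation $\Theta(\xi)$, and by the reflection symmetry of the surface of revolution about the turning meridian the portion from the equator to the turning parallel spans $\tfrac12\Theta(\xi)$; the straight crossing of the flat cylinder $[0,1]\times S^1$ at angle $\xi$ spans rotation $\cot\xi>0$. Therefore
\[
\omega \;=\; \tfrac12\Theta(\xi)+\cot\xi+\tfrac12\Theta(\xi) \;=\; \Theta(\xi)+\cot\xi \;\ge\; \Theta(\xi).
\]
Since $\Theta$ is strictly decreasing on $(0,\pi/2]$ with $\Theta(\pi/2)=\pi$ (Proposition~\ref{p:focusing_cap}(ii)), we get $\Theta(\xi)>\pi$, hence $\omega>\pi$. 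This ``more than a half-turn per oscillation'' is the whole source of the self-intersection.

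Finally I would produce the crossing by an elementary argument in the annulus $A$ bounded by the two turning parallels. Let $s$ be a meridional coordinate on $A$, scaled so the turning parallels are $\{s=\pm s_c\}$. Since $\dot\theta=c/\rho^2>0$ along $\gamma$, each half-period arc is the graph $s=f(\theta)$ of a strictly monotone $C^1$ function over a $\theta$-interval of length $\omega$, increasing and decreasing alternately, and consecutive arcs meet only at a turning point, where they are tangent to a turning parallel. Take two consecutive arcs $\alpha_{-1},\alpha_0$ meeting at a turning point on $\{s=-s_c\}$: say $\alpha_0$ is the increasing graph over $[0,\omega]$ with $f_0(0)=-s_c$, $f_0(\omega)=s_c$, and $\alpha_{-1}$ the decreasing graph over $[-\omega,0]$ with $f_{-1}(-\omega)=s_c$, $f_{-1}(0)=-s_c$. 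Reducing $\theta$ modulo $2\pi$, the domain of $\alpha_{-1}$ becomes $[2\pi-\omega,2\pi]$, and since $\omega>\pi$ it overlaps the domain $[0,\omega]$ of $\alpha_0$ in an interval $I=[\theta_L,\theta_R]$ of positive length. A short check, splitting into the cases $\omega\le2\pi$ and $\omega>2\pi$ and using that $f_0$ attains its extrema only at $0,\omega$ while $f_{-1}(\cdot-2\pi)$ attains its only at $2\pi-\omega,2\pi$, shows that $\alpha_0$ lies strictly below $\alpha_{-1}$ at one endpoint of $I$ and strictly above at the other. By the intermediate value theorem the two graphs cross at some interior $\theta_\ast\in(\theta_L,\theta_R)\subset(0,2\pi)$; in particular $\theta_\ast\ne0$, so this crossing is not the shared turning point of $\alpha_{-1},\alpha_0$ and is a genuine self-intersection of $\gamma$; and since one graph is strictly increasing and the other strictly decreasing at $\theta_\ast$, it is transverse. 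I expect the only real fuss to be this last piece of bookkeeping with the angular coordinate — identifying the overlap interval and checking that the intersection it yields is neither the trivial tangency at a turning point nor a non-transverse overlap; everything else is a direct application of Proposition~\ref{p:focusing_cap}.
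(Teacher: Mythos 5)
Your proof is correct and follows essentially the same route as the paper's: both parametrize the geodesic by half-period arcs between turning parallels, invoke Proposition~\ref{p:focusing_cap}(ii) to bound the angular advance by $\Theta(\xi)>\pi$, and conclude a transverse self-intersection from the resulting wrap-around. The only difference is that you make the Clairaut reduction explicit and spell out the intermediate-value argument that the paper leaves as an assertion after the bound $\theta(\tau_3)-\theta(-\tau_3)>2\pi$; that bookkeeping is sound.
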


\begin{proof}
Since the focusing caps are non-trapping (Proposition~\ref{p:focusing_cap}(i)), any geodesic other than the $\gamma_z$'s must enter both $B_1$ and $B_2$. Proposition~\ref{p:focusing_cap}(ii) implies that all geodesic segments entering a focusing cap with tangent angle $\xi\in(0,\pi/2)$ with respect to the boundary of the cap must exit after spanning a total rotation angle $\Theta(\xi)>\pi$. 

Let $\zeta:\R\to S^2$ be a geodesic distinct from the $\gamma_z$'s and the meridians. In particular, $\zeta$ does not go through the centers $x_1$ and $x_2$ of the focusing caps, since only the meridians do so. Every time $\zeta$ enters $B_1$, it does so with the same signed tangent angle $\xi$ with respect to $\partial B_1$. 
By the $S^1$-symmetry of the sphere of revolution $(S^2,g)$, it is enough to consider the case $\xi\in(0,\pi/2)$.
We parametrize $\zeta$ such that $\zeta(0)$ belongs to $B_1$ and $\dot\zeta(0)$ is tangent to a parallel of the cap. Let $\theta(t)$ be the global angle coordinate along $\zeta(t)$, and let us assume that $\theta(0)=0$ without loss of generality, so that $\theta(t)=-\theta(-t)$ for all $t>0$. Notice that, by our assumption on $\xi$, $\theta(t)$ is monotone increasing. Let $\tau_1>0$ be the minimal positive number such that $\zeta(t)$ exits $B_1$ at time $t=\tau_1$. By symmetry, $\zeta$ entered $B_1$ at time $t=-\tau_1$, and spans inside $B_1$ a total rotation 
\[\theta(\tau_1)-\theta(-\tau_1)=2\theta(\tau_1)=\Theta(\xi).\]
Let $\tau_2>\tau_1$ be the smallest positive number such that $\zeta(t)$ enters $B_2$ at time $t=\tau_2$, and $\tau_3>\tau_2$  the minimal time such that $\dot\zeta(\tau_3)$ is tangent to a parallel of the focusing cap $B_2$. By the north-south symmetry of the sphere of revolution $(S_2,g)$, we have $\tau_3=\tau_1+\tau_2$, and $\theta(\tau_3)-\theta(\tau_2)=\theta(\tau_1)$. Depending on the value of the tangent angle $\xi$, the geodesic $\zeta$ can be open or closed, and in this latter case its minimal period must be larger than or equal to $2\tau_3$. In both cases, the points $\zeta(-\tau_3)$ and $\zeta(\tau_3)$ lie on the same parallel of $B_2$, and the total rotation of $\zeta|_{[-\tau_3,\tau_3]}$ is bounded from below as
\begin{align*}
\theta(\tau_3)-\theta(-\tau_3)
>
4\theta(\tau_1)
=
2\Theta(\xi)
>2\pi.
\end{align*}
This implies that $\zeta|_{[-\tau_3,\tau_3]}$ must have a transverse self-intersection.
\end{proof}

\begin{figure}
\includegraphics{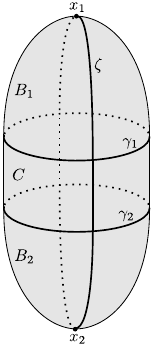}
\caption{The model sphere, the simple closed geodesics $\gamma_1,\gamma_2$ at the boundary of the focusing caps, and a meridian $\zeta$.}
\label{f:model_sphere}
\end{figure}

Let $\ell$ be the length of the meridians of $(S^2,g_0)$, and consider the angle coordinate $\theta$ on the focusing cap $B_1$. Each value of $\theta\in \R/2\pi\Z$ determines the unique meridian 
\[\zeta_\theta:\R/\ell\Z\hookrightarrow S^2\] parametrized with unit speed $\|\dot\zeta_\theta\|_{g_0}\equiv1$, going out of the center $\zeta_\theta(0)=x_1$ of the focusing cap along the half-meridian of angle $\theta$. There is an evident non-trivial $\ell$-periodic orthogonal Jacobi field along $\zeta_\theta$, given by $\partial_\theta\zeta_\theta$. 

\begin{Lemma}\label{l:no_orthogonal_Jacobi}
Any orthogonal $\ell$-periodic Jacobi field along $\zeta_\theta$ is of the form $\lambda\,\partial_\theta\zeta_\theta$ for some $\lambda\in\R$.
\end{Lemma}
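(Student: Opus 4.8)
The plan is to analyze the orthogonal Jacobi equation along a meridian $\zeta_\theta$ by tracking what happens as the geodesic passes through each focusing cap, using the transfer formulas of Proposition~\ref{p:focusing_cap}(iii). First I would parametrize $\zeta_\theta$ with unit speed so that $\zeta_\theta(0)=x_1$ (the center of $B_1$), let $t=\tau_1>0$ be the time at which it crosses the boundary $\partial B_1$, let $t=\tau_1+1$ be the time it enters $B_2$ (after crossing the flat cylinder, which has length $1$), and let $t=\tau_1+1+\tau_1$ be the time it reaches $x_2$; by the north-south symmetry, $\ell=2(\tau_1+1)$ and the meridian crosses $\partial B_2$ at $t=\tau_1+1$ and returns to cross $\partial B_1$ again at $t=\ell-\tau_1$, closing up at $t=\ell$. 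A scalar orthogonal Jacobi field $u$ along $\zeta_\theta$ solves $\ddot u+R_{g_0}u=0$; on the flat cylinder $R_{g_0}=0$ so $u$ is affine there. Note that the meridian enters each focusing cap radially, i.e. with tangent angle $\xi=\pi/2$ relative to the boundary parallel, so Proposition~\ref{p:focusing_cap}(iii) with $\xi=\pi/2$ applies at each cap crossing.

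The key computation is to compose the transition maps on the pair $(u,\dot u)$. Writing the cap boundary crossing of $B_1$: entering at the equator with $\xi=\pi/2$ and exiting (well, here the geodesic goes from the equator inward to $x_1$ and back out — but by symmetry of the cap it suffices to track a half-cap passage or, more cleanly, to regard the full passage through $B_1$ from one equator crossing straight through $x_1$ to... no: the meridian only touches $x_1$ once, at $t=0$). Better: I treat the closed meridian starting and ending at an equator point of $\partial B_1$. Over one period, the orthogonal Jacobi data transforms by a product of four elementary transfer matrices: passage through $B_1$ (entering and exiting $\partial B_1$, total rotation $\pi$), an affine segment of length $1$ across the cylinder, passage through $B_2$, another affine segment of length $1$. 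By Proposition~\ref{p:focusing_cap}(iii) with $\xi=\pi/2$, the passage through a focusing cap sends $(u(0),\dot u(0))\mapsto(-u(0)+\sin(\pi/2)\dot\Theta(\pi/2)\dot u(0),\,-\dot u(0))=(-u(0)+\dot\Theta(\tfrac\pi2)\dot u(0),\,-\dot u(0))$. Set $a:=\dot\Theta(\pi/2)<0$. The cylinder transfer over length $1$ is $(u,\dot u)\mapsto(u+\dot u,\dot u)$. So the monodromy matrix is
\[
P=\begin{pmatrix}1&1\\0&1\end{pmatrix}\begin{pmatrix}-1&a\\0&-1\end{pmatrix}\begin{pmatrix}1&1\\0&1\end{pmatrix}\begin{pmatrix}-1&a\\0&-1\end{pmatrix}.
\]
Computing, the inner product $\begin{pmatrix}-1&a\\0&-1\end{pmatrix}\begin{pmatrix}1&1\\0&1\end{pmatrix}\begin{pmatrix}-1&a\\0&-1\end{pmatrix}=\begin{pmatrix}-1&a\\0&-1\end{pmatrix}\begin{pmatrix}-1&a-1\\0&-1\end{pmatrix}=\begin{pmatrix}1&-(a-1)-a\\0&1\end{pmatrix}=\begin{pmatrix}1&1-2a\\0&1\end{pmatrix}$, hence $P=\begin{pmatrix}1&1\\0&1\end{pmatrix}\begin{pmatrix}1&1-2a\\0&1\end{pmatrix}=\begin{pmatrix}1&2-2a\\0&1\end{pmatrix}$. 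Since $a=\dot\Theta(\pi/2)<0$ we have $2-2a\ne0$, so the only periodic solution $(u,\dot u)$ of $P\begin{pmatrix}u\\\dot u\end{pmatrix}=\begin{pmatrix}u\\\dot u\end{pmatrix}$ has $\dot u=0$, and then $u$ is an arbitrary constant. Thus the space of $\ell$-periodic orthogonal Jacobi fields along $\zeta_\theta$ is one-dimensional, spanned by the field with $(u,\dot u)\equiv(\text{const},0)$ at the equator crossing, which is precisely $\partial_\theta\zeta_\theta$ (up to scaling). This gives the claim, with $\lambda\in\R$.

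The main obstacle I anticipate is bookkeeping the transfer maps correctly — in particular being careful about the sign of the tangent angle and whether a passage of the meridian through a cap should be counted as one application of the Proposition~\ref{p:focusing_cap}(iii) formula (equator-to-equator through the center) or whether the center $x_1$ needs separate treatment. Since the meridian passes smoothly through $x_1$ and $x_2$ and the scalar Jacobi equation $\ddot u+R_{g_0}u=0$ has no singularity there (the metric is smooth), one can simply use the equator-to-equator transfer formula across each cap, and the periodicity condition is imposed at a single chosen equator point. A secondary point worth stating carefully is that $\dot\Theta(\pi/2)<0$, which is part of Proposition~\ref{p:focusing_cap}(ii) (indeed $\dot\Theta|_{(0,\pi/2]}<0$), and this strict inequality is exactly what forces $2-2a\ne0$ and hence the one-dimensionality; without it the monodromy could be the identity and the Jacobi space two-dimensional. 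Finally, one checks that $\partial_\theta\zeta_\theta$ is genuinely a nontrivial $\ell$-periodic orthogonal Jacobi field (it is orthogonal because all the meridians have the same unit speed, so the variation field is everywhere perpendicular to $\dot\zeta_\theta$, and it is $\ell$-periodic because $\zeta_{\theta+2\pi}=\zeta_\theta$), so it spans the line.
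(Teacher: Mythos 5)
Your proof is correct and takes essentially the same route as the paper's: both apply Proposition~\ref{p:focusing_cap}(iii) with $\xi=\pi/2$ at the two cap crossings, use that scalar orthogonal Jacobi fields are affine on the flat cylinder, and conclude from $\dot\Theta(\pi/2)<0$ and $\ell$-periodicity that the periodic solutions form the one-dimensional space $\{\dot u=0\ \text{at the equator}\}$ spanned by $\partial_\theta\zeta_\theta$ --- the paper simply writes the monodromy computation as equalities on $(u,\dot u)$ at the crossing times instead of as a product of $2\times2$ transfer matrices. The only blemish is the harmless bookkeeping slip $\ell=2(\tau_1+1)$ (it should be $\ell=4\tau_1+2$), which plays no role since the argument only uses the cap--cylinder--cap--cylinder structure and the length-one cylinder transfers.
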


\begin{proof}
Let $a_1<b_1<a_2<b_2<a_1+\ell$ be time values such that $\zeta_\theta|_{[a_i,b_i]}\subset B_i$ and $\zeta_\theta(a_i),\zeta_\theta(b_i)\in\partial B_i$. Notice that $\zeta_\theta$ travels on the flat cylinder $C$ in the intervals $[b_1,a_2]$ and $[b_2,a_1+\ell]$, and therefore $a_2-b_1=a_1+\ell-b_2=1$. Let $J(t)=u(t)\,n_{\zeta_\theta}(t)$ be an orthogonal Jacobi field along $\zeta_\theta$, where $u$ is the associated scalar Jacobi field. 
By Proposition~\ref{p:focusing_cap}(iii), we have
\begin{equation}
\label{e:Jacobi_meridian_1}
\begin{split}
u(b_i) & = -u(a_i)+\dot u(a_i)\,\dot\Theta(\pi/2),
\\
\dot u(b_i) & = -\dot u(a_i).
\end{split}
\end{equation}
On a flat Riemannian surface, scalar orthogonal Jacobi fields have constant derivative. Therefore there exists $c\in\R$ such that
$\dot u|_{[b_2,a_1+\ell]}\equiv - \dot u|_{[b_1,a_2]} \equiv c$,
and we have
\begin{align}
\label{e:Jacobi_meridian_2}
 u(a_1+\ell)-u(b_2)
 =
 -\big(u(a_2)-u(b_1)\big)
 =
 c.
\end{align}
By~\eqref{e:Jacobi_meridian_1} and \eqref{e:Jacobi_meridian_2}, we infer
\begin{align*}
 u(a_1+\ell)
 & = u(b_2)+c
 = -u(a_2) + \dot u(a_2)\,\dot\Theta(\pi/2) +c \\
 & = -u(b_1) + \dot u(a_2)\,\dot\Theta(\pi/2) + 2c \\
 & = u(a_1) - \dot u(a_1)\,\dot\Theta(\pi/2) + \dot u(a_2)\,\dot\Theta(\pi/2) + 2c\\
 & = u(a_1) -2c\,\dot\Theta(\pi/2) + 2c.
\end{align*}
Assume now that the orthogonal Jacobi field $J$ is $\ell$-periodic, so that $u$ is $\ell$-periodic as well. Since $\dot\Theta(\pi/2)<0$ by Proposition~\ref{p:focusing_cap}(ii),  the previous identity implies that $c=0$. In particular, $u$ is constant on the interval $[a_2,b_1]$. Since the scalar Jacobi field associated to $\partial_\theta\zeta_\theta$ is constant and non-zero on $[a_2,b_1]$ as well, we conclude that $J\equiv\lambda \partial_\theta\zeta_\theta$ for some $\lambda\in\R$.
\end{proof}

Let $\gamma_1$, $\gamma_2$ be two disjoint embedded circles in the 2-sphere $S^2$, forming the flat link $\bm\gamma=(\gamma_1,\gamma_2)$. We denote by $\KK(\bm\gamma)$ the flat knot type relative to $\bm\gamma$ consisting of those embedded loops intersecting each $\gamma_i$ in two points. Notice that $\KK(\bm\gamma)$ is primitive, and therefore we can apply to it the Morse theoretic techniques of Section~\ref{s:Morse}. 

\begin{Lemma}\label{l:K_gamma1_gamma2}
The local homology of $\KK(\bm\gamma)$ is given by
\begin{align*}
C_d(\KK(\bm\gamma))
\cong
\left\{
  \begin{array}{@{}cc}
    \Z, & d\in\{n,n+1\}, \vspace{5pt}\\ 
    0, & d\not\in\{n,n+1\}, 
  \end{array}
\right. 
\end{align*}
for some integer $n\geq1$.
\end{Lemma}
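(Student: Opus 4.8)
The plan is to compute the local homology $C_*(\KK(\bm\gamma))$ by exploiting its metric-independence (Section~\ref{ss:global}: $C_*(\KK)$ does not depend on the admissible metric $g$) and evaluating it for the model metric $g_0$ constructed above, for which $\gamma_1$ and $\gamma_2$ are the boundary circles of the two focusing caps. So first I would fix $g=g_0$ and identify $\Gamma_{g_0}(\KK(\bm\gamma))$, the closed geodesics of $g_0$ lying in the relative flat knot type $\KK(\bm\gamma)$. An embedded closed geodesic intersecting each $\gamma_i$ in exactly two points cannot be one of the equators $\gamma_z$ (those are disjoint from or equal to the $\gamma_i$), and by Lemma~\ref{l:only_meridians} any geodesic other than the $\gamma_z$'s and the meridians has a transverse self-intersection, hence is not embedded. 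So $\Gamma_{g_0}(\KK(\bm\gamma))$ consists exactly of the $S^1$-family of meridians $Z=\{\zeta_\theta\ |\ \theta\in\R/2\pi\Z\}$, all of the same length $\ell$ — indeed each meridian crosses each $\gamma_i$ in two points (the two intersections of a great-circle-like curve with a parallel).

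Next I would show that $Z$ is a non-degenerate critical circle in the sense of Section~\ref{ss:hom_visible}: picking $\lambda=\gamma_1$ as the transversal curve, $Z_k$ is a critical circle of $E_k$, and its non-degeneracy amounts to the assertion that the only $\ell$-periodic orthogonal Jacobi fields along $\zeta_\theta$ are the multiples of $\partial_\theta\zeta_\theta$ — which is precisely Lemma~\ref{l:no_orthogonal_Jacobi}. Then Lemma~\ref{l:non_deg_visible} applies: provided the negative bundle $N\to Z_k$ is orientable, we get
\begin{align*}
C_d(Z)
\cong
\left\{
  \begin{array}{@{}ll}
    \Z, & d\in\{\ind(Z_k),\ind(Z_k)+1\}, \vspace{5pt}\\
    0, & d\not\in\{\ind(Z_k),\ind(Z_k)+1\}.
  \end{array}
\right.
\end{align*}
To settle orientability of $N$, note $N\to Z_k$ is a line bundle over a circle (the nullity is $1=\dim Z_k$, and by the Morse index theorem the index is finite; since $Z$ is a circle of geodesics in a surface the relevant negative/Jacobi analysis forces $\rank N=\ind(Z_k)$, and orientability can be read off from the monodromy of the Jacobi equation around $Z$ — by the explicit $S^1$-symmetry of the sphere of revolution the family $\zeta_\theta$ is carried into itself by rotation, so the negative bundle is equivariant under this free circle action and hence orientable). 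Finally, since $Z$ is the only critical set at level $\ell$ and there are no others interfering (it is isolated in $\Gamma(\KK(\bm\gamma))$ because $\Gamma(\KK(\bm\gamma))=Z$), the length-filtration properties (i)–(ii) of Section~\ref{ss:global} give $C_*(\KK(\bm\gamma))\cong C_*(Z)$, yielding the stated answer with $n=\ind(Z_k)$; that $n\geq1$ follows because the meridians are not local length minimizers within $\KK(\bm\gamma)$ (one can shorten $\zeta_\theta$ through embedded loops still meeting each $\gamma_i$ twice, e.g. by sliding toward an equator), so $\ind(Z_k)\geq1$.

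The main obstacle I expect is the two index-type computations: first, verifying rigorously that $\Gamma_{g_0}(\KK(\bm\gamma))=Z$ exactly — one must be careful that no meridian or near-meridian geodesic accidentally meets some $\gamma_i$ in more than two points and that the global flat-knot/isotopy type of a meridian relative to $\bm\gamma$ is indeed $\KK(\bm\gamma)$ — and second, pinning down the orientability of the negative bundle $N\to Z_k$ and the positivity $\ind(Z_k)\geq1$. The orientability is where the precise structure of the focusing-cap Jacobi data from Proposition~\ref{p:focusing_cap}(iii) enters, in combination with the flat-cylinder contribution, exactly as in the proof of Lemma~\ref{l:no_orthogonal_Jacobi}; I would argue it either by the $S^1$-equivariance of the whole meridian family (giving an equivariant, hence trivial, line bundle over the circle $Z_k$) or, failing that, by a direct monodromy computation using the transfer matrices $\begin{pmatrix}-1 & \sin\xi\,\dot\Theta(\pi-\xi)\\ 0 & -1\end{pmatrix}$ across each cap together with the flat-cylinder shear, whose product has positive trace and hence orientable unstable/negative direction. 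Everything else is a routine assembly of the Morse-theoretic machinery already developed in Sections~\ref{s:Morse} and~\ref{ss:global}.
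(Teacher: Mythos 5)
Your proof follows the paper's argument step by step: pass to the model metric $g_0$, use Lemma~\ref{l:only_meridians} to identify $\Gamma_{g_0}(\KK(\bm\gamma))$ with the meridian circle $Z$, deduce nondegeneracy of $Z$ from Lemma~\ref{l:no_orthogonal_Jacobi}, invoke the rotational symmetry of the sphere of revolution for orientability of the negative bundle, and conclude via Lemma~\ref{l:non_deg_visible}. One minor remark: the paper does not actually justify (nor use) the bound $n\ge 1$, and your sliding-toward-an-equator heuristic for $\ind(Z_k)\ge 1$ is not a local perturbation, so it would need a genuine Jacobi-field computation across the focusing caps if you wanted to pin it down.
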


\begin{proof}
The local homology of $\KK(\bm\gamma)$ is independent of the choice of Riemannian metric on $S^2$ having $\bm\gamma$ as flat link of closed geodesics. Therefore, we conveniently choose the model Riemannian metric of revolution $g_0$ introduced above, with $\gamma_1:=\partial B_1$ and $\gamma_2:=\partial B_2$ being the pair of equatorial closed geodesics at the boundary of the focusing caps $B_1$ and $B_2$ respectively (Figure~\ref{f:model_sphere}). We fix a parametrization $\gamma_1:S^1\hookrightarrow S^2$ with constant speed.

For each $s\in S^1$, we denote by $\zeta_s:S^1\hookrightarrow S^2$ the unique meridian (which is a simple closed geodesic) parametrized with constant speed and such that $\zeta_s(0)=\gamma_1(s)$ and $\dot\zeta_s(0)$ points south inside the flat cylinder $C$. Each $\zeta_s$ has relative flat knot type $\KK(\bm\gamma)$.
The space
\[
Z:=\big\{\zeta_s\ |\ s\in S^1\big\}
\]
is a critical circle of the energy functional $E$. By Lemma~\ref{l:only_meridians}, the critical torus $S^1\cdot Z$ is a connected component of $\crit(E)$. By Lemma~\ref{l:no_orthogonal_Jacobi}, any 1-periodic Jacobi field along $\zeta_s$ has the form $c_1\partial_s\zeta_s+c_2\dot\zeta_s$ for some $c_1,c_2\in\R$. Namely, the space of such 1-periodic Jacobi fields is precisely the tangent space $T_{\zeta_s}(S^1\cdot Z)$. Since the 1-periodic Jacobi fields along a closed geodesic in $\crit(E)$ span the kernel of the Hessian of the energy, we infer
\begin{align*}
 \ker(d^2E(\zeta_s)) = T_{\zeta_s}(S^1\cdot Z),
\end{align*}
that is, $S^1\cdot Z$ is a non-degenerate critical manifold of $E$. The symmetry of the 2-sphere of revolution $(S^2,g_0)$ implies that the negative bundle $N\to Z$ is orientable. Therefore, seeing $Z$ as a space of unparametrized oriented closed geodesics in $\Gamma_{g_0}(\KK)$, Lemma~\ref{l:non_deg_visible} implies that $Z$ has local homology
\begin{align*}
C_*(Z) 
\cong
\left\{
  \begin{array}{@{}cc}
    \Z, & d\in\{\ind(Z),\ind(Z)+1\}, \vspace{5pt}\\ 
    0, & d\not\in\{\ind(Z),\ind(Z)+1\}.
  \end{array}
\right.
\end{align*}
Finally, since the space of closed geodesics $\Gamma_{g_0}(\KK(\bm\gamma))$ is precisely $Z$ according to Lemma~\ref{l:only_meridians}, we conclude
\[
C_*(\KK(\bm\gamma))\cong C_*(Z).
\qedhere
\]
\end{proof}

\begin{proof}[Proof of Theorem~\ref{mt:sphere}]
Let $(S^2,g)$ be a Riemannian 2-sphere having two disjoint simple closed geodesics $\gamma_1,\gamma_2$, and consider the flat link $\bm\gamma=(\gamma_1,\gamma_2)$. By Lemma~\ref{l:K_gamma1_gamma2}, the primitive relative flat knot type $\KK(\bm\gamma)$ has non-trivial local homology. Therefore, the Morse inequality of Proposition~\ref{p:Morse_inequality} implies that $\KK(\bm\gamma)$ contains at least a closed geodesic $\gamma$, which is therefore a simple closed geodesic intersecting each $\gamma_i$ in two points.
\end{proof}

\section{Birkhoff sections}
\label{s:Birkhoff_sections}

\subsection{From closed geodesics to Birkhoff sections}

Let $(M,g)$ be a closed orientable Riemannian surface, and $\psi_t:SM$ its geodesic flow. For each open subset $W\subset SM$, the associated \emph{trapped set} is defined as
\begin{align*}
\trap(W) := \Big\{v\in SM\ \Big|\ \psi_t(v)\in W\mbox{ for all $t>0$ large enough} \Big\}.
\end{align*}
By a \emph{convex geodesic polygon}, we mean an open ball $B\subset M$ whose boundary is piecewise geodesic with at least one corner and all inner angles at its corners are less than $\pi$. We stress that the closure $\overline B$ is not required to be an embedded compact ball. Typical examples of convex geodesic polygons are the simply connected components of the complement of a finite collection of closed geodesics, as in Lemma~\ref{l:std_collection} below.

The main result of this section is the following.

\begin{Thm}\label{t:complete_system}
Any closed connected orientable Riemannian surface $(M,g)$ admits a finite collection of closed geodesics $\gamma_1,...,\gamma_n$ whose complement $U:=M\setminus(\gamma_1\cup...\cup\gamma_n)$ satisfies $\trap(SU)=\varnothing$, and each connected component of $U$ is a convex geodesic polygon.
\end{Thm}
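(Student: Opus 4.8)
The plan is to split the proof into a topological task and a dynamical one, and to run a finite induction powered by the forcing Theorems~\ref{mt:multiplicity} and~\ref{mt:sphere}. The key elementary observation throughout is that two geometrically distinct closed geodesics can never be tangent (uniqueness of geodesics): hence any finite collection $\Gamma$ of pairwise distinct closed geodesics is self-transverse, and every inner angle of every component of $M\setminus\Gamma$ is automatically $<\pi$. Thus, calling $\Gamma$ a \emph{standard collection} when the components of $M\setminus\Gamma$ are convex geodesic polygons, the only thing to arrange for a collection to be standard is that each complementary region is an open disk with at least one corner.

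The first task is the topological input recorded by Lemma~\ref{l:std_collection}: every closed connected orientable Riemannian surface carries a standard collection, and moreover any finite collection of closed geodesics can be enlarged to a standard one. For $M=S^2$ this is quick: take a simple closed geodesic (Lusternik--Schnirelmann); if it bounds two cornerless disks, adjoin a second simple closed geodesic, and if the two are disjoint invoke Theorem~\ref{mt:sphere} to produce a third crossing each of them --- two transversally crossing simple closed geodesics already cut $S^2$ into convex geodesic polygons. For positive genus I would start from a length-minimizing (hence simple) closed geodesic in a primitive free homotopy class, and then repeatedly adjoin closed geodesics obtained by running the curve shortening flow on core curves of the non--simply-connected complementary pieces (using Lemma~\ref{l:subloops}), until the complement is a union of disks; missing corners can be forced, if necessary, by adjoining one further geodesic crossing a cornerless boundary circle, produced by Theorem~\ref{mt:multiplicity}. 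The enlargement statement is proved the same way, starting from the given collection.

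The second task is dynamical. The heart of the matter is the following lemma: if $\Gamma$ is a standard collection and $U=M\setminus\Gamma$ has $\trap(SU)\neq\varnothing$, then the closure of some component $B$ of $U$ contains a contractible simple closed geodesic without conjugate points. Indeed, a forward orbit eventually trapped in $SU$ stays, past some time, in $SB$ for a single component $B$; passing to its $\omega$-limit set yields a nonempty compact $\psi_t$-invariant subset of $S\overline B$, all of whose orbits are \emph{complete} geodesic lines contained in $\overline B$. Since $B$ is a geodesically convex disk, a Poincar\'e--Bendixson type analysis of the resulting geodesic lamination on the planar region $B$ isolates a simple closed geodesic $\sigma\subset\overline B$; it is contractible in $M$ (it lies in a disk) and, being a limit of length-minimizing segments in the convex region $\overline B$, it has no conjugate points. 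Granting this, the induction proceeds: apply Theorem~\ref{mt:multiplicity} (if $\mathrm{genus}(M)>0$) or Theorem~\ref{mt:sphere} (if $M=S^2$) to the contractible simple closed geodesic $\sigma$ to obtain a closed geodesic $\delta$ meeting $\sigma$ transversally, adjoin $\delta$ to $\Gamma$, and complete the result to a new standard collection $\Gamma'$ by the enlargement part of Lemma~\ref{l:std_collection}. Since $\Gamma\subset\Gamma'$ we have $\trap(S(M\setminus\Gamma'))\subseteq\trap(SU)$, and the intersection points $\delta\cap\sigma$ prevent $\sigma$ from lying in the closure of any single component of $M\setminus\Gamma'$; so the collection of contractible simple closed geodesics without conjugate points that can occur as in the lemma strictly shrinks at each step.

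Finally, this loop must terminate, and this is the point I expect to be the main obstacle, together with the dynamical lemma that feeds it. A contractible simple closed geodesic $\sigma$ without conjugate points bounds an embedded disk $D_\sigma$ with geodesic boundary, so by Gauss--Bonnet $\int_{D_\sigma}R_g\,dA=2\pi$; combined with the absence of conjugate points along $\partial D_\sigma$ this forces a definite lower bound on $\area(D_\sigma)$, and hence an a priori bound $C=C(\area(M,g),\max(R_g),\mathrm{genus}(M))$ on the number of mutually disjoint such disks, hence on the number of adjunctions ever required. After at most $C$ steps the standard collection $\gamma_1,\dots,\gamma_n$ produced satisfies $\trap(SU)=\varnothing$ --- for otherwise the dynamical lemma would produce yet another such $\sigma$ --- and each component of $U$ is a convex geodesic polygon, which is the assertion. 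Everything outside the dynamical lemma and this packing estimate is bookkeeping with the curve shortening flow and the two forcing theorems.
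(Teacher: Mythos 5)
Your overall strategy (kill the obstructions coming from contractible simple closed geodesics by adjoining geodesics forced by Theorems~\ref{mt:multiplicity} and~\ref{mt:sphere}) is the same as the paper's, but two steps carry genuine gaps. The first is your ``dynamical lemma''. This is essentially Theorem~\ref{t:non_trapping}, which the paper does not prove but quotes from Contreras--Knieper--Mazzucchelli--Schulz: a convex geodesic polygon containing no simple closed geodesic has empty trapped set. That is a substantial theorem, and your sketch does not prove it: the geodesic flow is a three-dimensional flow, so Poincar\'e--Bendixson does not apply; the complete geodesics arising in the $\omega$-limit set of a trapped orbit need not be simple and need not form a lamination of $B$, so nothing in the sketch isolates a closed leaf. (The extra ``without conjugate points'' conclusion you ask of $\sigma$ is likewise unjustified, though also unnecessary: Gauss--Bonnet alone gives $\area(B_\sigma,g)\geq 2\pi/\max(R_g)$ for any disk with geodesic boundary, which is exactly the bound the paper uses.) If you simply invoke Theorem~\ref{t:non_trapping}, this gap disappears.

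The second gap is termination of your iteration, and it is not repaired by your packing estimate. The set of contractible simple closed geodesics contained in a component of the complement can be infinite (think of the continuum of meridians on a sphere of revolution), so the fact that it ``strictly shrinks at each step'' proves nothing; and the area/packing bound only counts \emph{pairwise disjoint} disks, whereas the disks $B_\sigma$ produced at successive steps of your loop may be nested or overlapping --- your procedure gives no disjointness. The paper avoids the iteration entirely: starting from the collection $A$ of Lemmas~\ref{l:std_collection} and~\ref{l:std_collection_S2} (whose complement is already a union of convex geodesic polygons, so no separate ``enlargement'' lemma is needed), it fixes in advance a \emph{maximal finite} family $Z=\zeta_1\cup\dots\cup\zeta_h$ of pairwise disjoint \emph{innermost} contractible simple closed geodesics in $M\setminus A$, i.e.\ with $\GG_{\zeta_i}=\varnothing$; finiteness comes from the Gauss--Bonnet area bound applied to the now genuinely disjoint disks, and the existence of innermost ones is the compactness argument of Lemma~\ref{l:innermost}, which needs the annulus Lemma~\ref{l:annuli}. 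Adding one forced geodesic $\beta_i$ through each $\zeta_i$, the maximality of $Z$ plus the path-connectedness of $A\cup Z\cup B$ excludes \emph{every} simple closed geodesic from the final complement in one stroke, and Theorem~\ref{t:non_trapping} is applied once. To salvage your version you would need either this one-shot organization or some a priori mechanism forcing the geodesics produced by your loop to bound pairwise disjoint disks; as written, the loop may not terminate.
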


In the terminology of \cite[Section~4.2]{Contreras:2022ab}, the family $\gamma_1,...,\gamma_n$ provided by Theorem~\ref{t:complete_system} is a ``complete system of closed geodesics with empty limit subcollection''. Postponing the proof of Theorem~\ref{t:complete_system} to the next subsection, we first derive the proof of Theorem~\ref{mt:Birkhoff_sections}.

\begin{proof}[Proof of Theorem~\ref{mt:Birkhoff_sections}]
Let $\gamma_1,...,\gamma_n$ be the finite collection of closed geodesics provided by Theorem~\ref{t:complete_system}, so that the complement $U:=M\setminus(\gamma_1\cup...\cup\gamma_n)$ satisfies $\trap(SU)=\varnothing$, and each connected component of $U$ is a convex geodesic polygon. We fix a unit-speed parametrization $\gamma_i:\R/L(\gamma_i)\Z\to M$, and consider a normal vector field $n_{\gamma_i}$. Each $\gamma_i$ has two associated Birkhoff annuli $A_i^+$ and $A_i^-$, defined as
\begin{align*}
A_i^\pm:=\big\{ v\in S_{\gamma_i(t)}M\ \big|\ t\in L(\gamma_i)\Z,\ \pm g(n_{\gamma_i}(t),v)\geq0\big\}.
\end{align*}
Namely, $A_i^\pm$ is an immersed compact annulus in $SM$ with boundary $\partial A_i^\pm=\dot\gamma_i\cup-\dot\gamma_i$, and whose interior consists of those unit tangent vectors $v\in SM$ based at some point $\gamma_i(t)$ and pointing to the same side of $\gamma_i$ as $\pm n_{\gamma_i}(t)$. Notice that $A_i^\pm$ is an \emph{immersed} surface of section: namely, it is almost a surface of section, the only missing property being the embeddedness of $\interior(A_i^\pm)$ into $SM\setminus\partial A_i^\pm$. The union
\begin{align*}
\Upsilon:=\bigcup_{i=1,...,n} \Big( A_i^+ \cup A_i^- \Big)
\end{align*}
is an immersed surface of section as well.

Notice that $SU=SM\setminus\Upsilon$. Since $\trap(SU)=\varnothing$, for each $v\in SM$ there exists a minimal $\tau_v>0$ such that $\psi_{\tau_v}(v)\in\Upsilon$. Since each connected component of $U$ is a convex geodesic polygon, there exists a neighborhood $W\subset M$ of $\partial U=\gamma_1\cup...\cup\gamma_n$ and $T>0$ such that any geodesic segment $\gamma:[-T,T]\to M$ parametrized with unit speed and such that $\gamma(0)\in W$ cannot be fully contained in $U$. This readily implies that the hitting time $\tau_v$ is uniformly bounded from above by a constant $\tau>0$ for all $v\in SM$. A surgery procedure due to Fried \cite{Fried:1983aa}, also described in the geodesics setting in \cite[Section~4.1]{Contreras:2022ab}, allows to resolve the self-intersections of $\Upsilon$, and produce a surface of section $\Sigma$ with the same boundary as $\Upsilon$, contained in an arbitrarily small neighborhood of $\Upsilon$, and such that for each $v\in\Upsilon$ the orbit segment $\psi_{(-\tau,\tau)}(z)$ intersect $\Sigma$. This implies that, for each $v\in SM$, the orbit segment $\psi_{(0,2\tau)}$ intersects $\Sigma$. Therefore $\Sigma$ is a Birkhoff section.
\end{proof}

\begin{Remark}
Since the Birkhoff section $\Sigma$ provided by Theorem~\ref{mt:Birkhoff_sections} is constructed by applying Fried surgery to the collection of pairs of Birkhoff annuli of a finite family of closed geodesics, one can show that the first return time 
\[\tau:\interior(\Sigma)\to(0,\infty),\qquad \tau(v)=\min\big\{t\in(0,T]\ \big|\ \psi_t(z)\in\Sigma \big\} \] 
is bounded from below by a positive constant.
\end{Remark}

\subsection{Complete system of closed geodesics}

The following result, which is a special case of \cite[Theorem 3.4]{Contreras:2022ab}, provides the main criterium to produce an open set $U$ with the properties asserted in Theorem~\ref{t:complete_system}.

\begin{Thm}[Contreras, Knieper, Mazzucchelli, Schulz]\label{t:non_trapping}
Let $(M,g)$ be a closed orientable Riemannian surface. If a convex geodesic polygon $B\subset M$ does not contain simple closed geodesics, then $\trap(SB)=\varnothing$.
\hfill\qed
\end{Thm}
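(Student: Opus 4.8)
The plan is to argue by contradiction. Suppose $\trap(SB)\neq\varnothing$, pick $v\in\trap(SB)$, and after a time shift obtain a unit-speed geodesic $\gamma:[0,\infty)\to M$ with $\gamma([0,\infty))\subset B$. The goal is to manufacture a \emph{simple} closed geodesic inside $B$, contradicting the hypothesis.

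First I would prove a confinement lemma: $\gamma([0,\infty))$ is contained in a compact subset $B'$ of the \emph{open} ball $B$. This is purely local and uses that $B$ is a convex geodesic polygon. Each boundary arc is a geodesic (zero geodesic curvature), so a geodesic entering a thin one-sided collar of such an arc either crosses it — and hence exits $\overline B$, which $\gamma$ never does — or crosses it within a uniformly bounded time; near a corner the interior-angle condition $<\pi$ likewise forces a geodesic to leave $\overline B$ within bounded time. Hence any geodesic that never exits $B$ stays uniformly away from $\partial B$, so the forward orbit $\{\psi_t(v):t\ge 0\}$ has compact closure inside $SB'$ and is therefore recurrent.

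Next, by recurrence, for every $\epsilon>0$ I can choose $0\le s<t$ with $t-s$ arbitrarily large and $\gamma(t)$ close to $\gamma(s)$ with nearly parallel velocities. Since a convex geodesic polygon is locally convex, for $\epsilon$ small there is a short geodesic arc $\sigma\subset B'$ from $\gamma(t)$ to $\gamma(s)$; concatenating $\gamma|_{[s,t]}$ with $\sigma$ and smoothing the single corner yields an immersed $C^\infty$ loop $\eta=\eta_\epsilon\subset B$ of length $\ge t-s$ whose turning is concentrated near one point with arbitrarily small total geodesic curvature there. Now run the curve shortening flow $\phi^\cdot=\phi^\cdot_g$ starting at $\eta$. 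Because each boundary geodesic arc of $B$ is a stationary solution and the corners are concave as seen from $\overline B$, the avoidance principle keeps $\phi^\tau(\eta)\subset B$ for all $\tau\in[0,t_\eta)$. Crucially, $\eta$ runs many times along the recurrent segment $\gamma|_{[s,t]}$, hence is noncontractible in the complement of a suitable neighborhood of the limit set $\pi(\Lambda)$ of $v$ in $M$ (here $\Lambda=\omega(v)\subset SM$); since the curve shortening flow cannot push a loop across the geodesics making up that limit set, the loop cannot collapse, so $L(\phi^\tau(\eta))\to\ell>0$. By Lemma~\ref{l:subloops}(i), either some $\phi^{\tau_n}(\eta)$ converges to a closed geodesic, or $\phi^\tau(\eta)$ develops a singularity carrying a $\rho$-subloop for every $\rho>0$; the latter is ruled out by Lemma~\ref{l:subloops}(ii) upon choosing $\rho<\tfrac\pi2\,t_\eta$. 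Thus we obtain a closed geodesic $c\subset\overline B$, and the confinement lemma applied to $c$ (itself a trapped geodesic) gives $c\subset B$.

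Finally I would upgrade $c$ to a simple closed geodesic in $B$. If $c$ has a self-intersection, take an innermost one: it cuts off a subloop bounding an embedded disk $D\subset B$; replacing that subloop by a boundary arc and applying the curve shortening flow to the resulting shorter loop — which stays in $B$ and, by the self-intersection monotonicity of the flow (\cite[Lemma~3.3]{Angenent:2005aa}), does not acquire new double points — produces a closed geodesic in $B$ with strictly fewer self-intersections, the collapse case being excluded exactly as above by keeping the loop essential relative to $\pi(\Lambda)$. Iterating finitely many times yields an embedded closed geodesic in $B$, the desired contradiction. I expect this last step to be the main obstacle: the loops in play are all contractible in the disk $B$, so preventing them from shrinking to points forces one to work relative to the limit set of the trapped orbit and to control its structure, which is where the real content of \cite[Theorem~3.4]{Contreras:2022ab} lies; by comparison the confinement and closing-up steps are comparatively soft.
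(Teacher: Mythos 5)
The paper itself does not prove this theorem: it is stated as a special case of \cite[Theorem~3.4]{Contreras:2022ab}, and no proof is given, so there is no in-paper argument to compare against; your sketch has to be judged on its own terms, and it has a genuine gap at exactly the step you flag as the heart of the matter.

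The collapse-prevention claim in the curve-shortening step does not hold as written. You assert that the closed-up loop $\eta$ is noncontractible in the complement of a small neighborhood of the projected $\omega$-limit set of $v$, and that the flow cannot push $\eta$ across the geodesics comprising that limit set. Neither is correct. First, $\eta$ is built by closing up a late segment $\gamma|_{[s,t]}$ of the trapped orbit, so for $s$ large $\eta$ lies entirely \emph{inside} any fixed neighborhood of the projected $\omega$-limit set; it is not a loop in that complement, and the noncontractibility statement is vacuous. Second, the $\omega$-limit set of a trapped geodesic in a convex polygon need not contain any closed geodesic at all (it can be a geodesic lamination with no compact leaves), so in general there are no closed geodesics there to act as barriers. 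Third, even when such geodesics are present, \cite[Lemma~3.3]{Angenent:2005aa} only says the number of intersections with a fixed geodesic is non-increasing along the flow; a loop that begins by crossing such a geodesic is free to lose all its intersections with it and then shrink to a point on one side, so these geodesics are not barriers for $\eta$, which certainly meets them. Since every loop in the disk $B$ is contractible in $B$, there is no free-homotopy obstruction to collapse and your argument supplies no substitute; this is precisely where the nontrivial content of \cite[Theorem~3.4]{Contreras:2022ab} lies, and your final surgery step to obtain a \emph{simple} closed geodesic inherits the same difficulty. The confinement lemma is also more delicate than presented (a trapped geodesic could a priori approach $\partial B$ infinitely often; the correct argument passes to a subsequential limit unit vector tangent to $\partial B$ and uses that the boundary geodesic arcs, extended past the corners, leave $\overline B$ because all interior angles are $<\pi$), but that part is repairable, whereas the shortening step is not.
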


In order to apply this result in the proof of Theorem~\ref{t:complete_system} we need to detect enough closed geodesics. As a starting point, on surfaces of positive genus, we employ the following standard family of closed geodesics, which exists for any Riemannian metric.

\begin{figure}
\begin{footnotesize}
\includegraphics{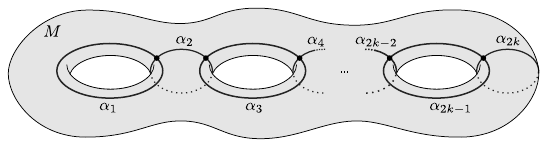}
\end{footnotesize}
\caption{The collection of closed geodesics of Lemma~\ref{l:std_collection}.}
\label{f:std_collection}
\end{figure}

\begin{Lemma}
[\cite{Contreras:2022ab}, Lemma 4.5]
\label{l:std_collection}
On any closed connected orientable Riemannian surface $(M,g)$ of genus $k\geq1$, there exist non-contractible simple closed geodesics $\alpha_1,...,\alpha_{2k}$ as depicted in Figure~\ref{f:std_collection}. Namely,  two such $\alpha_i$ and $\alpha_j$ intersect transversely in a single point if $|i-j|=1$, they are disjoint if $|i-j|\geq2$, and the complement $M\setminus(\alpha_1\cup...\cup\alpha_{2k})$ is simply connected.
\hfill\qed
\end{Lemma}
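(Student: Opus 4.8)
The plan is to realize a standard topological \emph{chain of curves} by closed geodesics using the curve shortening flow of $g$, controlling the intersection pattern along the flow.

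First I would recall the relevant surface topology. On a closed connected orientable surface $M$ of genus $k\ge1$ there is a classical ``linear chain'' of embedded simple closed curves $c_1,\dots,c_{2k}$ (as in Figure~\ref{f:std_collection}): each $c_i$ is non-separating, hence essential and of nontrivial class in $H_1(M;\Z)$; consecutive curves $c_i,c_{i+1}$ meet transversely in exactly one point; non-consecutive ones are disjoint; and $M\setminus(c_1\cup\dots\cup c_{2k})$ is an open disk. (A chain of even length $2k$ on a genus-$k$ surface has a regular neighbourhood homeomorphic to $\Sigma_{k,1}$, whose complement in $\Sigma_k$ is a disk; see e.g.\ Farb--Margalit's \emph{Primer on mapping class groups}.) Moreover the $c_i$ may be chosen pairwise non-isotopic and with no $c_i$ isotopic to any $c_j^{-1}$, and each pair $(c_i,c_j)$ is in minimal position, so $\#(c_i\cap c_j)=i([c_i],[c_j])$, equal to $1$ when $|i-j|=1$ and $0$ otherwise.

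Next I would run the curve shortening flow $\phi^t=\phi^t_g$ of Section~\ref{ss:curve_shortening} on each $c_i$. Since $c_i$ is embedded and non-contractible, Grayson's theorem \cite{Grayson:1989aa} (recalled in Section~\ref{ss:curve_shortening}) ensures that $\phi^t(c_i)$ is defined for all $t\ge0$ — finite-time extinction would make the free homotopy class trivial, which is impossible since $\phi^t$ is a homotopy — and that along some sequence $t_n\to\infty$, $\phi^{t_n}(c_i)$ converges in $C^\infty$ to a \emph{simple} closed geodesic lying in the free homotopy class $[c_i]$. A nested-subsequence (diagonal) extraction over $i=1,\dots,2k$ yields a single sequence $t_n\to\infty$ with $\phi^{t_n}(c_i)\to\alpha_i$ for all $i$, each $\alpha_i$ a simple closed geodesic with $[\alpha_i]=[c_i]$, hence non-contractible; the $\alpha_i$ are pairwise non-isotopic, hence pairwise distinct as geodesics. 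To pin down the intersection pattern I would use that, for geometrically distinct curves, $t\mapsto\#\big(\phi^t(c_i)\cap\phi^t(c_j)\big)$ is non-increasing \cite[Lemma~3.3]{Angenent:2005aa} (as quoted in the proof of Theorem~\ref{t:links}); the two flowed curves remain geometrically distinct for all $t$ because $\phi^t$ is deterministic and $[c_i]\neq[c_j]$. For $|i-j|\ge2$ the count starts at $0$, so it is identically $0$, and in the limit $\alpha_i\cap\alpha_j=\varnothing$, since two distinct geodesics cannot be tangent. For $|i-j|=1$ the count starts at $1$; the homological intersection number $[c_i]\cdot[c_j]=\pm1\neq0$ is a homotopy invariant, forcing $\#\big(\phi^t(c_i)\cap\phi^t(c_j)\big)\ge1$, hence the count is identically $1$; by upper semicontinuity of the intersection count at transverse configurations (applicable because $\alpha_i,\alpha_j$, being distinct geodesics, are transverse wherever they meet) the limit satisfies $\#(\alpha_i\cap\alpha_j)\le1$, and combined with the homological lower bound $\#(\alpha_i\cap\alpha_j)=1$, a single transverse crossing. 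Thus $(\alpha_1,\dots,\alpha_{2k})$ realizes exactly the chain pattern, and each pair is in minimal position.

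Finally, the complement: $(\alpha_i)$ is a collection of essential simple closed curves that are pairwise in minimal position, with the same curve-by-curve free homotopy classes and the same intersection pattern as the original chain $(c_i)$. Whether such a configuration fills $M$ (i.e.\ has disk complement) depends only on this combinatorial data and not on the metric — equivalently, by the change-of-coordinates principle there is a homeomorphism of $M$ carrying $(c_i)$ to $(\alpha_i)$ — so $M\setminus(\alpha_1\cup\dots\cup\alpha_{2k})$ is homeomorphic to $M\setminus(c_1\cup\dots\cup c_{2k})$, an open disk, and in particular simply connected. I expect the main obstacle to be exactly this last step, namely arguing rigorously that the geodesic configuration is ambient-isotopic to the topological chain; the other delicate inputs are the analytic fact (Grayson) that an embedded non-contractible loop flows to a simple closed geodesic in its own free homotopy class, and the homological lower bound, which is precisely what prevents the consecutive intersection points from disappearing in the limit.
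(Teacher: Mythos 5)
Your argument is sound. Note first that the paper does not actually prove this lemma: it is cited verbatim from \cite{Contreras:2022ab}, Lemma 4.5, with no proof given here, so there is no internal argument to compare against. Your proof is nevertheless a correct reconstruction in the spirit of the cited reference and of this paper's toolbox, since it uses the curve shortening flow --- the same engine underlying everything else in both works.

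The steps all check out. Starting from a topological chain $c_1,\dots,c_{2k}$, each $c_i$ is non-separating (it has geometric intersection number one with a neighbour), so its class is primitive in $H_1$ and a fortiori in $\pi_1$; Grayson's theorem therefore gives a globally defined flow converging subsequentially in $C^\infty$ to an embedded closed geodesic in the same free homotopy class. Running the flow simultaneously on all $2k$ curves, Angenent's monotonicity of the intersection count between geometrically distinct evolving curves forces the pattern $0$ (resp.\ $1$) for non-consecutive (resp.\ consecutive) pairs to persist, and your use of the algebraic intersection number $\pm1$ to pin down the single remaining crossing, combined with transversality of distinct geodesics to pass this to the $C^\infty$ limit, is exactly right. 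The final topological step is also correct: any chain of even length $2k$ on a closed genus-$k$ surface has regular neighbourhood of genus $k$ with a single boundary circle (this follows inductively, or from the Euler-characteristic count $\chi(C)=1-2k$ plus the standard fact that the number of boundary circles of the regular neighbourhood of a chain of length $n$ is $1$ for $n$ even and $2$ for $n$ odd), so the complement deformation retracts onto a disk; and the change-of-coordinates principle guarantees any such chain is ambient-homeomorphic to the standard one. Two tiny wording issues: what you call ``upper semicontinuity'' of the intersection count at a transverse configuration is more accurately \emph{lower} semicontinuity of the count of nearby curves relative to the limit (transverse crossings persist under $C^1$-small perturbations, giving $\#(\alpha_i\cap\alpha_j)\le 1$), and one should say explicitly that Angenent's non-increase is applied between the transverse endpoints $t=0$ and $t=t_n$, sidestepping isolated tangency times in between; neither affects the conclusion.

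A common alternative route, which \cite{Contreras:2022ab} may well use, is length minimization in each free homotopy class (Freedman--Hass--Scott: the shortest loop in a primitive simple free homotopy class on a closed orientable surface is a simple closed geodesic, and shortest geodesics in distinct classes realize the geometric intersection number). That argument avoids the curve shortening flow but requires citing those regularity results; your curve-shortening version is entirely self-contained within the machinery already set up in this paper, which is a mild advantage here.
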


On the 2-sphere, we have a similar statement.

\begin{Lemma}\label{l:std_collection_S2}
On any Riemannian 2-sphere, there exist two simple closed geodesics $\alpha_1,\alpha_2$ intersecting each other transversely and non-trivially.
\end{Lemma}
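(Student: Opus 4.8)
The plan is to deduce the lemma from the classical Lusternik--Schnirelmann theorem together with Theorem~\ref{mt:sphere}, which has already been established. The first thing I would record is an elementary transversality observation: two \emph{distinct} simple closed geodesics of a Riemannian surface can only meet \emph{transversely}. Indeed, a tangency at a common point would force the two geodesics to share a unit tangent vector there (up to reversing orientation), and hence, by uniqueness of geodesics with prescribed initial conditions, to coincide on a neighbourhood of that point and therefore to have the same image — contradicting distinctness. Consequently, to prove the lemma it suffices to exhibit two distinct simple closed geodesics of $(S^2,g)$ that are not disjoint.

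Next I would invoke the Lusternik--Schnirelmann theorem, proved rigorously by Grayson (see \cite{Grayson:1989aa,Mazzucchelli:2018aa}), which furnishes at least three distinct simple closed geodesics $c_1,c_2,c_3$ on $(S^2,g)$. If some pair $c_i,c_j$ with $i\neq j$ intersects, the transversality observation finishes the proof with $\alpha_1:=c_i$ and $\alpha_2:=c_j$. Otherwise $c_1,c_2,c_3$ are pairwise disjoint, so in particular $(S^2,g)$ carries two disjoint simple closed geodesics, say $\gamma_1:=c_1$ and $\gamma_2:=c_2$.

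In this remaining case I would apply Theorem~\ref{mt:sphere} to the disjoint pair $\gamma_1,\gamma_2$: it produces a simple closed geodesic $\gamma$ with $\#(\gamma\cap\gamma_1)=\#(\gamma\cap\gamma_2)=2$. Since $\gamma$ meets $\gamma_1$ in exactly two points, it is distinct from $\gamma_1$ and intersects it non-trivially, hence transversely by the preliminary observation; one then sets $\alpha_1:=\gamma_1$ and $\alpha_2:=\gamma$. This exhausts all cases.

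I do not expect a genuine obstacle here, since every step merely quotes a result that is already available (Lusternik--Schnirelmann and Theorem~\ref{mt:sphere}); the only point that deserves a sentence of care is precisely the transversality upgrade, i.e.\ that two distinct simple closed geodesics cannot be tangent, so that ``they intersect'' automatically strengthens to ``they intersect transversely and non-trivially'', which is what the statement requires.
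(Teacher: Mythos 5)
Your argument is correct and follows essentially the same route as the paper: invoke Lusternik--Schnirelmann to get three distinct simple closed geodesics, and if no two of them already meet, apply Theorem~\ref{mt:sphere} to a disjoint pair. The only difference is that you make explicit the (correct, and worth recording) observation that distinct simple closed geodesics can only intersect transversely, which the paper leaves implicit.
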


\begin{proof}
By Lusternik-Schnirelmann theorem \cite{Lusternik:1929wa,De-Philippis:2022aa}, any Riemannian 2-sphere admits three geometrically distinct simple closed geodesics. If two of them are disjoint, Theorem~\ref{mt:sphere} provides a simple closed geodesic intersecting both.
\end{proof}

Let $(M,g)$ be a closed connected oriented Riemannian surface, and $\alpha_1,...,\alpha_{2k}$ the family of simple closed geodesics provided by Lemmas~\ref{l:std_collection} and~\ref{l:std_collection_S2}. We denote their union, seen as a connected subset of $M$, by 
\[A:=\alpha_1\cup...\cup\alpha_{2k}.\]
Any connected component of $M\setminus A$ is a convex geodesic polygon, and may contain simple closed geodesics. In order to apply Theorem~\ref{t:non_trapping} we need to add sufficiently many of them, as well as more closed geodesics provided by Theorems~\ref{mt:multiplicity} and~\ref{mt:sphere}, to our initial collection $A$. We shall need one last statement borrowed from \cite{Contreras:2022ab}.

\begin{Lemma}
[\cite{Contreras:2022ab}, Lemma 3.6]\label{l:annuli}
On any closed orientable Riemannian surface $(M,g)$, there there exists a constant $a>0$ with the following property: for any embedded compact annulus $N\subset M$ with $\area(N,g)\leq a$ and whose boundary is the disjoint union of two simple closed geodesics $\gamma_1\cup\gamma_2$, we have 
\[
\tag*{\qed}
\tfrac29L(\gamma_2)\leq L(\gamma_1)\leq\tfrac92 L(\gamma_2).
\]
\end{Lemma}

Any simple closed geodesic $\gamma$ contained in $M\setminus A$ bounds a unique open disk $B_\gamma\subset M\setminus A$. Gauss-Bonnet theorem guarantees that such a disk cannot be too small: the Gaussian curvature $R_g:M\to\R$ must attain positive values somewhere in $B_\gamma$, and the area of $B_\gamma$ is bounded from below as
\begin{align}
\label{e:Gauss_Bonnet_area_bound}
 \area(B_\gamma,g)\geq\frac{2\pi}{\max(R_g)}.
\end{align}
We denote by $\GG_\gamma$ the family of contractible simple closed geodesics $\zeta$ contained in the open disk $B_\gamma$. The following is the last ingredient for the proof of Theorem~\ref{t:complete_system}.

\begin{Lemma}\label{l:innermost}
For each contractible simple closed geodesic $\gamma\subset M\setminus A$ such that $\GG_\gamma\neq\varnothing$, there exists $\zeta\in\GG_\gamma$ such that $\GG_\zeta=\varnothing$.
\end{Lemma}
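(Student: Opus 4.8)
The plan is to argue by contradiction via a size argument, using the Gauss--Bonnet lower bound \eqref{e:Gauss_Bonnet_area_bound} on the area of a disk bounded by a contractible simple closed geodesic, together with the annulus length-ratio estimate of Lemma~\ref{l:annuli}. Suppose $\GG_\gamma\neq\varnothing$ but every $\zeta\in\GG_\gamma$ has $\GG_\zeta\neq\varnothing$. Then starting from $\gamma$ one can build an infinite nested sequence of contractible simple closed geodesics $\gamma=\zeta_0\supset\zeta_1\supset\zeta_2\supset\cdots$, meaning $\zeta_{i+1}\in\GG_{\zeta_i}$, i.e.\ $B_{\zeta_{i+1}}\subsetneq B_{\zeta_i}$. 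First I would observe that, because each $B_{\zeta_i}$ contains the strictly smaller disk $B_{\zeta_{i+1}}$ and the curves are pairwise disjoint simple closed geodesics, the regions $N_i := \overline{B_{\zeta_i}}\setminus B_{\zeta_{i+1}}$ are embedded compact annuli with geodesic boundary $\zeta_i\cup\zeta_{i+1}$, and they are pairwise disjoint (interiors), so $\sum_i \area(N_i,g)\leq\area(M,g)<\infty$. In particular $\area(N_i,g)\to 0$, hence eventually $\area(N_i,g)\leq a$, the constant of Lemma~\ref{l:annuli}.

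Next I would extract the contradiction. Once $\area(N_i,g)\leq a$, Lemma~\ref{l:annuli} gives $\tfrac29 L(\zeta_i)\leq L(\zeta_{i+1})\leq\tfrac92 L(\zeta_i)$; in particular the lengths $L(\zeta_i)$ cannot shrink too fast. But more directly, the disks $B_{\zeta_i}$ are strictly decreasing, so $\area(B_{\zeta_i},g)$ is a strictly decreasing sequence of positive reals, hence converges to some $A_\infty\geq 0$; consequently $\area(N_i,g)=\area(B_{\zeta_i},g)-\area(B_{\zeta_{i+1}},g)\to 0$. The key point is that each $\zeta_{i}$ is a \emph{contractible} simple closed geodesic in $M$ (being contained in $B_\gamma\subset M\setminus A$, hence in the simply connected — in the positive genus case — or in a disk region), so the Gauss--Bonnet lower bound \eqref{e:Gauss_Bonnet_area_bound} applies to each of them:
\[
\area(B_{\zeta_i},g)\geq \frac{2\pi}{\max(R_g)}>0,\qquad\forall i.
\]
Thus the areas $\area(B_{\zeta_i},g)$ are bounded below by a fixed positive constant, yet they form a strictly decreasing convergent sequence whose consecutive differences tend to $0$; this is consistent only if... — and here is where I must be careful: a strictly decreasing sequence bounded below merely converges, it need not have a contradiction by itself. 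So the actual contradiction has to come from below the scale where \emph{both} $B_{\zeta_{i+1}}$ is nonempty with area $\geq 2\pi/\max(R_g)$ \emph{and} $N_i$ has tiny area: once $\area(N_i,g)<2\pi/\max(R_g)$ we cannot have $\area(B_{\zeta_i},g)=\area(B_{\zeta_{i+1}},g)+\area(N_i,g)$ drop below... no. The cleanest fix: since $\area(N_i,g)\to 0$, pick $i$ with $\area(N_i,g)<\frac{2\pi}{\max(R_g)}$; but $\zeta_{i+1}\in\GG_{\zeta_i}$ means there is a contractible simple closed geodesic strictly inside $B_{\zeta_i}$, and then $\zeta_{i+1}\in\GG_{\zeta_i}\neq\varnothing$ lets us continue — the contradiction is that $\area(B_{\zeta_0},g)=A_\infty+\sum_{i\geq0}\area(N_i,g)$ is finite, yet each summand beyond some index is smaller than any $\epsilon$, which is fine. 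I therefore expect the honest argument to be: the disks $B_{\zeta_i}$ form an infinite strictly decreasing family, hence $\bigcap_i \overline{B_{\zeta_i}}$ is a nonempty compact connected set $K$ with $\area(K,g)=A_\infty$; and one shows $A_\infty>0$ is impossible because $K$ would then have to contain yet another contractible simple closed geodesic... — this is getting circular.

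The main obstacle, then, is turning "infinitely nested disks bounded by geodesics" into a genuine contradiction; the correct mechanism is: the nested intersection $\overline{B_{\zeta_i}}$ has areas decreasing to $A_\infty$, the annuli $N_i$ have areas summing to a finite number so $\area(N_i,g)\to0$, and for $i$ large Lemma~\ref{l:annuli} forces $L(\zeta_{i+1})\geq\tfrac29 L(\zeta_i)$ so $L(\zeta_i)\geq c>0$ for all large $i$; but an embedded annulus $N_i$ of vanishing area whose two boundary components are geodesics of length bounded below by $c$ is impossible by the isoperimetric/collar argument behind Lemma~\ref{l:annuli} (a thin annulus between two boundary circles of length $\geq c$ has area bounded below) — more simply, \eqref{e:Gauss_Bonnet_area_bound} applied to $\zeta_i$ and to $\zeta_{i+1}$ shows $B_{\zeta_i}\setminus B_{\zeta_{i+1}}=N_i$ has area $\area(B_{\zeta_i},g)-\area(B_{\zeta_{i+1}},g)$, and since only finitely many indices can have $\area(B_{\zeta_i},g)\geq \area(B_{\zeta_{i+1}},g)+\tfrac{2\pi}{\max(R_g)}$ while all do satisfy $\area(B_{\zeta_{i+1}},g)\geq\tfrac{2\pi}{\max(R_g)}$, we conclude the sequence must terminate. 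Concretely: if the process did not terminate, then for every $i$, $\area(B_{\zeta_i},g)>\area(B_{\zeta_{i+1}},g)\geq\tfrac{2\pi}{\max(R_g)}$, and summing the positive gaps $\area(N_i,g)$ against the bound $\sum_i\area(N_i,g)\leq\area(M,g)$ is not yet a contradiction — but combined with Lemma~\ref{l:annuli}, once $\area(N_i,g)\leq a$ we get a definite fraction of $L(\zeta_i)$ surviving, and a thin annulus argument (area $\geq$ const $\cdot\min(L(\zeta_i),L(\zeta_{i+1}))\cdot(\text{width})$, with width bounded below since both boundaries are geodesics that cannot be joined by an arbitrarily short arc inside $M\setminus A$ while staying geodesic... ) gives a uniform positive lower bound on $\area(N_i,g)$, contradicting $\area(N_i,g)\to0$. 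Thus the chain of strictly nested $\GG$-descendants must be finite, and its last term $\zeta$ satisfies $\GG_\zeta=\varnothing$, proving the lemma. I would write this up choosing whichever of these two closures (Gauss--Bonnet bound on each $B_{\zeta_i}$ forcing finiteness of the nest directly, versus the Lemma~\ref{l:annuli} thin-collar estimate) is cleanest; I expect the Gauss--Bonnet route combined with $\sum\area(N_i,g)<\infty$ to suffice once one notes that \eqref{e:Gauss_Bonnet_area_bound} bounds $\area(B_{\zeta_i},g)$ below uniformly, so the strictly decreasing sequence $\area(B_{\zeta_i},g)$ can only take finitely many values above the threshold before the differences $\area(N_i,g)$ are forced below $\tfrac{2\pi}{\max(R_g)}$, at which stage the next disk $B_{\zeta_{i+1}}$, having area $\geq\tfrac{2\pi}{\max(R_g)}$, leaves no room — i.e.\ the nest is finite.
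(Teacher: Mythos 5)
Your approach is genuinely different from the paper's and contains a gap that you flag yourself but never close. Arguing by contradiction via an infinite nested chain cannot work here, because the geometric facts you invoke — the Gauss--Bonnet lower bound $\area(B_{\zeta_i},g)\geq 2\pi/\max(R_g)$, the summability $\sum_i\area(N_i,g)<\infty$, and the length-ratio estimate of Lemma~\ref{l:annuli} — are all perfectly compatible with the existence of an infinite strictly nested chain of contractible simple closed geodesics. Such chains genuinely occur: on a sphere that contains a flat cylindrical region, the parallel circles of the cylinder form a one-parameter family of contractible simple closed geodesics, any two bound an embedded annulus, and by taking those two circles close to each other the annular area tends to zero while both boundary lengths stay bounded below. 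This directly refutes the ``thin annulus between two geodesics of length $\geq c$ must have area bounded below'' heuristic you lean on at the end, and it also shows that your Gauss--Bonnet ``no room left'' count has no force: nothing prevents $\area(B_{\zeta_{i+1}},g)\geq 2\pi/\max(R_g)$ and $\area(N_i,g)<2\pi/\max(R_g)$ from holding simultaneously for all $i$. There is also a logical point: the negation of the lemma lets you build an infinite chain, but infinite chains can coexist with a minimal element, so exhibiting a chain does not by itself refute the negation.

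The mechanism that does work is to show that the infimum $b_\gamma:=\inf_{\zeta\in\GG_\gamma}\area(B_\zeta,g)$ is attained, since a minimizer $\eta$ necessarily has $\GG_\eta=\varnothing$ (any $\xi\in\GG_\eta$ would lie in $\GG_\gamma$ with strictly smaller enclosed area). The subtlety — which is exactly where Lemma~\ref{l:annuli} enters — is that a minimizing sequence in $\GG_\gamma$ need not have bounded length, so compactness of closed geodesics does not apply directly. The paper's fix is to first pick $\zeta\in\GG_\gamma$ with $\area(B_\zeta,g)\leq b_\gamma+a$; then for any $\eta\in\GG_\zeta$ the embedded annulus $\overline{B_\zeta}\setminus B_\eta$ has area at most $a$, so Lemma~\ref{l:annuli} gives $L(\eta)\leq\tfrac92 L(\zeta)$, which makes $\GG_\zeta$ compact in $C^\infty$. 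Minimizing $\area(B_\eta,g)$ over the compact set $\GG_\zeta$ then produces the desired element. You correctly identified the two relevant tools, but the step you are missing is this infimum-attainment argument, and in particular the use of Lemma~\ref{l:annuli} to extract an a priori length bound rather than to bound annular areas from below.
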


\begin{proof}
We set
\begin{align*}
b_\gamma:=\inf_{\zeta\in\GG_\gamma} \area(B_\zeta,g),
\end{align*}
which is a positive value according to~\eqref{e:Gauss_Bonnet_area_bound}.
We fix $\zeta\in\GG_\gamma$ such that 
\[
b_\gamma \leq\area(B_{\zeta},g)\leq b_\gamma +a,
\]
where $a>0$ is the constant given by Lemma~\ref{l:annuli}. This implies that every $\eta\in\GG_{\zeta}$ has length $L(\eta)\leq\tfrac92 L(\zeta)$. Therefore $\GG_{\zeta}$, seen as a subspace of $C^\infty(S^1,M)$ endowed with the $C^\infty$ topology, is compact. Consider another sequence $\eta_m\in\GG_{\zeta}$, for $m\geq1$, such that $\area(B_{\eta_m},g)\to b_{\zeta}$ as $m\to\infty$. By compactness, up to extracting a subsequence we have that $\eta_m$ converges in the $C^\infty$ topology to some $\eta\in\GG_{\zeta}$ such that $\area(B_\eta,g)=b_{\zeta}$. This implies that $\GG_\eta=\varnothing$.
\end{proof}

\begin{proof}[Proof of Theorem~\ref{t:complete_system}]
Since every simple closed geodesic contained in $M\setminus A$ bounds a disk of area uniformly bounded from below as in~\eqref{e:Gauss_Bonnet_area_bound}, Lemma~\ref{l:innermost} implies that there exists a maximal finite collection of pairwise disjoint contractible simple closed geodesics $\zeta_1,...,\zeta_h$ contained in $M\setminus A$ and such that $\GG_{\zeta_i}=\varnothing$ for all $i\in\{1,...,h\}$. We denote by $Z:=\zeta_1\cup...\cup\zeta_h$ their disjoint union. Here, ``maximal'' means that, for any other contractible simple closed geodesic $\zeta$ contained in $M\setminus (A \cup Z)$, the associated $\GG_{\zeta}$ must contain some $\zeta_i$.
If $Z=\varnothing$, then $A$ is the desired collection of closed geodesics: indeed, $U:=M\setminus A$ is a convex geodesic polygon that does not contain any simple closed geodesic, and Theorem~\ref{t:non_trapping} implies that $\trap(SU)=\varnothing$.

\begin{figure}
\begin{footnotesize}
\includegraphics{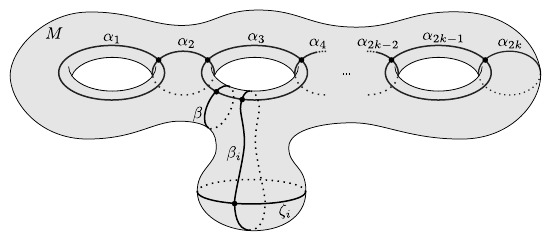}
\end{footnotesize}
\caption{The loop $\beta$, and the closed geodesic $\beta_i$ in the same free homotopy class, intersecting the contractible simple closed geodesic $\zeta_i$.}
\label{f:innermost}
\end{figure}

We now consider the case $Z\neq\varnothing$.
We need to argue differently for the surfaces of positive genus and for the 2-sphere.
\begin{itemize}
\setlength{\itemsep}{5pt}

\item If $M$ has positive genus, let $\beta$ be an embedded loop that intersects a unique $\alpha_j$, and such intersection is transverse and consists of a single point. Theorem~\ref{mt:multiplicity} implies that, for each $i\in\{1,...,h\}$, there exists a  closed geodesic $\beta_i$ in the same free homotopy class of loops of $\beta$ and such that $\beta_i\cap\zeta_i\neq\varnothing$ (Figure~\ref{f:innermost}). 
Notice that $\beta_i$ must intersect $\alpha_j$ too, since the intersection between $\beta$ and $\alpha_j$ is homologically essential. 

\item If $M=S^2$, Theorem~\ref{mt:sphere} implies that, for each $i\in\{1,...,h\}$, there exist a closed geodesic $\beta_i$ intersecting both $\zeta_i$ and $A$.

\end{itemize}

We set $B:=\beta_1\cup...\cup\beta_h$.
Since every connected component of $M\setminus A$ is simply connected and $A\cup Z\cup B$ is path-connected, every connected component of the complement $U:=M\setminus (A\cup Z\cup B)$ is simply connected, and thus is a convex geodesic polygon. Notice that $U$ does not contain any simple closed geodesic; indeed, if it contained a simple closed geodesic $\eta$, the maximality of the collection $Z$ would imply that $\GG_\eta$ contained some $\zeta_i$, contradicting the path-connectedness of $A\cup Z\cup B$. As before, Theorem~\ref{t:non_trapping} implies that $\trap(SU)=\varnothing$.
\end{proof}

\begin{Remark}
\label{r:bound_boundary_components}
The argument in the proof allows to bound from above the number $n$ of closed geodesics $\gamma_1,...,\gamma_n$ provided by Theorem~\ref{t:complete_system}. Indeed, the area lower bound~\eqref{e:Gauss_Bonnet_area_bound} readily implies that the number $h$ of closed geodesics in $Z$ is bounded from above as
\begin{align*}
 h \leq \frac{\area(M,g)\max(R_g)}{2\pi}.
\end{align*}
The set $B$ also contains $h$ closed geodesics, whereas $A$ contains exactly \[2k=2\max\big\{1,\mathrm{genus}(M)\big\}\] closed geodesics. Therefore
\begin{align*}
n
=
2k+2h
\leq 2\max\big\{1,\mathrm{genus}(M)\big\}+\frac{1}{\pi}\,\area(M,g)\max(R_g).
\end{align*}
The Birkhoff section $\Sigma\looparrowright SM$ of Theorem~\ref{mt:Birkhoff_sections} is built from the union of the Birkhoff annuli of the closed geodesics $\gamma_1,...,\gamma_n$, and therefore $2n$ Birkhoff annuli which all together have $4n$ boundary components. The surgery procedure that constructs $\Sigma$ out of the Birkhoff annuli does not increase the number of boundary components. Therefore $\Sigma$ has $b=4n$ boundary components, with
\begin{align*}
 b
 \leq
 8\max\big\{1,\mathrm{genus}(M)\big\}+\frac{4}{\pi}\,\area(M,g)\max(R_g).
\end{align*}
\end{Remark}

\bibliography{_biblio}

\vspace{10pt}

\end{document}